\address{Department of Mathematics, Northeastern University, 360 Huntington Avenue,
Boston MA 02115, USA}
\email{i.loseu@neu.edu}
\thanks{MSC 2010: 16E35, 16G99}
\newcommand{\OCat}{\mathcal{O}}
\newcommand{\Cat}{\mathcal{C}}
\newcommand{\gr}{\operatorname{gr}}
\newcommand{\K}{\mathbb{K}}
\newcommand{\bDelta}{\overline{\Delta}}
\newcommand{\bnabla}{\overline{\nabla}}
\newcommand{\Ext}{\operatorname{Ext}}
\newcommand{\Hom}{\operatorname{Hom}}
\newcommand{\Loc}{\operatorname{Loc}}
\newcommand{\A}{\mathcal{A}}
\newcommand{\Ca}{\mathsf{C}}
\newcommand{\Z}{\mathbb{Z}}
\newcommand{\C}{\mathbb{C}}
\newcommand{\R}{\mathbb{R}}
\newcommand{\CW}{\mathfrak{CW}}
\newcommand{\WC}{\mathfrak{WC}}
\newcommand{\p}{\mathfrak{p}}
\newcommand{\param}{\mathfrak{p}}
\newcommand{\Str}{\mathcal{O}}
\newcommand{\Coh}{\operatorname{Coh}}
\newcommand{\B}{\mathcal{B}}
\newcommand{\Tcal}{\mathcal{T}}
\newcommand{\g}{\mathfrak{g}}
\newcommand{\Supp}{\operatorname{Supp}}
\newcommand{\quo}{/\!/}
\newcommand{\VA}{\operatorname{V}}
\newtheorem{Thm}{Theorem}[section]
\newtheorem{Prop}[Thm]{Proposition}
\newtheorem{Cor}[Thm]{Corollary}
\newtheorem{Lem}[Thm]{Lemma}
\theoremstyle{definition}
\newtheorem{Rem}[Thm]{Remark}
\numberwithin{equation}{section}
\title{On categories $\mathcal{O}$
for quantized symplectic resolutions}
\author{Ivan Losev}
\begin{document}
\begin{abstract}
In this paper we study categories $\mathcal{O}$ over quantizations of symplectic resolutions
admitting Hamiltonian tori actions with finitely many fixed points. In this generality,
these categories were introduced by Braden, Licata, Proudfoot and Webster. We establish
a family of standardly stratified structures (in the sense of the author and Webster)
on these categories $\mathcal{O}$. We use these structures to study shuffling functors of Braden, Licata,
Proudfoot and Webster (called  cross-walling functors in this paper). Most importantly,
we prove that all cross-walling functors are derived equivalences that define
an action of the Deligne groupoid of a suitable real hyperplane arrangement.
\end{abstract}
\maketitle
\tableofcontents
\section{Introduction}
This paper continues the study of categories $\mathcal{O}$ over quantizations of symplectic resolutions
started in \cite{BLPW}.

Our base field is $\C$. Let $X$ be a symplectic algebraic variety with form $\omega$. We assume that $X$
is equipped with a $\C^\times$-action that has the following two properties:
\begin{itemize}
\item The algebra $\C[X]$ is finitely generated. Furthermore, the weights of
$\C^\times$ in $\C[X]$ are non-negative, and the zero weight component
consists of scalars.
\item The torus $\C^\times$ rescales the symplectic form with a positive weight, i.e., there is
an integer $d>0$ such that $t.\omega= t^d\omega$ for all $t\in \C^\times$.
\end{itemize}
We say that $X$ is a conical symplectic resolution (of $X_0:=\operatorname{Spec}(\C[X])$)
if a natural morphism $X\rightarrow X_0$ is projective and  birational.

A classical example is as follows. Let $G$ be a semisimple algebraic group, $T\subset B\subset G$
be a maximal torus and a Borel subgroup. Set $X:=T^*(G/B)$ and equip it with the $\C^\times$-action
by fiberwise dilations. The corresponding variety $X_0$ is the nilpotent cone in $\g^*$, and the morphism
$X\rightarrow X_0$ is the Springer resolution.

To $\lambda\in H^2_{DR}(X)$, we can assign a {\it quantization}, $\A_\lambda^\theta$ of $\mathcal{O}_X$
that is a sheaf of filtered algebras with $\gr\A_\lambda^\theta=\mathcal{O}_X$ and some additional
conditions. So far $\theta$ is just an element of notation. For example, in the case $X=T^*(G/B)$,
the sheaf $\mathcal{A}_\lambda^\theta$ is essentially the sheaf of $\lambda-\rho$-twisted differential operators
on $G/B$.

Now suppose that on $X$ we have a Hamiltonian action of a torus $T$ with finitely many fixed points.
The action lifts to a Hamiltonian action on $\A_\lambda^\theta$. A one-parameter subgroup
$\nu:\C^\times\rightarrow T$ is said to be {\it generic} if the number of fixed points
for $\nu(\C^\times)$ in $X$ is finite. The set of generic one-parameter subgroups in
the complement in $\operatorname{Hom}(\C^\times,T)$ to finitely many hyperplanes (to be called
{\it walls}). The connected components to the walls in $\operatorname{Hom}(\C^\times,T)\otimes_{\Z}\mathbb{R}$
will be called {\it chambers}.

Now fix a generic one-parameter subgroup $\nu:\C^\times\rightarrow T$. Consider its attracting locus $Y$
in $X$, i.e., the set of all $x\in X$ such that the limit $\lim_{t\rightarrow 0} t.x$ exists.
Then $Y$ is the lagrangian subvariety in $X$ whose irreducible components are affine spaces
labelled by $X^T$. We define the category $\mathcal{O}_\nu(\A_\lambda^\theta)$ as the category
of all coherent $\A_\lambda^\theta$-modules supported on $Y$ that admit a weakly $\nu(\C^\times)$-equivariant
structure. This definition is equivalent to the definition appearing in \cite[Section 3.3]{BLPW}.
In the case of $X=T^*(G/B)$, we recover a version of the classical Bernstein-Gelfand-Gelfand
category $\mathcal{O}$.

One of the main results of \cite{BLPW} is that the category $\mathcal{O}_\nu(\A_\lambda^\theta)$
is highest weight with simple objects labelled by $X^T$. For an order that is a part of a highest
weight structure, one can take the usual (contraction) order $\leqslant_{\nu}$ on the fixed points produced from
$\nu$. Being highest weight with respect to a given order means certain upper triangularity properties,
just as in the BGG case.

Our first result about the categories $\mathcal{O}_\nu(\A_\lambda^\theta)$ establishes more structure
on them. Namely, take $\nu_0$ lying in the closure of the chamber containing $\nu$. Then $\nu_0$
defines a {\it pre-order} $\preceq_{\nu_0}$ on $X^T$,  by contraction of the irreducible components of $X^{\nu_0(\C^\times)}$. This pre-order is
refined by $\leqslant_{\nu}$. We show, Theorem \ref{Thm:cat_O_ss},
that the category $\mathcal{O}_\nu(\A_\lambda^\theta)$ becomes {\it standardly stratified}
in the sense of \cite{LW} with respect to $\preceq_{\nu_0}$. Roughly speaking, this means some
additional upper-triangularity properties.

The main result of this paper concerns certain derived functors introduced in \cite[Section 8.2]{BLPW}
that relate categories $\mathcal{O}_\nu(\A_\lambda^\theta)$ and $\mathcal{O}_{\nu'}(\A_\lambda^\theta)$
with \underline{different} generic $\nu,\nu'$. The functors were called shuffling in \cite{BLPW},
but in this paper we call them {\it cross-walling}, because they are supposed to be Koszul dual
to functors called wall-crossing in \cite{BL} (and twisting in \cite{BPW}). We show that
the cross-walling functor $\CW_{\nu\rightarrow \nu'}:D^b(\OCat_\nu(\A_\lambda^\theta))
\rightarrow D^b(\OCat_{\nu'}(\A_\lambda^\theta))$ is an equivalence of triangulated categories
proving a conjecture of Braden, Licata, Proudfoot and Webster.
Moreover, we show that the cross-walling functors define an action of the Deligne
groupoid of the real hyperplane arrangement given by the walls in $\operatorname{Hom}(\C^\times,T)\otimes_{\Z}\mathbb{R}$  on the categories $D^b(\OCat_?(\A_\lambda^\theta))$, see Section \ref{SS_main_CW}.
Recall that by the Deligne groupoid of a real hyperplane arrangement  in $\C^n$ one means the full
subgroupoid in the fundamental groupoid of the hyperplane complement $\mathfrak{X}$ whose objects are points
in $\mathfrak{X}\cap \mathbb{R}^n$ one per chamber.
This establishes another conjecture of Braden, Licata, Proudfoot and Webster
(there is a stronger version taking a Weyl group into account, see \cite[Proposition 8.14]{BLPW}).
We also show that both the long wall-crossing and the long cross-walling functors
realize the Ringel duality, which implies \cite[Conjecture 8.24]{BLPW}.

In order to prove these claims we examine an interplay between the cross-walling functors
and the standardly stratified structures. This interplay is of independent interest, it will be
used in a subsequent paper to provide combinatorial recipes to determine the supports
of the global sections of the  irreducible objects in $\OCat_\nu(\A_\lambda^\theta)$ in the case when $X$
is a Gieseker moduli space.

The paper is organized as follows. In Section \ref{S_sympl_quant} we will recall
some generalities on symplectic resolutions and their quantizations  following \cite{BPW}. In Section
\ref{S_SSC} we recall generalities of standardly stratified categories mostly
following  \cite{LW}. Section \ref{S_cat_O} deals with categories $\mathcal{O}$
of symplectic resolutions. Here we follow \cite{BLPW,B_ineq}. These three sections
basically contain no new results.  In Section \ref{S_parab} we introduce a ``parabolic
induction'', one of the main tools to study the categories $\mathcal{O}$.
In particular, we apply the parabolic induction in Section \ref{S_SS_O} to introduce standardly
stratified structures on those categories. In Section \ref{S_CW}
we study cross-walling functors proving all results about them mentioned before.
In the appendix we discuss some analogies between our work and that of
Maulik and Okounkov, \cite{MO}.

{\bf Acknowledgements}. I would like to thank Roman Bezrukavnikov and Andrei
Okounkov for numerous stimulating discussions. I am also grateful to Justin Hilburn and 
Nick Proudfoot for their comments on a preliminary version of this text.  My work
was partially supported by the NSF under Grant  DMS-1161584.

\section{Symplectic resolutions and their quantizations}\label{S_sympl_quant}
\subsection{Conical symplectic resolutions and their deformations}
Let $X_0$ be a normal Poisson affine variety equipped with an action of $\C^\times$
such that the grading on $\C[X_0]$ is positive (meaning that the graded component $\C[X_0]_i$
is zero when $i<0$, and $\C[X_0]_0=\C$) and there is a positive integer $d$
such that $\{\C[X_0]_i,\C[X_0]_j\}\subset \C[X_0]_{i+j-d}$. By a symplectic resolution
of singularities of $X_0$ we mean a pair $(X,\rho)$ of
\begin{itemize}
\item a symplectic algebraic variety $X$ (with form $\omega$)
\item a morphism $\rho$ of Poisson varieties that is a projective resolution of singularities.
\end{itemize}
Below we assume that $(X,\rho)$ is a symplectic resolution of singularities. Besides, we will
assume that $(X,\rho)$ is {\it conical} meaning that the $\C^\times$-action lifts to $X$
in such a way that $\rho$ is equivariant. This $\C^\times$-action will be called {\it contracting}
later on.

Note that  $\rho^*:\C[X_0]\rightarrow \C[X]$ is an isomorphism because $X_0$ is normal.
By the Grauert-Riemenschneider theorem, we have
$H^i(X,\mathcal{O}_X)=0$ for $i>0$. By results of Kaledin, \cite[Theorem 2.3]{Kaledin}, $X_0$ has finitely
many symplectic leaves.

\begin{Rem}\label{Rem:sympl_leaves_finite}
Note that if $X_0'$ is an affine Poisson variety and its normalization
$X_0$ has finitely many leaves, then so does $X_0'$. It follows that
any affine Poisson algebraic variety $X_0'$ admitting a symplectic resolution
$X$ has finitely many leaves.
\end{Rem}

We will be interested in deformations $\hat{X}/\param$, where $\param$ is a finite dimensional vector space,
and $\hat{X}$ is a symplectic scheme over $\param$ with symplectic form $\hat{\omega}\in
\Omega^2(\hat{X}/\param)$ that specializes to $\omega$ and also with a $\C^\times$-action
on $\hat{X}$ having the following properties:
\begin{itemize}
\item the morphism $\hat{X}\rightarrow \p$ is $\C^\times$-equivariant,
\item the restriction of the action to $X$ coincides with the contracting action,
\item $t.\hat{\omega}:=t^{d}\hat{\omega}$.
\end{itemize}
It turns out that there is a universal such deformation $\tilde{X}$ over $\tilde{\param}:=H^2(X,\C)$
(any other deformation is obtained via the pull-back with respect to
a linear map $\param\rightarrow \tilde{\param}$). Moreover, the deformation
$\tilde{X}$ is trivial in the category of $C^{\infty}$-manifolds. This result is
due to Namikawa, \cite[Lemmas 12,22, Proposition 13]{Namikawa08}.

Now we are going to review some structural features of conical symplectic resolutions mostly due to
Kaledin and Namikawa.

First of all, let us point out that $X$ has no odd cohomology, \cite[Proposition 2.5]{BPW}.

For $\lambda\in \tilde{\param}$, let us write $X_{\lambda}$ for the corresponding fiber of $\tilde{X}
\rightarrow \tilde{\param}$. It turns out that, for $\lambda$ Zariski generic, $X_\lambda$
is affine and independent of the choice of a conical symplectic resolution $X$.
This shows that the groups $H^i(X,\Z)$ are independent of the choice of $X$.
Furthermore, if $X,X'$ are two conical symplectic resolutions of $X$, then there are
open subvarieties $X^0\subset X, X'^0\subset X'$ with complements of codimension bigger than $1$
that are isomorphic, see, e.g., \cite[Proposition 2.19]{BPW}. This allows to identify the Picard groups $\operatorname{Pic}(X)=\operatorname{Pic}(X')$.
Moreover, the Chern class map defines an isomorphism $\C\otimes_{\Z}\operatorname{Pic}(X)\xrightarrow{\sim} H^2(X,\C)$.
The isomorphisms $\C\otimes_{\Z}\operatorname{Pic}(X)\xrightarrow{\sim} H^2(X,\C)$,
$\C\otimes_{\Z}\operatorname{Pic}(X')\xrightarrow{\sim} H^2(X',\C)$ intertwine the identifications
$\operatorname{Pic}(X)\cong \operatorname{Pic}(X'), H^2(X,\C)\cong H^2(X',\C)$.
Let $\tilde{\param}_{\Z}$ be the image of $\operatorname{Pic}(X)$ in $H^2(X,\C)$.

Set $\tilde{\param}_{\R}:=\R\otimes_{\Z}\tilde{\param}_{\Z}$. There is a finite group $W$ acting on
$\tilde{\param}_{\R}$ as a reflection group, such that the movable cone $C$ of $X$ (that does not
depend on the choice of a resolution) is a fundamental chamber for $W$. We can partition $C$
into the union of cones $C_1,\ldots,C_m$ (with disjoint interiors)
such that the possible conical symplectic resolutions of $X_0$ are in one-to-one correspondence with
$C_1,\ldots,C_m$ in such a way that the cone corresponding
to a resolution $X'$ is its  ample cone. Let $H_1,\ldots,H_k$ be the hyperplanes spanned by
the codimension $1$ faces of the cones $C_1,\ldots,C_m$. Set
\begin{equation}\label{eq:singular} \mathcal{H}_{\C}:=\bigcup_{1\leqslant i\leqslant k,w\in W}w(\C\otimes_{\R}H^i).\end{equation} Then $X_\lambda$ is affine if and only if $\lambda\not\in \mathcal{H}_{\C}$.
The results in this paragraph are due to Namikawa, \cite{Namikawa13}.

For $\theta\not\in \mathcal{H}_{\C}$, we will write $X^\theta$ for the resolution corresponding to the ample cone
containing $W\theta\cap C$. Further, if $w\theta\in C$, we will choose a different identification
of $H^2(X^\theta,\C)$ with $\tilde{\param}$, one twisted by $w$ (so that the ample cone actually
contains $\theta$).

\subsection{Quantizations}
We will study quantizations of $X,X_0,\tilde{X}$. By a quantization of $X_0$, we mean a filtered
algebra $\A$ together with an isomorphism $\gr\A\cong \C[X_0]$ of graded Poisson algebras.
By a quantization of $X=X^\theta$, we mean a sheaf $\A^\theta$ of filtered algebras in the conical
topology on $X$ (in this topology, ``open'' means Zariski open and $\C^\times$-stable) that is complete
and separated with respect to the filtration together with an isomorphism $\gr\A^\theta\cong \Str_{X^\theta}$
(of sheaves of graded Poisson algebras).
A quantization $\tilde{\A}^\theta$ of the universal deformation $\tilde{X}^\theta$ is
\begin{itemize}
\item a filtered
sheaf of $\C[\tilde{\param}]$-algebras (with a complete and separated filtration),
where the induced filtration on $\C[\tilde{\param}]$ coincides with a natural one, where $\deg\tilde{\param}=d$,
\item together with an isomorphism $\gr\tilde{\A}^\theta\xrightarrow{\sim} \Str_{\tilde{X}^\theta}$ of graded Poisson $\C[\tilde{\param}]$-algebras.
\end{itemize}
Note that we can specialize a quantization  $\tilde{\A}^\theta$ to a point in $\tilde{\param}$
getting a quantization of $X$.

A result of Bezrukavnikov and Kaledin, \cite{BK}, (with ramifications given in \cite[Section 2]{quant}) shows that quantizations $\A^\theta$ are parameterized (up to an isomorphism) by the points in $\tilde{\param}$. More
precisely, there is a {\it canonical} quantization $\tilde{\A}^\theta$ of $\tilde{X}^\theta$ such that the quantization of $X^\theta$ corresponding to $\lambda\in \tilde{\param}$ is the specialization of $\tilde{\A}^\theta$ to $\lambda$. The quantization $\tilde{\A}^\theta$ has the following important property: there is an anti-automorphism, $\varsigma$
(to be called the parity anti-automorphism) that preserves the filtration, is the identity on the associated
graded, preserves $\tilde{\param}^*\subset \Gamma(\tilde{\A}^\theta)$ and induces $-1$ on $\tilde{\param}^*$.
It follows that $\A^\theta_{-\lambda}$ is isomorphic to $(\A_\lambda^\theta)^{opp}$.

We set $\tilde{\A}:=\Gamma(\tilde{\A}^\theta), \A_\lambda=\Gamma(\A_\lambda^\theta)$ so that
$\A_\lambda$ is the specialization of $\tilde{\A}$ to $\lambda$. It follows
from \cite[Section 3.3]{BPW} that the algebras $\tilde{\A}, \A_\lambda$ are independent from the choice of $\theta$.
From $H^i(X^\theta,\Str_{X^\theta})=0$, we deduce that the higher cohomology of both $\tilde{\A}^\theta$
and $\A^\theta_\lambda$ vanish and also that $\tilde{\A}$ is a quantization of $\C[\tilde{X}]$
and $\A_\lambda$ is a quantization of $\C[X]=\C[X_0]$. Also we note that $\A_\lambda$
is the specialization of $\tilde{\A}$ at $\lambda$ and that $\A_{-\lambda}\cong \A_\lambda^{opp}$.
Also we have $\A_\lambda\cong \A_{w\lambda}$ for all $\lambda\in \tilde{\param},w\in W$,
see \cite[Section 3.3]{BPW}.

\subsection{Localization and translation equivalences}\label{SSS_trans_equi}
Consider the category $\operatorname{Coh}(\A_\lambda^\theta)$  of coherent $\A_\lambda^\theta$-modules
and $\A_\lambda\operatorname{-mod}$ of finitely generated $\A_\lambda$-modules. Recall that an
$\A_\lambda^\theta$-module $M$ is called {\it coherent} if it can be equipped with a complete
and separated $\A_\lambda^\theta$-module filtration such that $\gr M$ is coherent
(such a filtration is called {\it good}).

We have functors between these categories, the global section functor $\Gamma_\lambda:\Coh(\A_\lambda^\theta)
\rightarrow \A_\lambda\operatorname{-mod}$ and its left adjoint, the localization
functor $\Loc_\lambda:\A_\lambda\operatorname{-mod}\rightarrow \Coh(\A_\lambda^\theta),
N\mapsto \A_\lambda^\theta\otimes_{\A_\lambda}N$. We can also consider their derived
functors $R\Gamma_\lambda: D^b(\Coh(\A_\lambda^\theta))\rightarrow D^b(\A_\lambda\operatorname{-mod})$
and $L\Loc_\lambda: D^-(\A_\lambda\operatorname{-mod})\rightarrow D^-(\Coh(\A_\lambda^\theta))$.

We will need a partial answer to the question of when $\Gamma_\lambda,\Loc_\lambda$ are mutually quasi-inverse
equivalences (in this case we will say that abelian localization holds for $(\lambda,\theta)$), see \cite[Corollary 5.12]{BPW}.

\begin{Lem}\label{Lem:local}
Let $\lambda\in \tilde{\param}$ and $\chi\in \tilde{\param}_{\Z}$ be ample for $X^\theta$.
Then there is $n_0\in \Z$ such that $\Gamma_{\lambda+n\chi}: \Coh(\A_{\lambda+n\chi}^\theta)
\rightleftarrows \A_{\lambda+n\chi}\operatorname{-mod}:\Loc_{\lambda+n\chi}$ are mutually quasi-inverse
equivalences for all $n\geqslant n_0$.
\end{Lem}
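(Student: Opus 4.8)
The plan is to reduce the statement to a cohomology vanishing and a support argument for the sheaves $\A_{\lambda+n\chi}^\theta$, using the standard "asymptotic" mechanism that appears in the theory of quantized symplectic resolutions. First I would recall the tensoring (translation) construction: for $\chi\in\tilde{\param}_{\Z}$ ample on $X^\theta$, one has a $(\A_{\lambda+\chi}^\theta,\A_\lambda^\theta)$-bimodule $\A_{\lambda,\chi}^\theta$ quantizing the line bundle $\Str(\chi)$ on $X^\theta$, together with its global sections $\A_{\lambda,\chi}:=\Gamma(\A_{\lambda,\chi}^\theta)$, a $(\A_{\lambda+\chi},\A_\lambda)$-bimodule. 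Tensoring with these bimodules gives functors $\Coh(\A_\lambda^\theta)\to\Coh(\A_{\lambda+\chi}^\theta)$ and $\A_\lambda\operatorname{-mod}\to\A_{\lambda+\chi}\operatorname{-mod}$ intertwined by $\Gamma$, and composing $n$ of them realizes the twist by $\Str(n\chi)$. By construction $\gr\A_{\lambda,n\chi}^\theta=\Str(n\chi)$ as a sheaf of graded modules, and since $\chi$ is ample, for $n\gg0$ the higher cohomology of $\gr\A_{\lambda,n\chi}^\theta$ vanishes by Serre vanishing on $X^\theta$; a filtration/spectral-sequence argument then upgrades this to $H^{>0}(X^\theta,\A_{\lambda,n\chi}^\theta)=0$ and to the statement that $\A_{\lambda,n\chi}^\theta$ is generated by its global sections. (One has to be a little careful because the filtration is complete and separated but not bounded; this is handled exactly as in \cite[Section 3]{BPW}, passing to $\gr$ and using that the relevant modules are Noetherian.)

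Next I would show that, for such $n$, the functors $\Gamma_{\lambda+n\chi}$ and $\Loc_{\lambda+n\chi}$ are mutually quasi-inverse. The key point is that derived global sections $R\Gamma_{\lambda+n\chi}$ is exact and that the unit and counit of the $(\Loc_{\lambda+n\chi},\Gamma_{\lambda+n\chi})$-adjunction are isomorphisms. For exactness of $R\Gamma$, note that every coherent $\A_{\lambda+n\chi}^\theta$-module $M$ admits a good filtration, so $\gr M$ is a coherent sheaf on $X^\theta$ supported on a subvariety; after a further twist its higher cohomology vanishes, and one runs the same filtration argument to get $H^{>0}(X^\theta,M)=0$ once we are far enough out in the $n\chi$-direction. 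For the adjunction morphisms: the counit $\Loc_{\lambda+n\chi}\Gamma_{\lambda+n\chi}(M)\to M$ is a morphism of coherent sheaves whose associated graded is the natural morphism $\Str_{X^\theta}\otimes_{\C[X_0]}\Gamma(\gr M)\to\gr M$, which is an isomorphism because $\gr M$, being a coherent $\Str_{X^\theta}$-module generated by global sections after the twist, is acyclic and globally generated; similarly the unit $N\to\Gamma_{\lambda+n\chi}\Loc_{\lambda+n\chi}(N)$ is checked on $\gr$ using $H^{>0}(X^\theta,\A_{\lambda+n\chi}^\theta)=0$ and $\gr\A_{\lambda+n\chi}=\C[X_0]$. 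Since a filtered morphism inducing an isomorphism on (complete, separated) associated gradeds is an isomorphism, we conclude. All of this is essentially \cite[Corollary 5.12]{BPW}; the only thing being asserted here is that the single ample class $\chi$ can be used, i.e. that abelian localization holds along the ray $\lambda+n\chi$, $n\gg0$.

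Finally, I should pin down the uniformity in $n$: once abelian localization holds for $(\lambda+n_0\chi,\theta)$, it holds for $(\lambda+n\chi,\theta)$ for all $n\geqslant n_0$, because the translation bimodule $\A_{\lambda+n_0\chi,(n-n_0)\chi}^\theta$ for ample $(n-n_0)\chi$ induces an equivalence $\Coh(\A_{\lambda+n_0\chi}^\theta)\xrightarrow{\sim}\Coh(\A_{\lambda+n\chi}^\theta)$ compatible with global sections, so the equivalence property propagates. The main obstacle, and the place where care is genuinely needed, is the passage from Serre vanishing on the commutative variety $X^\theta$ to the corresponding statements for the filtered sheaves $\A_{\lambda+n\chi}^\theta$ and $\A_{\lambda,n\chi}^\theta$: because the filtrations are only complete and separated (not bounded below in the naive sense relevant for convergence of the spectral sequence of a filtered complex), one must justify interchanging cohomology with the inverse limit, which is done by reducing to finitely generated pieces and invoking properness of $\rho$ together with the Noetherian property; this is exactly the technical heart of \cite[Sections 3.2--3.3]{BPW}, which I would cite rather than reprove.
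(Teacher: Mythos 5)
The paper itself gives no proof of this lemma: it is quoted directly from \cite[Corollary 5.12]{BPW} (the sentence immediately preceding the statement is exactly that citation), so your decision to defer the technical heart to \emph{loc.\ cit.}\ is in line with how the result is treated here, and with the citation retained the lemma stands.

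However, the argument you sketch around the citation has a genuine gap if read as a proof, namely a uniformity problem at its key step. Exactness of $\Gamma_{\lambda+n\chi}$ and global generation must hold for \emph{all} coherent $\A_{\lambda+n\chi}^\theta$-modules at a \emph{fixed} $n$, whereas Serre vanishing applied to $\gr M$ produces a twist depending on $M$; twisting by $\Str(m\chi)$ is the translation functor and lands you in $\Coh(\A_{\lambda+(n+m)\chi}^\theta)$, so ``being far enough out in the $n\chi$-direction'' changes the algebra, not the fixed module category you need to control. In the commutative limit the uniform statement is simply false --- no shift of the parameter makes every coherent sheaf on $X^\theta$ acyclic and globally generated --- which is exactly why the actual proofs proceed via $\Z$-algebras and section bimodules rather than a direct reduction to $\gr$; note also that $\gr\Gamma(M)\rightarrow\Gamma(\gr M)$ is in general only an embedding, so the unit and counit cannot be ``checked on $\gr$'' before the vanishing is already known. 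The same issue undermines your propagation step: the sheaf-level equivalence $\Coh(\A^\theta_{\lambda+n_0\chi})\simeq\Coh(\A^\theta_{\lambda+n\chi})$ exists unconditionally, but its asserted compatibility with global sections is precisely the statement that the section bimodules $\A^{(\theta)}_{\lambda+n_0\chi,(n-n_0)\chi}$ induce Morita equivalences --- the criterion recalled in the proof of Lemma \ref{Lem:gen_simplicity} --- so it is what has to be proven, not an automatic consequence. Either keep the lemma as a citation, as the paper does, or replace the sketch by the actual argument of \cite{BPW}.
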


Let us proceed to translation equivalences. When $\chi\in \tilde{\param}_{\Z}$, we have an equivalence $\operatorname{Coh}(\A_\lambda^\theta)\cong \operatorname{Coh}(\A_{\lambda+\chi}^{\theta})$. This equivalence is given by tensoring with the $\A_{\lambda+\chi}^\theta$-$\A_\lambda^\theta$-bimodule to be denoted by $\A_{\lambda,\chi}^\theta$. This bimodule is a unique quantization of the line bundle $\mathcal{O}(\chi)$ corresponding to $\chi$. Similarly, we can define the $\tilde{\A}^\theta$-bimodule $\tilde{\A}^{\theta}_{\tilde{\param},\chi}$ quantizing the extension to $\tilde{X}$ of the line bundle corresponding to $\chi$, \cite[Section 5.1]{BPW}. Clearly, $\A^\theta_{\lambda,\chi}$ is the specialization
of $\tilde{\A}^\theta_{\tilde{\param},\chi}$.

We write $\A^{(\theta)}_{\lambda,\chi}, \tilde{\A}^{(\theta)}_{\tilde{\param},\chi}$ for the global
sections of $\A^\theta_{\lambda,\chi}, \tilde{\A}^\theta_{\tilde{\param},\chi}$, respectively.
For an affine subspace $\param\subset \tilde{\param}$, we set $\A^{(\theta)}_{\param,\chi}:=\Gamma(\A^\theta_{\tilde{\param},\chi}
\otimes_{\C[\tilde{\param}]}\C[\param])$.

\begin{Lem}\label{Lem:transl_spec}
Suppose that $H^1(X,\mathcal{O}(\chi))=0$. Then
$\A^{(\theta)}_{\lambda,\chi}:=\tilde{\A}^{(\theta)}_{\param,\chi}\otimes\C_\lambda$.
\end{Lem}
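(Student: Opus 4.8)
The claim is that the canonical base-change morphism $\tilde{\A}^{(\theta)}_{\param,\chi}\otimes_{\C[\param]}\C_\lambda\to \Gamma(\tilde{\A}^\theta_{\param,\chi}\otimes_{\C[\param]}\C_\lambda)=\A^{(\theta)}_{\lambda,\chi}$ is an isomorphism, where the last equality uses $\A^\theta_{\lambda,\chi}=\tilde{\A}^\theta_{\param,\chi}\otimes_{\C[\param]}\C_\lambda$. The plan is to prove the corresponding statement first for the associated graded sheaf $\gr\tilde{\A}^\theta_{\param,\chi}$ (with respect to the order filtration) and for each of its $\C^\times$-weight components, and then to transfer it to $\tilde{\A}^\theta_{\param,\chi}$ by d\'evissage along the filtration. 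The hypothesis $H^1(X,\mathcal{O}(\chi))=0$ will be the single input, used to force the vanishing of $H^1$ of the graded pieces.

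At the associated graded level the statement is an instance of cohomology and base change for the $\C[\param]$-flat, $\C^\times$-equivariant sheaf $\gr\tilde{\A}^\theta_{\param,\chi}$ (obtained by restriction from the corresponding sheaf on the universal base $\tilde{\param}$, whose fiber over $0$ is $\mathcal{O}_X(\chi)$), and it reduces to the vanishing of $H^1$ of $\mathcal{O}(\chi)$ on \emph{every} fiber of the deformation. Over the central point this is the hypothesis; it propagates to all fibers because, working on the affinization $\tilde{X}_0^\theta$ through which the non-proper morphism $\tilde{X}^\theta\to\tilde{\param}$ factors, the coherent sheaf $R^1\rho_*\mathcal{O}(\chi)$ has $\C^\times$-stable support that — by the hypothesis and Nakayama's lemma for coherent sheaves — misses a neighborhood of the cone point, hence is empty. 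Consequently $R^1\pi_*\mathcal{O}(\chi)=0$ along the projection to $\param$, the zeroth direct image commutes with specialization at $\lambda$, and we obtain the base-change isomorphism for $\gr\tilde{\A}^\theta_{\param,\chi}$ and for each weight component, together with the $\C[\param]$-flatness of $\Gamma(\gr_i\tilde{\A}^\theta_{\param,\chi})$.

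Finally, the d\'evissage. Writing $F_i\tilde{\A}^\theta_{\param,\chi}$ for the $i$-th filtered piece, the short exact sequences $0\to F_{i-1}\tilde{\A}^\theta_{\param,\chi}\to F_i\tilde{\A}^\theta_{\param,\chi}\to \gr_i\tilde{\A}^\theta_{\param,\chi}\to 0$ and the vanishing $H^1(\gr_i\tilde{\A}^\theta_{\param,\chi})=0$ give, by induction on $i$, that $H^1(F_i\tilde{\A}^\theta_{\param,\chi})=0$, that $\Gamma(F_i\tilde{\A}^\theta_{\param,\chi})$ is $\C[\param]$-flat, and that $\Gamma(F_i\tilde{\A}^\theta_{\param,\chi})\otimes_{\C[\param]}\C_\lambda\xrightarrow{\sim}\Gamma(F_i\A^\theta_{\lambda,\chi})$. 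As the order filtration is exhaustive and $X^\theta$ is Noetherian, both $\Gamma$ and $-\otimes_{\C[\param]}\C_\lambda$ commute with the colimit over $i$, and the asserted isomorphism follows. I expect the genuinely delicate point to be the cohomology-and-base-change input of the previous paragraph: since $\tilde{X}^\theta\to\tilde{\param}$ is not proper one must pass to the affinization, and one must be careful that it is really the hypothesis $H^1(X,\mathcal{O}(\chi))=0$ — via the $\C^\times$-propagation — rather than flatness of the deformation alone that yields the vanishing, without which the truncations $\Gamma(F_i\tilde{\A}^\theta_{\param,\chi})$ need not be flat over $\C[\param]$ and specialization need not stay exact along the filtration. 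Everything else is routine bookkeeping with filtrations.
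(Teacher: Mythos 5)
Your overall strategy is the one the paper intends: its proof of this lemma is just the citation to \cite[Lemma 5.4(1)]{BL}, and that argument has exactly your shape (vanishing and base change at the associated graded level via the conical structure, then transfer through the quantization filtration, then read off the specialization from the vanishing of $H^1$). But two of your steps do not work as written. The first is the propagation of the vanishing from $X$ to the family. Nakayama applied to the coherent sheaf $R^1\rho_*$ of the line bundle on the affinization of the deformation would require the vanishing of its \emph{fiber at the cone point} $o$, and that fiber is controlled (via base change along $\rho$) by $H^1$ of the scheme-theoretic fiber $\rho^{-1}(o)$, i.e. of the compact core, not by $H^1(X,\mathcal{O}(\chi))$; your hypothesis concerns the restriction of the family to the fiber over $0\in\param$, which lies over all of $X_0$, not over $o$. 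So ``hypothesis plus Nakayama at the cone point'' is the wrong comparison. The standard repair, which is what \cite{BL} does: by properness of $\rho$ over the affine base, $H^1$ of the total family is a finitely generated graded module over the non-negatively graded algebra of functions on the affinization, hence has grading bounded below; flatness over $\C[\param]$ gives the exact sequence $0\to \mathcal{F}\xrightarrow{t}\mathcal{F}\to \mathcal{F}|_X\to 0$ (Koszul complex if $\dim\param>1$), whose long exact sequence together with $H^1(X,\mathcal{O}(\chi))=0$ shows that the parameter, of positive degree $d$, acts surjectively on that $H^1$; graded Nakayama kills it, and the same long exact sequence then yields both the vanishing on every fiber and the base-change isomorphism on global sections. (For an affine, non-linear $\param$ one argues with the filtration whose associated graded is the family over the asymptotic cone, or deduces the claim from the universal case over $\tilde{\param}$.)

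The second problem is the d\'evissage. On $\C^\times$-stable affine opens of $X^\theta$ the order filtration of $\tilde{\A}^\theta_{\param,\chi}$ is not bounded below: sections there are completed microlocalizations, which is precisely why the definition of a quantization in this paper requires the filtration to be complete and separated. Consequently your induction ``on $i$'' has no base case, the pieces $F_i$ are not finite iterated extensions of coherent subquotients, and exhaustiveness plus a colimit at the end does not substitute for this. The correct mechanism is the completeness itself: pass to the Rees/$\hbar$-adic model, where the quotient by $\hbar^n$ is a finite self-extension of the coherent sheaf quantizing $\mathcal{O}(\chi)$ on the deformation, obtain $H^1$-vanishing, flatness and base change for each $n$ from the graded statement of the previous paragraph, and then take the inverse limit over $n$ (Mittag--Leffler, using surjectivity on $H^0$ in the tower) before passing back to $\C^\times$-finite sections and specializing at $\lambda$. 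With these two corrections your outline becomes the proof of \cite[Lemma 5.4(1)]{BL} that the paper invokes; as written, however, both the base-change input and the filtration argument have genuine gaps.
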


The proof repeats that of \cite[Lemma 5.4(1)]{BL}.

Let us provide a sufficient condition on $z$ for the algebras $\A_{\lambda+z\chi}$ to be simple, to have
finite homological dimension and also for the localization theorem to hold.

\begin{Lem}\label{Lem:gen_simplicity}
Let $\chi$ be ample. Then the following is true.
\begin{enumerate}
\item There are finitely many elements $z_1,\ldots,z_k\in \C$ such that $\Loc_{\lambda+z\chi}$
is an equivalence provided $z_i-z\not\in \Z_{\geqslant 0}$ for every $i=1,\ldots,k$.
\item For  $z\not\in\bigcup_{i=1}^k (z_i+\Z)$, the algebra $\A_{\lambda+z\chi}$ is simple.
\item For a Zariski generic $z\in\C$, the algebra $\A_{\lambda+z\chi}$
has finite homological dimension. This dimension does not exceed $\dim X$.
\end{enumerate}
\end{Lem}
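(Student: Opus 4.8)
The plan is to reduce all three statements to the study of the one-parameter family $z\mapsto \A_{\lambda+z\chi}$ together with the translation bimodules $\A^{(\theta)}_{\lambda+z\chi,\chi}$ and $\A^{(\theta)}_{\lambda+z\chi,-\chi}$, exploiting that ampleness of $\chi$ makes large positive shifts well-behaved via Lemma \ref{Lem:local}. First I would recall that abelian localization holds for $(\lambda+z\chi,\theta)$ once $z$ is sufficiently large (Lemma \ref{Lem:local}), and that the compositions of the translation bimodules $\A^{(\theta)}_{\mu,\chi}\otimes_{\A_\mu}\A^{(\theta)}_{\mu+\chi,-\chi}$ and $\A^{(\theta)}_{\mu+\chi,-\chi}\otimes_{\A_{\mu+\chi}}\A^{(\theta)}_{\mu,\chi}$ are isomorphic to $\A_{\mu+\chi}$, resp.\ $\A_\mu$, precisely when localization holds at the relevant parameters, since on the sheaf level the corresponding statement for $\mathcal{O}(\chi)$ is automatic. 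By Lemma \ref{Lem:transl_spec}, after discarding the finitely many $z$ with $H^1(X,\mathcal{O}(z\chi'))\neq 0$ for the finitely many relevant $\chi'$, the bimodules $\A^{(\theta)}_{\lambda+z\chi,\pm\chi}$ are flat specializations of the $\C[\tilde\param]$-flat family $\tilde\A^{(\theta)}_{\tilde\param,\pm\chi}$; hence the locus of $z$ where the two composition maps fail to be isomorphisms is Zariski closed, and a standard argument (the composition is an isomorphism at $z\gg 0$, and being an isomorphism is an open condition that is also invariant under $z\mapsto z+1$ once one factor is invertible) shows this locus is a union of finitely many cosets $z_i+\Z$. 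This proves (1): if $z_i-z\notin\Z_{\geqslant 0}$ for all $i$, then tracking the chain $\A^{(\theta)}_{\lambda+z\chi,\chi},\A^{(\theta)}_{\lambda+(z+1)\chi,\chi},\ldots$ up to a parameter where Lemma \ref{Lem:local} applies exhibits $\Coh(\A^\theta_{\lambda+z\chi})\simeq \A_{\lambda+z\chi}\operatorname{-mod}$ via a composite of translation equivalences and an abelian localization, so $\Loc_{\lambda+z\chi}$ is an equivalence.

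For (2), I would argue that when $z\notin\bigcup_i(z_i+\Z)$ one has, by (1), equivalences $\A_{\lambda+z\chi}\operatorname{-mod}\simeq \Coh(\A^\theta_{\lambda+z\chi})\simeq \Coh(\A^\theta_{\lambda+(z+n)\chi})\simeq \A_{\lambda+(z+n)\chi}\operatorname{-mod}$ for every $n\in\Z$, and in particular a two-sided ideal of $\A_{\lambda+z\chi}$ corresponds to a coherent subsheaf of $\A^\theta_{\lambda+z\chi}$ stable under left and right multiplication, equivalently a Poisson-stable coherent ideal sheaf of $\mathcal O_X$ on the associated graded level. Since $X$ is connected and symplectic, the only such graded ideal sheaves are $0$ and the whole sheaf; but one must rule out ideals that are not visible on $\gr$. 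The cleaner route: an ideal $I\subsetneq \A_{\lambda+z\chi}$ with $I\neq 0$ would, after applying the translation equivalence to parameters where abelian localization holds (using that $z\pm n$ also avoids $\bigcup_i(z_i+\Z)$), correspond to a proper nonzero ideal of $\A_{\lambda+(z+n)\chi}$ whose localization is a proper nonzero $\A^\theta$-submodule of $\A^\theta_{\lambda+(z+n)\chi}$; intersecting with $\gr$ gives a proper nonzero Poisson ideal in $\mathcal{O}_X$, contradiction. So $\A_{\lambda+z\chi}$ is simple.

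For (3) I would use that, again for Zariski generic $z$, abelian localization holds for $(\lambda+z\chi,\theta)$ by (1), so $D^b(\A_{\lambda+z\chi}\operatorname{-mod})\simeq D^b(\Coh(\A^\theta_{\lambda+z\chi}))$; the latter has finite homological dimension bounded by $\dim X$ because $\A^\theta_{\lambda+z\chi}$ is a sheaf of algebras whose associated graded is $\mathcal O_X$ with $X$ smooth of dimension $\dim X$, so locally it has global dimension $\le\dim X$, and using the (affine, in generic fibre) structure together with vanishing of higher cohomology of $\A^\theta$ one bounds $\Ext$ between $\A_{\lambda+z\chi}$-modules by $\dim X$. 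The main obstacle is making the Zariski-closedness and $\Z$-periodicity argument in step (1) precise: one needs that the failure locus of invertibility of the translation bimodule compositions is genuinely closed (this uses flatness of $\tilde\A^{(\theta)}_{\tilde\param,\pm\chi}$ over $\C[\tilde\param]$ and the vanishing $H^1(X,\mathcal{O}(\chi))=0$ from Lemma \ref{Lem:transl_spec} to get the right specialization behaviour) and that it is closed under translation by $\Z_{\geqslant0}$ in the appropriate direction — which is exactly where ampleness of $\chi$ enters, via Lemma \ref{Lem:local}, to anchor the family at $z\gg 0$.
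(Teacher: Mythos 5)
Your part (2) contains a genuine gap, and in fact the route you propose would prove a false statement. Under the abelian equivalence $\Loc_{\lambda+z\chi}\colon \A_{\lambda+z\chi}\operatorname{-mod}\simeq\Coh(\A^\theta_{\lambda+z\chi})$ a two-sided ideal $I\subset\A_{\lambda+z\chi}$ only goes to a \emph{left} ideal subsheaf of $\A^\theta_{\lambda+z\chi}$ that is stable under right multiplication by \emph{global} sections (endomorphisms of the regular module), not under right multiplication by local sections; so its associated graded is merely an ideal sheaf in $\mathcal{O}_X$, with no reason to be Poisson-stable, and the contradiction "proper nonzero Poisson ideal in $\mathcal{O}_X$" is not available. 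If your argument worked it would show that abelian localization alone forces simplicity, which fails already for $X=T^*(G/B)$ at a regular dominant integral parameter (localization holds, yet the annihilator of a finite-dimensional module is a proper nonzero two-sided ideal). This is exactly why the paper proves (2) differently: it regards $\A_{\lambda+z\chi}$ as a module over $\A_{\lambda+z\chi}\otimes\A_{-\lambda-z\chi}$, invokes part (1) to get abelian localization on $X^\theta\times X^\theta$ at the doubled parameter $(\lambda+z\chi,-\lambda-z\chi)$ — this is where the genericity of $z$ beyond localization for a single $\theta$ is genuinely used — and then observes that the localized object is supported on the diagonal and has rank one, hence is simple as a sheaf-bimodule.

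For (1) your strategy (the Morita criterion for the translation bimodules $\A^{(\theta)}_{\ell,\pm\chi}$, anchored at $z\gg0$ by Lemma \ref{Lem:local}) is the paper's, but the step you yourself flag as the obstacle — that the locus where the two multiplication maps fail to be isomorphisms is a \emph{finite} subset of the line $\ell$ — is not supplied by flatness of $\tilde\A^{(\theta)}_{\tilde\param,\pm\chi}$ or Lemma \ref{Lem:transl_spec} alone: the kernels and cokernels are not finitely generated over $\C[\ell]$, so closedness of their supports is not automatic. The paper closes this with Proposition \ref{Prop:supp_gr}: these kernels/cokernels are HC bimodules, their right $\param$-support is closed and its asymptotic cone is the support of the associated graded, which forces finiteness; also note that no "$\Z$-periodicity" of the failure locus is needed (or true) — finiteness of the one-step failure set already yields the condition $z_i-z\notin\Z_{\geqslant 0}$. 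Finally, in (3) the phrase "for Zariski generic $z$ localization holds by (1)" is not justified: (1) only excludes the countable union $\bigcup_i(z_i-\Z_{\geqslant 0})$, whose complement is not cofinite. The paper instead applies (1) to both $(\lambda+z\chi,\theta)$ and $(\lambda+z\chi,-\theta)$; the two bad sets point in opposite integer directions, so their intersection is finite, and whenever localization holds for one of the two stations the homological dimension is bounded by $\dim X$.
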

\begin{proof}
Let us prove (1).  
The localization holds for all parameters of the form $\lambda+(z+n)\chi$ with $n\in \Z_{\geqslant 0}$
if and only if the bimodules $\A_{\lambda+(z+n)\chi,\chi}^{(\theta)}=\A_{\ell,\chi}^{(\theta)}|_{\lambda+(z+n)\chi},
\A_{\ell,-\chi}^{(\theta)}|_{\lambda+(z+n+1)\chi}$ define mutually inverse Morita equivalences,
see \cite[Section 5.3]{BPW}. The set of
$z\in \C$ such that $\A^{(\theta)}_{\lambda+z\chi,\chi}$ and $\A^{(\theta)}_{\ell,-\chi}|_{\lambda+(z+1)\chi}$ are
mutually inverse Morita equivalences is Zariski open. The reason is that this set is precisely the locus $\{\lambda+z\chi, z\in \C\}$,
where the natural homomorphisms (given by multiplication) $$\A_{\ell+\chi,-\chi}^{(\theta)}\otimes_{\A_{\ell+\chi}}\A^{(\theta)}_{\ell,\chi}
\rightarrow \A_{\ell}, \A^{(\theta)}_{\ell,\chi}\otimes_{\A_{\ell}}\A^{(\theta)}_{\ell+\chi,-\chi}\rightarrow
\A_{\ell+\chi}$$
are isomorphisms. Together with Proposition \ref{Prop:supp_gr} below, this proves (1).

Let us prove (2):  $\A_{\lambda+z\chi}$ is simple for a Weil generic $z$. Equivalently,
we need to show that the $\A_{\lambda+z\chi}\otimes \A_{-\lambda-z\chi}$-module $\A_{\lambda+z\chi}$
is simple. By (1), abelian localization holds for the parameter $(\lambda+z\chi, -\lambda+(-z)\chi)$
and the variety $X^\theta\times X^\theta$ when $z$ is Weil generic. The module $\A_{\lambda+z\chi}$
localizes to $\A^\theta_{\lambda+z\chi}$. The latter is simple because it is supported
on the diagonal in $X^\theta\times X^\theta$ and has rank 1. This finishes the proof of
simplicity of $\A_{\lambda+z\chi}$ for a Weil generic $z$.

Let us proceed to (3). The algebra $\A_{\lambda+z\chi}$ has  homological dimension not exceeding $\dim X$
provided localization holds for $(\lambda+z\chi,\theta)$ or for $(\lambda+z\chi,-\theta)$. Now our
claim easily follows from (1) applied to both these situations.
\end{proof}

We will also need the following general lemma whose proof is completely analogous to that of
\cite[Lemma 5.7]{BL}.

\begin{Lem}\label{Lem:gen_freeness}
Let $\ell$ denote a one-dimensional vector space.  Let
$\mathfrak{A}_\ell$ be a filtered $\C[\ell]$-algebra such that $\ell^*$ has degree $d$ and $\gr\mathfrak{A}_\ell$
is commutative and  finitely generated. Let $M_\ell$ be a finitely generated $\mathfrak{A}_\ell$-module.
Then there is an open subset $\ell^0\subset \ell$ such that $\C[\ell^0]\otimes_{\C[\ell]}M_\ell$ is a
free $\C[\ell^0]$-module.
\end{Lem}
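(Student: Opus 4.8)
The plan is to establish $\C[\ell^0]$-freeness of $M_\ell$ by the standard generic-flatness technique, adapted to the filtered (as opposed to graded) setting. First I would recall generic flatness for commutative algebras: since $\gr\mathfrak{A}_\ell$ is a finitely generated commutative $\C[\ell]$-algebra and $\gr M_\ell$ (with respect to a good filtration on $M_\ell$) is a finitely generated $\gr\mathfrak{A}_\ell$-module, the generic flatness theorem of Grothendieck yields a principal open subset $\ell^0\subset\ell$ such that $\C[\ell^0]\otimes_{\C[\ell]}\gr M_\ell$ is a \emph{flat} $\C[\ell^0]$-module. Over the one-dimensional smooth affine base $\ell^0$ (so $\C[\ell^0]$ is a PID after possibly shrinking further), flat and finitely generated means free; so $\C[\ell^0]\otimes_{\C[\ell]}\gr M_\ell$ is a free $\C[\ell^0]$-module. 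After base change the good filtration on $M_\ell$ induces a filtration on $\C[\ell^0]\otimes_{\C[\ell]}M_\ell$ whose associated graded is $\C[\ell^0]\otimes_{\C[\ell]}\gr M_\ell$, using that $\ell^0\to\ell$ is flat so tensoring is exact and commutes with $\gr$.

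The key step is then to lift freeness from the associated graded to the module itself. Each filtration piece $F_i(\C[\ell^0]\otimes M_\ell)$ sits in a short exact sequence $0\to F_{i-1}\to F_i\to \gr_i\to 0$ of $\C[\ell^0]$-modules, and $\gr_i$ is free (a direct summand of the free module $\C[\ell^0]\otimes\gr M_\ell$, being a graded component — here I use that the grading on $\gr M_\ell$ induced by $\deg\ell^*=d$ makes each homogeneous component a f.g.\ projective, hence free, $\C[\ell^0]$-module after shrinking). Since $\gr_i$ is free, the sequence splits, so by induction on $i$ each $F_i$ is free and the filtration is a filtration by free modules with free quotients. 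The filtration is exhaustive ($\bigcup_i F_i = \C[\ell^0]\otimes M_\ell$) and bounded below ($F_i=0$ for $i\ll 0$), and since $M_\ell$ is finitely generated we may arrange, by choosing the good filtration appropriately, that it stabilizes in each graded degree; passing to the colimit, $\C[\ell^0]\otimes_{\C[\ell]}M_\ell$ is a (possibly infinite, but degreewise finite) union of free modules with free quotients, hence free.

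The main obstacle I anticipate is bookkeeping around the non-Noetherian, infinite-dimensional nature of $\mathfrak{A}_\ell$ and $M_\ell$ as $\C[\ell^0]$-modules: the filtration is not finite, so one cannot simply say ``a finite filtration by frees with free quotients is free.'' The resolution is to work degree by degree with respect to the internal grading (degree of $\ell^*$ is $d$, and $\mathfrak{A}_\ell$, $M_\ell$ are graded-finite in the relevant sense coming from conicity), so that in each fixed internal degree everything is finitely generated over $\C[\ell^0]$ and the splitting argument applies; then one reassembles. This is exactly the argument of \cite[Lemma 5.7]{BL}, so I would simply note that the proof there carries over verbatim, with the only input being generic flatness over the commutative associated graded together with the observation that over a one-dimensional base finitely generated flat equals free.
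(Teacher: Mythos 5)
Your overall skeleton (choose a good filtration, apply generic freeness to $\gr M_\ell$ over $\C[\ell]$, then lift to $M_\ell$) is the right one, and indeed the paper gives no argument beyond citing \cite[Lemma 5.7]{BL}. But your lifting mechanism has a genuine gap: since $\ell^*$ has filtration degree $d>0$, multiplication by $\ell^*$ \emph{raises} filtration degree, so $F_iM_\ell$ is not a $\C[\ell]$-submodule of $M_\ell$, and the homogeneous components of $\gr M_\ell$ are not $\C[\ell]$-modules at all ($\ell^*$ shifts the grading by $d$). Consequently the objects you manipulate do not exist: ``$F_i(\C[\ell^0]\otimes M_\ell)$'' is not a $\C[\ell^0]$-submodule, the short exact sequences $0\to F_{i-1}\to F_i\to \gr_i\to 0$ of $\C[\ell^0]$-modules cannot be formed, and the parenthetical claim that each homogeneous component of $\gr M_\ell$ becomes a finitely generated projective (hence free) $\C[\ell^0]$-module is vacuous --- these components are just $\C$-vector spaces permuted by $\ell^*$, so there is nothing to split degreewise. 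Your argument would be fine if $\C[\ell]$ sat inside $F_0\mathfrak{A}_\ell$ (i.e.\ $d=0$), but that is the opposite of the hypothesis; the whole point of the lemma is the positive-degree situation. A smaller slip in the first step: $\C[\ell^0]\otimes_{\C[\ell]}\gr M_\ell$ is not finitely generated over $\C[\ell^0]$, so ``flat plus finitely generated implies free'' is not applicable (and flat alone does not give free over a PID for infinitely generated modules, cf.\ the fraction field); instead one should quote Grothendieck's generic freeness lemma in its standard form (Noetherian domain, finitely generated algebra, finitely generated module), which yields freeness of the localized $\gr M_\ell$ directly.

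A correct lifting has to respect this asymmetry between the filtration and the $\C[\ell]$-action: for instance, pick homogeneous elements of $\gr M_\ell$ whose images form a $\C[\ell^0]$-basis of $\C[\ell^0]\otimes_{\C[\ell]}\gr M_\ell$, lift them to $M_\ell$, and verify that they give a basis of $\C[\ell^0]\otimes_{\C[\ell]}M_\ell$ --- linear independence by passing to symbols of a putative relation (the top filtration-degree terms produce a nontrivial $\C[\ell]$-relation in $\gr M_\ell$), and spanning by induction on the filtration degree, using that the good filtration is exhaustive, bounded below and has finite-dimensional pieces. This is in substance the argument of \cite[Lemma 5.7]{BL}, which the paper simply invokes; deferring to that reference is legitimate, but the degreewise splitting you describe is not a faithful rendering of it and would not go through as written.
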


\subsection{Harish-Chandra bimodules}
Let us introduce the notion of a Harish-Chandra, shortly, HC bimodule.
Let $\A,\A'$ two $\Z_{\geqslant 0}$-filtered algebras. Suppose we have a fixed isomorphism
$\gr\A\cong \gr \A'$ of graded algebras. Further, suppose that
$A:=\gr\A=\gr\A'$ is finitely generated. Let $\B$ be an $\A'$-$\A$-bimodule.
We say that $\B$ is HC if it is equipped with a bimodule filtration
(called {\it good}) such that $\gr\B$ is a finitely generated
$A$-module (meaning that the left and the right actions of $A$
coincide). Note that a HC bimodule is automatically finitely generated as a left
and as a right module.

An example is as follows: let $\A:=\A_\lambda, \A':=\A_{\lambda+\chi}$
and $\B:=\A_{\lambda,\chi}^{(\theta)}$. Then $\B$ is HC, see \cite[Section 6.3]{BPW}.

Below we will need a result about the supports of HC bimodules in the parameter
spaces. Namely, suppose that $\param$ is a finite dimensional vector space and
let $\A$ be a filtered $\C[\param]$-algebra such that $\param$ has degree $d>0$
and $[\A_{\leqslant i},\A_{\leqslant j}]\subset \A_{\leqslant i+j-d}$.
Further, suppose that $\gr \A/(\param)$ is finitely generated and the corresponding
Poisson scheme $X_0$ has finitely many symplectic leaves.

Now let $\B$ be a Harish-Chandra $\A$-bimodule. Define its right $\param$-support
$\Supp^r_{\param}(\B)$ as the set of all $p\in \param$ such that $\B\otimes_{\C[\param]}\C_p\neq 0$.
Similarly, we can define the  $\param$-support of $\gr\B$ (where the associated graded
is taken with respect to a good filtration) to be denoted by $\Supp_{\param}(\gr\B)$.
The following proposition generalizes \cite[Proposition 5.15]{BL}.
\begin{Prop}\label{Prop:supp_gr}
The subset $\Supp^r_{\param}(\B)\subset \param$ is Zariski closed.
The associated cone of $\Supp^r_{\param}(\B)$ coincides with
$\Supp_{\param}(\gr\B)$.
\end{Prop}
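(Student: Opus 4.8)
The plan is to reduce the statement to a Rees-algebra computation over the polynomial ring $\C[\param]$, exploiting the good filtration on $\B$. First I would introduce the Rees bimodule: fix a good filtration on $\B$ and form $\B_\hbar:=\bigoplus_i \B_{\leqslant i}\hbar^i$, a graded bimodule over the Rees algebra $\A_\hbar$, which is flat over $\C[\hbar]$ and satisfies $\A_\hbar/(\hbar-1)=\A$, $\A_\hbar/(\hbar)=\gr\A$, and likewise $\B_\hbar/(\hbar-1)=\B$, $\B_\hbar/(\hbar)=\gr\B$. Since $\param$ sits in degree $d>0$, the subspace $\param\subset\A$ lifts to $\param\hbar^d\subset\A_\hbar$, and $\C[\param]$-module structures propagate to $\C[\param_\hbar]$-module structures where $\param_\hbar$ carries the obvious grading. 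The key point is that $\B_\hbar$ is a \emph{finitely generated} $\C[\param_\hbar]\otimes\gr\A$-module after passing to $\gr$, hence $\B_\hbar$ is finitely generated over $\A_\hbar$ as a $\C[\param_\hbar]$-algebra; and the right $\param$-support of $\B$ is the $(\hbar=1)$-fiber of the $\param_\hbar$-support of $\B_\hbar$, while $\Supp_\param(\gr\B)$ is the $(\hbar=0)$-fiber. Thus the whole statement follows once one knows that the $\param_\hbar$-support of $\B_\hbar$ is a closed, $\C^\times$-stable (i.e.\ conical in the $\hbar$-grading) subscheme of $\param_\hbar$ that is \emph{flat} over the $\hbar$-line, or at least whose fibers over $0$ and $1$ have the asserted relationship.

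Next I would address closedness and the conical structure. Closedness of $\Supp^r_\param(\B)$: the non-vanishing locus of a finitely generated module is closed, but here $\B$ itself need not be finitely generated over $\C[\param]$ — however, $\B$ is finitely generated as a \emph{left} $\A$-module, and after choosing finitely many generators one sees that $\B\otimes_{\C[\param]}\C_p=0$ is equivalent to the vanishing of the associated graded module's fiber, which is cut out by the vanishing of finitely many Fitting-type ideals in $\C[\param]$ coming from $\Supp_\param(\gr\B)$; this is where I would invoke that $\gr\B$ is finitely generated over $\gr\A$ and $\gr\A/(\param)$ is finitely generated, so that $\Supp_\param(\gr\B)$ is genuinely a closed conical subset of $\param$. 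The conicity of $\Supp^r_\param(\B)$ — meaning it is stable under the contracting $\C^\times$ up to the $d$-fold rescaling — comes from the grading on $\A_\hbar$: the $\param_\hbar$-support of $\B_\hbar$ is automatically graded, hence conical.

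The main obstacle, and the heart of the argument, is the second assertion: that the associated cone of $\Supp^r_\param(\B)$ equals $\Supp_\param(\gr\B)$, rather than merely being contained in it. One inclusion is soft: any point in the associated cone of $\Supp^r_\param(\B)$ lies in $\Supp_\param(\gr\B)$ by upper semicontinuity of fiber dimension (or by the general fact that the special fiber of the Rees degeneration of $\B_\hbar$ surjects onto $\gr$ of everything). The reverse inclusion $\Supp_\param(\gr\B)\subseteq$ (associated cone of $\Supp^r_\param(\B)$) is the delicate part; here I would mimic the proof of \cite[Proposition 5.15]{BL}, whose geometric input is precisely the finiteness of symplectic leaves of $X_0$. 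The idea is: the support of $\gr\B$ over $\param$, being $\C^\times$-conical and Poisson-stable (the bimodule condition forces $\Supp_\param(\gr\B)$, together with its $X_0$-support, to be a union of symplectic-leaf closures crossed with conical subsets of $\param$), lies in a \emph{finite} stratification, so it suffices to check the inclusion on each stratum; and on a fixed leaf one can use a slice/quantization-of-a-slice argument to reduce to the case where $X_0$ is a point, where the claim is the classical statement that a filtered module over a filtered algebra has the same $\param$-support as its associated graded generically along $\C^\times$-orbits. I would then conclude by a Zariski-density argument: the associated cone of a closed conical set $Z\subset\param$ is $Z$ itself, and since $\Supp^r_\param(\B)$ and $\Supp_\param(\gr\B)$ are both closed, conical, have the same dimension at each leaf-stratum by the slice reduction, and one contains the associated cone of the other, they must coincide. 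The one genuinely new ingredient over \cite{BL} is handling a general finite-dimensional $\param$ rather than a one-dimensional line, but this is purely bookkeeping: the Rees construction and the leaf-stratification arguments go through verbatim with $\param_\hbar$ in place of the two-dimensional Rees parameter space.
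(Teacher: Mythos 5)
Your overall strategy is the right one and matches the paper's: generalize \cite[Proposition 5.15]{BL} via the Rees bimodule $\B_\hbar$, use finiteness of symplectic leaves of $X_0$, and reduce to the case of a module finitely generated over $\C[[\param,\hbar]]$ by completing at a point of a leaf (where a Weyl algebra factor splits off). But several steps as you state them are wrong or missing. First, $\Supp^r_{\param}(\B)$ is \emph{not} conical, and cannot be: the proposition asserts that its associated (asymptotic) cone is $\Supp_{\param}(\gr\B)$, which would be vacuous if $\Supp^r_{\param}(\B)$ were a cone, and it is false for basic examples such as translation bimodules, whose right $\param$-support is an affine translate of a linear subspace. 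The grading of the $\C[\param,\hbar]$-support of $\B_\hbar$ lives in $\param\oplus\C$ with the action $t.(p,\alpha)=(t^dp,t\alpha)$; its $\hbar=1$ slice is not stable, so no conicity of $\Supp^r_{\param}(\B)$ follows. Consequently your closing argument (``both closed, conical, same dimension on each stratum, hence coincide'') proves a statement that is not the proposition and is not true. Relatedly, your closedness argument is circular: ``$\B\otimes_{\C[\param]}\C_p=0$ is equivalent to the vanishing of the fiber of $\gr\B$'' is exactly what fails (the two supports differ precisely by the cone construction), so closedness cannot be read off from Fitting ideals of $\gr\B$; in the paper it comes out of the same induction as the cone statement.

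Second, the recursive mechanism that makes the leaf-by-leaf reduction work is absent from your sketch. The completion $\B^{\wedge_x}_\hbar$ at a point $x$ of the open leaf $\mathcal{L}_k$ in $\VA(\B)$ only controls the part of the support ``visible'' at $x$: one gets a closed subvariety $Z\subset\param\oplus\C$, stable under the weighted $\C^\times$-action (here the Euler derivation $\hbar\partial_\hbar$ is used to pass from a formal to an algebraic statement), with $Z_1\subset\Supp^r_{\param}(\B)$ and $Z_0$ equal both to the asymptotic cone of $Z_1$ and to the $\param$-support of $(\gr\B)^{\wedge_x}$. To account for the rest of the support one must take the annihilator $I$ of $\B_\hbar^{\wedge_x}$ in $\C[\param,\hbar]$, specialize to $I_1$ at $\hbar=1$, and pass to $\B I_1$, whose associated variety lies in $\bigcup_{i\leqslant k-1}\mathcal{L}_i$, so induction applies; one then concludes from $\Supp^r_{\param}(\B)=\Supp^r_{\param}(\B I_1)\cup Z_1$ and $\Supp_{\param}(\gr\B)=\Supp_{\param}(\gr(\B I_1))\cup Z_0$. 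A dimension count on strata plus a density argument cannot replace this: the supports may have components of different dimensions sitting over different leaves, and without finite generation over $\C[\param,\hbar]$ (which fails globally and is exactly what the completion buys you) there is no semicontinuity linking the $\hbar=1$ and $\hbar=0$ fibers.
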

\begin{proof}
The proof generalizes that of \cite[Proposition 5.15]{BL}. Namely,
to $\B$ we can assign its associated variety $\VA(\B)\subset X_0$
defined as the support of the finitely generated $\C[X_0]$-module
$\gr\B/\param \gr \B$.  It is easy to see that $\gr\B/\param \gr\B$
is a Poisson module.

Let us order the symplectic leaves of $X_0$: $\mathcal{L}_1,\ldots,\mathcal{L}_k$
in such a way that $\mathcal{L}_i\subset \overline{\mathcal{L}}_j$
implies $i\leqslant j$. We prove our claim by induction on $k$
such that $\VA(\B)\subset \bigcup_{i\leqslant k}\mathcal{L}_i$. The base,
$k=1$, follows from the observation that $\mathcal{L}_1$ is a single
point, in which case $\B$ is finitely generated as a module over $\C[\param]$.

Now suppose that we know our claim for all $\B'$ with $\VA(\B')\subset
\bigcup_{i\leqslant k-1}\mathcal{L}_i$. Pick $\B$ with $\VA(\B)\subset
\bigcup_{i\leqslant k}\mathcal{L}_i$.

Equip $\B$ with a good filtration and consider the Rees bimodule $\B_\hbar$.
Pick $x\in \mathcal{L}_k$ and consider the completion $\B^{\wedge_x}_\hbar$
at $x$. It decomposes as $\B^{\wedge_x}_\hbar=\mathbb{A}_\hbar^{\wedge_0}\widehat{\otimes}_{\C[\hbar]}
\underline{\B}_\hbar$, where $\mathbb{A}_\hbar$ is the homogenized Weyl algebra
of the tangent space $T_x\mathcal{L}_i$ that is embedded into $\A^{\wedge_x}_\hbar$
(compare, for example, to \cite[Section 5.4]{BL})
and $\underline{\B}_\hbar$ is the centralizer of $\mathbb{A}^{\wedge_0}_\hbar$ in
$\B_\hbar^{\wedge_x}$. By the choice of $x$ and $\mathcal{L}_i$, we see that
$\underline{\B}_\hbar$ is finitely generated over $\C[[\param,\hbar]]$.
It follows that the right $\C[[\param,\hbar]]$-support of $\underline{\B}_\hbar$ (that is the same as
the support of $\B_\hbar^{\wedge_x}$) is the closed subscheme of the formal
neighborhood of $0$ in $\param\oplus \C$ whose intersection with $\param$
coincides with the $\C[[\param]]$-support of $(\gr\B)^{\wedge_x}$. We note that $\B_\hbar$
comes with the Euler derivation $\hbar\partial_{\hbar}$. This derivation extends
to $\B_\hbar^{\wedge_x}$. We deduce that the support of $\B_{\hbar}^{\wedge_x}$
is the formal neighborhood at $0$ of a closed subvariety $Z\subset \param\oplus \C$
that is stable with respect to the $\C^\times$-action on $\param\oplus \C,
t.(p,\alpha)=(t^dp,t\alpha)$. Let $Z_1$ be the intersection of $Z$ with $\param\times \{1\}$.
Let us check that $Z_1\subset \Supp^r_{\param}(\B)$. Indeed, let $z\in Z_1$
and $D$ be the formal neighborhood of $0$ in $\C z$. Then $(\B_\hbar^{\wedge_x})\otimes_{\C[[\param,\hbar]]}\C[D]$
coincides with $(\B_{z_1})_\hbar^{\wedge_x}$. This implies $Z_1\subset \Supp^r_{\param}(\B)$.

Moreover, consider $Z_0=Z\cap(\param\times\{0\})$. It coincides with the asymptotic cone of $Z_1$,
and the formal neighborhood of $0$ in $Z_0$ is the support of $(\gr\B)^{\wedge_x}$.

Now let $I$ be the annihilator of $\B_\hbar^{\wedge_x}$ in $\C[\param,\hbar]$.
The corresponding subvariety in $\param\oplus \C$ is $Z$. Let $I_1,I_0$ be the specializations
of $I$ to $\hbar=1,\hbar=0$, respectively. Note that $(\B I_1)_\hbar^{\wedge_x}=0$.
It follows that $\VA(\B I_1)\subset \bigcup_{i\leqslant k-1}\mathcal{L}_i$.
So $\Supp^r_{\param}(\B I_1)$ is a closed subvariety in $\param$ and its asymptotic cone
is $\Supp_{\param}(\gr(\B I_1))$. Similarly to the proof of \cite[Proposition 5.15]{BL},
we see that $\Supp^r_{\param}(\B)=\Supp^r_{\param}(\B I_1)\cup Z_1$ and
$\Supp_{\param}(\gr\B)=\Supp_{\param}(\gr(\B I_1))\cup Z_0$. Our claim follows.
\end{proof}

\section{Standardly stratified categories}\label{S_SSC}
Here we recall the notion of a standardly stratified category following \cite[Section 2]{LW}
that is a refinement of the first definition of a standardly stratified category given in
\cite{CPS}.

\subsection{Definition}
Let $\K$ be a field. Let $\Cat$ be a $\K$-linear abelian category equivalent to the category of
finite  dimensional modules over a split unital associative finite dimensional $\K$-algebra. We will write
$\Tcal$ for an indexing set for the simple objects of  $\Cat$. Let us write $L(\tau)$
for the simple object indexed by $\tau\in \Tcal$ and $P(\tau)$ for the projective cover of
$L(\tau)$.

The additional structure of a standardly stratified category on $\mathcal{C}$ is a partial  pre-order
$\leqslant$ on $\Tcal$ that should satisfy  axioms (SS1),(SS2) to be explained below.
Let us write $\Xi$ for the set of equivalence classes of $\leqslant$,
this is a poset (with partial order again denoted by $\leqslant$) that comes with a natural surjection $\varrho:\Tcal\twoheadrightarrow \Xi$. The pre-order $\leqslant$ defines a filtration on $\Cat$ by Serre subcategories indexed by $\Xi$. Namely, to $\xi\in \Xi$ we assign the subcategory $\Cat_{\leqslant \xi}$ that is the Serre span
of the simples $L(\tau)$ with $\varrho(\tau)\leqslant \xi$. Define $\Cat_{<\xi}$ analogously and let
$\Cat_\xi$ denote the quotient $\Cat_{\leqslant \xi}/\Cat_{<\xi}$. Let $\pi_\xi$ denote the quotient
functor $\Cat_{\leqslant \xi}\twoheadrightarrow \Cat_{\xi}$. Let us write $\Delta_\xi:\Cat_\xi\rightarrow \Cat_{\leqslant \xi}$ for the left adjoint functor of $\pi_\xi$. Also we write $\gr\Cat$ for $\bigoplus_{\xi}\Cat_\xi, \Delta$
for $\bigoplus_\xi \Delta_\xi:\gr\Cat\rightarrow \Cat$. Finally, for $\tau\in \varrho^{-1}(\xi)$
we write $L_\xi(\tau)$ for $\pi_\xi(L(\tau))$, $P_\xi(\tau)$ for the projective cover of
$L_\xi(\tau)$ in $\Cat_\xi$ and $\Delta(\tau)$ for $\Delta_\xi(P_\xi(\tau))$.
The object $\Delta(\tau)$ is called {\it standard}. The object $\bDelta(\tau):=\Delta_\xi(L_\xi(\tau))$
is called {\it proper standard}.

The axioms to be satisfied by $(\Cat,\leqslant)$ in order to give a standardly stratified structure are as follows.
\begin{itemize}
\item[(SS1)] The functor $\Delta:\gr\Cat\rightarrow \Cat$ is exact.
\item[(SS2)] The kernel of  $P(\tau)\twoheadrightarrow \Delta(\tau)$
has a filtration with successive quotients $\Delta(\tau')$, where $\tau'>\tau$.
\end{itemize}


Note that (SS1) allows to identify $K_0(\gr\Cat)$ and $K_0(\Cat)$ by means of $\Delta$. If (SS2)
holds as well, then we also have the identification of $K_0(\gr\Cat\operatorname{-proj})$
and $K_0(\Cat\operatorname{-proj})$.

If all quotient categories $\Cat_\xi$ are equivalent to $\operatorname{Vect}$, then
we recover the classical definition of  a highest weight category. On
the opposite end, if we take the trivial pre-order on $\Tcal$, then there is no additional structure.

\subsection{(Proper) standardly filtered objects}
We say that an object in $\Cat$ is {\it standardly filtered} if it admits a filtration whose successive quotients are
standard. The notion of a proper standardly filtered object is introduced in a similar fashion. The categories
of the standardly filtered objects and of the proper standardly filtered objects will be denoted by $\Cat^{\Delta}$
and $\Cat^{\bDelta}$. Note that (SS1) implies that $\Cat^{\Delta}\subset \Cat^{\bDelta}$.

\begin{Lem}\label{Lem:prop_stand_stratif}
Suppose (SS1) holds.  Let $M$ be an object in $\Cat^{\bDelta}$ such that all proper standard  filtration subquotients are of the form $\bDelta(\tau)$ with $\varrho(\tau)=\xi$. Then $M=\Delta_\xi(\pi_\xi(M))$.
\end{Lem}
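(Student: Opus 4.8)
The plan is to use the adjunction between $\Delta_\xi$ and $\pi_\xi$ together with the exactness supplied by (SS1). First I would recall that for any object $N\in\Cat_\xi$, the counit gives a natural map $\Delta_\xi(\pi_\xi(\Delta_\xi(N)))\to \Delta_\xi(N)$; in fact applying $\pi_\xi$ to the unit $N\to\pi_\xi(\Delta_\xi(N))$ and using the triangle identities, one checks that $\pi_\xi\circ\Delta_\xi$ is naturally isomorphic to the identity on $\Cat_\xi$ (this is the standard fact that a left adjoint to a quotient functor is fully faithful, i.e. $\Delta_\xi$ realizes $\Cat_\xi$ as a ``coreflective'' piece). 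Consequently $\pi_\xi(\bDelta(\tau))=\pi_\xi(\Delta_\xi(L_\xi(\tau)))=L_\xi(\tau)$ for $\varrho(\tau)=\xi$, and $\Delta_\xi(\pi_\xi(\bDelta(\tau)))=\Delta_\xi(L_\xi(\tau))=\bDelta(\tau)$ again.

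Next I would set up the induction on the length of a proper standard filtration of $M$. The base case, $M=\bDelta(\tau)$, is exactly the computation just made. For the inductive step, choose a short exact sequence $0\to M'\to M\to \bDelta(\tau)\to 0$ where $M'$ has a shorter proper standard filtration with all subquotients $\bDelta(\tau')$, $\varrho(\tau')=\xi$, so by induction $M'=\Delta_\xi(\pi_\xi(M'))$. Now apply the quotient functor $\pi_\xi$, which is exact, to get $0\to \pi_\xi(M')\to \pi_\xi(M)\to L_\xi(\tau)\to 0$. Then apply $\Delta_\xi$: by (SS1) the functor $\Delta=\bigoplus_\xi\Delta_\xi$ is exact, hence so is each $\Delta_\xi$ (one can see $\Delta_\xi$ as the composite of $\Delta$ with the inclusion $\Cat_\xi\hookrightarrow\gr\Cat$ and projection, all exact), so we obtain a short exact sequence $0\to \Delta_\xi(\pi_\xi(M'))\to \Delta_\xi(\pi_\xi(M))\to \Delta_\xi(L_\xi(\tau))\to 0$, i.e. $0\to M'\to \Delta_\xi(\pi_\xi(M))\to \bDelta(\tau)\to 0$.

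Finally I would compare this sequence with the original one via the counit map $\varepsilon_M\colon \Delta_\xi(\pi_\xi(M))\to M$. Naturality of $\varepsilon$ gives a morphism of short exact sequences from $0\to M'\to \Delta_\xi(\pi_\xi(M))\to\bDelta(\tau)\to 0$ to $0\to M'\to M\to\bDelta(\tau)\to 0$ which is the identity on $M'$ (using $\varepsilon_{M'}$ an isomorphism by the inductive hypothesis) and the identity on $\bDelta(\tau)$ (since $\pi_\xi\bDelta(\tau)=L_\xi(\tau)$ and $\varepsilon_{\bDelta(\tau)}$ is an isomorphism). The five lemma then forces $\varepsilon_M$ to be an isomorphism, i.e. $M=\Delta_\xi(\pi_\xi(M))$.

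The one genuinely delicate point — and the place I would be most careful — is establishing that $\pi_\xi\circ\Delta_\xi\cong\mathrm{id}_{\Cat_\xi}$ and that the counit $\varepsilon_{\bDelta(\tau)}$ is an isomorphism; these are the facts that make the five-lemma argument close up, and they rely only on the general nonsense of a left adjoint to a Serre quotient functor being fully faithful, plus the definition of $\bDelta(\tau)$. Everything else is a routine induction, with (SS1) invoked precisely to guarantee exactness of $\Delta_\xi$ so that applying it to the short exact sequence in $\Cat_\xi$ again yields a short exact sequence in $\Cat_{\leqslant\xi}$.
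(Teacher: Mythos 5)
Your argument is correct, but it runs along a different track than the paper's. The paper also starts from the counit $\Delta_\xi(\pi_\xi(M))\rightarrow M$ and the fact that applying $\pi_\xi$ to it gives an isomorphism, but then finishes in one stroke without induction: since every subquotient of the proper standard filtration is $\bDelta(\tau)$ with $\varrho(\tau)=\xi$, no simple in the head of $M$ is killed by $\pi_\xi$, so the counit (whose cokernel is killed by $\pi_\xi$) must be surjective; and since, by exactness of $\Delta_\xi$ (i.e.\ (SS1)), the object $\Delta_\xi(\pi_\xi(M))$ acquires a proper standard filtration with the same subquotients as $M$, the two objects have equal classes in $K_0$, forcing the epimorphism to be an isomorphism. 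Your dévissage replaces the head argument and the $K_0$ count by induction on the filtration length plus the five lemma applied to the naturality ladder of the counit; this is slightly longer but arguably more elementary, and it correctly isolates the two facts both proofs rest on, namely $\pi_\xi\circ\Delta_\xi\cong\operatorname{id}$ (fully faithfulness of the left adjoint to the quotient functor, concretely $\Delta_\xi= Ae\otimes_{eAe}-$ and $\pi_\xi=e(-)$ in the finite-dimensional setting here) and exactness of $\Delta_\xi$ supplied by (SS1). One small point of hygiene, which you in fact handle: the induction must carry the statement that the counit map itself is an isomorphism (not merely that $M'$ and $\Delta_\xi(\pi_\xi(M'))$ are abstractly isomorphic), since it is the counit that appears in the comparison of short exact sequences; your base case via the triangle identity and your use of $\varepsilon_{M'}$, $\varepsilon_{\bDelta(\tau)}$ in the ladder do exactly this.
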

\begin{proof}
From adjointness, we have a homomorphism $\Delta_\xi(\pi_\xi(M))\rightarrow M$. After applying
$\pi_\xi$ this homomorphism becomes an isomorphism. But all simples in the head of $M$
are not killed by $\pi_\xi$. So $\Delta_\xi(\pi_\xi(M))\twoheadrightarrow M$. Since
the classes of $M$ and $\Delta_\xi(\pi_\xi(M))$ in $K_0$ coincide, we conclude that this epimorphism is iso.
\end{proof}

Also note that in a standardly stratified category the following hold:
\begin{align}\label{Ext:vanish}
&\Ext^i_{\Cat}(\Delta_\xi(M),\Delta_{\xi'}(N))\neq 0\Rightarrow \xi'\leqslant \xi.\\\label{Ext:equal}
&\Ext^i_{\Cat}(\Delta_\xi(M),\Delta_{\xi}(N))=\Ext^i_{\Cat_\xi}(M,N).
\end{align}

\subsection{Subcategories and quotients}
Suppose that $\Cat$ is  standardly stratified.

Let $\Xi_0$ be a poset ideal in $\Xi$. Let $\Cat_{\Xi_0}$ denote the Serre span
of the simples $L(\tau)$ with $\varrho(\tau)\in \Xi_0$. Then $\Cat_{\Xi_0}$
is a standardly stratified category with pre-order on $\Tcal_0:=\varrho^{-1}(\Xi_0)$
restricted from $\Xi$. Note that, for $\tau\in \Tcal_0$, we have
$\bDelta_{\Xi_0}(\tau)=\bDelta(\tau),
\Delta_{\Xi_0}(\tau)=\Delta(\tau)$,
where the subscript $\Xi_0$ refers to the objects computed in $\Cat_{\Xi_0}$.

The embedding $\iota_{\Xi_0}:\Cat_{\Xi_0}^{\bDelta}\hookrightarrow \Cat^{\bDelta}$
admits a left adjoint functor $\iota^!_{\Xi_0}$: to an object $M\in \Cat^{\bDelta}$,
this functor assigns the maximal quotient lying in $\Cat_{\Xi_0}^{\bDelta}$.

The following lemma will be of importance in the sequel.
\begin{Lem}\label{Lem:derived_inclusion}
The natural functor $D^-(\Cat_{\Xi_0})\rightarrow D^-(\Cat)$ is a fully faithful embedding.
\end{Lem}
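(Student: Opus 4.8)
The plan is to show that for $M, N \in \Cat_{\Xi_0}$ the natural map $\Ext^i_{\Cat_{\Xi_0}}(M,N) \to \Ext^i_{\Cat}(M,N)$ is an isomorphism for all $i$; since $\Cat_{\Xi_0}$ is a Serre subcategory, it is standard that this suffices to conclude that $D^-(\Cat_{\Xi_0}) \to D^-(\Cat)$ is fully faithful. The map is always injective (an extension of objects of $\Cat_{\Xi_0}$ that splits in $\Cat$ splits in $\Cat_{\Xi_0}$), and in degrees $0,1$ it is visibly an isomorphism because $\Cat_{\Xi_0}$ is closed under subquotients and extensions in $\Cat$. The content is surjectivity in higher degrees.

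The key step is to produce, for each object $M \in \Cat_{\Xi_0}$, a projective resolution inside $\Cat$ whose terms also lie in $\Cat_{\Xi_0}$, or more precisely whose terms behave like projectives for the purpose of computing $\Ext$ into $\Cat_{\Xi_0}$. The natural candidates are the projective covers $P_{\Xi_0}(\tau)$ of $L(\tau)$ computed in $\Cat_{\Xi_0}$, for $\tau \in \Tcal_0$. The crucial observation is that $P_{\Xi_0}(\tau)$ is \emph{acyclic for} $\Hom_{\Cat}(-,N)$ with $N \in \Cat_{\Xi_0}$, i.e. $\Ext^{>0}_{\Cat}(P_{\Xi_0}(\tau), N) = 0$. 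To see this, recall from the description of $\Cat_{\Xi_0}$ as a standardly stratified category that $P(\tau) \twoheadrightarrow P_{\Xi_0}(\tau)$ has kernel filtered by standards $\Delta(\tau')$ with $\varrho(\tau') \notin \Xi_0$; combined with $\Delta_{\Xi_0}(\tau) = \Delta(\tau)$ for $\tau \in \Tcal_0$ and the $\Ext$-vanishing \eqref{Ext:vanish}, one gets that $\Ext^{>0}_{\Cat}(P_{\Xi_0}(\tau), N)$ vanishes for $N$ a standardly filtered object of $\Cat_{\Xi_0}$; then a dévissage in $N$ using that every object of $\Cat_{\Xi_0}$ has a presentation by such objects (or directly, a standard argument with the filtration by $\Cat_{\leqslant \xi}$, $\xi \in \Xi_0$) extends the vanishing to arbitrary $N \in \Cat_{\Xi_0}$. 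Granting this, one computes $\Ext^i_{\Cat}(M,N)$ using a resolution of $M$ by sums of $P_{\Xi_0}(\tau)$'s, which is simultaneously a projective resolution in $\Cat_{\Xi_0}$, and the two $\Ext$-groups coincide.

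The main obstacle is the acyclicity statement $\Ext^{>0}_{\Cat}(P_{\Xi_0}(\tau), N) = 0$; everything else is formal homological algebra once that is in hand. The delicate point is passing from the case of standardly filtered $N$ (where \eqref{Ext:vanish} applies cleanly) to general $N \in \Cat_{\Xi_0}$: here I would argue by induction on the poset ideal, peeling off the top strata $\xi \in \Xi_0$, using that $\Cat_{<\xi} \cap \Cat_{\Xi_0}$ is again of the required form and that the proper standard objects $\bDelta(\tau)$, $\tau \in \varrho^{-1}(\xi)$, together with $\Delta_\xi$-exactness (SS1), let one build any $N$ out of pieces to which the hypothesis applies. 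Alternatively — and this is perhaps cleanest — one invokes that $P_{\Xi_0}(\tau)$, being standardly filtered with sections $\Delta(\tau')$, $\tau' \in \Tcal_0$, and that for such objects $\Ext^{>0}_{\Cat}(-, N) = 0$ whenever $N$ has a proper costandard (i.e. $\bnabla$-) filtration in $\Cat$; since every object of a standardly stratified category, hence of $\Cat_{\Xi_0}$, embeds into one with a $\bnabla$-filtration, a final dévissage in the cokernel finishes the argument.
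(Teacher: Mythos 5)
Your strategy is sound, and at bottom it is the same mechanism as the paper's own (very terse) proof, just packaged from the other side: the paper takes a projective resolution $P_\bullet$ of $M$ in $\Cat$, applies the left adjoint $\iota^!_{\Xi_0}$ (maximal quotient lying in $\Cat_{\Xi_0}$), and observes that the resulting complex is still exact and consists of projectives of $\Cat_{\Xi_0}$, so that by adjunction $\Hom_{\Cat}(P_\bullet,N)=\Hom_{\Cat_{\Xi_0}}(\iota^!_{\Xi_0}P_\bullet,N)$ computes both Ext groups; you instead resolve $M$ by sums of the $P_{\Xi_0}(\tau)$ and show these are $\Hom_{\Cat}(-,N)$-acyclic. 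Both arguments rest on the structural fact you correctly isolate: the kernel of $P(\tau)\twoheadrightarrow P_{\Xi_0}(\tau)=\iota^!_{\Xi_0}P(\tau)$ is filtered by $\Delta(\tau')$ with $\varrho(\tau')\notin\Xi_0$ (by (SS2) and the ideal property of $\Xi_0$), and such objects admit no nonzero maps, indeed no higher Ext's, into objects of $\Cat_{\Xi_0}$.

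Two points in your reduction need care. First, the passage from standardly filtered $N$ to general $N\in\Cat_{\Xi_0}$: your first suggestion (a presentation of $N$ by standardly filtered objects) goes the wrong way homologically --- a left resolution in the second argument shifts the Ext degree up, so that induction does not terminate without finite global dimension, which is not available here. Your alternative does work, but only if the $\bnabla$-filtered object is taken inside $\Cat_{\Xi_0}$ (e.g.\ the injective hull in $\Cat_{\Xi_0}$, which is $\bnabla$-filtered in $\Cat$ because $\bnabla_{\Xi_0}(\tau)=\bnabla(\tau)$ for $\tau\in\Tcal_0$), so that the cokernel stays in $\Cat_{\Xi_0}$; if you only embed into a $\bnabla$-filtered object of $\Cat$, the cokernel leaves $\Cat_{\Xi_0}$ and the acyclicity you are inducting on is false for such coefficients. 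In fact the detour through standardly filtered $N$ is unnecessary: prove by induction on $i$, simultaneously for all $\tau'$ with $\varrho(\tau')\notin\Xi_0$ and all $N\in\Cat_{\Xi_0}$, that $\Ext^i_{\Cat}(\Delta(\tau'),N)=0$; the case $i=0$ is the head argument, and for $i\geqslant 1$ use $0\to K'\to P(\tau')\to\Delta(\tau')\to 0$, where $K'$ is again $\Delta$-filtered by strata outside $\Xi_0$ since $\Xi_0$ is an ideal. This yields $\Ext^{>0}_{\Cat}(P_{\Xi_0}(\tau),N)=0$ for all $N\in\Cat_{\Xi_0}$ at once. Second, when you invoke (\ref{Ext:vanish}), note that the inequality you actually need is that nonvanishing of $\Ext^i_{\Cat}(\Delta_\xi(M),\Delta_{\xi'}(N))$ forces $\xi\leqslant\xi'$, so that $\xi'\in\Xi_0$ together with the ideal property forces $\xi\in\Xi_0$, a contradiction; with the opposite orientation of the inequality no contradiction arises, so derive the vanishing in this form (it follows from (SS2) exactly as above) rather than quoting the displayed inequality verbatim.
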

\begin{proof}
Let $P_\bullet$ be a projective resolution (in $\Cat$) of $M\in \Cat_{\Xi_0}$. Then
$\iota^!_{\Xi_0}P_\bullet$ is easily seen to be exact.  So it is a projective resolution of $M$ in $\Cat_{\Xi_0}$.
It follows that  for $M,N\in \Cat_{\Xi_0}$, we have $$\operatorname{Ext}^i_{\Cat}(M,N)=
\operatorname{Ext}^i_{\Cat_{\Xi_0}}(M,N).$$ This completes the proof.
\end{proof}

Now let $\Cat^{\Xi_0}$ be the quotient category $\Cat/\Cat_{\Xi_0}$. Let $\pi_{\Xi_0}$
denote the quotient functor $\Cat\rightarrow \Cat^{\Xi_0}$ and let $\pi_{\Xi_0}^!$
be its left adjoint. The category $\Cat^{\Xi_0}$ is standardly stratified with
pre-order on $\Tcal^0:=\Tcal\setminus \Tcal_0$ restricted from $\Tcal$.
For $\xi\in \Xi^0:=\Xi\setminus \Xi_0$ we have $\Delta^0_\xi=\pi_{\Xi_0}\circ \Delta_\xi$.
Let us also point out that $\pi_{\Xi_0}^!$ defines a full embedding $(\Cat^{\Xi_0})^{\bDelta}
\hookrightarrow \Cat^{\bDelta}$ whose image coincides with the full subcategory
$\Cat^{\bDelta,\Tcal^0}$ consisting of all objects that admit a filtration with successive
quotients $\Delta(\tau)$ with $\tau\in \Tcal^0$. This embedding sends $\Delta^{\Xi_0}(\tau)$
to $\Delta(\tau)$, $P^{\Xi_0}(\tau)$ to $P(\tau)$.

\subsection{Opposite category}
Let $\Cat$ be a standardly stratified category.
It turns out that the opposite category $\Cat^{opp}$ is also standardly stratified with the same pre-order
$\leqslant$, see \cite[1.2]{LW}. The standard and proper standard objects for $\Cat^{opp}$ are denoted
by $\nabla(\tau)$ and $\bnabla(\tau)$. When viewed as objects of $\Cat$, they are called
{\it costandard} and {\it proper costandard}. The right adjoint functor to $\pi_\xi$ will be
denoted by $\nabla_\xi$ and we write $\nabla$ for $\bigoplus_\xi \nabla_\xi$. So we have
$\bnabla(\tau)=\nabla(L_\xi(\tau))$ and $\nabla(\tau)=\nabla(I_\xi(\tau))$, where
$I_\xi(\tau)$ is the injective envelope of $L_\xi(\tau)$ in $\Cat_\xi$.

Let us write $\Cat^\nabla, \Cat^{\bnabla}$ for the subcategories of costandardly filtered
objects. We have the following standard lemma.

\begin{Lem}[Lemma 2.4 in \cite{LW}]\label{Lem:exts}
The following is true.
\begin{enumerate}
\item $\dim \Ext^i(\Delta(\tau), \bnabla(\tau'))=\dim
  \Ext^i(\bDelta(\tau), \nabla(\tau'))=
  \delta_{i,0}\delta_{\tau,\tau'}$. 
 \item For $N\in \Cat$, we have $N\in \Cat^\nabla$ (resp., $N\in
  \Cat^{\bnabla}$) if and only if $\Ext^1(\bDelta(\tau),N)=0$
  (resp., $\Ext^1(\Delta(\tau), N)=0$) for all $\tau$. Similar characterizations
  hold for $\Cat^{\Delta},\Cat^{\bDelta}$.
\end{enumerate}
\end{Lem}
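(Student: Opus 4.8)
The plan is to deduce both statements from part (1) in the single form $\Ext^i_{\Cat}(\Delta(\tau),\bnabla(\tau'))=\delta_{i,0}\delta_{\tau,\tau'}$: passing to the opposite category $\Cat^{opp}$, which by \cite[1.2]{LW} is again standardly stratified with the same pre-order and interchanges $\Delta\leftrightarrow\nabla$, $\bDelta\leftrightarrow\bnabla$ while reversing $\Ext$, turns the second equality in (1), and the characterizations of $\Cat^{\Delta},\Cat^{\bDelta}$ in (2), into the $\Cat^{opp}$-versions of the first equality and of the characterizations of $\Cat^{\nabla},\Cat^{\bnabla}$. Three tools are used throughout. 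First, axiom (SS1) and its $\Cat^{opp}$-analogue make all $\Delta_\xi,\nabla_\xi:\Cat_\xi\to\Cat_{\leqslant\xi}$ exact, so the abelian adjunctions $(\Delta_\xi,\pi_\xi)$ and $(\pi_\xi,\nabla_\xi)$ pass unchanged to derived categories, yielding $R\Hom_{\Cat_{\leqslant\xi}}(\Delta_\xi M,K)=R\Hom_{\Cat_\xi}(M,\pi_\xi K)$ and $R\Hom_{\Cat_{\leqslant\xi}}(K,\nabla_\xi M)=R\Hom_{\Cat_\xi}(\pi_\xi K,M)$ for $K\in\Cat_{\leqslant\xi}$, together with $\pi_\xi\Delta_\xi=\pi_\xi\nabla_\xi=\operatorname{id}$. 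Second, Lemma \ref{Lem:derived_inclusion} lets one replace $\Cat$ by the Serre subcategory $\Cat_{\Xi_0}$ for a poset ideal $\Xi_0$ whenever both $\Ext$-arguments lie in it. Third, axiom (SS2) supplies $0\to K(\tau)\to P(\tau)\to\Delta(\tau)\to0$ with $K(\tau)$ filtered by standards $\Delta(\sigma)$, $\sigma>\tau$.

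For (1), set $\xi=\varrho(\tau)$, $\xi'=\varrho(\tau')$ and compute $E:=\Ext^\bullet_{\Cat}(\Delta(\tau),\bnabla(\tau'))$, using $\Delta(\tau)\in\Cat_{\leqslant\xi}$ and $\bnabla(\tau')=\nabla_{\xi'}(L_{\xi'}(\tau'))\in\Cat_{\leqslant\xi'}$. If $\xi<\xi'$, both objects lie in $\Cat_{\leqslant\xi'}$, so $E$ is computed there, and the $(\pi_{\xi'},\nabla_{\xi'})$-adjunction identifies it with $\Ext^\bullet_{\Cat_{\xi'}}(\pi_{\xi'}\Delta(\tau),L_{\xi'}(\tau'))=0$, since $\Delta(\tau)\in\Cat_{<\xi'}$ is killed by $\pi_{\xi'}$. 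If $\xi'\leqslant\xi$, both objects lie in $\Cat_{\leqslant\xi}$, and writing $\Delta(\tau)=\Delta_\xi(P_\xi(\tau))$ the $(\Delta_\xi,\pi_\xi)$-adjunction identifies $E$ with $\Ext^\bullet_{\Cat_\xi}(P_\xi(\tau),\pi_\xi\bnabla(\tau'))$; here $\pi_\xi\bnabla(\tau')=0$ if $\xi'<\xi$, and $\pi_\xi\bnabla(\tau')=\pi_\xi\nabla_\xi(L_\xi(\tau'))=L_\xi(\tau')$ if $\xi'=\xi$, whence $E=\delta_{\tau,\tau'}\K$ in degree $0$ because $P_\xi(\tau)$ is projective in $\Cat_\xi$ and the algebra is split. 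The remaining case, $\xi$ and $\xi'$ incomparable, I would handle by downward induction on $\xi$: since $\xi\not\leqslant\xi'$, $L(\tau)$ is not a composition factor of $\bnabla(\tau')$, so $\Hom_{\Cat}(P(\tau),\bnabla(\tau'))=0$, and feeding the (SS2)-sequence into the long exact $\Ext$-sequence reduces $E$ to $\Ext^\bullet_{\Cat}(K(\tau),\bnabla(\tau'))$; each standard subquotient $\Delta(\sigma)$ of $K(\tau)$ has $\varrho(\sigma)>\xi$, hence $\varrho(\sigma)\neq\xi'$, so the pair $(\Delta(\sigma),\bnabla(\tau'))$ is either comparable — a case already settled, giving $0$ as $\sigma\neq\tau'$ — or again incomparable with a strictly larger first stratum, where the inductive hypothesis gives $0$. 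The base case $\xi$ maximal is immediate: then $K(\tau)=0$, so $\Ext^i_{\Cat}(\Delta(\tau),\bnabla(\tau'))=\Ext^i_{\Cat}(P(\tau),\bnabla(\tau'))=\delta_{i,0}[\bnabla(\tau'):L(\tau)]=0$. Reading the result in $\Cat^{opp}$ gives the second equality.

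For (2), the ``only if'' direction follows from (1) by filtering a $\nabla$- (resp.\ $\bnabla$-) filtered object. For ``if'', assume $\Ext^1_{\Cat}(\bDelta(\tau),N)=0$ for all $\tau$ and induct on $|\Xi|$. Choose $\xi_0$ minimal in $\Xi$, so $\Cat_{\leqslant\xi_0}=\Cat_{\xi_0}$, $\Delta_{\xi_0}=\nabla_{\xi_0}=\operatorname{id}$, and $\bDelta(\sigma)=L(\sigma)$, $\nabla(\sigma)=I_{\xi_0}(\sigma)$ for $\varrho(\sigma)=\xi_0$. Let $N_0\subseteq N$ be the maximal subobject lying in $\Cat_{\xi_0}$. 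Then $N/N_0$ has no nonzero subobject in $\Cat_{\xi_0}$, so $\Hom_{\Cat}(L(\sigma),N/N_0)=0$ for $\varrho(\sigma)=\xi_0$, and the long exact sequence of $0\to N_0\to N\to N/N_0\to0$ gives $\Ext^1_{\Cat}(L(\sigma),N_0)=0$; via the injection $\Ext^1_{\Cat_{\xi_0}}(L(\sigma),N_0)\hookrightarrow\Ext^1_{\Cat}(L(\sigma),N_0)$ this makes $N_0$ injective in $\Cat_{\xi_0}$, hence a direct sum of $\nabla(\sigma)$'s, so $N_0\in\Cat^{\nabla}$. Since $\Ext^2_{\Cat}(\bDelta(\tau),\nabla(\sigma))=0$ by (1), the same long exact sequence gives $\Ext^1_{\Cat}(\bDelta(\tau),N/N_0)=0$ for all $\tau$; combined with the vanishing of $\Hom_{\Cat}(-,N/N_0)$ and $\Ext^1_{\Cat}(L(\sigma),N/N_0)$ for $\varrho(\sigma)=\xi_0$, this shows $N/N_0$ is $\Cat_{\xi_0}$-saturated and thus pulled back from $\Cat/\Cat_{\xi_0}$, where the inductive hypothesis applies; pulling the resulting $\nabla$-filtration back and prepending $N_0$ gives $N\in\Cat^{\nabla}$. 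The case of $\Cat^{\bnabla}$ is identical with $\bDelta$ replaced by $\Delta$ and the proper costandards; the $\Cat^{\Delta},\Cat^{\bDelta}$ statements are the $\Cat^{opp}$-versions.

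I expect the main obstacle to be the incomparable case in (1): there is no Serre subcategory of $\Cat$ containing both $\Delta(\tau)$ and $\bnabla(\tau')$, so the adjunction computations are not available and one is forced onto (SS2) plus an induction on $\Xi$ — the point being that this is not circular, since the sequence $0\to K(\tau)\to P(\tau)\to\Delta(\tau)\to0$ only ever introduces \emph{strictly larger} strata. A secondary technical point is the saturation step in the ``if'' part of (2), i.e.\ checking that $N/N_0$ genuinely descends to $\Cat/\Cat_{\xi_0}$; this is exactly where the hypothesis $\Ext^1_{\Cat}(\bDelta(\tau),N)=0$ for \emph{all} $\tau$, not merely for $\tau$ in the bottom stratum, is consumed.
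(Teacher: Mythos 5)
The paper itself offers no argument for this lemma (it is quoted from [LW, Lemma 2.4]), so I am judging your proof on its own merits. Part (1) is correct and complete: the two comparable cases, handled inside $\Cat_{\leqslant\xi}$ resp.\ $\Cat_{\leqslant\xi'}$ via Lemma \ref{Lem:derived_inclusion} and the derived adjunctions for $(\Delta_\xi,\pi_\xi)$ and $(\pi_{\xi'},\nabla_{\xi'})$ (legitimate because $\Delta_\xi$ and $\nabla_{\xi'}$ are exact by (SS1) for $\Cat$ and $\Cat^{opp}$), and the incomparable case by (SS2) plus downward induction on the stratum, all check out, and the reduction of the remaining statements to $\Cat^{opp}$ is fine.

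The ``if'' direction of (2) has a gap at the decisive step, and it is not the one you flagged. After establishing that $M:=N/N_0$ is $\Cat_{\xi_0}$-saturated and that $\Ext^1_{\Cat}(\bDelta(\tau),M)=0$ for all $\tau$, you invoke the inductive hypothesis for $\pi_{\Xi_0}(M)$ in $\Cat/\Cat_{\xi_0}$. But that hypothesis is a vanishing of $\Ext^1$ computed \emph{in the quotient category}, $\Ext^1_{\Cat/\Cat_{\xi_0}}(\pi_{\Xi_0}\bDelta(\tau),\pi_{\Xi_0}M)$, and the quotient functor does not a priori preserve $\Ext^1$: saturation of $M$ identifies $\Hom$'s, but by the derived adjunction with the right adjoint $s$ of $\pi_{\Xi_0}$ one only gets an exact sequence $0\rightarrow \Ext^1_{\Cat}(\bDelta(\tau),M)\rightarrow \Ext^1_{\Cat/\Cat_{\xi_0}}(\pi_{\Xi_0}\bDelta(\tau),\pi_{\Xi_0}M)\rightarrow \Hom_{\Cat}(\bDelta(\tau),R^1s(\pi_{\Xi_0}M))$, so the group you need to kill can be strictly larger than the one you have killed. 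As written, ``where the inductive hypothesis applies'' is therefore an unproven assertion, and it is exactly the nontrivial content of your inductive step. The gap is fillable within your framework, but it requires an argument you did not give: $R^{>0}s(\pi_{\Xi_0}M)$ lies in $\Cat_{\xi_0}$, while $\bDelta(\tau)$ has simple top $L(\tau)$ with $\varrho(\tau)\neq\xi_0$, so every nonzero quotient of $\bDelta(\tau)$ contains $L(\tau)$ and hence $\Hom_{\Cat}(\bDelta(\tau),Y)=0$ for all $Y\in\Cat_{\xi_0}$; with this, the third term above vanishes and the needed $\Ext^1$-vanishing in the quotient does follow (alternatively, one can split a lifted extension directly using the same simple-top observation). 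A similar remark applies to ``pulling the resulting $\nabla$-filtration back'': that step is fine, but only after invoking the $\Cat^{opp}$-version of the paper's statement that the adjoint of $\pi_{\Xi_0}$ embeds the filtered objects of the quotient into those of $\Cat$, applied to the saturated object $M$; it is not automatic from saturation alone. In short: (1) and the first half of (2) are solid, but the transfer of the $\Ext^1$-hypothesis across the Serre quotient must be proved, not assumed.
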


Let us also note the following fact.

\begin{Lem}\label{Lem:w_st_stratif}
Suppose (SS1) holds as well as the following weaker version of (SS2): $P(\tau)$ admits an epimorphism
onto $\Delta(\tau)$ and the kernel admits a filtration with quotient of the form $\Delta_\xi(M_\xi)$
for $\xi>\varrho(\tau)$ and $M_\xi\in \Cat_\xi$ (unlike in (SS2) we do not require $M_\xi$ to be projective).
Then $\Cat$ is standardly stratified if and only if the right adjoint $\nabla_\xi$ of $\pi_\xi$ is exact
for all $\xi$.
\end{Lem}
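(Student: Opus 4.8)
The plan is to prove both implications of the last lemma by reducing everything to statements about proper standard and proper costandard objects, using the characterizations in Lemma \ref{Lem:exts}. For the forward direction, assume $\Cat$ is standardly stratified. Then $\Cat^{opp}$ is standardly stratified with the same pre-order, so by the definition applied to $\Cat^{opp}$, the functor $\nabla = \bigoplus_\xi \nabla_\xi$ is exact (this is axiom (SS1) for $\Cat^{opp}$, phrased via the right adjoints $\nabla_\xi$ of $\pi_\xi$). Hence each $\nabla_\xi$ is exact. This direction is essentially a citation of the facts recalled in the subsection on opposite categories.

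The substance is the converse: assuming (SS1), the weak form of (SS2), and exactness of every $\nabla_\xi$, deduce that $\Cat$ is standardly stratified, i.e. that the genuine (SS2) holds. First I would record the consequence of exactness of $\nabla_\xi$: for $\tau$ with $\varrho(\tau)=\xi$ and any $\xi'$, we get $\Ext^i_{\Cat}(\Delta_{\xi'}(M),\bnabla(\tau))$ controlled as in \eqref{Ext:vanish}--\eqref{Ext:equal}; in particular, since $\nabla_\xi$ is exact it takes injective objects of $\Cat_\xi$ to objects $N$ with $\Ext^1_{\Cat}(\Delta_{\xi'}(M),N)=0$ for all $\xi'\le\xi$ and, together with \eqref{Ext:vanish}, for all $\xi'$; so by Lemma \ref{Lem:exts}(2) the proper costandard objects $\bnabla(\tau)$ lie in $\Cat^{\bnabla}$ and moreover $\Ext^1_{\Cat}(\bDelta(\tau),\bnabla(\tau'))=0$ unless forced. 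The key numerical input is then the reciprocity/orthogonality $\dim\Ext^i(\bDelta(\tau),\bnabla(\tau')) = \delta_{i,0}\delta_{\tau,\tau'}$ type statement, which one extracts from \eqref{Ext:equal} plus exactness of $\nabla_\xi$ and the fact that $L_\xi(\tau), I_\xi(\tau)$ are related in $\Cat_\xi$.

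Then I would run the standard argument that promotes the weak (SS2) to (SS2). Write the kernel $K$ of $P(\tau)\twoheadrightarrow\Delta(\tau)$; by the weak hypothesis $K$ has a filtration with subquotients $\Delta_{\xi'}(M_{\xi'})$, $\xi'>\varrho(\tau)$. I need to upgrade each $M_{\xi'}$ to a \emph{projective} object of $\Cat_{\xi'}$. The mechanism is: apply $\Hom_{\Cat}(-,\bnabla(\tau'))$ to the defining sequences and use Lemma \ref{Lem:exts} together with exactness of $\nabla_\xi$ to see that $\Ext^{>0}$ from $K$ into all proper costandards vanishes, whence $K\in\Cat^\Delta$ with the stronger property that it is in fact filtered by \emph{genuine} standards $\Delta(\tau')$; feeding this back, comparison in $K_0(\Cat\text{-proj})$ (legitimate once (SS1) is known, as remarked after the axioms) forces the multiplicities to match those of a projective filtration in each $\Cat_{\xi'}$, and then a minimality/covering argument inside $\Cat_\xi$ shows $M_\xi$ must be $P_\xi(\tau)$-type, i.e. projective. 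The main obstacle I anticipate is exactly this last bookkeeping step: carefully passing between the filtration of $K$ by the $\Delta_{\xi'}(M_{\xi'})$ and a filtration by honest standards $\Delta(\tau')$, and checking that exactness of the $\nabla_\xi$ is precisely what licenses the inductive $\Ext$-vanishing needed to split off the top of each $M_{\xi'}$ as a projective of $\Cat_{\xi'}$. Everything else is formal manipulation with the adjunctions $\Delta_\xi\dashv\pi_\xi\dashv\nabla_\xi$ and the $K_0$-identifications guaranteed by (SS1).
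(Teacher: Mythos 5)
Your forward direction (quoting the opposite-category facts) is exactly what the paper does. The converse, however, has a genuine gap: the tools you lean on are only available \emph{after} the standardly stratified structure is established. Lemma \ref{Lem:exts} and the vanishing statements (\ref{Ext:vanish})--(\ref{Ext:equal}) are stated (and proved in \cite{LW}) for categories that are already standardly stratified; in particular the orthogonality $\dim\Ext^i(\Delta(\tau),\bnabla(\tau'))=\delta_{i,0}\delta_{\tau,\tau'}$ and the characterization of $\Cat^{\Delta}$ by vanishing of $\Ext^1(-,\bnabla(\tau'))$ are normally deduced \emph{from} (SS2), by resolving $\Delta(\tau)$ by projectives whose kernels are standardly filtered with larger indices. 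Invoking them to prove (SS2) is circular, and your route needs them in an essential way: from $0\to K\to P(\tau)\to\Delta(\tau)\to 0$ you get $\Ext^1(K,\bnabla(\tau'))\cong\Ext^2(\Delta(\tau),\bnabla(\tau'))$, so the vanishing you want for $K$ is precisely the higher orthogonality you have not yet justified under only (SS1), the weak (SS2$'$) and exactness of $\nabla_\xi$ (one would at least need derived adjunctions for the exact pairs $(\Delta_\xi,\pi_\xi)$, $(\pi_\xi,\nabla_\xi)$ and a comparison of $\Ext$ computed in $\Cat$ versus $\Cat_{\leqslant\xi}$, the analogue of Lemma \ref{Lem:derived_inclusion}, which is likewise proved only in the stratified setting). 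The closing ``$K_0$ comparison plus minimality/covering'' step is both vague and redundant: if you really knew $K$ was filtered by genuine standards with indices $>\varrho(\tau)$, (SS2) would already hold and nothing further would be needed.

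The paper's proof avoids all of this with one elementary observation: since $\nabla_\xi$ is exact, $\pi_\xi$ has an exact right adjoint, hence sends projectives to projectives ($\Hom_{\Cat_\xi}(\pi_\xi Q,-)=\Hom_{\Cat_{\leqslant\xi}}(Q,\nabla_\xi(-))$ is a composition of exact functors). Setting $\Xi_0=\{\xi'\leqslant\xi\}$, the object $\iota^!_{\Xi_0}(P(\tau))$ is projective in $\Cat_{\leqslant\xi}$, so $\pi_\xi(\iota^!_{\Xi_0}(P(\tau)))$ is projective in $\Cat_\xi$; by the weak filtration and Lemma \ref{Lem:prop_stand_stratif} this object is identified with the $\xi$-layer $M_\xi$, so $M_\xi$ is projective and $\Delta_\xi(M_\xi)$ is a direct sum of $\Delta(\tau')$'s, which is (SS2). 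If you want to repair your write-up, this is the mechanism to substitute for the $\Ext$-orthogonality argument.
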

\begin{proof}
It remains to show that if $\nabla_\xi$ is exact, then (SS2) holds. Set $\Xi_0:=\{\xi'\in \Xi| \xi'\leqslant \xi\}$.
The object $\iota^!_{\Xi_0}(P(\tau))$ is projective in $\Cat_{\leqslant \xi}$ if $\varrho(\tau)\leqslant \xi$
and is zero otherwise. Since $\pi_\xi$ has an exact right adjoint, the object
$\pi_\xi(\iota^!_{\Xi_0}(P(\tau)))$ is projective. By Lemma \ref{Lem:prop_stand_stratif},
the object $\Delta_\xi(M)$ in (SS2$'$) is the sum of $\Delta(\tau)$'s. This completes the proof.
\end{proof}

\subsection{Compatible standardly stratified structures}\label{SSS_SS_comp}
Now suppose that $\Cat$ is a highest weight category with set of simples $\Tcal$
and partial order $\leqslant$. Consider a pre-order $\preceq$ on $\Tcal$
refined  by $\leqslant$, meaning that $\tau\leqslant \tau'$ implies
$\tau\preceq \tau'$. We say that $\preceq$ is standardly stratified
if $(\Cat,\preceq)$ is a standardly stratified category. The corresponding standardly
stratified structure will be called {\it compatible} with the initial highest weight
structure. Note that the corresponding associated graded category $\gr_{\preceq}\Cat$
carries a natural highest weight structure.

The following lemma gives a criterium for $\preceq$ to be standardly stratified.
Namely, for a standardly stratified object $M$ and an equivalence  class $\xi$
for $\preceq$ define a standardly stratified object $M(\xi)$ as
$\pi_\xi\circ \iota_{\Tcal_0}^!(M)$, where $\Tcal_0:=\{\tau|\varrho(\tau)\leqslant \xi\}$
is a poset ideal in $\Tcal$ with respect to $\leqslant$, and $\iota_{\Tcal_0}:
\Cat_{\Tcal_0}\hookrightarrow \Cat$ is an inclusion. The object $M(\xi)$ is standardly
stratified in $\Cat_\xi$. Similarly, for a costandardly filtered object $N$, we can
define $N(\xi)$ in a similar way (but we need to use the right
adjoint $\iota_{\Tcal_0}^*$ instead of the left adjoint $\iota_{\Tcal_0}^!$).

\begin{Lem}\label{Lem:ss_order}
The following are equivalent:
\begin{enumerate}
\item The partial order $\preceq$ is standardly stratified.
\item For any projective $P\in \Cat$ and any injective $I\in \Cat$, the objects
$P(\xi),I(\xi)\in \Cat_\xi$ are projective and injective, respectively.
\end{enumerate}
\end{Lem}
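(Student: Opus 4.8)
The plan is to prove the equivalence of \eqref{Lem:ss_order}(1) and \eqref{Lem:ss_order}(2) by checking that condition (2) is precisely what makes axioms (SS1) and (SS2) work for the pre-order $\preceq$, using the highest weight structure on $\Cat$ and on $\gr_{\preceq}\Cat$ as the backbone.

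First I would establish the implication (1)$\Rightarrow$(2). Assume $\preceq$ is standardly stratified. Given a projective $P\in\Cat$, the object $P(\xi)=\pi_\xi\circ\iota^!_{\Tcal_0}(P)$ should be projective in $\Cat_\xi$: indeed $\iota^!_{\Tcal_0}(P)$ is projective in $\Cat_{\Tcal_0}=\Cat_{\leqslant\xi}$ (applying $\iota^!$ for the highest weight poset ideal $\Tcal_0$ to a projective gives a projective, or zero), and then by Lemma \ref{Lem:w_st_stratif} — or directly from (SS1)+(SS2) — the quotient functor $\pi_\xi$ has an exact right adjoint $\nabla_\xi$, so $\pi_\xi$ sends projectives to projectives. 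The statement for injectives follows by passing to $\Cat^{opp}$, which is again standardly stratified with respect to the same $\preceq$, exchanging projectives/injectives and $\iota^!/\iota^*$.

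The substantive direction is (2)$\Rightarrow$(1), and this is where I expect the main work. The strategy is: (SS1) amounts to exactness of each $\Delta_\xi:\Cat_\xi\to\Cat_{\leqslant\xi}$, equivalently (by Lemma \ref{Lem:exts}-type arguments, or passing to opposite categories) to exactness of each $\nabla_\xi$; and by Lemma \ref{Lem:w_st_stratif} it suffices to verify the weak form (SS2$'$) together with exactness of all $\nabla_\xi$, since then full (SS2) is automatic. I would argue that exactness of $\nabla_\xi$ follows from the injectivity of $I(\xi)$ for all injectives $I$: given a short exact sequence in $\Cat_\xi$, lift it using injective resolutions, apply $\nabla_\xi$, and use that $\nabla_\xi$ of an injective object of $\Cat_\xi$ extends (via $\iota_*$ and the injectivity hypothesis, matching $I(\xi)$) to an injective object of $\Cat$, so the Ext-obstruction vanishes. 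Dually the projectivity of $P(\xi)$ gives exactness of $\Delta_\xi$, hence (SS1). For (SS2$'$): since $\Cat$ is highest weight, $P(\tau)$ has a $\Delta$-filtration (in the highest-weight sense, with standards indexed by $\tau'\geqslant\tau$); grouping the filtration subquotients according to the $\preceq$-classes and using that $\leqslant$ refines $\preceq$ (so $\tau'\geqslant\tau$ forces $\varrho(\tau')\succeq\varrho(\tau)$), together with Lemma \ref{Lem:prop_stand_stratif} applied inside each $\Cat_{\leqslant\xi}$, I would repackage the top piece as $\Delta_{\varrho(\tau)}(P_{\varrho(\tau)}(\tau))$ — here projectivity of $P(\xi)$ in $\Cat_\xi$ is used to identify the relevant quotient category projective — and the remaining pieces as $\Delta_\xi(M_\xi)$ with $\xi\succ\varrho(\tau)$.

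The hard part will be the bookkeeping in (SS2$'$): one must show the highest-weight $\Delta$-filtration of $P(\tau)$ can be reorganized so that the sub corresponding to the principal $\preceq$-class is exactly a standard object $\Delta(\tau)$ for the new stratified structure, and the quotient is filtered by $\Delta_\xi(M_\xi)$'s with strictly larger $\xi$ — this requires knowing that $\iota^!_{\Tcal_0}(P(\tau))$ is the projective cover of $L(\tau)$ in $\Cat_{\leqslant\xi}$ (true since $\Tcal_0$ is a highest-weight poset ideal) and then applying hypothesis (2) to see that $\pi_\xi$ of it is $P_\xi(\tau)$, so that $\Delta_\xi(\pi_\xi(\iota^!_{\Tcal_0}(P(\tau))))=\Delta(\tau)$ in the sense of the $\preceq$-structure. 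Once the filtration is arranged, Lemma \ref{Lem:w_st_stratif} closes the argument. I would also remark that this lemma is essentially \cite[Lemma 2.5]{LW} (or an analogue thereof), so in the actual write-up one could alternatively cite it; but the proof sketch above makes the paper self-contained.
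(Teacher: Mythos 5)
Your overall route is the paper's: (1)$\Rightarrow$(2) is essentially (SS2) for $\Cat$ and $\Cat^{opp}$, and (2)$\Rightarrow$(1) is proved by extracting exactness of the two adjoints of $\pi_\xi$ from hypothesis (2) and then feeding Lemma \ref{Lem:w_st_stratif} (with Lemma \ref{Lem:prop_stand_stratif} used to repackage the highest-weight $\Delta$-filtration of $P(\tau)$ by $\preceq$-classes). However, in the key step of (2)$\Rightarrow$(1) you have the adjunction bookkeeping backwards, and as written that step fails. The injectivity of the $I(\xi)$ controls the \emph{left} adjoint $\Delta_\xi$, not $\nabla_\xi$: since $\Hom_{\Cat_{\leqslant\xi}}(\Delta_\xi(-),J)\cong\Hom_{\Cat_\xi}(-,\pi_\xi J)$ and every injective $J$ of $\Cat_{\leqslant\xi}$ is of the form $\iota^*_{\Tcal_0}(I)$ with $I$ injective in $\Cat$, the hypothesis that all $I(\xi)=\pi_\xi\iota^*_{\Tcal_0}(I)$ are injective gives exactness of $\Delta_\xi$, i.e.\ (SS1). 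Dually, $\Hom_{\Cat_{\leqslant\xi}}(\iota^!_{\Tcal_0}(P),\nabla_\xi(-))\cong\Hom_{\Cat_\xi}(P(\xi),-)$ shows that projectivity of the $P(\xi)$ gives exactness of $\nabla_\xi$. Your claimed implication ``injectivity of $I(\xi)$ implies $\nabla_\xi$ exact'' is not correct, and the argument you sketch for it only shows that $\nabla_\xi$ preserves injectives --- which is automatic (it is right adjoint to the exact functor $\pi_\xi$) and does not touch the actual obstruction, namely failure of surjectivity of $\nabla_\xi B\to\nabla_\xi C$. Concretely, writing $\Cat_{\leqslant\xi}=A\operatorname{-mod}$, $\Cat_\xi=eAe\operatorname{-mod}$, exactness of $\Delta_\xi$ is projectivity of $Ae$ over $eAe$ while exactness of $\nabla_\xi$ is projectivity of $eA$, and these are independent conditions; for the same reason your parenthetical claim that exactness of $\Delta_\xi$ is ``equivalently'' exactness of $\nabla_\xi$ is unjustified (the latter is (SS1) for $\Cat^{opp}$, and Lemma \ref{Lem:exts} cannot be invoked before the stratified structure is established).

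The good news is that the error is repairable without new ideas, because hypothesis (2) is self-dual: it hands you both exactness statements, just attributed to the opposite halves from what you wrote. With the attributions swapped --- injectives give (SS1), projectives give exactness of $\nabla_\xi$ --- your remaining bookkeeping is exactly the paper's argument: group the highest-weight standard filtration of $P(\tau)$ by $\preceq$-classes (possible since $\leqslant$ refines $\preceq$ and by the usual $\Ext^1$-vanishing between standards), use (SS1) and Lemma \ref{Lem:prop_stand_stratif} to rewrite each graded piece as $\Delta_{\xi'}(M_{\xi'})$, identify the top piece with $\Delta_{\varrho(\tau)}(P_{\varrho(\tau)}(\tau))$ via projectivity of $P(\tau)(\varrho(\tau))$, and conclude (SS2) by Lemma \ref{Lem:w_st_stratif}.
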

\begin{proof}
If (1) holds, then (2) is just (SS2) for $\Cat$ and $\Cat^{opp}$. Conversely,
as we have seen in the proof of Lemma \ref{Lem:w_st_stratif}, the claim that
$I(\xi)$ are injective for all injectives $I$ and all $\xi$ implies (SS1).
Now the claim about $P(\xi)$'s implies (SS2).
\end{proof}

Conversely, let $\Cat$  be a standardly stratified category with respect to a pre-order $\preceq$.
Suppose that each $\Cat_\xi$ is highest weight with respect to an order $\leqslant_\xi$ for all
$\xi\in \Xi$. Then $\Cat$ is highest weight with respect to the order $\leqslant$ defined as follows:
$\tau\leqslant \tau'$ if and only if either $\tau\prec \tau'$ or $\tau\sim \tau'$ and $\tau\leqslant_\xi\tau'$
for $\xi=\varrho(\tau)$.

\section{Categories $\mathcal{O}$}\label{S_cat_O}
\subsection{Cartan subquotient: algebra level}\label{SSS_Cart_subquot_alg}
Here we define a suitable subquotient of an algebra equipped with a Hamiltonian $\C^\times$-action.

Let $\A=\bigcup_{i\in \Z} \A_{\leqslant i}$ be a filtered associative algebra (with a complete and separated
filtration) such that $[\A_{\leqslant i},\A_{\leqslant j}]\subset \A_{\leqslant i+j-d}$ for some $d\in \Z_{>0}$.
The algebra $\gr\A$ is commutative and we require that it is finitely generated.

Suppose that $\A$ is equipped with a pro-rational Hamiltonian $\C^\times$-action $\nu$ that preserves the filtration.
Let $h\in \A$ be the image of $1$ under the comoment map and let $\A^i$ denote the $i$th graded component
so that $\A^i:=\{a| [h,a]=ia\}$.  We set $\A^{\geqslant 0}:=\bigoplus_{i\geqslant 0}\A^i,
\A^{> 0}:=\bigoplus_{i>0}\A^i, \Ca_\nu(\A):=\A^{\geqslant 0}/(\A^{\geqslant 0}\cap \A\A^{>0})$. Note that $\A^{\geqslant 0}\cap \A\A^{>0}$ is a two-sided ideal in $\A^{\geqslant 0}$. We have a natural isomorphism
\begin{equation}\label{eq:Cartan_iso}\Ca_{\nu}(\A)\cong \A^0/\bigoplus_{i>0}\A^{-i}\A^i.\end{equation}

The algebra $\Ca_\nu(\A)$ inherits a filtration
from $\A$. This filtration is complete and separated as any two-sided ideal in $\A^0$ is closed
with respect to the topology induced by the filtration, compare to \cite[Lemma 2.4.4]{HC}. The algebra $\gr\Ca_\nu(\A)$ is commutative and, being a quotient of $(\gr \A)^{\nu(\C^\times)}$, finitely generated.

Let us note that the definitions of $\A^{>0},\A^{\geqslant 0}, \Ca_\nu(\A)$ make sense even if
the action $\nu$ is not Hamiltonian.

\subsection{Categories $\mathcal{O}$: algebra level}\label{SSS_cat_O_gen}
Let $\A$ be as in Section \ref{SSS_Cart_subquot_alg} and assume that the filtration mentioned there is by $\Z_{\geqslant 0}$.
By the category $\OCat_\nu(\A)$ we mean the full subcategory of the category $\A\operatorname{-mod}$
of the finitely generated $\A$-modules consisting of all modules such that $\A^{>0}$ acts locally
nilpotently. We have the {\it Verma module} functor $\Delta_\nu:\Ca_\nu(\A)\operatorname{-mod}\rightarrow\OCat_\nu(\A)$
given by $\Delta_\nu(N):=\A\otimes_{\A^{\geqslant 0}}N=(\A/\A\A^{>0})\otimes_{\Ca_{\nu}(\A)}N$. Note that if $h$ acts on $N$ with eigenvalue $\alpha$
(so that $N$ is a single generalized $h$-eigen-space), then $h$ acts locally finitely on $\Delta_\nu(N)$
with eigenvalues in $\alpha+\Z_{\leqslant 0}$. The generalized eigenspace with eigenvalue $\alpha$
coincides with $(\A/\A\A^{>0})^{\nu(\C^\times)}\otimes_{\A^0}N=N$ (note that $\Ca_\nu(\A)$
is identified with $(\A/\A\A^{>0})^{\nu(\C^\times)}$). The module $\Delta_\nu(N)$ has the maximal submodule
that does not intersect $N$, the quotient is denoted by $L_\nu(N)$. Note that $L_\nu(N)$ is simple provided
$N$ is simple. Conversely, any simple object $L$ in $\OCat_\nu(\A)$ is of the form $L_\nu(N)$ for a unique
simple $N\in \Ca_\nu(\A)\operatorname{-mod}$, this simple $N$ coincides with the annihilator
$L^{\A^{>0}}$ of $\A^{>0}$ in $L$.

Assume now that $\dim\Ca_\nu(\A)<\infty$. Let $N_1,\ldots,N_r$ be the full list of the simple
$\Ca_\nu(\A)$-modules. Let $\alpha_1,\ldots,\alpha_r$ be the eigenvalues of $h$ on these modules
(note that $h$ maps into the center of $\Ca_\nu(\A)$). We define a partial order $\leqslant$ on
the set $N_1,\ldots,N_r$ by setting $N_i\leqslant N_j$ if $\alpha_j-\alpha_i\in \Z_{>0}$ or $i=j$.
Using the order it is easy to prove the following.

\begin{Lem}\label{Lem:fin_length}
Under the assumption $\dim\Ca_\nu(\A)<\infty$, the following is true:
\begin{enumerate}
\item The modules $\Delta_\nu(N)$ have finite length.
\item The simple $L_\nu(N_i)$ appears in the composition series of $\Delta_\nu(N_i)$ with
multiplicity $1$ and as a quotient. Further, if $L_\nu(N_j)$ appears in the composition
series of $\Delta_\nu(N_i)$, then $N_j\leqslant N_i$.
\item All modules in $\OCat_\nu(\A)$ have finite length and all generalized eigen-spaces
for $h$ are finite dimensional.
\item The category $\OCat_{\nu}(\A)$ consists of all $\A$-modules with locally nilpotent action
of $\A^{>0}$, locally finite action of $h$ and finite dimensional generalized eigen-spaces for $h$.
\end{enumerate}
\end{Lem}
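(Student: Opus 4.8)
The plan is to exploit the partial order $\leqslant$ on $\{N_1,\ldots,N_r\}$ introduced just above the statement. First I would establish (1): given a simple $\Ca_\nu(\A)$-module $N=N_i$, the Verma $\Delta_\nu(N_i)$ decomposes as a direct sum of its generalized $h$-eigenspaces, and by the remark in Section \ref{SSS_cat_O_gen} these eigenspaces occur only in degrees $\alpha_i+\Z_{\leqslant 0}$; moreover $\Ca_\nu(\A)$ being finite dimensional forces each such eigenspace to be finite dimensional (it is a subquotient of $(\A/\A\A^{>0})$ tensored with $N_i$, and $(\A/\A\A^{>0})^{\nu(\C^\times)}=\Ca_\nu(\A)$). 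Any proper submodule is again $h$-graded, so a descending chain of submodules would produce, in some fixed eigenvalue $\alpha_i-k$, a strictly decreasing chain of finite-dimensional spaces, which stabilizes; running over the finitely many relevant $k$ between a given submodule and $0$ (using that the head $N_i$ sits in degree $\alpha_i$ and generates) shows $\Delta_\nu(N_i)$ is both Noetherian and Artinian, hence of finite length.

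Next, for (2): the defining property of $L_\nu(N_i)$ as the quotient of $\Delta_\nu(N_i)$ by the maximal submodule avoiding $N_i$ immediately gives that $L_\nu(N_i)$ is a quotient and that its $N_i$-isotypic piece in degree $\alpha_i$ is exactly $N_i$ with multiplicity one; since every other composition factor $L_\nu(N_j)$ has its $\A^{>0}$-invariants $N_j$ concentrated in degree $\alpha_j$, and $\Delta_\nu(N_i)$ only lives in degrees $\alpha_i+\Z_{\leqslant 0}$, we get $\alpha_i-\alpha_j\in\Z_{\geqslant 0}$; the case $\alpha_i=\alpha_j$ forces $j=i$ by the multiplicity-one statement, so $N_j\leqslant N_i$ for $j\neq i$, and $L_\nu(N_i)$ appears exactly once. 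This is essentially bookkeeping with the $h$-grading.

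For (3) and (4) I would argue as follows. Let $M\in\OCat_\nu(\A)$, finitely generated with $\A^{>0}$ acting locally nilpotently. Pick finitely many generators; the submodule of $M$ they generate over $\A^{\geqslant 0}$ is a finitely generated $\A^{\geqslant 0}$-module, and modding out by $\A\A^{>0}$ one sees $M$ is a quotient of a finite direct sum of modules $\Delta_\nu(N)$ with $N$ a finitely generated $\Ca_\nu(\A)$-module; since $\dim\Ca_\nu(\A)<\infty$, such $N$ is finite dimensional, hence a finite iterated extension of the $N_i$, so $\Delta_\nu(N)$ has finite length by (1) and exactness of $\Delta_\nu$ on short exact sequences of $\Ca_\nu(\A)$-modules (which follows since $\A/\A\A^{>0}$ is free, or at least flat, over $\Ca_\nu(\A)$ — one checks $\A^{\geqslant 0}$ is free over $\A^0$ on the relevant graded pieces, or simply that the functor is right exact and composition factors are counted by $h$-eigenvalues). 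Thus $M$ has finite length and, being a subquotient of a finite sum of Vermas, has finite-dimensional generalized $h$-eigenspaces with $h$ acting locally finitely. Conversely, if $M$ is any $\A$-module with $\A^{>0}$ locally nilpotent, $h$ locally finite and finite-dimensional generalized eigenspaces, then the $\A^{>0}$-invariants $M^{\A^{>0}}$ is a nonzero $\Ca_\nu(\A)$-submodule (when $M\neq 0$, using local nilpotence to produce invariants) which is finite dimensional because $h$ acts on it with finitely many eigenvalues among the $\alpha_i+\Z$; by adjunction $M$ is generated over $\A$ by finitely many elements lifting a basis of $M^{\A^{>0}}$ together with... — more precisely one shows $M$ is a quotient of $\Delta_\nu(M^{\A^{>0}})$ when $M$ is generated by its invariants, and in general an induction on the finitely many eigenvalue-cosets present reduces to that case — so $M$ is finitely generated, hence lies in $\OCat_\nu(\A)$.

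The main obstacle I expect is the converse direction in (4): showing that the stated finiteness conditions force finite generation over $\A$. The clean way is to check that $M$ is generated by $M^{\A^{>0}}$ together with finitely many $h$-eigenvectors — equivalently, that the natural map $\Delta_\nu(M^{\A^{>0}})\oplus(\text{correction terms})\to M$ is surjective — which one does by downward induction on $h$-eigenvalues using local nilpotence of $\A^{>0}$ to lift; the finite-dimensionality of the generalized eigenspaces and the fact that only finitely many eigenvalue-cosets appear (a consequence of $M$ being an extension of Verma quotients in the forward direction, but needing an independent argument here) are what make the induction terminate. Everything else is a matter of chasing the $h$-grading through the functor $\Delta_\nu$ and invoking that $\Ca_\nu(\A)$ is a finite-dimensional algebra.
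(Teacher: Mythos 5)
The paper itself offers no proof of this lemma (it is dismissed with ``Using the order it is easy to prove the following''), so you are being judged against what the statement actually needs, and there your write-up has genuine gaps. The first is in (1): the finite-dimensionality of the generalized $h$-eigenspaces of $\Delta_\nu(N_i)$ is the real content of the lemma, and it does not follow, as you assert, from the mere fact that $(\A/\A\A^{>0})^{\nu(\C^\times)}=\Ca_\nu(\A)$ is finite dimensional --- for a graded module the degree-zero piece controls nothing about the other degrees. What makes it true is finite generation of $\gr\A$: $\gr(\A/\A\A^{>0})$ is a quotient \emph{ring} of $\C[X_0]/\C[X_0]\C[X_0]^{>0}$, each graded component of the latter is a finitely generated module over its degree-zero component, and in the quotient the degree-zero component is $\gr\Ca_\nu(\A)$, which is finite dimensional; only then are the weight spaces of $\A/\A\A^{>0}$, hence the eigenspaces of $\Delta_\nu(N)$, finite dimensional. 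Relatedly, your chain argument terminates not because ``the head sits in degree $\alpha_i$ and generates'' but because every nonzero subquotient contains a nonzero $\A^{>0}$-invariant vector, whose $h$-eigenvalue must lie in the finite set $\{\alpha_1,\dots,\alpha_r\}$, so that $M\mapsto\sum_j\dim M_{(\alpha_j)}$ strictly increases along strict inclusions; with these two inputs your skeleton for (1)--(2) does go through, and the same counting also settles the converse inclusion in (4) without any need to control ``eigenvalue cosets''.

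The step that actually fails is in (3): it is not true that every $M\in\OCat_\nu(\A)$ is a quotient of a finite direct sum of modules $\Delta_\nu(N)$ with $N$ a (finite-dimensional) $\Ca_\nu(\A)$-module. By adjunction, any map $\Delta_\nu(N)\rightarrow M$ has image contained in $\A\cdot M^{\A^{>0}}$, and this is in general a proper submodule: already for $X=T^*\mathbb{P}^1$ (essentially $\mathfrak{sl}_2$) a costandard object such as a dual Verma module is not generated by its $\A^{>0}$-invariants. So ``modding out by $\A\A^{>0}$'' loses information, and your derivation of finite length and local finiteness of $h$ for general $M$ collapses on this false claim; the flatness of $\A/\A\A^{>0}$ over $\Ca_\nu(\A)$ you invoke in passing is also unjustified (and unnecessary --- right exactness of $\Delta_\nu$ is all the finite-length count requires). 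The correct route, which you yourself flag as ``the main obstacle'' but do not carry out, is to show that a finitely generated module with locally nilpotent $\A^{>0}$-action is a quotient of finitely many modules induced from finite-dimensional $\A^{\geqslant 0}$-modules on which $\A^{>0}$ acts nilpotently but not necessarily by zero (e.g.\ via the filtration of $\A/\A(\A^{>0})^n$ by the submodules $\A(\A^{>0})^j$, whose subquotients \emph{are} generated by $\A^{>0}$-invariant vectors), and this again rests on the associated-graded finiteness argument above rather than on $\dim\Ca_\nu(\A)<\infty$ alone. As written, the proposal does not establish (3) or the forward inclusion in (4).
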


Also an argument similar to that in \cite[Section 2.4]{GGOR} implies that $\OCat_\nu(\A)$ has enough projectives.

Now let us describe the opposite category. The opposite algebra $\A^{opp}$ is also graded.
Consider the category $\OCat_{-\nu}(\A^{opp})$. We have a natural identification
$\Ca_{-\nu}(\A^{opp})\cong \Ca_{\nu}(\A)^{opp}$.  For a module $M\in \OCat_\nu(\A)$, we define its restricted dual
$M^{(*)}$ as $\bigoplus_\alpha M_\alpha^*$, where $M_\alpha$ stands for the generalized
eigen-space for $h$ with eigenvalue $\alpha$. Note that $M^{(*)}\in \OCat_{-\nu}(\A^{opp})$
by (4) of Lemma \ref{Lem:fin_length}.
Similarly, for $M'\in \OCat_{-\nu}(\A^{opp})$, we can form $M'^{(*)}$ and this will
be an object of $\OCat_\nu(\A)$. The functors of taking the restricted dual define
mutually inverse contravariant equivalences between $\OCat_\nu(\A)$ and
$\OCat_{-\nu}(\A^{opp})$.

Using this equivalences one can introduce dual Verma modules $\nabla_\nu(N):=\Delta_{-\nu}^{opp}(N^*)^{(*)}\in \OCat_\nu(\A)$. Equivalently, $\nabla_\nu(N)=\Hom_{\Ca_{\nu}(\A)}(\A/\A^{<0}\A,N)$ because the the space
of $\Hom$'s from any $M\in \OCat_\nu(\A)$ to the left and the right hand side of the previous equality
coincide.

\subsection{Case of quantizations of symplectic resolutions}\label{SS_quant_resol_O}
Now let $X$ be a conical symplectic resolution of an affine variety $X_0$. Let $\A_\lambda$
stand for the algebra of  global sections of the filtered quantization $\A_\lambda^\theta$ of $X$
corresponding to $\lambda\in \tilde{\param}:=H^2(X,\C)$.
Assume that we have a Hamiltonian action $\nu$ of $\C^\times$ on $X$ with finitely many fixed points
that commutes with the contracting $\C^\times$-action.
This action lifts to a Hamiltonian action on a quantization and hence gives rise to a Hamiltonian
action on $\A_\lambda$ again denoted by $\nu$.

The following results were obtained in \cite[Section 5]{BLPW}.

\begin{Prop}\label{Prop:hw_alg}
Fix $\lambda^\circ\in \tilde{\param}$ and pick  $\chi\in \tilde{\param}$ such that $X_\chi$ is affine.
Then, for a Zariski generic $\lambda\in \ell:=\{\lambda^\circ+z\chi, z\in \C\}$, we have the following
\begin{enumerate}
\item  The natural homomorphism $\Ca_\nu(\A_\lambda)\rightarrow \Gamma(\Ca_{\nu}(\A_\lambda^\theta))= \C[X^{\nu(\C^\times)}]$ is an isomorphism.
\item The category $\OCat_\nu(\A_\lambda)$ is highest weight, where the standard objects
are $\Delta_\nu(N_i)$, costandard objects are $\nabla_\nu(N_i)$, $N_i\in \operatorname{Irr}(\Ca_\nu(\A_{\lambda+z\chi}))$.
The order is given as explained before Lemma \ref{Lem:fin_length}.
\item A natural functor $D^b(\OCat_\nu(\A_\lambda))\rightarrow D^b(\A_\lambda\operatorname{-mod})$
is a full embedding.
\end{enumerate}
\end{Prop}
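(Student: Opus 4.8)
The three assertions are closely tied together, so the plan is to establish them essentially simultaneously, working over the line $\ell=\{\lambda^\circ+z\chi\}$ and extracting the Zariski-generic conclusion at the very end. I would begin with (1): the homomorphism $\Ca_\nu(\A_\lambda)\to \Gamma(\Ca_\nu(\A_\lambda^\theta))$ is always defined because the Cartan subquotient is functorial in the algebra and the global section functor carries $\A_\lambda^{>0}$ to $\A_\lambda^{\theta,>0}$. The right-hand side is $\C[X^{\nu(\C^\times)}]$ by the quantized version of Kirwan surjectivity / the computation of the Cartan subquotient on the sheaf level (this is part of the cited material from \cite[Section 5]{BLPW}, and geometrically comes from the fact that $\gr\Ca_\nu(\A_\lambda^\theta)=\mathcal{O}_{X^{\nu(\C^\times)}}$ together with $X^{\nu(\C^\times)}$ being a finite set of points, so its structure sheaf has no higher cohomology). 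To get the isomorphism on the algebra level, I would consider the Rees construction in the family over $\ell$: form $\tilde{\A}_\ell$, apply $\Ca_\nu(-)$, and observe that $\gr\Ca_\nu(\A_\lambda)$ is a quotient of $(\gr\A_\lambda)^{\nu(\C^\times)}=\C[X]^{\nu(\C^\times)}$, while the target $\C[X^{\nu(\C^\times)}]$ is a finite-dimensional algebra. Using Lemma~\ref{Lem:gen_freeness} applied to the $\C[\ell]$-algebra $\Ca_\nu(\A_\ell)$ (whose associated graded is commutative and finitely generated), the dimension of the fiber $\Ca_\nu(\A_\lambda)$ is constant and equal to $\dim\C[X^{\nu(\C^\times)}]=|X^T|$ on a Zariski-open subset of $\ell$; combined with surjectivity of the comparison map (a consequence of $H^{>0}(X^{\nu(\C^\times)},\mathcal{O})=0$, which forces $\Gamma$ to be exact on the relevant short exact sequences of sheaves built from $\A_\lambda^\theta$), a dimension count gives that it is an isomorphism for generic $\lambda$.

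With (1) in hand, $\dim\Ca_\nu(\A_\lambda)<\infty$, so Lemma~\ref{Lem:fin_length} applies: the category $\OCat_\nu(\A_\lambda)$ has finite-length objects, finite-dimensional generalized $h$-eigenspaces, enough projectives, Verma modules $\Delta_\nu(N_i)$ of finite length with $L_\nu(N_i)$ as their heads, and dual Verma modules $\nabla_\nu(N_i)$; moreover since $\Ca_\nu(\A_\lambda)=\C[X^{\nu(\C^\times)}]$ is \emph{semisimple} (a product of $|X^T|$ copies of $\C$), each simple $N_i$ is one-dimensional and the quotient categories at each weight are just $\operatorname{Vect}$. The partial order $\leqslant$ defined before Lemma~\ref{Lem:fin_length} via the $h$-eigenvalues $\alpha_i$ is then exactly the candidate highest-weight order. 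To prove (2) I would verify the highest-weight axioms: the upper-triangularity of composition multiplicities $[\Delta_\nu(N_i):L_\nu(N_j)]$ with respect to $\leqslant$ is Lemma~\ref{Lem:fin_length}(2); the BGG reciprocity / standard filtration on projectives is the one genuinely substantive point. For that I would use the contravariant restricted-duality functor $M\mapsto M^{(*)}$ from Section~\ref{SSS_cat_O_gen}, which interchanges $\Delta_\nu$ and $\nabla_\nu$ and shows $\OCat_\nu(\A_\lambda)^{opp}\cong \OCat_{-\nu}(\A_\lambda^{opp})=\OCat_{-\nu}(\A_{-\lambda}^{opp})$ — note $\A_{-\lambda}\cong\A_\lambda^{opp}$ — so that costandard objects of one category match standard objects of another category of the same type; then the Ext-vanishing $\Ext^{>0}(\Delta_\nu(N_i),\nabla_\nu(N_j))=\delta_{ij}\C$ together with the finiteness just established yields, by the usual homological argument, that the indecomposable projectives carry $\Delta_\nu$-filtrations with the correct multiplicities.

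For (3), the claim is that $D^b(\OCat_\nu(\A_\lambda))\to D^b(\A_\lambda\operatorname{-mod})$ is fully faithful, i.e. that $\Ext$-groups computed inside $\OCat$ agree with those computed in $\A_\lambda\operatorname{-mod}$. The strategy is to show that projective objects of $\OCat_\nu(\A_\lambda)$ remain "relatively projective" for this purpose, or more precisely to exhibit, for each projective $P\in\OCat_\nu(\A_\lambda)$ and each $\A_\lambda$-module $M$, a vanishing $\Ext^{>0}_{\A_\lambda}(P,M)$ whenever $M$ lies in $\OCat$; equivalently one checks that a projective resolution of an object of $\OCat$ by projectives-in-$\OCat$ is acyclic for $\Hom_{\A_\lambda}(-,N)$ when $N\in\OCat$. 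The cleanest route — and the one I expect \cite{BLPW} uses — is to invoke finite homological dimension of $\A_\lambda$ for Zariski-generic $\lambda\in\ell$, which is exactly Lemma~\ref{Lem:gen_simplicity}(3), together with the geometric description of $\OCat$ as modules set-theoretically supported on the attracting locus $Y$: the projectives in $\OCat$ can be realized (after further shrinking the generic locus so abelian localization holds, Lemma~\ref{Lem:local} / Lemma~\ref{Lem:gen_simplicity}(1)) via $\Loc_\lambda$ as coherent $\A_\lambda^\theta$-modules, and a local computation on $X$ shows that the sheaf-level Verma and projective objects have no higher self-$\Ext$ outside $\OCat$ because the relevant local model is a Weyl-algebra-type computation on the affine cells of $Y$. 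The main obstacle, and where I would spend the most care, is precisely this last point — establishing that the embedding of derived categories is full, since it requires passing between the algebra $\A_\lambda$ and its sheaf-theoretic incarnation $\A_\lambda^\theta$ and controlling $\Ext$-groups that a priori could pick up contributions from modules not in $\OCat$; handling it uniformly in the parameter (so that "Zariski generic $\lambda$" makes sense for all three statements at once) is the delicate bookkeeping. Everything else is a combination of the cited structural results and the formal highest-weight machinery of Section~\ref{S_SSC}.
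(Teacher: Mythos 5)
The first thing to say is that the paper contains no proof of this proposition at all: it is quoted from \cite[Section 5]{BLPW} (Section \ref{S_cat_O} is expressly stated to contain no new results), so the only in-paper point of comparison is the closely related Proposition \ref{Prop:A0_descr} and its proof via Harish-Chandra bimodules. Measured against either source, your treatment of part (1) has a genuine gap. You claim the comparison map $\Ca_\nu(\A_\lambda)\rightarrow\Gamma(\Ca_\nu(\A_\lambda^\theta))=\C[X^{\nu(\C^\times)}]$ is surjective for \emph{every} $\lambda$, deducing this from $H^{>0}(X^{\nu(\C^\times)},\mathcal{O})=0$, and you let genericity enter only through a dimension count based on Lemma \ref{Lem:gen_freeness}. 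Surjectivity genuinely fails at special parameters: for $X=T^*(G/B)$ and $\lambda$ corresponding to a singular central character, $\OCat_\nu(\A_\lambda)$ has fewer than $|W|=|X^T|$ simple objects, hence $\Ca_\nu(\A_\lambda)$ has fewer than $|X^T|$ pairwise non-isomorphic simple quotients and cannot surject onto $\C^{|X^T|}$. The problem is that sections of the sheaf-level quotient can be strictly larger than the quotient of sections, and no cohomology vanishing on the fixed locus controls this; consequently your generic-freeness argument never identifies the generic rank of $\Ca_\nu(\A_\ell)$ over $\C[\ell]$ with $|X^T|$, and that identification is precisely the content of (1). The mechanism the paper uses for the analogous claim (proof of Proposition \ref{Prop:A0_descr}) is exactly what is missing here: the kernel and cokernel of the family map $\Ca_\nu(\tilde{\A})\rightarrow\Gamma(\Ca_\nu(\tilde{\A}^\theta))$ are Harish-Chandra bimodules (using Lemma \ref{Lem:A0_classic}), their associated graded modules are supported over $\mathcal{H}_\C$, and Proposition \ref{Prop:supp_gr} bounds their actual supports by closed subvarieties whose asymptotic cones lie in $\mathcal{H}_\C$; since $\chi\notin\mathcal{H}_\C$ (as $X_\chi$ is affine), the line $\ell$ meets these supports in finitely many points. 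Some argument of this strength, or the one in \cite{BLPW}, has to replace your surjectivity claim.

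Parts (2) and (3) depend on (1) and are also under-argued on their own. In (2) the substantive axiom is the existence of $\Delta_\nu$-filtrations on the indecomposable projectives; you derive it from the vanishing $\Ext^{>0}(\Delta_\nu(N_i),\nabla_\nu(N_j))=0$, which you never prove, and which is not a formal consequence of the duality $M\mapsto M^{(*)}$ — in this setting one either constructs the projectives explicitly (induction of suitable finite-dimensional $\A_\lambda^{\geqslant 0}$-modules and truncation by $h$-eigenvalues, as in \cite{GGOR} and \cite{BLPW}) or proves the $\Ext$-vanishing by a concrete resolution or bimodule argument. In (3) the entire content is that $\Ext$-groups between objects of $\OCat_\nu(\A_\lambda)$ computed in $\A_\lambda\operatorname{-mod}$ agree with those computed in $\OCat_\nu(\A_\lambda)$; your reduction to an unspecified ``Weyl-algebra-type local computation on the cells of $Y$'' is a placeholder rather than an argument, and finite homological dimension of $\A_\lambda$ by itself does not give fullness. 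Within this paper's toolkit, statements of exactly this kind are obtained from projectivity of bimodules such as $\A_{\lambda}/\A_\lambda\A_\lambda^{>0}$ over the Cartan subquotient, proved by the holonomicity and support techniques of Sections \ref{SS_holon} and \ref{S_SS_O} (compare Lemma \ref{Lem:der_st_cost} and Lemma \ref{Lem:der_tens_prod}); that is the kind of input you would need to supply, uniformly in $\lambda\in\ell$, to make (3) — and hence the proposal as a whole — complete.
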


The image of the embedding in (3) is the category $D^b_{\OCat_\nu}(\A_\lambda\operatorname{-mod})$
of all complexes whose homology lie in $\OCat_\nu(\A_\lambda)$.

%
%

We also have the category $\mathcal{O}$ on the level of sheaves.
Following \cite[Section 3.3]{BLPW},
consider the category $\OCat_\nu(\A_\lambda^\theta)$. It consists of all coherent $\A_\lambda^\theta$-modules
that come with a good filtration stable under $h$ and that are supported on the contracting locus $Y$
for $\nu$ in $X$, i.e., $Y:=\{x\in X| \lim_{t\rightarrow 0} \nu(t).x\text{ exists}\}$. Note that $Y$
is a lagrangian subvariety.

The translation equivalences preserve categories $\mathcal{O}_\nu$ and so do $\Gamma$
and $\Loc$, see \cite[Lemma 3.17, Corollary 3.19]{BLPW}.

\begin{Lem}\label{Lem:O_equiv}   A coherent $\A_\lambda^\theta$-module supported on
$Y$ lies in $\OCat_\nu(\A_\lambda^\theta)$ if and only if it can be made weakly $\nu(\C^\times)$-equivariant.
\end{Lem}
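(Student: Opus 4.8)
The plan is to show the two implications separately, using the definitions of $\OCat_\nu(\A_\lambda^\theta)$ in terms of a good $h$-stable filtration on one side and weak $\nu(\C^\times)$-equivariance on the other. Recall that a weakly $\nu(\C^\times)$-equivariant structure on a coherent $\A_\lambda^\theta$-module $M$ amounts to an action of $\nu(\C^\times)$ by $\mathbb{C}$-linear sheaf automorphisms compatible with the $\nu(\C^\times)$-action on $\A_\lambda^\theta$ (which comes from the Hamiltonian action), while the ``finite'' version baked into $\OCat_\nu$ is that $M$ carries a good filtration that is stable under the operator $h\in\A_\lambda$ (equivalently, the grading operator $\partial$ of this action). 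So the content of the lemma is that on a coherent module supported on $Y$ these two notions are equivalent.

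First I would prove the easy direction: if $M\in\OCat_\nu(\A_\lambda^\theta)$, it has a good $h$-stable filtration $M=\bigcup M_{\leqslant i}$. Since the filtration is $h$-stable, $h$ acts on each finite-dimensional-over-$\mathbb{C}[Y]$ quotient $M_{\leqslant i}/M_{\leqslant i-1}$; more to the point, because the filtration is $h$-stable and $[h,-]$ is a derivation of $\A_\lambda^\theta$ integrating the $\nu(\C^\times)$-action, one can integrate the locally finite $h$-action to a genuine $\nu(\C^\times)$-action on $M$. The point is that local finiteness of $h$ on the associated graded forces local finiteness of $h$ on $M$ itself (the filtration being complete and separated), and a locally finite action of $h$ together with integrality of the eigenvalue differences — which holds because $\gr M$ is a $\gr\A_\lambda^\theta=\Str_X$-module and the $\nu(\C^\times)$-action on $X$ is algebraic — exponentiates to a rational $\nu(\C^\times)$-action. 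This action is weakly equivariant essentially by construction, since $\exp(\log(t)\,\mathrm{ad}\,h)$ on $\A_\lambda^\theta$ is exactly the geometric $\nu(t)$-action.

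For the converse, suppose $M$ is coherent, supported on $Y$, and weakly $\nu(\C^\times)$-equivariant. Differentiating the $\nu(\C^\times)$-action gives an operator on $M$ whose bracket with $\A_\lambda^\theta$ agrees with $[h,-]$; since $h$ acts on the trivial module by the comoment map and $M$ is a quotient of free modules, after adjusting by a locally constant (hence constant, $X$ being connected in each relevant piece) scalar one gets a genuine action of $h$ on $M$ compatible with its action on $\A_\lambda^\theta$. Now start from any good filtration on $M$; averaging, or rather applying the standard trick of replacing $M_{\leqslant i}$ by $\sum_{j} h^{(j)}\cdot(\text{generators})$ where $h$ acts locally finitely, produces an $h$-stable good filtration. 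Concretely, pick finitely many homogeneous (for the locally finite $h$-action) generators of $M$ and let $M_{\leqslant i}$ be spanned over $\A_{\lambda,\leqslant i}^\theta$ by these; this is automatically $h$-stable because both the generators and $\A_{\lambda,\leqslant i}^\theta$ are $h$-stable, and it is good since it is a quotient of the corresponding filtration on a free module. This witnesses $M\in\OCat_\nu(\A_\lambda^\theta)$.

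The main obstacle, and the step deserving the most care, is the passage between a locally finite \emph{$h$-action} and a rational \emph{$\nu(\C^\times)$-action}: one must check that the eigenvalues of $h$ on $M$ differ by integers (so that exponentiation lands in $\nu(\C^\times)$ rather than some cover), that the action is rational rather than merely locally finite, and that completeness/separatedness of the filtration is not an issue when integrating. The integrality of eigenvalue differences is where supportedness on $Y$ and the geometric origin of $\nu$ enter: on $\gr M$, which lives on $X$, the grading operator has integer-difference eigenvalues because it comes from the algebraic $\C^\times$-action $\nu$ on $X$, and this propagates up the filtration. I would phrase this carefully, possibly citing \cite[Section 3.3]{BLPW} for the parallel discussion, since it is exactly the compatibility that makes the definition of $\OCat_\nu$ via filtrations equivalent to the geometric one.
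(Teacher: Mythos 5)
Your overall strategy (pass back and forth between an $h$-stable good filtration and a weak $\nu(\C^\times)$-equivariant structure by differentiating/exponentiating) differs from the paper's, which first reduces to the algebra level: one may assume abelian localization holds for $(\lambda,\theta)$, supports match under $\Gamma$, and so the claim becomes: a finitely generated $\A_\lambda$-module supported on $Y_0$ lies in $\OCat_\nu(\A_\lambda)$ (local nilpotency of $\A^{>0}$) if and only if it is gradable. Unfortunately, both of your key steps have genuine gaps. In the forward direction, the integrality claim on which you hang the exponentiation is false: the eigenvalues of $h$ on a module in $\OCat_\nu$ are shifted by the scalars $c^p_\lambda$ coming from the quantum comoment map, which are not integers and whose pairwise differences (for different fixed points $p$, or on a direct sum of Vermas) are not integers either; nothing about $\gr M$ being an $\Str_X$-module forces these constants to vanish, so $t\mapsto t^{h}$ is not a rational $\nu(\C^\times)$-action. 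The correct observation, which is how the paper argues, is that a weakly equivariant structure does \emph{not} require the grading operator to be $h$: one simply declares the generalized $z$-eigenspace of $h$ to sit in degree $\varphi(z)$ for any function $\varphi:\C\rightarrow\Z$ with $\varphi(z+1)=\varphi(z)+1$; compatibility with the grading on $\A_\lambda$ is automatic because $\A^i$ shifts $h$-eigenvalues by $i$. (You also need local finiteness of $h$, which at the sheaf level is not part of the definition and is why the reduction to the algebra side is made first.)

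In the converse direction your construction is not justified and, as written, cannot be repaired without using the support hypothesis, which you never invoke. The filtration $M_{\leqslant i}:=\A^\theta_{\lambda,\leqslant i}\cdot\{m_j\}$ generated by $\nu$-homogeneous generators is good, but it is not ``automatically $h$-stable'': $h\,m_j$ has the same weight as $m_j$ yet may have arbitrarily large filtration degree, so $hM_{\leqslant i}\not\subset M_{\leqslant i}$ in general. Indeed, if your argument were correct it would show that every weakly equivariant coherent module admits an $h$-stable good filtration, which is false: for $M=\A_\lambda^\theta$ an $h$-stable good filtration would force the symbol of $h$ to kill $\gr M$, i.e.\ $\gr M$ would be supported on the zero locus of the classical moment map for $\nu$, contradicting $\Supp(\gr M)=X$. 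The support condition is exactly what makes things work: in the paper's argument one takes a $\nu(\C^\times)$-equivariant good filtration of the global sections module $M_0$, notes that $\Supp(M_0)\subset Y_0$ forces the $\nu$-weights of $M_0$ to be bounded from above, and concludes that $\A^{>0}$ acts locally nilpotently, i.e.\ $M_0\in\OCat_\nu(\A_\lambda)$. Finally, the step where you ``adjust by a locally constant scalar to get a genuine action of $h$'' is unnecessary and somewhat confused: $h$ is an element of $\A_\lambda$ and already acts on any module; the issue is never the existence of an $h$-action but the compatibility of gradings/filtrations, and that is precisely where the boundedness coming from the support on the contracting locus must enter.
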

\begin{proof}
We may assume that  abelian localization holds for $(\lambda,\theta)$. Let $Y_0$
be the $\nu$-contracting locus in $X_0$ so that $Y=\rho^{-1}(Y_0)$. For $M\in \Coh(\A_\lambda^\theta)$,
we have $\Supp(M)\subset Y$ if and only if $\Supp(\Gamma(M))\subset Y_0$. So it is enough to check
that a finitely generated module $M_0$ supported on $Y_0$ lies in $\OCat_\nu(\A_\lambda)$ if and only if it admits a
weakly $\nu(\C^\times)$-equivariant structure.

Suppose $M_0\in \OCat_\nu(\A_\lambda)$. Then $h$ acts locally finitely on $M_0$. Pick any
map $\varphi:\C\rightarrow \Z$ such that $\varphi(z+1)=\varphi(z)+1$. Then we define a
grading on $M_0$ by requiring that the generalized $z$-eigenspace for $h$ has degree $\varphi(z)$.
This gives a required weakly $\nu(\C^\times)$-equivariant structure.

Conversely, assume that $M_0$ has a weakly $\nu(\C^\times)$-equivariant structure. We can pick a
$\nu(\C^\times)$-equivariant good filtration on $M_0$. Since $\Supp(M_0)\subset Y_0$,
we see that the weights of $\nu(\C^\times)$ in $M_0$ are bounded from above. It follows
that $\A^{>0}$ acts locally nilpotently. So $M\in \OCat_\nu(\A_\lambda)$.
\end{proof}

Again, we assume that $\nu$ is generic.  Pick an ample class $\chi\in \operatorname{Pic}(X)$. Using  equivalences
$\OCat_\nu(\A_\lambda^\theta)\xrightarrow{\sim}\OCat_\nu(\A_{\lambda+n\chi}^\theta)
\xrightarrow{\sim} \OCat_\nu(\A_{\lambda+n\chi})$ for $n\gg 0$, we can carry the highest weight structure from
$\OCat_\nu(\A_{\lambda+n\chi})$  to $\OCat_\nu(\A_\lambda^\theta)$. However, we can use
a different partial order getting the same collection of standard objects.
Namely, for $p\in X^{\nu(\C^\times)}$, let $Y_p$ denote the contracting
component of $p$, $Y_p:=\{x\in X| \lim_{t\rightarrow 0}\nu(t)x=p\}$. Define the partial order
$\preceq_\nu$ on $X^{\nu(\C^\times)}$ by taking the transitive closure of the relation
$p'\preceq p$ given by $p'\preceq p$ if and only if $p'\in \overline{Y_p}$.
Set $Y_{\preceq p}:=\bigcup_{p'\preceq p}Y_{p'}$,
this is a closed lagrangian subvariety in $X$. Further, set
$$\OCat_\nu(\A^\theta_\lambda)_{\preceq p}:=\{M\in \OCat_\nu(\A^\theta_\lambda)| \VA(M)\subset Y_{\preceq p}\}.$$
Here, as usual, we write $\VA(M)$ for the support of $\gr M$ in $X$,
where the associated is taken with respect to a good filtration. It is defined
as the support of the coherent sheaf $\gr M$ for some choice of a good filtration on $M$.

It turns out that $\OCat_\nu(\A^\theta_\lambda)$ is a highest weight category with respect to this order.
The standard objects are localizations of Verma modules $\operatorname{Loc}_{\lambda+n\chi}(\Delta_{\nu}(N))\in
\OCat_{\nu}(\A_{\lambda+n\chi}^\theta)\cong \OCat_\nu(\A_\lambda^\theta)$, see \cite[Section 5.3]{BLPW}.


Below we will need to understand the structure of the category $\OCat_\nu(\A_\lambda^\theta)$
in the case when $X$ splits into the product $X^1\times X^2$ of two conical symplectic resolutions.
Both $\lambda$ and $\theta$ are naturally pairs $(\lambda^1,\lambda^2), (\theta^1,\theta^2)$.


\begin{Lem}\label{Lem:product}
The functor $\OCat_{\nu}(\A^{1\theta^1}_{\lambda^1})\boxtimes \OCat_\nu(\A^{2\theta^2}_{\lambda^2})
\rightarrow \OCat_\nu(\A_\lambda^\theta)$ defined by $M\boxtimes N\rightarrow M\otimes N$ is an equivalence
of highest weight categories.
\end{Lem}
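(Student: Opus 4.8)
The plan is to reduce everything to the three basic structures that make up the category: the algebra of sections, the Cartan subquotient, and the highest weight order. First I would observe that, since $\A_\lambda^\theta = \A^{1\theta^1}_{\lambda^1}\boxtimes \A^{2\theta^2}_{\lambda^2}$ on $X=X^1\times X^2$, one has $\A_\lambda = \A^1_{\lambda^1}\otimes \A^2_{\lambda^2}$ on global sections (the higher cohomology vanishes by the remarks in Section \ref{S_sympl_quant}), and the Hamiltonian $\nu$-action has comoment element $h = h_1\otimes 1 + 1\otimes h_2$. Choosing $\chi=(\chi^1,\chi^2)$ ample on $X$ and passing to $n\gg 0$ via the translation and localization equivalences, which by \cite[Lemma 3.17, Corollary 3.19]{BLPW} preserve categories $\OCat_\nu$ and by Lemma \ref{Lem:local} are equivalences, it suffices to prove the statement on the level of the algebras $\A_{\lambda+n\chi} = \A^1_{\lambda^1+n\chi^1}\otimes\A^2_{\lambda^2+n\chi^2}$, i.e. that the external tensor product functor
\[
\OCat_\nu(\A^1_{\lambda^1})\boxtimes \OCat_\nu(\A^2_{\lambda^2}) \to \OCat_\nu(\A^1_{\lambda^1}\otimes\A^2_{\lambda^2}),
\qquad M\boxtimes N\mapsto M\otimes N,
\]
is an equivalence of highest weight categories (I suppress the shift $n\chi$).

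Next I would identify the three ingredients. The Cartan subquotient factors: $\Ca_\nu(\A^1\otimes\A^2)\cong \Ca_\nu(\A^1)\otimes\Ca_\nu(\A^2)$, which one checks directly from the description \eqref{eq:Cartan_iso} using the grading $h=h_1\otimes 1+1\otimes h_2$ and the fact that $(\A^1\otimes\A^2)^{>0}$ is generated by $(\A^1)^{>0}\otimes\A^2$ and $\A^1\otimes(\A^2)^{>0}$; this matches the geometric fact $\C[X^{\nu(\C^\times)}]=\C[(X^1)^{\nu(\C^\times)}]\otimes\C[(X^2)^{\nu(\C^\times)}]$, and, after passing to generic $\lambda$ via Proposition \ref{Prop:hw_alg}(1), shows the simple $\Ca_\nu$-modules of the product are exactly the $N^1_i\otimes N^2_j$. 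Then the Verma module functor is multiplicative: $\Delta_\nu(N^1\otimes N^2) = (\A^1\otimes\A^2)\otimes_{(\A^1\otimes\A^2)^{\geqslant 0}}(N^1\otimes N^2)\cong \Delta_\nu(N^1)\otimes\Delta_\nu(N^2)$, since $(\A^i)^{\geqslant 0}$ and the relation $(\A\A^{>0})$ all factor through the tensor product. Taking maximal submodules not meeting the generating subspace, the same holds for simples: $L_\nu(N^1\otimes N^2)\cong L_\nu(N^1)\otimes L_\nu(N^2)$, using that $h$ acts on $M^1\otimes M^2$ with generalized eigenspaces $\bigoplus_{\alpha+\beta=\gamma}M^1_\alpha\otimes M^2_\beta$, all finite dimensional by Lemma \ref{Lem:fin_length}(3). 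Thus the functor sends standards to standards, costandards to costandards, simples to simples, and a Künneth argument (both sides have enough projectives, and projective covers of simples on the left map to projective covers on the right since $\Ext^1$ factors — see below) shows it is an equivalence of abelian categories; comparing the orders, the product order $\preceq_\nu$ on $X^{\nu(\C^\times)} = (X^1)^{\nu(\C^\times)}\times(X^2)^{\nu(\C^\times)}$ is the product of the two orders because contracting loci in a product are products of contracting loci, so the highest weight structures match.

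The step I expect to be the main obstacle is verifying that the functor is an equivalence of \emph{abelian} categories rather than just a fully faithful exact functor preserving the distinguished objects — concretely, controlling $\Ext$-groups and projectives across the tensor product. The natural route is a Künneth formula: for $M^i, M'^i \in \OCat_\nu(\A^i)$ one wants $\Ext^k_{\OCat_\nu(\A^1\otimes\A^2)}(M^1\otimes M^2, M'^1\otimes M'^2)\cong\bigoplus_{p+q=k}\Ext^p(M^1,M'^1)\otimes\Ext^q(M^2,M'^2)$. By Proposition \ref{Prop:hw_alg}(3) these $\Ext$'s may be computed in $\A^i\operatorname{-mod}$, where the tensor product of a projective resolution of $M^1$ over $\A^1$ with one of $M^2$ over $\A^2$ is a resolution of $M^1\otimes M^2$ over $\A^1\otimes\A^2$ by flatness, and one must check the resulting complex of $\Hom$-spaces computes the $\Ext$ inside category $\OCat$ — which follows once one knows category $\OCat$ is a Serre subcategory closed under the relevant operations and the resolution can be taken inside $\OCat$ (using that $\OCat_\nu(\A^i)$ has enough projectives and finite global dimension, the latter from Lemma \ref{Lem:gen_simplicity}(3) for generic $\lambda$). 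From $\Ext^{>0}=0$ between projectives it then follows that $P(N^1_i)\otimes P(N^2_j)$ is projective and, having simple head $L_\nu(N^1_i)\otimes L_\nu(N^2_j)$, is the projective cover $P(N^1_i\otimes N^2_j)$; hence the functor is essentially surjective and an equivalence. I would present the Künneth step carefully and treat the rest as routine bookkeeping.
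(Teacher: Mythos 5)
Your proposal is correct and follows essentially the same route as the paper: the paper's proof reduces to the algebra level via localization and establishes exactly your K\"unneth-type formula $\dim\Ext^i(P^1(p_1)\boxtimes P^2(p_2),L^1(p_1')\boxtimes L^2(p_2'))=\delta_{i,0}\delta_{p_1,p_1'}\delta_{p_2,p_2'}$ (using the full embedding of Proposition \ref{Prop:hw_alg}(3)), concluding that the external products of projectives are the indecomposable projectives. You merely spell out details the paper leaves implicit (factorization of $\Ca_\nu$, Vermas, simples, and the product contraction order), all of which check out.
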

\begin{proof}
Note that we have
\begin{equation}\label{eq:Ext_prod}\dim\operatorname{Ext}^i_{\A_\lambda^\theta}(P^1(p_1)\boxtimes P^2(p_2), L^1(p_1')\boxtimes
L^2(p_2'))=\delta_{i,0}\delta_{p_1,p_1'}\delta_{p_2,p_2'},\end{equation}
where $p_i,p_i'\in X^{i\nu(\C^\times)},i=1,2,$ and $P^i, L^i$ stand for the projective and simple objects
in the categories $\OCat_{\nu}(\A^{i\theta^i}_{\lambda^i})$.
Indeed, (\ref{eq:Ext_prod}) is clear for the categories on the level of algebras and on the level of sheaves follows
from the localization argument. The objects $L^1(p_1')\boxtimes L^2(p_2')$ are the simples in
$\OCat_\nu(\A^{\theta})$. It follows that the objects $P^1(p_1)\boxtimes P^2(p_2)$ are the indecomposable projectives.
\end{proof}

\subsection{Holonomic modules}\label{SS_holon}
In this section, we recall some results from \cite{B_ineq}. Let $X_0$ be a Poisson variety with finitely many
symplectic leaves. Following \cite{B_ineq}, we say that a subvariety $Y_0\subset X_0$ is {\it isotropic}
if its intersection with every leaf is isotropic in the leaf.  For example, when $X_0$ admits a symplectic
resolution of singularities $(X,\rho)$, then $Y_0\subset X_0$ is isotropic if and only if
$\rho^{-1}(Y_0)$ is isotropic, see \cite[Lemma 5.1]{B_ineq}.

%

Now let $X,X_0,\nu$ be as in Section \ref{SS_quant_resol_O}. Let $Y_0\subset X_0$ be the contracting
locus of $\nu$. Then $\rho^{-1}(Y_0)=Y$.
We conclude that $Y_0\subset X_0$ is isotropic.

Let us proceed to holonomic $\A_\lambda$-modules. We say that a finitely generated $\A_\lambda$-module
is {\it holonomic} if its support in $X_0$ is holonomic. In particular, any module in $\OCat_{\nu}(\A_\lambda)$
is supported on $Y_0$ and so is holonomic.

For holonomic modules, we have the following result, see \cite[Theorems 1.2,1.3]{B_ineq}.

\begin{Prop}\label{Prop:B_eq}
Let $N$ be a holonomic $\A_\lambda$-module and $\mathcal{I}$ be its annihilator.
Then $\operatorname{GK-}\dim \A_\lambda/\mathcal{I}=2\operatorname{GK-}\dim N$,
where $\operatorname{GK-}\dim$ stands for the Gelfand-Kirillov dimension.
\end{Prop}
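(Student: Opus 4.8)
\textbf{Proof proposal for Proposition \ref{Prop:B_eq}.}

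The plan is to reduce the statement to the analogous inequality and equality for modules over the associated graded algebra $\gr\A_\lambda=\C[X_0]$, i.e.\ for the Poisson structure on $X_0$, and to exploit that $X_0$ has finitely many symplectic leaves (Remark \ref{Rem:sympl_leaves_finite}). First I would equip $N$ with a good filtration, so that $\gr N$ is a finitely generated $\C[X_0]$-module whose support is exactly the holonomic (isotropic) subvariety $\Supp_{X_0}(N)$. The key geometric input is that if $Y_0\subset X_0$ is isotropic then $2\dim Y_0\leqslant \dim Z$ for every symplectic leaf $Z$ meeting $Y_0$ in a subset of dimension $\dim Y_0$; combined with the observation that the associated variety of the ideal $\mathcal{I}$ is a union of leaf closures, this gives the inequality $\operatorname{GK-}\dim\A_\lambda/\mathcal{I}\geqslant 2\operatorname{GK-}\dim N$ automatically, and the content is the reverse inequality. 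For the reverse direction I would pass to the formal neighborhood of a generic point $x$ of a leaf $Z$ of maximal dimension inside $\VA(N)$, using the decomposition $\A_\lambda^{\wedge_x}\cong\mathbb{A}_\hbar^{\wedge_0}\widehat\otimes\underline{\A}^{\wedge_x}$ (the slice/Darboux decomposition already invoked in the proof of Proposition \ref{Prop:supp_gr}), which reduces the problem to a module over the ``transverse slice'' algebra $\underline{\A}$, whose associated graded is supported at a point of the transverse Poisson slice.

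The heart of the argument is thus the local statement: a module over the slice algebra whose support is a point has annihilator of GK-dimension equal to twice the GK-dimension of the module. Here I would use that holonomicity is preserved under restriction to the slice (by \cite[Lemma 5.1]{B_ineq} and the compatibility of $\VA$ with the slice decomposition), so the module on the slice is again holonomic, and then induct on the number of leaves in $X_0$: the base case is a single point, where both sides are comparing a finite-dimensional module against a finite-dimensional algebra quotient. The inductive step peels off the open leaf, exactly as in the structure of the proof of Proposition \ref{Prop:supp_gr}, replacing $N$ by $N$ times an appropriate ideal to push the support into fewer leaves.

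The main obstacle I expect is establishing the precise equality (rather than just the two-sided inequality) $\operatorname{GK-}\dim\A_\lambda/\mathcal{I}=2\operatorname{GK-}\dim N$, because $\operatorname{GK-}\dim N$ could a priori be strictly smaller than the dimension of $\VA(N)$ if the filtration is badly chosen, and because the associated variety of $\mathcal{I}$ need not be reduced or equidimensional. To handle this I would choose the good filtration so that $\gr N$ has no embedded components supported on lower leaves (possible after replacing $N$ by a submodule of finite colength, which changes neither $\mathcal{I}$ nor $\operatorname{GK-}\dim$), and use that for holonomic modules $\operatorname{GK-}\dim N=\dim\VA(N)$ by \cite[Theorem 1.1]{B_ineq} (the holonomicity bound). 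Once both GK-dimensions are identified with dimensions of honest subvarieties of $X_0$, the equality follows from the leaf-by-leaf symplectic geometry: over the top leaf $Z$ of $\VA(N)$, the module restricts to a holonomic module on the Darboux slice of dimension $\dim Z$, whose support is the whole slice precisely because its transverse dimension is half of $\dim Z$, forcing $\VA(\A_\lambda/\mathcal{I})\supseteq\overline{Z}$ and hence $\operatorname{GK-}\dim\A_\lambda/\mathcal{I}\geqslant\dim Z=2\dim Y_0$.
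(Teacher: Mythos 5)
First, a point of comparison: the paper does not prove Proposition \ref{Prop:B_eq} at all --- it is quoted from \cite{B_ineq} (Theorems 1.2, 1.3), where it is the main result; so the only question is whether your sketch would itself constitute a proof, and it would not. You correctly isolate the easy direction: since $\gr\mathcal{I}$ lies in the annihilator of $\gr N$, one has $\VA(N)\subset\VA(\A_\lambda/\mathcal{I})$, the latter is a union of closures of symplectic leaves, and isotropy of $\VA(N)$ gives $2\operatorname{GK-}\dim N\leqslant \operatorname{GK-}\dim\A_\lambda/\mathcal{I}$. The content is therefore the opposite inequality $\operatorname{GK-}\dim\A_\lambda/\mathcal{I}\leqslant 2\operatorname{GK-}\dim N$, i.e.\ the Bernstein-type inequality. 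But your closing argument never proves it: showing $\VA(\A_\lambda/\mathcal{I})\supseteq\overline{Z}$ for the leaf $Z$ through a generic point of $\VA(N)$, and that the transverse support has dimension $\frac{1}{2}\dim Z$, only yields the lower bound $\operatorname{GK-}\dim\A_\lambda/\mathcal{I}\geqslant\dim Z$ --- the direction you already called automatic --- and the identity $\dim Z=2\dim\VA(N)$ is exactly what has to be proved, not an input. (Note also that $\VA(N)$, being isotropic, contains no leaf of positive dimension, so ``the top leaf of $\VA(N)$'' can only mean the leaf through a generic point of it.)

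The step that would fail is the reduction itself. A priori $\VA(\A_\lambda/\mathcal{I})$ may contain leaves of dimension larger than $2\dim\VA(N)$, including leaves that do not meet $\Supp N$ at all, and the leaf through a generic point of $\Supp N$ may itself be larger than $2\dim\VA(N)$: the model case is a module with zero-dimensional support inside a large leaf (for the Weyl algebra the theorem is precisely the nonexistence of nonzero finite-dimensional modules). Your completion argument cannot see this case: completing at a generic point of a big leaf of $\VA(\A_\lambda/\mathcal{I})$ kills $N$, since its support misses that point, so it gives no information about $N$; completing at a point of $\Supp N$ does not reduce the number of leaves. The mechanism that powers the induction in Proposition \ref{Prop:supp_gr} --- that the completion of a HC bimodule at a point of an open leaf becomes finitely generated over $\C[[\param,\hbar]]$ --- is special to bimodules whose two $\gr$-actions coincide and has no analogue for the one-sided module $N$. (Also, replacing $N$ by a submodule of finite colength can change $\mathcal{I}$, so that normalization is not free.) The hard inequality genuinely requires the different and considerably more involved argument of \cite{B_ineq}; it is not a leaf-by-leaf formal-neighborhood induction of the kind sketched here.
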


We will need a consequence of this proposition and Lemma \ref{Lem:gen_simplicity}.

\begin{Cor}\label{Cor:full_GK_O}
Let $\lambda,\chi$ be as in Lemma \ref{Lem:gen_simplicity}. Then, for a Weil generic $z$
and any nonzero holonomic $\A_{\lambda+z\chi}$-module $M$, we have $\operatorname{GK-}\dim M=\frac{1}{2}\dim X$.
\end{Cor}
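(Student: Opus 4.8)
The plan is to combine the Gelfand--Kirillov dimension identity of Proposition \ref{Prop:B_eq} with the simplicity statement of Lemma \ref{Lem:gen_simplicity}(2). Fix $\lambda,\chi$ as in Lemma \ref{Lem:gen_simplicity}, and recall that for $z\notin \bigcup_{i=1}^k(z_i+\Z)$ the algebra $\A_{\lambda+z\chi}$ is simple; this is a Zariski-dense (in fact cofinite up to $\Z$-translates) condition, so in particular it holds for a Weil generic $z$. First I would take a nonzero holonomic $\A_{\lambda+z\chi}$-module $M$ with annihilator $\mathcal{I}$. Since $\A_{\lambda+z\chi}$ is simple and $\mathcal{I}$ is a two-sided ideal with $M\neq 0$ (so $\mathcal{I}\neq \A_{\lambda+z\chi}$), we get $\mathcal{I}=0$, i.e. $M$ is faithful. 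Then Proposition \ref{Prop:B_eq} gives $\operatorname{GK-}\dim \A_{\lambda+z\chi} = 2\operatorname{GK-}\dim M$.

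Next I would identify $\operatorname{GK-}\dim \A_{\lambda+z\chi}$. Since $\gr \A_{\lambda+z\chi}\cong \C[X_0]=\C[X]$ and $X$ is a symplectic resolution of $X_0$ with $\dim X = \dim X_0$, we have $\operatorname{GK-}\dim \A_{\lambda+z\chi} = \dim X_0 = \dim X$. Combining with the previous display yields $\operatorname{GK-}\dim M = \tfrac12\dim X$, which is exactly the assertion. The only point requiring a little care is that Proposition \ref{Prop:B_eq} is stated for holonomic modules over $\A_{\lambda'}$ for a general parameter $\lambda'$, so one must check that the hypotheses there (that $X_0$ has finitely many symplectic leaves, which holds by Kaledin's theorem and Remark \ref{Rem:sympl_leaves_finite}) are in force for $\lambda' = \lambda+z\chi$; they are, since these are properties of $X_0$ alone.

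I expect the main (minor) obstacle to be the bookkeeping around genericity: one must ensure that "Weil generic $z$" can be chosen so that simultaneously (a) the simplicity condition of Lemma \ref{Lem:gen_simplicity}(2) holds, i.e. $z\notin\bigcup_i(z_i+\Z)$, and (b) any other finitely many generic conditions that might be implicitly used are satisfied. Since the failure locus in (a) is a countable union of points, this is harmless over an uncountable field such as $\C$, and "Weil generic" is precisely the phrase that absorbs this. A secondary subtlety is that a priori a holonomic module could be zero or have GK-dimension strictly less than $\tfrac12\dim X$ over a non-generic parameter (for instance if $\A_\lambda$ has a nontrivial two-sided ideal with small associated variety); the simplicity of $\A_{\lambda+z\chi}$ for Weil generic $z$ is exactly what rules this out, forcing faithfulness and hence, via Proposition \ref{Prop:B_eq}, the maximal possible GK-dimension $\tfrac12\dim X$ for every nonzero holonomic module.
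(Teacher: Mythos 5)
Your argument is correct and is exactly the paper's intended route: the corollary is stated as a direct consequence of Proposition \ref{Prop:B_eq} combined with the simplicity of $\A_{\lambda+z\chi}$ for Weil generic $z$ from Lemma \ref{Lem:gen_simplicity}(2), which forces any nonzero holonomic module to be faithful and hence gives $2\operatorname{GK-}\dim M=\operatorname{GK-}\dim\A_{\lambda+z\chi}=\dim X$. Your observations about Weil genericity absorbing the countable bad locus $\bigcup_i(z_i+\Z)$ and about $\operatorname{GK-}\dim\A_{\lambda+z\chi}=\dim X_0=\dim X$ are precisely the points needed.
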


Let us provide one more example of holonomic modules. Let $\B$ be a HC $\A_{\lambda}$-bimodule.
Then it is a holonomic $\A_{\lambda}\otimes \A_{\lambda}^{opp}$-module (its support is the diagonal
in $X_0\times X_0$ that is easily seen to be isotropic).
%

\section{Parabolic induction}\label{S_parab}
\subsection{Hamiltonian actions and fixed point components}\label{SS_Ham_fixed}
Suppose that we have a Hamiltonian action $\nu$ of $\C^\times$ on $X=X^\theta$
that commutes with the contracting action of $\C^\times$.
By the universality of the deformations, this action extends to both
$\tilde{X}^\theta$ and $\tilde{\A}^\theta$. Since $H^1_{DR}(X)=0$, we see
that the actions on $\tilde{X}^\theta,\tilde{\A}^\theta$ are Hamiltonian as well.
For the action on $\tilde{\A}^\theta$, this means, by definition, that there is a
$\nu(\C^\times)$-invariant element $h\in \tilde{\A}$ such that the derivation
$[h,\cdot]$ of $\tilde{\A}^\theta$ coincides with the derivation induced by the
$\C^\times$-action.


\begin{Prop}\label{Prop:sympl_res_torus_fixed}
The fixed point subvariety $X^{\nu(\C^\times)}$ is a conical symplectic resolution
itself.
\end{Prop}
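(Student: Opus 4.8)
The plan is to verify directly that $Z := X^{\nu(\C^\times)}$, equipped with the restriction of the contracting $\C^\times$-action, satisfies the defining properties of a conical symplectic resolution. First I would produce the affinization: since $\nu$ commutes with the contracting action, $\nu(\C^\times)$ acts on $X_0 = \Spec(\C[X])$, and I claim $Z_0 := X_0^{\nu(\C^\times)}$ is the relevant affine Poisson base. The natural map $Z \to Z_0$ is the restriction of $\rho$, so it is projective; it is birational because over the open symplectic locus of $X_0$ the resolution $\rho$ is an isomorphism and $\nu$-equivariant, so it restricts to an isomorphism on fixed loci there, and both $Z$ and $Z_0$ are reduced with $Z$ smooth. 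The key structural input is that $Z$ is smooth and symplectic: $X^{\nu(\C^\times)}$ is smooth because it is the fixed locus of a reductive (here, torus) action on a smooth variety, and the symplectic form $\omega$ restricts to a symplectic form on $Z$ since at a fixed point $p$ the tangent space $T_pX$ decomposes into $\nu$-weight spaces, $T_pZ = (T_pX)^{\nu}$ is the zero-weight part, and the weight-$i$ and weight-$(-i)$ spaces are paired non-degenerately by $\omega$ (as $\omega$ has $\nu$-weight $0$, $\nu$ commuting with the contracting action which rescales $\omega$), forcing $\omega|_{T_pZ}$ to be non-degenerate.

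Next I would check the conicity conditions on $\C[Z_0] = \C[X_0]^{\nu(\C^\times)}$: the grading is inherited from $\C[X_0]$, hence still non-negative with one-dimensional degree-zero part (the degree-zero part of $\C[X_0]^\nu$ is contained in $\C[X_0]_0 = \C$); and the contracting $\C^\times$ rescales $\omega|_Z$ with the same weight $d$ since $\omega|_Z$ is the restriction of $\omega$. Finite generation of $\C[Z_0]$ follows since it is the ring of invariants of a reductive group acting on a finitely generated algebra. One also needs $\C[Z] = \C[Z_0]$, i.e. that $\rho|_Z$ induces an isomorphism on functions; this should follow from $\rho^*\colon\C[X_0]\xrightarrow{\sim}\C[X]$ being $\nu$-equivariant and taking $\nu$-invariants — but one must be slightly careful that functions on $Z$ extend to $\nu$-invariant functions on $X$, which holds by reductivity (the restriction map $\C[X]^{\nu}\to\C[Z]$ is surjective because $Z$ is a connected component union inside the fixed locus and one can use a Reynolds-type argument, or appeal to normality of $Z_0$ together with birationality as in the argument for $X_0$ itself).

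The main obstacle I anticipate is normality of $Z_0$ (equivalently, that $\rho|_Z\colon Z\to Z_0$ really is the affinization, with no finite cover interfering) and the possibility that $X^{\nu(\C^\times)}$ is disconnected — in which case "conical symplectic resolution" must be understood componentwise, and each connected component $Z^{(j)}$ resolves its own affinization $Z_0^{(j)}$. I would handle disconnectedness by simply observing the statement and all its uses are compatible with taking a disjoint union of conical symplectic resolutions. For normality, the cleanest route is: $Z$ is smooth (hence normal), $\rho|_Z$ is projective and birational, so the Stein factorization $Z\to\Spec\Gamma(Z,\mathcal O_Z)\to Z_0$ has normal middle term, and it suffices to identify $\Spec\Gamma(Z,\mathcal O_Z)$ with $Z_0$, which is the function-field computation above. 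Finally, that $Z\to Z_0$ is a \emph{resolution} (an isomorphism over the smooth locus of $Z_0$) can be deduced from the corresponding statement for $X\to X_0$ restricted equivariantly to fixed loci, since $\rho$ is an iso over the open symplectic leaf of $X_0$ and that leaf is $\nu$-stable.
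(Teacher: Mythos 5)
Your smoothness/symplecticity observations and the componentwise reading are fine, and Stein factorization is indeed how the paper packages the conclusion, but the central step of your argument does not work. You take $Z_0:=X_0^{\nu(\C^\times)}$ and claim $Z\to Z_0$ is birational ``because $\rho$ is an isomorphism over the open symplectic locus of $X_0$ and restricts to an isomorphism on fixed loci there.'' The fixed locus need not meet the open leaf at all: already for $X=T^*\mathbb{P}^1$ over the nilpotent cone, every $\nu(\C^\times)$-fixed point of $X$ maps to the cone point, which lies in the most singular stratum, and $X^{\nu(\C^\times)}\to X_0^{\nu(\C^\times)}$ is two points mapping to one. So restricting the isomorphism over the open leaf gives no information about $\rho|_Z$, and in general $Z\to X_0^{\nu(\C^\times)}$ is only finite onto its image, not birational; relatedly, $\C[X_0^{\nu(\C^\times)}]$ is a quotient of $\C[X_0]$ by the ideal generated by nonzero-weight functions, not the invariant ring $\C[X_0]^{\nu(\C^\times)}$, so your ``take $\nu$-invariants of $\rho^*$'' identification of function rings is also off. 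The genuine difficulty, which your proposal never addresses, is to show that $\rho$ does not collapse dimension on a fixed component, i.e.\ $\dim\rho(Z)=\dim Z$; only then does Stein factorization give a birational projective map $Z\to\operatorname{Spec}(\C[Z])$ (with $\operatorname{Spec}(\C[Z])\to\rho(Z)$ merely finite dominant), which is what ``conical symplectic resolution'' means here.

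The paper closes exactly this gap by a deformation argument: pick a Zariski generic line $\ell\subset\tilde{\param}=H^2(X,\C)$, let $Z_\ell$ be the component of $X_\ell^{\nu(\C^\times)}$ through $Z$ (a smooth one-parameter deformation of $Z$), and use that over $\ell\setminus\{0\}$ the fibers $X_\lambda$ are affine so $\tilde{\rho}$ is an isomorphism there; since $\tilde{\rho}(Z_\ell)$ is irreducible over $\ell$ with zero fiber $\rho(Z)$, one gets $\dim\rho(Z)=\dim Z_\ell-1=\dim Z$. Conicality then follows because $\rho(Z)$ is a closed $\C^\times$-stable subvariety of $X_0$. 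Without this (or some substitute, e.g.\ a direct argument that $\rho|_Z$ is generically finite), your proof does not establish that $Z$ resolves its affinization, so as written there is a real gap rather than an alternative route.
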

\begin{proof}
The proof is in two steps.


{\it Step 1}. Let us take an irreducible component $Z$ of $X^{\nu(\C^\times)}$. Assume for the time being that
$\dim \rho(Z)=\dim Z$. Apply the Stein decomposition to the projective morphism $Z\rightarrow \rho(Z)$: it factors
through a birational morphism $Z\rightarrow Z_0:=\operatorname{Spec}(\C[Z])$ and a finite dominant morphism
$Z_0\rightarrow \rho(Z)$. The morphism $Z\rightarrow Z_0$ is a symplectic resolution of singularities.
It is conical because $\rho(Z)$ is a closed $\C^\times$-stable subvariety of $X_0$. So it is enough to
check that indeed $\dim \rho(Z)=\dim Z$.

{\it Step 2}. Pick  a Zariski generic one-dimensional subspace  $\ell\subset \tilde{\param}$. Consider the irreducible
component $Z_{\ell}$ of $X_{\ell}^{\nu(\C^\times)}$ containing $Z$, a one-dimensional smooth deformation of $Z$ over
$\ell$. So $\tilde{\rho}(Z_{\ell})$ is an irreducible scheme  over $\ell$. The fiber over zero is $\rho(Z)$.
But, over $\ell\setminus \{0\}$, the map $Z_{\ell}\rightarrow \tilde{\rho}(Z_{\ell})$ is an iso. It follows
that $\dim \rho(Z)=\dim \rho(Z_{\ell})-1=\dim Z_\ell-1=\dim Z$. This completes the proof.
\end{proof}

We also note that the irreducible components of $X^{\nu(\C^\times)}_\chi$ are in a natural bijection
with those of $X^{\nu(\C^\times)}$.

\subsection{Cartan subquotient: sheaf level}\label{SSS_Cart_subquot_sheaf}
We start with a symplectic variety $X$ equipped with a $\C^\times$-action that rescales the symplectic form
and also with a commuting Hamiltonian action $\nu$. Of course, it still makes sense to speak about quantizations
of $X$ that are Hamiltonian for $\nu$.
We want to construct a quantization $\Ca_\nu(\A^{\theta})$ of $X^{\nu(\C^\times)}$ starting from a Hamiltonian quantization
$\A^\theta$ of $X$.

The variety $X$ can be covered by $(\C^{\times})^{2}$-stable open affine subvarieties.
Pick such a subvariety $X'$ with $(X')^{\nu(\C^\times)}\neq \varnothing$.
Define $\Ca_\nu(\A^\theta)(X')$ as $\Ca_\nu(\A^\theta(X'))$.
We remark that the open subsets of the form $(X')^{\nu(\C^\times)}$ form a base of the Zariski topology on $X^{\nu(\C^\times)}$.

The following proposition defines the sheaf $\Ca_\nu(\A^\theta)$.
%

\begin{Prop}\label{Prop:A0}
The following holds.
\begin{enumerate}
\item Suppose that the contracting $\nu$-locus in $X'$ is a complete intersection defined by homogeneous (for $\nu(\C^\times)$) equations of positive weight. Then the algebra $\Ca_\nu(\A^\theta(X'))$ is a  quantization of $\C[X'^{\nu(\C^\times)}]$.
\item There is a unique sheaf $\Ca_\nu(\A^\theta)$ in the conical topology on $X^{\nu(\C^\times)}$
whose sections on $X'^{\nu(\C^\times)}$ with $X'$ as above coincide with $\Ca_\nu(\A^\theta(X'))$.
This sheaf is a quantization of $X^{\nu(\C^\times)}$.
\item If $X'$ is a $(\C^\times)^2$-stable affine subvariety, then $\Ca_\nu(\A^\theta)(X')=\Ca_\nu(\A^\theta(X'))$.
\end{enumerate}
\end{Prop}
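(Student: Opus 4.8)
The plan is to reduce everything to part (1), which is the only part requiring genuine work; parts (2) and (3) are formal consequences of (1) together with a standard gluing argument. For part (1), fix a $(\C^\times)^2$-stable open affine $X'$ on which the contracting $\nu$-locus $Y'$ is cut out by $\nu$-homogeneous equations $f_1,\dots,f_r$ of positive weight forming a regular sequence, so that $\C[Y'] = \C[X']/(f_1,\dots,f_r)$ and, since $Y'$ is Lagrangian in the symplectic $X'$, we have $\dim Y' = \tfrac12\dim X'$ and hence $r = \tfrac12\dim X'$; note also $\C[X'^{\nu(\C^\times)}] = \C[X']^{\nu(\C^\times)}$ and $\C[Y']^{\nu(\C^\times)} = \C[X'^{\nu(\C^\times)}]$ because $Y'$ retracts onto the fixed locus. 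Write $\A' := \A^\theta(X')$, a filtered quantization of $\C[X']$ equipped with the Hamiltonian $\nu$-action and comoment element $h$. First I would lift $f_1,\dots,f_r$ to $\nu$-homogeneous elements $F_1,\dots,F_r \in \A'^{>0}$ with $\gr F_i = f_i$; the positive-weight hypothesis guarantees $F_i \in \A'^{>0}$. The key claim is that the left ideal $\A'\A'^{>0}$ equals $\sum_i \A' F_i$ and that $\A'/\A'\A'^{>0}$ is, as a filtered left $\A'^{\geqslant 0}$-module, a quantization of $\C[X']/(f_1,\dots,f_r) = \C[Y']$; this follows by a Rees-algebra/associated-graded argument: pass to the Rees algebra $\A'_\hbar$, where $\gr(\A'_\hbar/\sum \A'_\hbar F_i) = \C[X']_\hbar/(f_1,\dots,f_r)$ because $(f_i)$ is a regular sequence (so there is no collapse of the associated graded and no lower-order corrections survive in the limit), then specialize $\hbar = 1$.

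The second ingredient is to identify $\Ca_\nu(\A') = \A'^{\geqslant 0}/(\A'^{\geqslant 0}\cap \A'\A'^{>0})$ with the $\nu(\C^\times)$-invariants $(\A'/\A'\A'^{>0})^{\nu(\C^\times)}$, which is exactly the isomorphism recorded in \eqref{eq:Cartan_iso} (or rather the argument behind it: an element of $\A'^{\geqslant 0}$ maps into the invariant part of the quotient, and conversely the invariant part is hit by degree-zero elements). Combining with the previous step, $\gr\Ca_\nu(\A') = (\C[X']_{\hbar=1}/(f_1,\dots,f_r))^{\nu(\C^\times)} = \C[Y']^{\nu(\C^\times)} = \C[X'^{\nu(\C^\times)}]$, where one uses that taking $\nu(\C^\times)$-invariants is exact on rational representations so it commutes with passing to $\gr$. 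It remains to check that the Poisson bracket induced on $\gr\Ca_\nu(\A')$ is the correct one, i.e. the one coming from the symplectic form on $X'^{\nu(\C^\times)}$ (which is a symplectic resolution by Proposition \ref{Prop:sympl_res_torus_fixed}, or at least a symplectic variety at this local stage). This is a local computation: near a fixed point the $\nu$-action linearizes and $X'$ looks like $T^*V \times (\text{symplectic fixed part})$ with $V$ the positive-weight part, and the Cartan subquotient literally kills the cotangent directions; I would phrase this intrinsically by noting that the bracket on $\A'^0$ descends to the bracket on the Hamiltonian reduction, matching the symplectic form on the fixed locus.

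For part (2): the subsets $X'^{\nu(\C^\times)}$ with $X'$ as in (1) form a base of the conical topology on $X^{\nu(\C^\times)}$ — every fixed point has a $(\C^\times)^2$-stable affine neighborhood on which the contracting locus is a complete intersection of the required type, since this can be arranged after shrinking (the contracting locus is smooth Lagrangian, hence locally a regular-sequence complete intersection, and by averaging one makes the defining functions $\nu$-homogeneous, necessarily of positive weight). The assignment $X' \mapsto \Ca_\nu(\A^\theta(X'))$ is compatible with restriction to smaller such opens because the Cartan subquotient construction commutes with the localizations defining the sheaf structure of $\A^\theta$; hence it extends uniquely to a sheaf on the base, and then to all of $X^{\nu(\C^\times)}$. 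That the resulting sheaf is a quantization follows because being a quantization is a local condition, verified in (1). Part (3) then follows: for an arbitrary $(\C^\times)^2$-stable affine $X'$, cover $X'^{\nu(\C^\times)}$ by basic opens $X_\alpha'^{\nu(\C^\times)}$ as in (1); one has $\Ca_\nu(\A^\theta)(X') = \varprojlim_\alpha \Ca_\nu(\A^\theta(X_\alpha'))$ by the sheaf axiom, while $\Ca_\nu(\A^\theta(X')) = \Ca_\nu(\A'^\theta)$ maps to this limit, and the map is an isomorphism because it is a filtered morphism which is an isomorphism on associated graded (both sides have $\gr = \C[X'^{\nu(\C^\times)}]$, using that $\C[X'^{\nu(\C^\times)}]$ satisfies the analogous gluing, as it is the ring of functions on an affine variety covered by the $X_\alpha'^{\nu(\C^\times)}$) and the filtrations are complete and separated. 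I expect the main obstacle to be part (1), specifically the regular-sequence argument showing that $\A'/\sum\A' F_i$ has the expected associated graded with no lower-order contamination, and the verification that the induced Poisson/symplectic structure on the fixed locus is the intended one rather than some rescaling.
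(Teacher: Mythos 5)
Your overall architecture matches the paper's: a local computation for (1) under the complete intersection hypothesis, a Luna-slice/linearization argument showing the sets $X'^{\nu(\C^\times)}$ form a base and a gluing argument for (2), and a reduction to basic opens for (3). But the heart of the proposition is exactly the step you defer. In (1) you assert that $\A'\A'^{>0}=\sum_i \A' F_i$ has the expected associated graded ``because $(f_i)$ is a regular sequence (so there is no collapse \dots no lower-order corrections survive),'' and you acknowledge at the end that this is the main obstacle. This is not a formality: for a left ideal in a noncommutative filtered algebra generated by lifts of a regular sequence, the absence of lower-order contamination is equivalent to the $\hbar$-saturation of the ideal $\sum_i \A'_\hbar \tilde f_i$ in the Rees/$\hbar$-adic algebra, and regularity of $(f_i)$ alone does not give it. The paper's argument is: first show by successive approximation (using completeness of the $\hbar$-adic filtration) that the $\tilde f_i$ generate $\A'_\hbar\A'^{>0}_\hbar$; then, given $\sum_j \tilde h_j\tilde f_j\in\hbar\A'_\hbar$, use that all relations $\sum_j h_jf_j=0$ among a regular sequence are Koszul, lift the antisymmetric matrix $(h_{j\ell})$, and observe that the resulting error term is $\hbar\sum_{j<\ell}\tilde h_{j\ell}\cdot\frac{1}{\hbar}[\tilde f_\ell,\tilde f_j]$, where $\frac{1}{\hbar}[\tilde f_\ell,\tilde f_j]$ again has positive $\nu$-weight and hence lies in the ideal already generated. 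The $\nu$-homogeneity and positivity of the weights of the commutators is the essential input; without spelling this out your proof of (1) is incomplete.

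There is a secondary gap in your treatment of (3). You compare $\Ca_\nu(\A^\theta(X'))$ with $\varprojlim_\alpha\Ca_\nu(\A^\theta(X'_\alpha))$ and claim the map is an isomorphism because it is an isomorphism on associated graded, ``both sides having $\gr=\C[X'^{\nu(\C^\times)}]$.'' But for an arbitrary $(\C^\times)^2$-stable affine $X'$ the complete-intersection hypothesis of (1) need not hold, and all one knows a priori is an epimorphism from the classical Cartan reduction of $\C[X']$ onto $\gr\Ca_\nu(\A^\theta(X'))$; the equality $\gr\Ca_\nu(\A^\theta(X'))=\C[X'^{\nu(\C^\times)}]$ for general $X'$ is part of what is being proved, so your argument is circular at this point. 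The paper avoids this by shrinking the covering to principal opens pulled back from $X'\quo\nu(\C^\times)$, noting that the corresponding section algebras are microlocalizations of $\Ca_\nu(\A^\theta(X'))$, and invoking the uniqueness property of microlocalization to identify the glued sections with $\Ca_\nu(\A^\theta(X'))$. You would need either this microlocalization argument or a separate proof of the associated graded identification for general equivariant affine opens.
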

\begin{proof}
Let us prove (1). To simplify the notation, we write $\A$ for $\A^\theta(X')$.
The algebra $\A$ is Noetherian because it is complete and separated with respect to
a filtration whose associated graded is Noetherian.   Let us show that
$\operatorname{gr}(\A\A^{>0})=\C[X']\C[X']^{>0}$, this will complete the proof of (1).

In the proof it is more convenient to deal with $\hbar$-adically completed homogenized quantizations. Namely,
let $\A_\hbar$ stand for the $\hbar$-adic completion of $R_\hbar(\A)$. The claim that
$\operatorname{gr}\A\A^{>0}=\C[X']\C[X']^{>0}$ is equivalent to the condition that
$\A_\hbar\A_{\hbar,>0}$ is $\hbar$-saturated meaning that $\hbar a\in \A_{\hbar}\A_{\hbar,>0}$
implies that $a\in \A_{\hbar}\A_{\hbar,>0}$.

Recall that we assume that there are $\nu$-homogeneous elements $f_1,\ldots,f_k\in \C[X']^{>0}$ that form a regular sequence
generating the ideal $\C[X']\C[X']^{>0}$. We can lift those elements to homogeneous $\tilde{f}_1,\ldots, \tilde{f}_k\in \A_{\hbar,>0}$.
We claim that these elements still generate $\A_\hbar\A_{\hbar,>0}$. Indeed, it is enough to check that $\A_{\hbar,>0}\subset \operatorname{Span}_{\A_\hbar}(\tilde{f}_1,\ldots,\tilde{f}_k)$. For a homogeneous element $f\in \A_{\hbar,>0}\setminus \hbar\A_{\hbar}$
we can find homogeneous elements $g_1,\ldots,g_k$
such that $f-\sum_{i=1}^k g_i\tilde{f}_i$ still has the same $\nu(\C^\times)$-weight and is divisible by
$\hbar$. Divide by $\hbar$ and  repeat the
argument. Since the $\hbar$-adic topology is complete and separated, we see that $f\in \operatorname{Span}_{\A_\hbar}(\tilde{f}_1,\ldots,\tilde{f}_k)$. So it is enough to check that $\operatorname{Span}_{\A_\hbar}(\tilde{f}_1,\ldots,\tilde{f}_k)$ is $\hbar$-saturated.

Pick elements $\tilde{h}_1,\ldots,\tilde{h}_k$ such that $\sum_{j=1}^k \tilde{h}_j\tilde{f}_j$ is divisible by $\hbar$.
Let $h_j\in \C[X']$ be congruent to $\tilde{h}_j$ modulo $\hbar$
so that $\sum_{j=1}^k h_j f_j=0$. Since $f_1,\ldots,f_k$ form a regular sequence, we see that
there are  elements $h_{ij}\in \C[X']$ such that $h_{j'j}=-h_{jj'}$ and $h_j=\sum_{\ell=1}^k h_{j\ell}f_\ell$.
Lift the elements $h_{jj'}$ to $\tilde{h}_{jj'}\in \A_\hbar$ with $\tilde{h}_{jj'}=-\tilde{h}_{j'j}$.
So we have $\tilde{h}_j=\sum_{\ell=1}^k \tilde{h}_{j\ell}\tilde{f}_\ell+\hbar\tilde{h}'_j$ for
some $\tilde{h}'_j\in \A_\hbar$.
It follows that $\sum_{j=1}^k \tilde{h}_j \tilde{f}_j= \hbar\sum_{j=1}^k \tilde{h}'_j \tilde{f}_j+
\sum_{j,\ell=1}^k \tilde{h}_{j\ell}\tilde{f}_\ell\tilde{f}_j$. But $\sum_{j,\ell=1}^k \tilde{h}_{j\ell}\tilde{f}_\ell\tilde{f}_j=
\sum_{j<\ell} \tilde{h}_{j\ell}[\tilde{f}_\ell,\tilde{f}_j]$. The bracket is divisible by $\hbar$.
But $\frac{1}{\hbar}[\tilde{f}_\ell, \tilde{f}_j]$ is still in $\A_{\hbar,>0}$ and so in $\operatorname{Span}_{\A_\hbar}(\tilde{f}_1,\ldots,\tilde{f}_k)$.
This finishes the proof of (1).

Let us proceed to the proof of (2).  Let us show that we can choose a covering of $X^{\nu(\C^\times)}$ by
$X'^{\nu(\C^\times)}$, where $X'$ is as in (1). This is easily reduced to
the affine case. Here the existence of such a covering is deduced from the Luna slice theorem applied to a fixed point for  $\nu$.
In more detail, for a fixed point $x$, we can choose an open affine neighborhood $U$ of $x$ in $X\quo \nu(\C^\times)$
with an \'{e}tale morphism $U\rightarrow T_xX\quo \nu(\C^\times)$ such that $\pi^{-1}(U)\cong U\times_{T_xX\quo \nu(\C^\times)}T_xX$,
where $\pi$ stands for the quotient morphism for the action $\nu$. The subset $\pi^{-1}(U)$ then obviously satisfies
the requirements in (1).

It is easy to see that  the algebras $\Ca_\nu(\A^\theta(X'))$ form a presheaf with respect to the covering
$X'^{\nu(\C^\times)}$ (obviously, if $X',X''$ satisfy our assumptions, then their intersection does).
Since the subsets $X'^{\nu(\C^\times)}$ form a base of topology on $X^{\nu(\C^\times)}$, it is enough
to show that the algebras $\Ca_\nu(\A^\theta(X'))$ form a sheaf with respect to the covering. This is easily deduced from the two straightforward claims:
\begin{itemize}
\item $\Ca_\nu(\A^\theta(X'))$ is complete and separated with respect to the filtration (here we use an easy claim
that, being finitely generated, the ideal $\A^\theta(X')\A^{\theta}(X')^{>0}$ is closed).
\item The algebras $\operatorname{gr}\Ca_\nu(\A^\theta(X'))=\C[X'^{\nu(\C^\times)}]$ do form a sheaf --
the structure sheaf $\mathcal{O}_{X^{\nu(\C^\times)}}$.
\end{itemize}
For example, let us show that elements $a_i\in \Ca_{\nu}(\A^\theta(X'_i))$ such that
the restrictions of $a_i,a_j$ to $\Ca_\nu(\A^\theta(X'_i\cap X'_j))$ agree glue
to an element of $\Ca_\nu(\A^\theta(X'))$ (here $X=\bigcup X'_i$ is a covering).
Let $d$ be the common filtration degree of the elements $a_i$. We can find
a homogeneous element $f\in \C[X'^{\nu(\C^\times)}]$ of degree $d$ such that
the degree $d$ part of $a_i$ is $f|_{X'_i}$. Lift $f$ to an element
$a\in \Ca_{\nu}(\A^\theta(X'))$ and replace $a_i$ with $a_i-a$. This reduces
the degree by $1$. Now we can use the descending induction on the degree
(that makes sense because the filtrations involved are complete and separated).

The proof of (2) is now complete.


To prove (3) it is enough to assume that $X$ is affine. Let $\pi$ denote the categorical quotient map
$X\rightarrow X\quo \nu(\C^\times)$. It is easy to see that, for  every open $(\C^\times)^2$-stable affine subvariety $X'$
that intersects $X^{\nu(\C^\times)}$ non-trivially, and any point $x\in X'^{\nu(\C^\times)}$,
there is some $\C^\times$-stable open affine subvariety $Z\subset X\quo \nu(\C^\times)$ with $x\in \pi^{-1}(Z)\subset X'$. So we can assume, in addition, that all covering
affine subsets $X^i$ are of the form $\pi^{-1}(?)$. Moreover, we can assume that they are all principal
(and so are given by non-vanishing of $\nu(\C^\times)$-invariant and $\C^\times$-semiinvariant elements of $\A^\theta(X)$).
Then all algebras $\Ca_\nu(\A^\theta(X'))$ are obtained from $\Ca_\nu(\A^\theta(X))$ by microlocalization.
By standard properties of  microlocalization, the algebra $\Ca_{\nu}(\A^\theta)(X)$ is the only
algebra with these properties. (3) follows.
\end{proof}

\subsection{Comparison between algebra and sheaf levels}\label{SS_Ca_sheaf_vs_alg}
Now let us assume that $X$ is a conical symplectic resolution of $X_0$.
We write $\A^\theta_\lambda$
for the quantization of $X$ corresponding to $\lambda$ and $\A_\lambda$
for its algebra of global sections. By the construction, for any $\lambda\in H^2(X)$,
there are a natural homomorphism $\Ca_\nu(\A_{\lambda})\rightarrow \Gamma(\Ca_\nu(\A_\lambda^\theta))$
as well as a natural homomorphism $\Ca_{\nu}(\C[X_0])\rightarrow \Gamma(\Ca_\nu(\mathcal{O}_X))=
\C[X^{\nu(\C^\times)}]$. Our goal in this section is to prove the following result.

\begin{Prop}\label{Prop:A0_descr}
For a Zariski generic $\lambda\in \tilde{\param}$,
the homomorphism $\Ca_\nu(\A_{\lambda})\rightarrow \Gamma(\Ca_\nu(\A^\theta_\lambda))$ is an isomorphism.
\end{Prop}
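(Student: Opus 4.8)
The plan is to reduce the statement to the already-established algebra-level comparison in Proposition \ref{Prop:hw_alg}(1) by a graded-Nakayama / flatness argument over the one-parameter family $\ell:=\{\lambda^\circ+z\chi\}$, where $\chi\in\tilde{\param}_\Z$ is ample. First I would work universally: pass to the $\C[\ell]$-algebra $\mathfrak{A}_\ell:=\Gamma(\tilde{\A}^\theta|_\ell)$ and its Cartan subquotient $\Ca_\nu(\mathfrak{A}_\ell)$, and similarly form the sheaf $\Ca_\nu(\tilde{\A}^\theta|_\ell)$ on $X^{\nu(\C^\times)}$ using Proposition \ref{Prop:A0}. There is a natural $\C[\ell]$-algebra homomorphism
\begin{equation*}
\phi_\ell:\Ca_\nu(\mathfrak{A}_\ell)\longrightarrow \Gamma\big(\Ca_\nu(\tilde{\A}^\theta|_\ell)\big),
\end{equation*}
and specializing at $\lambda\in\ell$ gives the homomorphism $\phi_\lambda$ of the statement (using that taking global sections on $X^{\nu(\C^\times)}$ commutes with the relevant base changes, since higher cohomology of coherent sheaves on the affine-over-$\ell$ scheme $\tilde X^{\nu(\C^\times)}_\ell$ vanishes — and by Proposition \ref{Prop:sympl_res_torus_fixed}, $X^{\nu(\C^\times)}$ is itself a conical symplectic resolution, so it and its universal deformation behave well). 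The target $\Gamma(\Ca_\nu(\tilde{\A}^\theta|_\ell))$ is a quantization of $\C[\tilde X^{\nu(\C^\times)}_\ell]$, in particular flat (indeed free, after shrinking $\ell$, by Lemma \ref{Lem:gen_freeness}) over $\C[\ell]$.

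The core of the argument is then the following: on associated graded, $\gr\phi_\lambda$ is the natural map
\begin{equation*}
\Ca_\nu(\C[X_0])=\C[X_0]^{\geqslant 0}/(\C[X_0]^{\geqslant 0}\cap \C[X_0]\C[X_0]^{>0})\longrightarrow \C[X^{\nu(\C^\times)}],
\end{equation*}
which is an isomorphism whenever the contracting locus $Y_0\subset X_0$ is cut out by a regular sequence of positively-graded $\nu$-homogeneous functions — this is exactly the local hypothesis verified via the Luna slice theorem in the proof of Proposition \ref{Prop:A0}(1)(2), globalized because $\C[X^{\nu(\C^\times)}]$ and the relevant Cartan subquotient both sheafify. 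Granting $\gr\phi_\lambda$ is an isomorphism for one (hence, by semicontinuity, Zariski generic) $\lambda$, a standard filtered-algebra argument — complete and separated filtrations on both sides, by Section \ref{SSS_Cart_subquot_alg} and Proposition \ref{Prop:A0}(1) — upgrades this to $\phi_\lambda$ itself being an isomorphism. The subtlety is that $\gr$ of a Cartan subquotient need not compute the Cartan subquotient of $\gr$ in general: one only has a surjection $\gr\Ca_\nu(\A_\lambda)\twoheadrightarrow \Ca_\nu(\gr\A_\lambda)$ a priori. This is precisely where genericity of $\lambda$ enters, and it is handled by the regular-sequence argument of Proposition \ref{Prop:A0}(1): for generic $\lambda$ the ideal $\A_\lambda\A_\lambda^{>0}$ has associated graded equal to $\C[X_0]\C[X_0]^{>0}$, so no collapse occurs.

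I expect the main obstacle to be the passage between the algebra level and the sheaf level — namely showing that $\Gamma$ of the sheaf $\Ca_\nu(\A_\lambda^\theta)$ is computed correctly and is compatible with specialization, and that $\gr\Gamma(\Ca_\nu(\A_\lambda^\theta))=\C[X^{\nu(\C^\times)}]$ globally rather than just locally. The strategy here is to use that $\Ca_\nu(\A_\lambda^\theta)$ is a quantization of the conical symplectic resolution $X^{\nu(\C^\times)}$ (Proposition \ref{Prop:A0}(2)) together with the vanishing $H^i(X^{\nu(\C^\times)},\Str)=0$ for $i>0$, which forces $H^i$ of the quantization sheaf to vanish too and makes $\Gamma$ exact on the relevant complexes; this is the same mechanism used for $\tilde{\A}^\theta$ itself in Section \ref{S_sympl_quant}. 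Combining this with Proposition \ref{Prop:hw_alg}(1) (which already asserts the isomorphism for generic $\lambda\in\ell$, with target $\C[X^{\nu(\C^\times)}]$) and spreading out from the line $\ell$ to all of $\tilde{\param}$ by the standard "generic point" argument — a property holding on a Zariski-dense union of lines through a generic point holds on a Zariski-open subset — yields the claim.
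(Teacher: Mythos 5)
Your reduction has a genuine gap at its core step. You claim that $\gr\phi_\lambda$ is the classical map $\Ca_\nu(\C[X_0])\rightarrow \C[X^{\nu(\C^\times)}]$ and that this map is an isomorphism by the regular-sequence/Luna argument of Proposition \ref{Prop:A0}, "globalized". But that classical map is essentially never an isomorphism when $X\rightarrow X_0$ is a nontrivial resolution: the complete-intersection hypothesis of Proposition \ref{Prop:A0}(1) is verified locally on the \emph{smooth} variety $X$ (via Luna slices at smooth fixed points), not on the singular affine $X_0$, and Lemma \ref{Lem:A0_classic} only gives that $X^{\nu(\C^\times)}\rightarrow \operatorname{Spec}(\Ca_\nu(\C[X_0]))$ is surjective and projective. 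For $X=T^*(G/B)$ with generic $\nu$, $\operatorname{Spec}(\Ca_\nu(\C[X_0]))$ is supported at the single point $0$ of the nilpotent cone while $\C[X^{\nu(\C^\times)}]=\C^{|W|}$, so the map cannot be an isomorphism. For the same reason your "no collapse" claim — that for generic $\lambda$ one has $\gr(\A_\lambda\A_\lambda^{>0})=\C[X_0]\C[X_0]^{>0}$ — is also false: if it held, then combining $\gr\Ca_\nu(\A_\lambda)=\Ca_\nu(\C[X_0])$ with $\gr\Gamma(\Ca_\nu(\A_\lambda^\theta))=\C[X^{\nu(\C^\times)}]$ and the asserted isomorphism $\phi_\lambda$ would force $\Ca_\nu(\C[X_0])\cong\C[X^{\nu(\C^\times)}]$, contradicting the example above. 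In other words, the collapse in passing from $\Ca_\nu(\gr\A_\lambda)$ to $\gr\Ca_\nu(\A_\lambda)$ is not an obstruction to be ruled out; it is forced, and therefore no argument carried out at the associated graded level of a fixed central fiber can produce the genericity in $\lambda$.

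The genericity is a deformation phenomenon, and this is how the paper proceeds: one works over all of $\tilde{\param}$ with $\Ca_\nu(\tilde\A)\rightarrow\Gamma(\Ca_\nu(\tilde\A^\theta))$, observes that the classical map $\Ca_\nu(\C[\tilde X_0])\rightarrow\C[\tilde X^{\nu(\C^\times)}]$ \emph{is} an isomorphism over $\tilde\param\setminus\mathcal{H}_\C$ (because $\tilde X\rightarrow\tilde X_0$ is an isomorphism there), so the associated graded of the kernel and cokernel are supported over $\mathcal{H}_\C$; then one shows the kernel and cokernel are Harish-Chandra $\Ca_\nu(\tilde\A)$-bimodules (using Lemma \ref{Lem:A0_classic} both for finiteness of $\C[X^{\nu(\C^\times)}]$ over $\Ca_\nu(\C[X_0])$ and for finiteness of the symplectic leaves) and invokes Proposition \ref{Prop:supp_gr} to convert the statement about $\gr$ into the statement that the actual supports are closed subvarieties with asymptotic cone in $\mathcal{H}_\C$, hence avoid a Zariski generic $\lambda$. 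Your proposal never uses Proposition \ref{Prop:supp_gr}, which is exactly the tool that bridges this gap. Two further problems: Proposition \ref{Prop:hw_alg}(1) is stated under the assumption that $\nu$ has finitely many fixed points, whereas Proposition \ref{Prop:A0_descr} is needed (and later used, e.g.\ for $\nu_0$ in Section \ref{S_SS_O}) for arbitrary Hamiltonian $\nu$ with possibly positive-dimensional fixed locus, so citing it does not cover the statement; and the final "spread out from generic lines to a Zariski open set" step is not a standard fact and is not justified — the paper instead obtains closedness of the bad locus directly from Proposition \ref{Prop:supp_gr}.
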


In the proof we will need a lemma describing some properties of the natural
morphism $X^{\nu(\C^\times)}\rightarrow \operatorname{Spec}(\Ca_\nu(\C[X_0]))$.

\begin{Lem}\label{Lem:A0_classic}
The morphism $X^{\nu(\C^\times)}\rightarrow \operatorname{Spec}(\Ca_\nu(\C[X_0]))$
is surjective and projective.
Further, the scheme $\operatorname{Spec}(\Ca_\nu(\C[X_0]))$ has finitely many symplectic leaves.
\end{Lem}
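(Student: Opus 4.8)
\textbf{Proof plan for Lemma \ref{Lem:A0_classic}.}

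The plan is to exploit the contracting $\C^\times$-action together with the fact that $\C[X_0]$ is a finitely generated Poisson algebra with finitely many symplectic leaves, and to reduce everything to properties of the (classical) Cartan subquotient $\Ca_\nu(\C[X_0]) = \C[X_0]^0/\bigoplus_{i>0}\C[X_0]^{-i}\C[X_0]^i$. First I would observe that $X^{\nu(\C^\times)}$ is a closed subscheme of $X$, hence $X \to X_0$ restricts to a projective morphism $X^{\nu(\C^\times)} \to \rho(X^{\nu(\C^\times)})$; and $\rho(X^{\nu(\C^\times)}) \subseteq X_0^{\nu(\C^\times)}$ because $\rho$ is $\nu(\C^\times)$-equivariant. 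Since $X^{\nu(\C^\times)}$ carries a contracting $\C^\times$-action with $\C[X^{\nu(\C^\times)}]$ finitely generated (it is a quotient of $\C[X]^{\nu(\C^\times)} = \C[X_0]^{\nu(\C^\times)}$), the morphism $X^{\nu(\C^\times)} \to \Spec(\C[X^{\nu(\C^\times)}])$ is projective and birational onto its image; composing, it suffices to identify $\Spec(\C[X^{\nu(\C^\times)}])$ with $\Spec(\Ca_\nu(\C[X_0]))$ and check surjectivity. For the identification, I would use that $\Gamma(\Ca_\nu(\mathcal{O}_X)) = \C[X^{\nu(\C^\times)}]$ (stated in Section \ref{SS_Ca_sheaf_vs_alg}, from Proposition \ref{Prop:A0}) together with the natural map $\Ca_\nu(\C[X_0]) \to \C[X^{\nu(\C^\times)}]$; I expect this to be a finite, birational morphism of affine schemes, so that $\Spec(\C[X^{\nu(\C^\times)}])\to\Spec(\Ca_\nu(\C[X_0]))$ is the normalization-type map, in particular surjective and projective (finite). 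Hence the composite $X^{\nu(\C^\times)}\to\Spec(\Ca_\nu(\C[X_0]))$ is surjective and projective.

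For surjectivity onto $\Spec(\Ca_\nu(\C[X_0]))$ more directly: the attracting/repelling decomposition for $\nu$ on the affine cone $X_0$ gives, for each point $y$ of the fixed locus $X_0^{\nu(\C^\times)}$, a well-defined limit map, and the Bia\l ynicki-Birula-type description of $\Ca_\nu$ shows that $\Spec(\Ca_\nu(\C[X_0]))$ is (up to nilpotents and normalization) the image in $X_0$ of the fixed locus; since $\rho$ is surjective and equivariant, $X^{\nu(\C^\times)}$ surjects onto $X_0^{\nu(\C^\times)}$, and composing with the finite map settles surjectivity.

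For the last assertion, that $\Spec(\Ca_\nu(\C[X_0]))$ has finitely many symplectic leaves, I would argue as follows. The algebra $\Ca_\nu(\C[X_0])$ is Poisson: the grading by $h = $ (image of $1$ under the comoment map) makes $\C[X_0]^0$ a Poisson subalgebra and $\bigoplus_{i>0}\C[X_0]^{-i}\C[X_0]^i$ a Poisson ideal in it, so the quotient inherits a Poisson bracket — compare the reduction picture in Section \ref{SSS_Cart_subquot_alg}. By the finite birational map just constructed, $\Spec(\Ca_\nu(\C[X_0]))$ and $\Spec(\C[X^{\nu(\C^\times)}])$ agree generically; and $\Spec(\C[X^{\nu(\C^\times)}])$ is the affinization $Z_0$ of the conical symplectic resolution $X^{\nu(\C^\times)}$ provided by Proposition \ref{Prop:sympl_res_torus_fixed} (more precisely, $X^{\nu(\C^\times)}$ is a disjoint union of such, one per irreducible component, and $Z_0$ is the corresponding disjoint union of affine Poisson cones). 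By Kaledin's theorem (cited right after the definition of symplectic resolution, \cite[Theorem 2.3]{Kaledin}), each such affine cone has finitely many symplectic leaves; and by Remark \ref{Rem:sympl_leaves_finite}, a finite surjective (in particular, the birational normalization-type) morphism between affine Poisson varieties transports the finiteness of leaves — so $\Spec(\Ca_\nu(\C[X_0]))$ has finitely many symplectic leaves as well.

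\textbf{Main obstacle.} The delicate point is establishing that the natural map $\Ca_\nu(\C[X_0]) \to \C[X^{\nu(\C^\times)}] = \Gamma(\Ca_\nu(\mathcal{O}_X))$ is \emph{finite and birational} (equivalently, that $\Spec(\Ca_\nu(\C[X_0]))$ has the ``right'' dimension and the map is generically an isomorphism). The local complete-intersection hypothesis of Proposition \ref{Prop:A0}(1) is exactly what is available at the sheaf level on suitable opens, but patching this to a global statement about $X_0$ — where the contracting locus need not be a complete intersection cut out by positive-weight equations — is where the work is. I expect to handle it by working near a generic point of each component of $X_0^{\nu(\C^\times)}$, where the Luna slice / symplectic normal form reduces the situation to a model $T^*V \times (\text{symplectic vector space with linear } \nu\text{-action})$, for which $\Ca_\nu$ is computed directly and the map is visibly an isomorphism; dimension count plus the contracting grading (which forces birational = generically iso over the affinization) then gives finiteness and birationality in general.
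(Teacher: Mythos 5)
There is a genuine gap, and it sits exactly at the point you flag as the ``main obstacle'': the map $\Ca_\nu(\C[X_0])\rightarrow \C[X^{\nu(\C^\times)}]$ is \emph{not} birational in general, so the picture of $\Spec(\C[X^{\nu(\C^\times)}])\rightarrow\Spec(\Ca_\nu(\C[X_0]))$ as a normalization-type map cannot be made to work by any local analysis. Already for $X=T^*\mathbb{P}^1\rightarrow X_0$ the nilpotent cone of $\mathfrak{sl}_2$ (so $\C[X_0]=\C[A,B,C]/(AB-C^2)$) with $\nu$ of weights $(2,-2,0)$ on $(A,B,C)$, one computes $\Ca_\nu(\C[X_0])=\C[C]/(C^2)$, a non-reduced point, while $X^{\nu(\C^\times)}$ consists of two points and $\C[X^{\nu(\C^\times)}]=\C^2$: the map is finite of degree $2$ with nilpotent kernel, not generically an isomorphism. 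The phenomenon is typical: several components of $X^{\nu(\C^\times)}$ (here, several fixed points) can lie over a single component of $X_0^{\nu(\C^\times)}$, so the Luna-slice argument at a generic point of $X_0^{\nu(\C^\times)}$ sees only one branch and cannot prove birationality; also your parenthetical claim that $\C[X^{\nu(\C^\times)}]$ is a quotient of $\C[X_0]^{\nu(\C^\times)}$ fails in the same example (invariant functions restrict to the constants on the two fixed points). The statement that is actually true, and that the paper proves, is scheme-theoretic rather than birational: $\Spec(\Ca_\nu(\C[X_0]))=Y_0\quo\nu(\C^\times)$ where $Y_0$ is the attracting subscheme of $X_0$, and since the closed $\nu(\C^\times)$-orbits in $Y_0$ are exactly the fixed points and the ideal of $X_0^{\nu(\C^\times)}$ in $Y_0$ contains all positive-weight elements, one gets $\Spec(\Ca_\nu(\C[X_0]))_{red}=X_0^{\nu(\C^\times)}$. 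Your second paragraph gestures at this (``up to nilpotents''), and from there surjectivity and projectivity do follow by composing with the projective surjection $X^{\nu(\C^\times)}\twoheadrightarrow X_0^{\nu(\C^\times)}$ (fibers of $\rho$ over fixed points are projective and $\nu(\C^\times)$-stable, hence contain fixed points) --- but note that this route does not need, and cannot have, birationality.

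The failure of birationality also breaks your argument for the last assertion as written: you cannot say that $\Spec(\Ca_\nu(\C[X_0]))$ and $\Spec(\C[X^{\nu(\C^\times)}])$ ``agree generically,'' and Remark \ref{Rem:sympl_leaves_finite} is a statement about normalizations, so it does not directly apply to a finite surjective map of degree $>1$; a general transfer of ``finitely many leaves'' along finite surjective Poisson morphisms is not something you have established. The paper instead deduces finiteness of leaves directly from the identification $\Spec(\Ca_\nu(\C[X_0]))_{red}=X_0^{\nu(\C^\times)}$ together with Proposition \ref{Prop:sympl_res_torus_fixed}: each component of $X^{\nu(\C^\times)}$ is a conical symplectic resolution of its affinization, which maps finitely onto the corresponding component of $X_0^{\nu(\C^\times)}$, and then the argument of Remark \ref{Rem:sympl_leaves_finite} applies. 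So the skeleton of your first two paragraphs can be repaired by replacing ``finite birational/normalization'' with ``finite with nilpotent kernel, reduced spectrum equal to $X_0^{\nu(\C^\times)}$,'' but the leaf-finiteness step needs to be rerouted through the fixed locus as above rather than through a birational comparison.
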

\begin{proof}
Let $Y_0$ denote the contracting locus for the action of $\nu(\C^\times)$ on $X_0$, we view $Y_0$ as a subscheme of $X_0$ with the defining ideal $\C[X_0]\C[X_0]^{>0}$ so that $\operatorname{Spec}(\Ca_\nu(\C[X_0]))=Y_0\quo \nu(\C^\times)$.
The embedding $X_0^{\nu(\C^\times)}\hookrightarrow Y_0$ gives rise to
the morphism $X_0^{\nu(\C^\times)}\rightarrow \operatorname{Spec}(\Ca_\nu(\C[X_0]))$.
The ideal of $X_0^{\nu(\C^\times)}$ in $Y_0$ contains all elements of
positive degree with respect to $\nu$. Therefore the morphism
$X_0^{\nu(\C^\times)}\rightarrow \operatorname{Spec}(\Ca_\nu(\C[X_0]))$ is also
an embedding. It is a bijection on the level of points,
so $X_0^{\nu(\C^\times)}=\operatorname{Spec}(\Ca_\nu(\C[X_0]))_{red}$. On the
other hand, the morphism $X^{\nu(\C^\times)}\rightarrow \operatorname{Spec}(\Ca_\nu(\C[X_0]))$
we study is the composition of the surjective projective morphism $\rho:X^{\nu(\C^\times)}\rightarrow X_0^{\nu(\C^\times)}$ and the embedding $X_0^{\nu(\C^\times)}\hookrightarrow \operatorname{Spec}(\Ca_\nu(\C[X_0]))$.
It follows that $X^{\nu(\C^\times)}\rightarrow \operatorname{Spec}(\Ca_\nu(\C[X_0]))$
is surjective and projective.

We see that $\operatorname{Spec}(\Ca_\nu(\C[X_0]))_{red}$ admits a symplectic resolution
$X^{\nu(\C^\times)}$ (see Proposition \ref{Prop:sympl_res_torus_fixed})
so it has finitely many leaves.
\end{proof}

\begin{proof}[Proof of Proposition \ref{Prop:A0_descr}]
Recall that $\tilde{X}$ denotes the universal deformation of $X$ over $\tilde{\param}$ and $\tilde{X}_0$ be its affinization. Consider
the natural homomorphism $\Ca_\nu(\C[\tilde{X}_0])\rightarrow \C[\tilde{X}^{\nu(\C^\times)}]$. It is an isomorphism outside of $\mathcal{H}_\C$ (defined by (\ref{eq:singular}))
since $\tilde{X}\rightarrow \tilde{X}_0$ is an isomorphism precisely outside that locus.
Now consider the canonical quantization $\tilde{\A}^\theta$ of $\tilde{X}$. Similarly to the previous
section, $\Ca_\nu(\tilde{\A}^{\theta})$ is a quantization of $\tilde{X}^{\nu(\C^\times)}$. The equality
$H^i(X^{\nu(\C^\times)},\mathcal{O})=0, i>0,$ (that is a consequence of Proposition \ref{Prop:sympl_res_torus_fixed}) implies that  $H^i(\tilde{X}^{\nu(\C^\times)},\mathcal{O})=0, i>0$. It follows that $\gr \Gamma(\Ca_\nu(\tilde{\A}^\theta))=\C[\tilde{X}^{\nu(\C^\times)}]$.
Also there is a natural epimorphism $\Ca_\nu(\C[\tilde{X}_0])\twoheadrightarrow \gr\Ca_\nu(\tilde{\A})$ and a natural homomorphism $\gr\Ca_\nu(\tilde{\A})\rightarrow \gr \Gamma(\Ca_\nu(\tilde{\A}^{\theta}))$. The resulting homomorphism $\gr\Ca_\nu(\tilde{\A})\rightarrow \gr\Gamma(\Ca_\nu(\tilde{\A}^{\theta}))$ is, on one hand, the associated graded of the homomorphism $\Ca_\nu(\tilde{\A})\rightarrow\Gamma(\Ca_\nu(\tilde{\A}^{\theta}))$ and on the other hand, an isomorphism over  $\tilde{\param}\setminus\mathcal{H}_\C$. We deduce that the supports of the associated graded modules of the kernel and the cokernel of
$\Ca_\nu(\tilde{\A})\rightarrow\Gamma(\Ca_\nu(\tilde{\A}^{\theta}))$ are supported on $\mathcal{H}_{\C}$ as $\C[\tilde{\param}]$-modules.

Now let us check that this kernel and the cokernel are HC bimodules over $\Ca_\nu(\tilde{\A})$.
For the kernel it is obvious. For the cokernel, we need to check that $\Gamma(\Ca_\nu(\tilde{\A}^\theta))$
is a HC $\Ca_\nu(\tilde{\A})$-bimodule.
This will follow if we check that $\C[X^{\nu(\C^\times)}]=[\gr \Gamma(\Ca_\nu(\tilde{\A}^\theta))]/(\tilde{\param})$ is finitely
generated over $[\gr \Ca_\nu(\tilde{\A})]/(\tilde{\param})$. The latter algebra is a quotient
of $\Ca_{\nu}(\C[X_0])$. Now the claim about the cokernel follows from Lemma
\ref{Lem:A0_classic}.

Also thanks to Lemma \ref{Lem:A0_classic}, $\operatorname{Spec}(\Ca_\nu(\C[X_0]))$ has finitely many symplectic
leaves. It follows that $\Ca_\nu(\tilde{\A})$ satisfies the assumptions of
Proposition \ref{Prop:supp_gr}. Applying this proposition to the kernel and the cokernel
above, we see that their supports in $\param$ are closed subvarieties whose asymptotic
cones lie in $\mathcal{H}_{\C}$. So, for a Zariski generic parameter $\lambda$,
the homomorphism $\Ca_{\nu}(\tilde{\A})_\lambda\rightarrow \Gamma(\Ca_\nu(\tilde{\A}^\theta))_\lambda$
is an isomorphism.

We note that $\Gamma(\Ca_\nu(\tilde{\A}^{\theta}))$ is flat over $\param$
and the specialization at $\lambda$ coincides with $\Gamma(\Ca_\nu(\A_{\lambda}^{\theta}))$, this is because
$X^{\nu(\C^\times)}$ is a symplectic resolution, Proposition \ref{Prop:sympl_res_torus_fixed}.
Further,  $\Ca_\nu(\tilde{\A})_\lambda=\Ca_\nu(\A_\lambda)$.
This implies the claim of the proposition.
\end{proof}

\subsection{Correspondence between parameters}\label{SS_Ca_param}
Our next goal is to understand how to recover the periods of the direct summands $\Ca_\alpha(\A^{\theta})$ from that of $\A^\theta$. We will assume that $H^i(X^{\alpha(\C^\times)},\mathcal{O})=0$ for $i>0$,
but we will not require that of $X$. The period map still makes sense  for $X$, see \cite[Section 4]{BK}.
Consider the decomposition $X^{\alpha(\C^\times)}=\bigsqcup_i X^0_i$ into connected components. Let $Y_i$ denote the
contracting locus of $X^0_i$ and let $\A_{i}^{\theta 0}$ be the restriction of $\Ca_\alpha(\A^\theta)$ to $X^0_i$.
To determine the period of $\A_i^{\theta 0}$, we will quantize $Y_i$ and then use results from  \cite{BGKP}
on quantizations of line bundles on lagrangian subvarieties.

First of all, let us consider the case when $X$ is affine and so is quantized by a single algebra, $\A$. We will quantize the contracting locus $Y$ by a single $\A$-$C_\nu(\A)$-bimodule, this bimodule is $\A/\A\A^{>0}$.

\begin{Lem}\label{Lem:repel_quant_affine}
Under the above assumptions,  $\gr\A/\A\A^{>0}=\C[Y]$.
\end{Lem}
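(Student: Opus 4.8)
The plan is to reduce the statement to the affine local analysis already done in Proposition \ref{Prop:A0}(1), using the $\hbar$-adic homogenization. First I would pass to the Rees algebra $\A_\hbar$, the $\hbar$-adic completion of $R_\hbar(\A)$, and observe that the claim $\gr(\A\A^{>0}) = \C[Y] = \C[X]\C[X]^{>0}$ (note $Y$ here is the contracting locus inside the \emph{affine} $X$, whose ideal by definition is generated by the positive-weight part of $\C[X]$) is equivalent to saying that $\A_\hbar \A_{\hbar,>0}$ is $\hbar$-saturated, i.e.\ $\hbar a \in \A_\hbar\A_{\hbar,>0}$ forces $a \in \A_\hbar\A_{\hbar,>0}$. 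This is exactly the $\hbar$-saturation statement that was the technical heart of the proof of Proposition \ref{Prop:A0}(1).

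The difference from Proposition \ref{Prop:A0}(1) is that there we assumed the contracting locus was a complete intersection cut out by a regular sequence of $\nu$-homogeneous positive-weight functions, whereas here $X$ is a smooth symplectic resolution, so the contracting locus $Y$ is smooth (a union of affine spaces, being lagrangian), hence \emph{locally} a complete intersection but not globally. So the second step is to localize: cover $X$ by $(\C^\times)^2$-stable open affine subvarieties $X'$ on which the contracting locus $Y \cap X'$ \emph{is} a complete intersection defined by a regular sequence of $\nu$-homogeneous functions of positive weight — such a cover exists near any $\nu$-fixed point by the Luna slice theorem argument used in the proof of Proposition \ref{Prop:A0}(2), and away from the fixed-point locus the contracting locus is empty so there is nothing to check. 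On each such $X'$, Proposition \ref{Prop:A0}(1) (or rather its proof) gives $\gr(\A(X')\A(X')^{>0}) = \C[X']\C[X']^{>0}$. The third step is to check that these local identifications glue: since $\gr$ and the formation of the structure sheaf both commute with restriction to principal open subsets, and the sheaves $\Str_X$ and $\gr \A^\theta$ already agree, the ideal sheaves $\mathcal{A}^\theta \mathcal{A}^{\theta,>0}$ and its associated graded glue to give the global statement $\gr(\A\A^{>0}) = \C[X]\C[X]^{>0} = \C[Y]$, where global sections are used and $H^{>0}(X,\Str_X)=0$ ensures no cohomological obstruction to passing between sheaf and algebra level.

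The main obstacle I anticipate is purely bookkeeping: making precise that $\gr$ of the \emph{global} left ideal $\A\A^{>0}$ is computed correctly from the local data, i.e.\ that $\A(X')\A(X')^{>0} = \Gamma(X', \mathcal{A}^\theta\mathcal{A}^{\theta,>0})$ and that taking associated graded commutes with this restriction. This is where one uses that $\A^\theta\A^{\theta,>0}$ is a coherent ideal sheaf (it is finitely generated locally by the lifts $\tilde f_i$ of the regular sequence), that coherent sheaves on the affine $X_0$ have no higher cohomology, and that $\rho_*$ of a good filtration is again good — all of which are available from the earlier sections. Once the gluing is set up, the $\hbar$-saturation on each chart is verbatim the computation in the proof of Proposition \ref{Prop:A0}(1), using the regular sequence to write Koszul-type relations $h_j = \sum_\ell h_{j\ell} f_\ell$ with $h_{j\ell}$ antisymmetric and controlling the commutator correction terms $\frac{1}{\hbar}[\tilde f_\ell, \tilde f_j] \in \A_{\hbar,>0}$ by induction on the filtration degree.
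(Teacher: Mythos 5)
Your proposal is correct and follows essentially the same route as the paper: the paper's proof simply cites the complete-intersection computation from the proof of Proposition \ref{Prop:A0}(1) (the $\hbar$-saturation argument with a regular sequence of positive-weight semiinvariants) and handles the general affine case as in the proof of Proposition \ref{Prop:A0}(3), i.e.\ by passing to a cover by $(\C^\times)^2$-stable affine charts, obtained via the Luna slice argument of part (2), where the contracting locus is such a complete intersection, and then globalizing by (micro)localization — which is exactly your reduction-plus-gluing plan, with your sheaf-theoretic bookkeeping playing the role of the paper's appeal to standard properties of microlocalization. (Only cosmetic slips: here $\nu$ need not be generic, so $Y$ is an affine bundle over the possibly positive-dimensional fixed locus rather than a union of affine spaces, and the charts not meeting $Y$ are those outside the $\pi$-saturated neighborhoods of the fixed locus, not merely "away from the fixed points"; neither affects the argument.)
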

\begin{proof}
This was established in the proof of Proposition \ref{Prop:A0}. More precisely, the case when $Y$ is a complete intersection given by $\alpha(\C^\times)$-semiinvariant elements of positive weight follows from the proof of assertion (1), while the general case follows similarly to the proof of (3).
\end{proof}

Now let us consider the non-affine case. Let us cover $X\setminus \bigcup_{k\neq i}X^0_k$ with $(\C^\times)^2$-stable open affine subsets $X^j$. We may assume that $X^j\cap Y_i=\varnothing$ or $X^j\cap Y_i=\pi_i^{-1}(X^j\cap X_i^0)$,
where $\pi_i:Y_i\rightarrow X_i^0$ is the projection. For this we first choose some covering by $(\C^\times)^2$-stable
open affine subsets. Then we delete $Y_i\setminus \pi_i^{-1}(X^j\cap X_i^0)$ from each $X^j$, we still have a covering. We cover the remainder of each $X^j$ by subsets that are preimages of open affine subsets on $X^j\quo \nu(\C^\times)$ that do not
intersect $X^0_j\quo \nu(\C^\times)$. It is easy to see that this covering has required properties. Let us replace $X$ with the union of $X^j$ that intersect $Y_i$.

After this replacement, we can quantize $Y_i$ by a $\A^\theta$-$\Ca_\nu(\A^\theta)$-bimodule. We have natural $\A^\theta(X^j)$-$\Ca_\nu(\A^\theta)(X^j\cap X_i^0)$-bimodule structures on $\A^\theta(X^j)/\A^\theta(X^j)\A^\theta(X^j)^{>0}$
and glue the bimodules corresponding to different $j$ together
along the intersections $X^i\cap X^j$ (we have homomorphisms $\A^\theta(X^i)\rightarrow \A^\theta(X^i\cap X^j)$
that give rise to $\A^\theta(X^i)/\A^\theta(X^i)\A^\theta(X^i)^{>0}\rightarrow \A^\theta(X^i\cap X^j)/\A^\theta(X^i\cap X^j)\A^\theta(X^i\cap X^j)^{>0}$ and to $\Ca_\nu(\A^\theta(X^i))\rightarrow \Ca_\nu(\A^\theta(X^i\cap X^j))$). Similarly to the proof of (2) in Proposition \ref{Prop:A0}, we get a sheaf of $\A^\theta$-$\Ca_\nu(\A^{\theta})$-bimodules on $Y_i$ that we denote by $\A^\theta/\A^\theta \A^{\theta,>0}$. The following is a direct consequence of  the construction.

\begin{Lem}\label{Lem:repel_gener}
The associated graded of $\A^\theta/\A^{\theta}\A^{\theta,>0}$  coincides with the
$\mathcal{O}_X$-$\mathcal{O}_{X^0_i}$-bimodule $\mathcal{O}_{Y_i}$.
\end{Lem}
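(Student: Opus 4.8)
\textbf{Proof plan for Lemma \ref{Lem:repel_gener}.}

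The plan is to reduce to the local, affine calculation already carried out and then check that taking associated graded is compatible with the gluing used to construct the sheaf $\A^\theta/\A^\theta\A^{\theta,>0}$. First I would fix one of the $(\C^\times)^2$-stable affine charts $X^j$ appearing in the construction, with $X^j\cap Y_i=\pi_i^{-1}(X^j\cap X_i^0)$ nonempty. By Lemma \ref{Lem:repel_quant_affine}, applied to $X^j$ in place of $X$ (the hypothesis $H^{>0}(\mathcal{O})=0$ is automatic on an affine variety, and the contracting locus of $\nu$ in $X^j$ is exactly $X^j\cap Y_i$), we get $\gr\,\A^\theta(X^j)/\A^\theta(X^j)\A^\theta(X^j)^{>0}=\C[X^j\cap Y_i]$ as a $\C[X^j]$-$\C[X^j\cap X^0_i]$-bimodule, i.e.\ as the sections of the $\mathcal{O}_X$-$\mathcal{O}_{X^0_i}$-bimodule $\mathcal{O}_{Y_i}$ over $X^j\cap Y_i$.

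Next I would argue that the filtrations chosen on the local bimodules $\A^\theta(X^j)/\A^\theta(X^j)\A^\theta(X^j)^{>0}$ are compatible with the restriction homomorphisms along the intersections $X^i\cap X^j$ — this is built into the construction, since all the maps in sight (the restriction maps $\A^\theta(X^i)\to\A^\theta(X^i\cap X^j)$ and the induced maps on Cartan subquotients) are filtered. Therefore $\A^\theta/\A^\theta\A^{\theta,>0}$ carries a natural filtration (the degree-$d$ part being the subsheaf glued from the degree-$d$ parts of the local bimodules), exactly as in the proof of part (2) of Proposition \ref{Prop:A0}. Taking $\gr$ commutes with this gluing because on each chart $\gr$ of the local bimodule is $\C[X^j\cap Y_i]=\Gamma(X^j\cap Y_i,\mathcal{O}_{Y_i})$, and the identifications on overlaps are the obvious restriction maps of $\mathcal{O}_{Y_i}$; hence the glued sheaf $\gr(\A^\theta/\A^\theta\A^{\theta,>0})$ is canonically $\mathcal{O}_{Y_i}$, with its evident $\mathcal{O}_X$-$\mathcal{O}_{X^0_i}$-bimodule structure coming from $\gr\A^\theta=\mathcal{O}_X$ and $\gr\Ca_\nu(\A^\theta)=\mathcal{O}_{X^{\nu(\C^\times)}}$ restricted to $X^0_i$.

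The only genuine point requiring care — and the step I expect to be the main obstacle — is verifying that the associated graded sheaf really is $\mathcal{O}_{Y_i}$ on the nose (no lower-order corrections, no non-reducedness) rather than merely having the same support; but this is precisely what the complete-intersection argument in part (1) of Proposition \ref{Prop:A0} provides on charts where $Y_i$ is cut out by a $\nu$-homogeneous regular sequence of positive weight, and the reduction of the general chart to that case is the same as in the proof of part (3) of Proposition \ref{Prop:A0}, invoked via Lemma \ref{Lem:repel_quant_affine}. Since the conclusion is local on $Y_i$ and the charts $X^j\cap Y_i$ cover $Y_i$, assembling the local statements finishes the proof.
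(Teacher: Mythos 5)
Your proposal is correct and coincides with the paper's own (essentially implicit) argument: the paper simply records the lemma as ``a direct consequence of the construction,'' meaning exactly what you spell out — on each chart Lemma \ref{Lem:repel_quant_affine} (via the complete-intersection argument of Proposition \ref{Prop:A0}(1) and its extension as in (3)) gives $\gr$ of the local bimodule equal to $\C[X^j\cap Y_i]$, and these glue, as in the proof of Proposition \ref{Prop:A0}(2), to the bimodule $\mathcal{O}_{Y_i}$.
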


Now we want to interpret $Y$ as a lagrangian subvariety in $X\times X^0$
(we still use $X$ as in the paragraph preceding Lemma \ref{Lem:repel_gener}, and so can write $Y$ instead of $Y_i$ and $X^0$ instead of $X^0_i$).  Namely, let $\iota$ denote the inclusion $Y\hookrightarrow X$ and $\pi$ be the projection $Y\rightarrow X^0$. We embed $Y$ into $X\times X^0$ via $(\iota,\pi)$. We equip $X\times X^0$
with the symplectic form $(\omega,-\omega^0)$, where $\omega^0$ is the restriction of $\omega$ to $X^0$.
With respect to this symplectic form $Y$ is a lagrangian subvariety. Further, $\A^\theta\widehat{\otimes}\Ca_\nu(\A^\theta)^{opp}$
is a quantization of $X\times X^0$ with period $(\lambda,-\lambda^0)$, where $\lambda,\lambda^0$ are periods of
$\A^\theta, \Ca_\nu(\A^\theta)$.

\begin{Prop}\label{Prop:shift}
The period  $\lambda^0$ coincides with  $\iota^{0*}(\lambda+c_1(K_Y)/2)\in H^2(X^0)=H^2(Y)$,
where $K_Y$ denotes the canonical class of $Y$ and $\iota^0$ is the inclusion $X^0\hookrightarrow X$.
\end{Prop}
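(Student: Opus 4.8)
The plan is to identify the period of the quantization $\Ca_\nu(\A^\theta)$ of $X^0:=X^{\nu(\C^\times)}$ by using the bimodule $\A^\theta/\A^\theta\A^{\theta,>0}$ constructed above as a \emph{quantization of the Lagrangian} $Y\subset X\times X^0$ (with symplectic form $(\omega,-\omega^0)$), and then feeding this into the classification of quantizations of line bundles on Lagrangian subvarieties from \cite{BGKP}. Recall from \cite{BGKP} that if $\B^\theta$ is a quantization of a smooth symplectic variety $V$ with period $\mu\in H^2(V)$, and $L\subset V$ is a smooth Lagrangian subvariety, then $\B^\theta$ admits a (right, say) module supported on $L$ and quantizing a line bundle on $L$ if and only if the restriction of $\mu$ to $H^2(L)$ equals $c_1(K_L^{1/2})=\tfrac12 c_1(K_L)$, i.e. the obstruction to quantizing $L$ with a line bundle is exactly $\iota_L^*\mu-\tfrac12 c_1(K_L)$. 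So the whole statement will come down to: (a) showing that our sheaf $\A^\theta/\A^\theta\A^{\theta,>0}$ is indeed such a quantization of the Lagrangian $Y$ inside $X\times X^0$, and (b) computing the restriction of $c_1(K_Y)$ to $H^2(X^0)$ and of $(\omega,-\omega^0)$-period to $H^2(Y)$, and matching them.

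First I would record the setup cleanly. By Lemma \ref{Lem:repel_gener}, $\gr(\A^\theta/\A^\theta\A^{\theta,>0})=\mathcal{O}_Y$ as an $\mathcal{O}_X$-$\mathcal{O}_{X^0}$-bimodule, hence as a module over $\A^\theta\widehat{\otimes}\Ca_\nu(\A^\theta)^{opp}$ it is a quantization of the structure sheaf $\mathcal{O}_Y$ of the Lagrangian $Y\hookrightarrow X\times X^0$ embedded via $(\iota,\pi)$. Since $Y$ is (a disjoint union over the components of $X^0$ of) an affine bundle over $X^0$ via $\pi$ — the contracting bundle — the inclusions induce isomorphisms $H^2(X^0)\xrightarrow{\sim} H^2(Y)$ and $\pi^*$ is inverse to $\iota_Y^*$ where $\iota_Y$ is any section; moreover $H^{\mathrm{odd}}(X^0)=0$ and $H^1(X^0,\mathcal{O})=0$ by Proposition \ref{Prop:sympl_res_torus_fixed}, so all the cohomological bookkeeping is unobstructed and the \cite{BGKP} criterion applies verbatim. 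The period of $\A^\theta\widehat\otimes\Ca_\nu(\A^\theta)^{opp}$ on $X\times X^0$ is $(\lambda,-\lambda^0)$ by additivity of periods under completed tensor product (this is where the $opp$, hence the sign on $\lambda^0$, enters, using the parity anti-automorphism / the fact that passing to the opposite algebra negates the period, as recorded for $\A^\theta_{-\lambda}\cong(\A^\theta_\lambda)^{opp}$ in the quantizations subsection).

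The next step is the actual computation. Restricting $(\lambda,-\lambda^0)$ to $H^2(Y)$ via $(\iota,\pi)^*$ gives $\iota^*\lambda-\pi^*\lambda^0$. The \cite{BGKP} obstruction vanishing says this must equal $\tfrac12 c_1(K_Y)$, where $K_Y$ here means the canonical class of $Y$ as a variety (the relative dualizing issue is harmless since everything is smooth). So $\pi^*\lambda^0=\iota^*\lambda-\tfrac12 c_1(K_Y)$ in $H^2(Y)$; applying the inverse isomorphism $(\pi^*)^{-1}=\iota_Y^*$ — equivalently pulling back along a section $X^0\hookrightarrow Y$, which is the same on $H^2$ as using $\iota^0:X^0\hookrightarrow X$ composed with $\iota_Y$, i.e. $\iota^{0*}$ after identifying $H^2(X)$-classes — yields $\lambda^0=\iota^{0*}\bigl(\lambda+\tfrac12 c_1(K_Y)\bigr)$, which is the assertion (note the sign: $-\tfrac12 c_1(K_Y)$ on $Y$ becomes $+$ after accounting for the orientation conventions in \cite{BGKP}, i.e. the half-canonical of $Y$ versus the ``$\rho$-shift'' normalization of periods; I would double-check this sign against the precise statement in \cite{BGKP}, as it is the only place an error could creep in).

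The main obstacle, and the step deserving the most care, is (b)/(a) together: verifying that the hypotheses of the \cite{BGKP} line-bundle-quantization theorem are genuinely met in the sheaf-theoretic, non-affine, possibly non-smooth-$Y$ situation, and pinning down the exact normalization so that the shift is $+c_1(K_Y)/2$ rather than $\pm c_1(K_Y)/2$ with some other sign or a full $c_1(K_Y)$. Concretely one must check that $Y$, though typically singular or reducible as a subvariety of $X$, is smooth enough after the reduction preceding Lemma \ref{Lem:repel_gener} (where $X$ was replaced by the union of the affine charts meeting $Y_i$) — indeed $Y_i\to X_i^0$ is an affine-space bundle, so $Y_i$ is smooth — and that the gluing construction of $\A^\theta/\A^\theta\A^{\theta,>0}$ produces precisely the module that \cite{BGKP} characterize, not a twist of it by a line bundle pulled back from $X^0$ (which would be invisible to the associated-graded computation in Lemma \ref{Lem:repel_gener} and could shift $\lambda^0$ by an integral class). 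That rigidity — uniqueness up to such a twist, and the fact that our canonical construction gives the ``untwisted'' one — is what makes the period well-defined and equal to the stated value.
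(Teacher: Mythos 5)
Your proposal follows essentially the same route as the paper: realize the bimodule $\A^\theta/\A^\theta\A^{\theta,>0}$ (with $\gr$ equal to $\mathcal{O}_Y$ by Lemma \ref{Lem:repel_gener}) as a quantization of the Lagrangian $Y\subset X\times X^0$ for $\A^\theta\widehat{\otimes}\Ca_\nu(\A^\theta)^{opp}$, whose period is $p_1^*\lambda-p_2^*\lambda^0$, pull back to $Y$, and invoke \cite{BGKP}. The one substantive defect is your handling of the sign, which is the entire content of the proposition: you state the \cite{BGKP} criterion as ``restriction of the period $=+\tfrac12 c_1(K_Y)$'', derive $\pi^*\lambda^0=\iota^*\lambda-\tfrac12 c_1(K_Y)$, and then flip the sign at the end by appealing to unspecified ``orientation conventions'' — that is not an argument, and as written your chain of equalities yields $\lambda^0=\iota^{0*}(\lambda-c_1(K_Y)/2)$, the opposite of the claim. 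The paper avoids this by quoting \cite[(1.1.3), Theorem 1.1.4]{BGKP} in the precise form $\iota^*(\lambda)-\pi^*(\lambda^0)=-\tfrac12 c_1(K_Y)$, from which restricting to $X^0$ gives $\lambda^0=\iota^{0*}(\lambda+c_1(K_Y)/2)$ directly; you need to pin down that statement rather than adjust the sign a posteriori.

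Your final worry about a possible twist by a line bundle pulled back from $X^0$ is not needed and is in any case misplaced: the argument only uses the \emph{existence} of a quantization of the specific line bundle $\mathcal{O}_Y$ (the trivial one), and which line bundle is being quantized is read off from the associated graded, which Lemma \ref{Lem:repel_gener} computes to be $\mathcal{O}_Y$ itself — a twist by a nontrivial line bundle would change the associated graded, so it is not ``invisible''. No rigidity or uniqueness statement from \cite{BGKP} enters the paper's proof.
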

\begin{proof}
The period of $\A^\theta\widehat{\otimes}\Ca_\nu(\A^\theta)^{opp}$ coincides with $p_1(\lambda)-p_2(\lambda^0)$,
where $p^*_1:X\times X^0\rightarrow X, p^*_2:X\times X^0\rightarrow X^0$ are the projections.
So the pull-back of the period to $Y$ is $\iota^*(\lambda)-\pi^*(\lambda^0)$. The structure sheaf of $Y$
admits a quantization to a $\A^\theta\widehat{\otimes}\Ca_{\nu}(\A^\theta)^{opp}$-bimodule,
By \cite[(1.1.3),Theorem 1.1.4]{BGKP}, we have $\iota^*(\lambda)-\pi^*(\lambda^0)=-\frac{1}{2}c_1(K_Y)$.
Restricting this equality to $X^0$, we get the equality required in the proposition.
%
\end{proof}

\subsection{Parabolic induction functor}\label{SS_parab_induc}
Let $X$ be a conical symplectic resolution of $X_0$. We assume that $X$ comes with a Hamiltonian action of a torus
$T$. Let $\mathfrak{C}$ stand for $\operatorname{Hom}(\C^\times,T)$. We introduce
a  pre-order $\prec^\lambda$ on $\mathfrak{C}$ as follows: $\nu\prec^\lambda \nu'$ if
\begin{itemize}
\item
$\A_\lambda \left(\A_{\lambda}^{>0,\nu}+ (\A_\lambda^{\nu(\C^\times)})^{>0,\nu'}\right)=\A_\lambda\A_{\lambda}^{>0,\nu'}$,
\item and the natural action of $\nu(\C^\times)$ on $\Ca_{\nu'}(\A_\lambda)$ is trivial.
\end{itemize}
It extends
naturally to $\mathfrak{C}_{\mathbb{Q}}:=\mathbb{Q}\otimes_{\Z}\mathfrak{C}$.

The following lemma explains why this ordering is important.

\begin{Lem}\label{Lem:parab_ind}
Suppose $\nu\prec\nu'$. Then $\Ca_{\nu'}(\Ca_\nu(\A_\lambda))=\Ca_{\nu'}(\A_\lambda)$. Further, let
$\Delta_{\nu'}: \Ca_{\nu'}(\A_\lambda)\operatorname{-mod}\rightarrow \A_\lambda\operatorname{-mod}, \Delta_{\nu}:
\Ca_\nu(\A_\lambda)\operatorname{-mod}\rightarrow \A_\lambda\operatorname{-mod}, \underline{\Delta}:\Ca_{\nu'}(\A)\operatorname{-mod}
\rightarrow \Ca_\nu(\A_\lambda)\operatorname{-mod}$ be the Verma module functors. We have $\Delta_{\nu'}=\Delta_\nu\circ\underline{\Delta}$.
\end{Lem}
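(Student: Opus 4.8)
The plan is to unravel the definitions on both sides and check they produce the same algebra and the same functor. Recall $\Ca_\nu(\A_\lambda)=\A_\lambda^{0,\nu}/\bigoplus_{i>0}\A_\lambda^{-i,\nu}\A_\lambda^{i,\nu}$, equivalently $\A_\lambda^{\geqslant 0,\nu}/(\A_\lambda^{\geqslant 0,\nu}\cap \A_\lambda\A_\lambda^{>0,\nu})$. The second bullet in the definition of $\nu\prec^\lambda\nu'$ says that $\nu(\C^\times)$ acts trivially on $\Ca_{\nu'}(\A_\lambda)$, so $\nu$ induces a Hamiltonian $\C^\times$-action on $\Ca_{\nu'}(\A_\lambda)$ that is \emph{inner}, i.e. the grading on $\Ca_{\nu'}(\A_\lambda)$ coming from $\nu$ is concentrated in degree $0$. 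Wait --- more precisely the action is trivial, so $\Ca_{\nu'}(\A_\lambda)^{0,\nu}=\Ca_{\nu'}(\A_\lambda)$ and there is nothing to quotient, giving $\Ca_{\nu}(\Ca_{\nu'}(\A_\lambda))=\Ca_{\nu'}(\A_\lambda)$. But the statement asserts $\Ca_{\nu'}(\Ca_\nu(\A_\lambda))=\Ca_{\nu'}(\A_\lambda)$, i.e. with the roles the other way: we must compute the $\nu'$-Cartan subquotient of $\Ca_\nu(\A_\lambda)$. So first I would use the first bullet, which compares the left ideals $\A_\lambda\A_\lambda^{>0,\nu'}$ and $\A_\lambda(\A_\lambda^{>0,\nu}+(\A_\lambda^{\nu(\C^\times)})^{>0,\nu'})$: this is precisely a ``transitivity of Verma reduction'' statement. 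Passing to $\nu$-invariants (more accurately, to the $\nu$-nonnegative part and then the quotient) one extracts from it that $\Ca_\nu(\A_\lambda)^{>0,\nu'}$ generates, inside $\Ca_\nu(\A_\lambda)$, the same left ideal as the image of $\A_\lambda^{>0,\nu'}$; and that the $\nu$-degree-$0$, $\nu'$-degree-$0$ part of $\A_\lambda$ surjects onto $\Ca_{\nu'}(\A_\lambda)$. Combining, $\Ca_{\nu'}(\Ca_\nu(\A_\lambda))$ and $\Ca_{\nu'}(\A_\lambda)$ are both identified with $\A_\lambda^{0,\nu+0,\nu'}$ modulo the same relations, which gives the first assertion.

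Granting the algebra identification, the functor statement is then a formal consequence of associativity of tensor product together with a base-change identity for the relevant left ideals. Write $\A:=\A_\lambda$, $\Ca:=\Ca_\nu(\A)$, $\Ca':=\Ca_{\nu'}(\A)=\Ca_{\nu'}(\Ca)$. By definition $\Delta_\nu(M)=(\A/\A\A^{>0,\nu})\otimes_{\Ca}M$, $\underline{\Delta}(N)=(\Ca/\Ca\Ca^{>0,\nu'})\otimes_{\Ca'}N$, and $\Delta_{\nu'}(N)=(\A/\A\A^{>0,\nu'})\otimes_{\Ca'}N$. So
\[
\Delta_\nu(\underline{\Delta}(N))
=(\A/\A\A^{>0,\nu})\otimes_{\Ca}(\Ca/\Ca\Ca^{>0,\nu'})\otimes_{\Ca'}N
=\bigl((\A/\A\A^{>0,\nu})\big/(\A/\A\A^{>0,\nu})\Ca^{>0,\nu'}\bigr)\otimes_{\Ca'}N .
\]
Thus I must show the $\A$-$\Ca'$-bimodule $(\A/\A\A^{>0,\nu})\big/(\A/\A\A^{>0,\nu})\Ca^{>0,\nu'}$ equals $\A/\A\A^{>0,\nu'}$. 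The right-hand denominator $(\A/\A\A^{>0,\nu})\Ca^{>0,\nu'}$ is the image in $\A/\A\A^{>0,\nu}$ of $\A\cdot\widetilde{\A^{>0,\nu'}}$, where $\widetilde{\A^{>0,\nu'}}$ denotes any lift to $\A^{\geqslant 0,\nu}$ of $\Ca^{>0,\nu'}\subset\Ca$; one checks such a lift can be taken inside $(\A^{\nu(\C^\times)})^{>0,\nu'}$. Hence the iterated reduction is $\A$ modulo the left ideal generated by $\A^{>0,\nu}$ together with (a lift of) $(\A^{\nu(\C^\times)})^{>0,\nu'}$, and the first bullet of $\nu\prec^\lambda\nu'$ says exactly that this left ideal equals $\A\A^{>0,\nu'}$. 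This yields $\Delta_\nu\circ\underline{\Delta}=\Delta_{\nu'}$ as functors.

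The main obstacle, I expect, is the bookkeeping around lifts: to run the tensor-product computation cleanly one needs that the surjection $\A^{\geqslant 0,\nu}\twoheadrightarrow\Ca$ carries $(\A^{\nu(\C^\times)})^{>0,\nu'}$ onto $\Ca^{>0,\nu'}$ (so that a generating set of $\Ca^{>0,\nu'}$ lifts to $\nu$-degree-zero, $\nu'$-positive elements of $\A$), and that $(\A/\A\A^{>0,\nu})\cdot(\text{image of }(\A^{\nu(\C^\times)})^{>0,\nu'})$ really computes $(\A/\A\A^{>0,\nu})\otimes_{\Ca}\Ca^{>0,\nu'}$ as a \emph{left} $\A$-submodule --- this uses that $\Ca\cong(\A/\A\A^{>0,\nu})^{\nu(\C^\times)}$ acts on $\A/\A\A^{>0,\nu}$ on the right, compatibly with its grading. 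None of these steps is deep, but each requires keeping the two commuting gradings (from $\nu$ and $\nu'$) straight; the completeness/separatedness of the filtrations (Section~\ref{SSS_Cart_subquot_alg}) is what guarantees the relevant ideals are closed so that no convergence issue arises. Once these are in place both assertions drop out as above.
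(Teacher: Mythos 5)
Your proposal is correct and follows essentially the same route as the paper: both parts are obtained by unwinding the definition of $\prec^\lambda$, using bullet (i) to identify the left ideal $\A_\lambda(\A_\lambda^{>0,\nu}+(\A_\lambda^{\nu(\C^\times)})^{>0,\nu'})$ with $\A_\lambda\A_\lambda^{>0,\nu'}$ (which is exactly how the paper identifies the bimodules defining $\Delta_\nu\circ\underline{\Delta}$ and $\Delta_{\nu'}$) and bullet (ii) for the identification of Cartan subquotients. Your extra bookkeeping about lifting $\Ca_\nu(\A_\lambda)^{>0,\nu'}$ into $(\A_\lambda^{\nu(\C^\times)})^{>0,\nu'}$ is a correct, slightly more detailed version of what the paper leaves implicit.
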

\begin{proof}
Let us write $\A$ for $\A_\lambda$.

Let us prove that $\Ca_{\nu'}(\Ca_\nu(\A))=\Ca_{\nu'}(\A)$.
(i) produces an epimorphism $\A/\A\A^{>0,\nu}\twoheadrightarrow \A/\A\A^{>0,\nu'}$.
By (ii), the latter gives rise to an algebra epimorphism $\Ca_{\nu}(\A)\twoheadrightarrow \Ca_{\nu'}(\A)$
that induces $\Ca_{\nu'}(\Ca_\nu(\A))\twoheadrightarrow \Ca_{\nu'}(\A)$.

To show that $\Delta_{\nu}\circ \underline{\Delta}\cong \Delta_{\nu'}$ we note that
the right hand side is $\A/I_r\otimes_{\Ca_{\nu'}(\A)}\bullet$, while the
left hand side is $\A/I_\ell\otimes_{\Ca_{\nu'}(\A)}\bullet$, where we write
$I_\ell,I_r$ for the ideals of $\A$ in the left and the right hand side of the
equality in (i).
\end{proof}

The lemma shows that the Verma module functor can be studied in stages. This is what we mean by the parabolic induction.

Here is an example  of two one-parameter subgroups that are comparable with respect to $\prec^\lambda$.
Pick one-parameter subgroups $\nu,\kappa:\C^\times\rightarrow T$.

\begin{Lem}\label{Lem:prec_spec}
For $m\gg 0$, we have $\nu\prec^\lambda \nu':=m\nu+\kappa$ for all $\lambda$.
\end{Lem}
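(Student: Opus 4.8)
The plan is to verify the two defining conditions of $\nu \prec^\lambda \nu'$ for $\nu' = m\nu + \kappa$ with $m$ large, working with the filtered algebra $\A := \A_\lambda$ and passing to the associated graded $\gr\A = \C[X_0]$, where the analogous statements are geometric. First I would establish a purely commutative version: writing $Y_0^{\nu}, Y_0^{\nu'}$ for the $\nu$- and $\nu'$-contracting loci in $X_0$ (with their natural scheme structures given by the ideals $\C[X_0]\C[X_0]^{>0,\nu}$ and $\C[X_0]\C[X_0]^{>0,\nu'}$), I claim that for $m \gg 0$ one has $Y_0^{\nu'} = Y_0^{\nu} \cap X_0^{\nu(\C^\times)}$ as schemes, i.e. $\C[X_0]\bigl(\C[X_0]^{>0,\nu} + (\C[X_0]^{\nu(\C^\times)})^{>0,\nu'}\bigr) = \C[X_0]\C[X_0]^{>0,\nu'}$, and moreover $\nu(\C^\times)$ acts trivially on $\Ca_{\nu'}(\C[X_0]) = \C[X_0^{\nu'(\C^\times)}]$. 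Both follow from the observation that $X_0$ is covered by finitely many $(\C^\times)^2$-stable affine charts, on each of which $\C[X_0]$ is a finitely generated graded algebra; a weight of $\nu'$ is $m$ times the $\nu$-weight plus the $\kappa$-weight, so once $m$ exceeds (in absolute value) the finitely many $\kappa$-weights appearing on a fixed set of homogeneous generators, "positive $\nu'$-weight" is equivalent to "positive $\nu$-weight, or ($\nu$-weight zero and positive $\kappa$-weight)", which gives the first identity, and "$\nu'$-weight zero" forces "$\nu$-weight zero", which gives the second (triviality of the $\nu$-action on $X_0^{\nu'(\C^\times)}$).

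Next I would lift these statements from $\gr\A$ to $\A$ itself. For the second condition this is immediate: the $\nu(\C^\times)$-action on $\Ca_{\nu'}(\A)$ is compatible with the filtration, and it is trivial on $\gr\Ca_{\nu'}(\A)$ (a quotient of $\C[X_0^{\nu'(\C^\times)}]$ by the previous paragraph), hence trivial on $\Ca_{\nu'}(\A)$ since the filtration is complete and separated. For the first condition, I would argue exactly as in the proof of Proposition \ref{Prop:A0}(1): pass to the $\hbar$-adically completed homogenized Rees algebra $\A_\hbar$, choose $\nu$-homogeneous lifts in $\A_\hbar$ of a finite set of homogeneous generators of the relevant ideal of $\C[X_0]$, and use the $\hbar$-saturation / successive approximation argument together with the graded identity to conclude that $\A_\hbar\bigl(\A_{\hbar}^{>0,\nu} + (\A_\hbar^{\nu(\C^\times)})^{>0,\nu'}\bigr) = \A_\hbar \A_{\hbar}^{>0,\nu'}$; specializing $\hbar \to 1$ and taking the filtration-completion gives condition (i). (In fact the inclusion $\supseteq$ of the corresponding left ideals in $\A$ is automatic; the content is the reverse inclusion, and for that one only needs $\A_{\lambda}^{>0,\nu'} \subset \A_\lambda\bigl(\A_{\lambda}^{>0,\nu} + (\A_\lambda^{\nu(\C^\times)})^{>0,\nu'}\bigr)$, which follows from the graded statement by the usual complete-and-separated induction on filtration degree, without even needing the regular-sequence hypothesis.)

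I expect the main obstacle to be the lift of the first condition from the associated graded to $\A$. At the level of $\gr\A = \C[X_0]$ everything is an elementary bookkeeping of weights, but the passage to $\A$ requires controlling the commutators $[\,\cdot\,,\,\cdot\,]$ that appear when one rearranges products of the chosen generators — this is precisely the subtlety handled in Proposition \ref{Prop:A0} via the $\hbar$-adic completeness of $\A_\hbar$ and the fact that $\frac1\hbar[\tilde f_\ell, \tilde f_j]$ still lies in the positive-weight part. I would either cite that argument verbatim or, more cheaply, observe that condition (i) is an equality of two left ideals of $\A$ whose reverse inclusion is the only issue, reduce it to showing $\A^{>0,\nu'} \subseteq \A(\A^{>0,\nu} + (\A^{\nu(\C^\times)})^{>0,\nu'})$, and then run the degree-by-degree approximation using only that $\gr$ of the right-hand ideal contains all homogeneous elements of positive $\nu'$-weight in $\C[X_0]$ — which is the first-paragraph identity. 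This avoids the commutator subtlety entirely, since we are only proving one containment of \emph{left} ideals, not that a generating set behaves well.
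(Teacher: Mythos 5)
Your overall strategy --- reduce to weight bookkeeping on a finite set of $T$-homogeneous generators, for which the $\nu'$-weight is $m$ times the $\nu$-weight plus the $\kappa$-weight, take $m$ larger than the finitely many $\kappa$-weights that occur, and transfer from $\gr\A_\lambda=\C[X_0]$ to $\A_\lambda$ by a complete-filtration approximation --- is essentially the paper's, and your argument for the second condition (triviality on $\gr\Ca_{\nu'}(\A_\lambda)$ plus pro-rationality and completeness of the filtration) is fine. The genuine gap is in your ``cheaper'' route for condition (i): the containment you declare automatic is precisely the one that costs something. Neither inclusion in (i) holds for formal reasons; in particular $\A_\lambda^{>0,\nu}\subset \A_\lambda\A_\lambda^{>0,\nu'}$ is exactly where $m\gg 0$ and finite generation enter, because an element of positive $\nu$-weight can have arbitrarily negative $\kappa$-weight, hence negative $\nu'$-weight for any fixed $m$, and there is no a priori reason for it to lie in the left ideal generated by $\A_\lambda^{>0,\nu'}$ (symmetrically, elements of negative $\nu$-weight but large positive $\kappa$-weight obstruct a formal proof of the other inclusion). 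This inclusion is where the paper spends most of its proof: it chooses finitely many $T$-semiinvariant generators $f_1,\ldots,f_k$ of the ideal $\C[X_0]^{>0,\nu}$ in $\C[X_0]^{\geqslant 0,\nu}$, lifts them to $T$-semiinvariant $\tilde f_1,\ldots,\tilde f_k\in\A_\lambda$ (these generate $\A_\lambda^{>0,\nu}$ over $\A_\lambda^{\geqslant 0,\nu}$), and takes $m$ with $ma_i+b_i>0$, so that each $\tilde f_i$ lies in $\A_\lambda^{>0,\nu'}$. If you follow your parenthetical shortcut you have proved only one of the two needed inclusions.

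Relatedly, the commutator subtlety that pushes you toward quoting Proposition \ref{Prop:A0}(1) is not actually present, and that proposition is the wrong tool here (its content is computing $\gr$ of an ideal under a complete-intersection hypothesis, not transferring an equality of graded ideals). Once a finite set of $T$-semiinvariant generators is lifted as above, both inclusions follow from the standard lift-of-generators induction on filtration degree, applied separately to each side: taking $T$-weight components, the error terms stay in $\A_\lambda^{>0,\nu}$ (respectively $\A_\lambda^{>0,\nu'}$), and completeness closes the induction --- no $\hbar$-saturation and no regular sequences. For condition (ii) the paper argues a bit differently from you: using the just-proved ideal identity it writes $\Ca_{\nu'}(\A_\lambda)$ as the $\nu'(\C^\times)$-invariants of the $m$-independent quotient $\A_\lambda/\A_\lambda\bigl(\A_\lambda^{>0,\nu}+(\A_\lambda^{\nu(\C^\times)})^{>0,\kappa}\bigr)$ and observes that for $m$ large these coincide with the invariants for $\nu(\C^\times)\times\kappa(\C^\times)$; your associated-graded argument is an acceptable alternative.
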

\begin{proof}
Let us write $\A$ for $\A_\lambda$.
Let us prove the inclusion $\A^{>0,\nu}\subset \A\A^{>0,\nu'}$ that will
imply $\A\left(\A^{>0,\nu}+(\A^{\nu(\C^\times)})^{>0,\nu'}\right)\subset \A\A^{>0,\nu'}$.

The algebra $\gr(\A^{\geqslant 0,\nu})=\C[X_0]^{\geqslant 0,\nu}$
is finitely generated, as in the proof of \cite[Lemma 3.1.2]{GL}. So we can choose finitely many $T$-semiinvariant generators of the ideal $\C[X_0]^{>0,\nu}$ in $\C[X_0]^{\geqslant 0,\nu}$, say $f_1,\ldots,f_k$.
Let $\tilde{f}_1,\ldots,\tilde{f}_k$ denote their lifts to $T$-semiinvariant elements in $\A$,
these lifts are generators of the ideal $\A^{>0,\nu}$ in $\A^{\geqslant 0,\nu}$.
Let $a_1,\ldots,a_k>0$ be their weights for $\nu$ and $b_1,\ldots,b_k$ be their weights for $\kappa$.
Take $m\in \Z^{>0}$ such that $ma_i+b_i>0$ for all $i$. The elements $\tilde{f}_1,\ldots,\tilde{f}_k$ then
lie in $\A^{>0,\nu'}$ and so $\A\A^{>0,\nu}\subset \A\A^{>0,\nu'}$.

The inclusion $\A^{>0,\nu'}\subset \A\left(\A^{>0,\nu}+(\A^{\nu(\C^\times)})^{>0,\nu'}\right)$
is proved in a similar fashion. So we get the required equality of the left ideals.

Let us check that $\nu(\C^\times)$ acts trivially on $\Ca_{\nu'}(\A)$. We get
$$\Ca_{\nu'}(\A)=\left(\A/\A(\A^{>0,\nu}\oplus (\A^{\nu(\C^\times)})^{>0,\kappa})\right)^{\nu'(\C^\times)}.$$
Note that the expression in brackets is independent of $m$ and the weights of
$\nu$ are all non-negative. From here it is easy to deduce
that the invariants with respect to $\nu'(\C^\times)$ coincides with the invariants
for $\nu(\C^\times)\kappa(\C^\times)$ for $m$ large enough.
\end{proof}

\section{Standardly stratified structures on categories $\mathcal{O}$}\label{S_SS_O}
In this section we introduce a family of standardly stratified structures on the category
$\mathcal{O}_\nu(\A_\lambda^\theta)$ that are compatible
in the sense of  Section \ref{SSS_SS_comp} with the highest weight structure
recalled in Section \ref{SS_quant_resol_O}.

\subsection{Main result}
Suppose that we have a Hamiltonian action of a torus $T$ on $X$ (commuting with the contracting action)
such that the fixed point set $X^T$ is finite. Let $\chi_1,\ldots,\chi_N$ be the characters
of the $T$-action on $$\bigoplus_{p\in X^T}T_pX$$
The kernels $\ker\chi_i, i=1,\ldots,N,$ partition the co-character space $\operatorname{Hom}(\C^\times,T)\otimes_{\Z}\mathbb{R}$ into polyhedral cones to
be called {\it chambers}. If $\nu$ lies in the interior of a chamber (i.e., $\langle\chi_i,\nu\rangle\neq 0$ for all $i$), then $X^{\nu(\C^\times)}=X^T$. Such one-parameter subgroups will be called
{\it generic}. Otherwise, $X^{\nu(\C^\times)}$ is infinite.

Let $\nu$ be a generic one-parameter subgroup of $T$ and $\nu_0$ be a one-parameter subgroup
lying in the closure of the chamber containing $\nu$. In this case we will write $\nu\rightsquigarrow
\nu_0$. Our goal is to produce a standardly stratified structure on $\OCat_\nu(\A_\lambda^\theta)$ corresponding to $\nu_0$. The first step is to define a pre-order on $X^T$ from $\nu_0$.

Let $Z_1,\ldots,Z_k$ be the irreducible (=connected) components of $X^{\nu_0(\C^\times)}$.
We define a partial order $\preceq_{\nu_0}$ on the set $\{Z_1,\ldots,Z_k\}$ as the transitive closure
of the relation $Z_i\preceq_{\nu_0}Z_j$ specified by $Z_i\cap \overline{Y_j}\neq \varnothing$, where $Y_j$
is the contracting locus of $Z_j$, i.e., $$Y_j=\{x\in X| \lim_{t\rightarrow 0} \nu_0(t)x\in Z_j\}.$$
For $p,p'\in X^T$, we set $p\preceq_{\nu_0}p'$ if $p\in Z_i, p'\in Z_j$ with $Z_i\prec_{\nu_0}Z_j$.
Clearly, $\preceq_{\nu_0}$ is a partial pre-order on the set $X^T$ and the associated poset is
$\{Z_1,\ldots,Z_k\}$.

\begin{Lem}\label{Lem:order_compat}
The relation $p\preceq_\nu p'$ implies $p\preceq_{\nu_0}p'$.
\end{Lem}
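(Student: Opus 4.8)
The plan is to compare the Bia\l ynicki--Birula cells of the generic $\nu$ with those of $\nu_0$ in a neighbourhood of a $T$-fixed point, reducing the statement to an elementary inequality between tangent weights. Since $\preceq_\nu$ and $\preceq_{\nu_0}$ are both defined as transitive closures, it is enough to treat a generating relation: if $p$ lies in the closure of the $\nu$-contracting cell $Y_{p'}:=\{x\in X\mid \lim_{t\to 0}\nu(t)x=p'\}$ of some $p'\in X^T$, I must show $p\preceq_{\nu_0}p'$. Let $Z_j$ be the component of $X^{\nu_0(\C^\times)}$ containing $p'$. The crucial claim I would isolate is that $Y_{p'}\subseteq Y_j$, where $Y_j$ is the $\nu_0$-contracting locus of $Z_j$. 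Granting this, $\overline{Y_{p'}}\subseteq\overline{Y_j}$, so the component $Z_i\ni p$ meets $\overline{Y_j}$, and by the very definition of $\preceq_{\nu_0}$ this gives $Z_i\preceq_{\nu_0}Z_j$, i.e.\ $p\preceq_{\nu_0}p'$.

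To establish $Y_{p'}\subseteq Y_j$ I would argue in three steps. First I would record two stability facts: $Y_{p'}$ is $T$-stable (as $T$ commutes with $\nu$ and fixes $p'$), and, dually, $Y_j$ is $\nu(\C^\times)$-stable (as $\nu(\C^\times)$ commutes with $\nu_0$ and fixes $p'$, hence preserves the component $Z_j$, and therefore preserves $\{x\mid \lim_{s\to0}\nu_0(s)x\in Z_j\}$). Second, using that $p'$ is a smooth fixed point of the torus $T$, a $T$-equivariant (formal or analytic) linearization identifies a neighbourhood $U$ of $p'$ in $X$ with one of $0$ in $T_{p'}X=\bigoplus_\chi V_\chi$; under it $Y_{p'}$ becomes $\bigoplus_{\langle\chi,\nu\rangle>0}V_\chi$ and $Y_j$ becomes $\bigoplus_{\langle\chi,\nu_0\rangle\geqslant 0}V_\chi$ near $p'$. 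Every weight $\chi$ of $T_{p'}X$ occurs among $\chi_1,\dots,\chi_N$, and $\langle\chi_i,\cdot\rangle$ has constant sign on the open chamber containing $\nu$; since $\nu_0$ lies in the closure of that chamber, $\langle\chi,\nu\rangle>0$ forces $\langle\chi,\nu_0\rangle\geqslant 0$. Hence the first subspace sits inside the second, which gives the local inclusion $Y_{p'}\cap U\subseteq Y_j$. Third, I would globalize: for $x\in Y_{p'}$ one has $\nu(t)x\to p'$ as $t\to0$, so $\nu(t)x\in U\cap Y_{p'}\subseteq Y_j$ for $|t|$ small, and then $x=\nu(t)^{-1}\bigl(\nu(t)x\bigr)\in Y_j$ by the $\nu(\C^\times)$-stability of $Y_j$.

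The only genuinely geometric input is the local model at $p'$ together with the weight comparison; everything else is formal manipulation with transitive closures, and the $\nu(\C^\times)$-equivariance of $Y_j$ is precisely what propagates the inclusion from a neighbourhood of $p'$ to the whole cell $Y_{p'}$. I expect the step needing the most care to be the verification of the local description of $Y_j$ near $p'$ --- namely that the $\nu_0$-attracting set of the (possibly positive-dimensional) component $Z_j$ is modelled by $\bigoplus_{\langle\chi,\nu_0\rangle\geqslant 0}V_\chi$ and that the $\nu_0$-limits of points of $Y_{p'}$ close to $p'$ stay close to $p'$ and land in $Z_j$ --- since this is where the passage from the affine-space picture to the actual variety has to be justified.
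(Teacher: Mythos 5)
Your proof is correct and takes essentially the same route as the paper: reduce to the generating relation $p\in\overline{Y_{p'}}$ and compare weights at $p'$, using that any weight positive on $\nu$ is non-negative on $\nu_0$ because $\nu_0$ lies in the closure of the chamber containing $\nu$, so that $\nu_0$ contracts $Y_{p'}$ into the component $Z_j\ni p'$. The only (cosmetic) difference is packaging: the paper gets the local model from the Luna slice theorem on a $T$-stable affine neighborhood, realizing $Y_{p'}$ as a $T$-module with positive $\nu$-weights, while you linearize $X$ at $p'$ and then globalize the inclusion $Y_{p'}\subseteq Y_j$ via the $\nu(\C^\times)$-stability of $Y_j$.
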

\begin{proof}
In the proof we can assume that $p\in \overline{Y}_{p'}$. In this case, what we need to check
is that $\nu_0$ contracts $Y_{p'}$ to the irreducible component $Z$ of $X^{\nu_0(\C^\times)}$
containing $p'$. We can replace $X$ with a $T$-stable affine neighborhood of $p'$.
Applying the Luna slice theorem to $T$ and $p'$, we see that $Y_{p'}$ is $T$-equivariantly
isomorphic to a $T$-module, such that all weights of $\nu$ are positive. Since $\nu_0$
lies in the closure of the chamber containing $\nu$, we see that all weights of $\nu_0$
are non-negative. This implies our claim.
\end{proof}

Let us proceed to the main result of this section.

\begin{Thm}\label{Thm:cat_O_ss}
Suppose $\nu_0$ lies in the closure of the chamber containing $\nu$. Then $(\OCat_{\nu}(\A_\lambda^\theta),\preceq_{\nu_0})$
is a standardly stratified category.
\end{Thm}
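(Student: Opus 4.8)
The strategy is to verify the criterion of Lemma \ref{Lem:ss_order}: it suffices to check that for every projective $P\in \OCat_\nu(\A_\lambda^\theta)$ and every injective $I$, the truncations $P(\xi)$ and $I(\xi)$ are projective and injective in the quotient category $\OCat_\nu(\A_\lambda^\theta)_\xi$ attached to each equivalence class $\xi=Z_j$ of $\preceq_{\nu_0}$. By Lemma \ref{Lem:order_compat} the pre-order $\preceq_{\nu_0}$ is refined by $\preceq_\nu$, so this is genuinely a \emph{compatible} candidate standardly stratified structure in the sense of Section \ref{SSS_SS_comp}, and the standard objects of the putative structure are exactly the $\Delta_\nu$-Vermas of the highest weight structure, grouped according to which component $Z_j$ their labelling fixed point lies in. The first reduction I would make is to pass, via translation functors and abelian localization (Lemma \ref{Lem:local}, and the fact recalled in Section \ref{SS_quant_resol_O} that translation and $\Gamma,\Loc$ preserve categories $\mathcal{O}$), to a parameter $\lambda$ for which $\OCat_\nu(\A_\lambda^\theta)\cong \OCat_\nu(\A_\lambda)$ on the algebra level, so that the combinatorics can be done with honest modules; one then transports the conclusion back.

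The heart of the argument is the parabolic induction of Section \ref{S_parab}. The point is that for generic $\nu$ and $\nu_0$ in the closure of its chamber one has $\nu\prec^\lambda \nu_0$ (or, if that fails at the given $\lambda$, at a generic $\lambda$ along an ample line, using Lemma \ref{Lem:prec_spec} as the model computation and Proposition \ref{Prop:supp_gr}/Lemma \ref{Lem:gen_simplicity} to move to generic parameters), so by Lemma \ref{Lem:parab_ind} the Verma functor factors as $\Delta_\nu = \Delta_{\nu_0}\circ \underline{\Delta}$, where $\underline{\Delta}$ is the Verma functor for $\Ca_{\nu_0}(\A_\lambda)$ and $\Ca_{\nu_0}(\A_\lambda)\cong \Gamma(\Ca_{\nu_0}(\A_\lambda^\theta))$ is the global sections of a quantization of the \emph{symplectic resolution} $X^{\nu_0(\C^\times)}=\bigsqcup_j Z_j$ (Propositions \ref{Prop:sympl_res_torus_fixed}, \ref{Prop:A0}, \ref{Prop:A0_descr}). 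Restricting the residual $\nu$-action to each $Z_j$ makes it a generic one-parameter subgroup of the residual torus acting on $Z_j$, so $\Ca_{\nu_0}(\A_\lambda)\operatorname{-mod}$, localized, decomposes (Lemma \ref{Lem:product}) as $\bigoplus_j \OCat_\nu(\A_{\lambda_j}^{\theta_j})$ where $\A_{\lambda_j}^{\theta_j}$ is the induced quantization of $Z_j$ with period computed by Proposition \ref{Prop:shift}. I would then identify the quotient category $\OCat_\nu(\A_\lambda^\theta)_{Z_j}$ with $\OCat_\nu(\A_{\lambda_j}^{\theta_j})$, under which $\Delta_{Z_j}$ becomes $\Delta_{\nu_0}$ composed with the standard-object functor of the smaller category; exactness of $\Delta$ on $\gr\OCat$ (axiom (SS1)) then follows from exactness of $\Delta_{\nu_0}$ — which in turn follows because $\Ca_{\nu_0}(\A_\lambda)$ has finite homological dimension at generic $\lambda$ (Lemma \ref{Lem:gen_simplicity}(3)) and $X^{\nu_0(\C^\times)}\to \Spec\Ca_{\nu_0}(\C[X_0])$ is projective (Lemma \ref{Lem:A0_classic}), so Vermas over it behave like standard modules in a highest weight category. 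For (SS2), one uses that the projective cover $P(\tau)$ in $\OCat_\nu(\A_\lambda^\theta)$ already carries a $\Delta_\nu$-filtration (highest weight structure), regroups that filtration by $\preceq_{\nu_0}$-classes using the triangularity $\tau\leqslant_\nu \tau' \Rightarrow \tau\preceq_{\nu_0}\tau'$, and applies Lemma \ref{Lem:prop_stand_stratif} to recognize each graded piece as $\Delta_{Z_j}$ of a projective object of $\OCat_\nu(\A_\lambda^\theta)_{Z_j}$ — here the input is that $\pi_{Z_j}$ of the relevant truncated projective is projective, which is exactly the statement one gets from the product/localization picture above together with Lemma \ref{Lem:w_st_stratif} (reducing (SS2) to exactness of the right adjoint $\nabla_{Z_j}$ of $\pi_{Z_j}$, i.e. to exactness of the costandard functor on the smaller category $\OCat_\nu(\A_{\lambda_j}^{\theta_j})$, which holds by its own highest weight structure).

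The main obstacle I anticipate is the bookkeeping in the step that identifies the subquotient categories $\OCat_\nu(\A_\lambda^\theta)_{Z_j}$ with categories $\mathcal{O}$ for the quantized resolutions $Z_j$ — in particular checking that the sheaf-theoretic Cartan subquotient $\Ca_{\nu_0}(\A_\lambda^\theta)$, its restriction to each component, and the induced standard objects match the abstract $\Delta_{Z_j},\pi_{Z_j}$ produced from the pre-order, and doing so uniformly in $\lambda$ so that the genericity invoked to get $\nu\prec^\lambda\nu_0$, finite homological dimension, and abelian localization can all be arranged simultaneously. Once that dictionary is in place, (SS1) and (SS2) are formal consequences of the highest weight structures on the smaller categories via Lemmas \ref{Lem:ss_order}, \ref{Lem:w_st_stratif} and \ref{Lem:prop_stand_stratif}; the genuinely nontrivial inputs — that fixed-point loci are again conical symplectic resolutions and that their global quantizations are the expected Cartan subquotients — are precisely Propositions \ref{Prop:sympl_res_torus_fixed} and \ref{Prop:A0_descr}, already established above.
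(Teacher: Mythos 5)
Your overall skeleton does match the paper's: identify the subquotients of $(\OCat_\nu(\A_\lambda^\theta),\preceq_{\nu_0})$ with categories $\OCat_\nu$ over the Cartan subquotient $\Ca_{\nu_0}(\A_{\lambda^+})$ restricted to the components $Z$, so that the standardization functor becomes $\Delta_{\nu_0}$, and then conclude via Lemma \ref{Lem:w_st_stratif}/Lemma \ref{Lem:ss_order}. But there is a genuine gap at the single step that carries all the weight: the exactness of $\Delta_{\nu_0}$ (axiom (SS1)) and of $\nabla_{\nu_0}$ (needed for (SS2) via Lemma \ref{Lem:w_st_stratif}). Since $\Delta_{\nu_0}(\bullet)=\B^+_{\lambda^+}\otimes_{\Ca_{\nu_0}(\A_{\lambda^+})}\bullet$ with $\B^+_{\lambda^+}=\A_{\lambda^+}/\A_{\lambda^+}\A_{\lambda^+}^{>0,\nu_0}$, its exactness is equivalent to flatness (in fact one proves projectivity) of $\B^+_{\lambda^+}$ as a right $\Ca_{\nu_0}(\A_{\lambda^+})$-module, and dually exactness of $\nabla_{\nu_0}=\Hom_{\Ca_{\nu_0}}(\B^{-,o}_{\lambda^+},\bullet)$ needs projectivity of $\B^{-,o}_{\lambda^+}$ as a left module. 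Your justifications --- finite homological dimension of $\Ca_{\nu_0}(\A_\lambda)$ (Lemma \ref{Lem:gen_simplicity}(3)), projectivity of $X^{\nu_0(\C^\times)}\rightarrow\Spec(\Ca_{\nu_0}(\C[X_0]))$, and ``Vermas behave like standards in a highest weight category'' --- do not give this. Unlike the highest weight case, here the ``Cartan'' is a large noncommutative algebra quantizing a positive-dimensional variety, and finite global dimension of it says nothing about flatness of the specific bimodule $\A/\A\A^{>0,\nu_0}$ over it; likewise, exactness of the right adjoint $\nabla_{Z}$ of $\pi_Z$ is a statement about this bimodule, not something internal to the highest weight structure of $\OCat_\nu$ on $Z$. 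In the paper this is exactly Proposition \ref{Prop:projectivity} (and its mirror for $\B^{-,o}$), which holds only for Zariski generic parameters on the line $\lambda+\C\chi$ and is proved by a nontrivial argument: reduce to vanishing of $\Ext^{>0}$ against the regular module, pass to the $\C[\ell]$-family, and show the Ext-modules are torsion using the associated graded analysis (Lemmas \ref{Lem:CZ_fibers}, \ref{Lem:BZ_assoc_graded}), Gabber involutivity, holonomicity and the GK-dimension bound of Corollary \ref{Cor:full_GK_O}. None of this machinery appears in your plan, and without it (SS1) and (SS2) are not ``formal consequences'' of anything.

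Two smaller points. First, the identification of the subquotient $\OCat_\nu(\A_\lambda^\theta)_{Z}$ with $\OCat_\nu(\Ca_{\nu_0}(\A_\lambda^\theta)|_Z)$, and of $\pi^!_Z,\pi^*_Z$ with $\Delta_{\nu_0},\nabla_{\nu_0}$, is more than bookkeeping: it uses the affineness in $z$ of the highest-weight characters $c^p_{\lambda+z\chi}$, their constancy on $\mathfrak{t}_0$ along each component, and the inequality of Lemma \ref{Lem:ineq_char} to separate the $h_0$-eigenvalues of different components (Proposition \ref{Prop:assoc_graded_O} in the paper); your plan should at least isolate that eigenvalue argument. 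Second, the parabolic-induction relation you invoke goes the other way: with $\nu=m\nu_0+\kappa$, $m\gg 0$, Lemma \ref{Lem:prec_spec} gives $\nu_0\prec^\lambda\nu$ (not $\nu\prec^\lambda\nu_0$), which is what yields $\Delta_\nu=\Delta_{\nu_0}\circ\underline{\Delta}$ via Lemma \ref{Lem:parab_ind}.
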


In the proof we will always assume that $\nu=m\nu_0+\kappa$ for a fixed $\kappa$ and $m\gg 0$.

Let us explain our scheme of proof of Theorem \ref{Thm:cat_O_ss}.
We will start by describing the associated graded category $\gr \mathcal{O}_\nu(\A_\lambda^\theta)$,
we will show that this category is $\OCat_\nu(\Cat_{\nu_0}(\A_\lambda^\theta))$ and that, under the identifications
$\OCat_\nu(\A_\lambda^\theta)\cong \OCat_{\nu}(\A_{\lambda+n\chi})$ and $\OCat_\nu(\Cat_{\nu_0}(\A_\lambda^\theta))
\cong \OCat_{\nu}(\Ca_{\nu_0}(\A_{\lambda+n\chi}))$ (where $\chi$ is ample and $n\gg 0$) the standardization
functor becomes the Verma module functor $\Delta_{\nu_0}$. Using this, we will check the standardization
functor is exact confirming (SS1), this reduces to the claim that the
right $\Ca_{\nu_0}(\A_{\lambda+n\chi})$-module $\A_{\lambda+n\chi}/\A_{\lambda+n\chi}(\A_{\lambda+n\chi})_{>0,\nu_0}$
is projective. Finally, we will use Lemma \ref{Lem:ss_order} to complete the proof.

Below we will write $\lambda^+$ for $\lambda+n\chi$ with $n\gg 0$.

\subsection{Associated graded category}
The goal of this part is to prove the following proposition.

\begin{Prop}\label{Prop:assoc_graded_O}
Let $Z$ be an irreducible component of $X^{\nu_0(\C^\times)}$. The subquotient $\OCat_\nu(\A_\lambda^\theta)_Z$
is equivalent to  $\OCat_\nu(\Ca_{\nu_0}(\A_\lambda^\theta)|_Z)$ as an abelian category so that
the functors $\pi^!_Z$ and $\pi^*_Z$ get identified with $\Delta_{\nu_0}$ and $\nabla_{\nu_0}$,
respectively.
\end{Prop}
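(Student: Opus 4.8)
The plan is to prove Proposition \ref{Prop:assoc_graded_O} by reducing everything to the affine (algebra) situation via localization, and then identifying the subquotient category explicitly.

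\textbf{Step 1: Reduce to algebras.} First I would pass to $\lambda^+=\lambda+n\chi$ with $n\gg 0$ so that abelian localization holds, giving $\OCat_\nu(\A_\lambda^\theta)\cong \OCat_\nu(\A_{\lambda^+})$; by Proposition \ref{Prop:A0_descr} (applied for generic parameter, which we may assume after shifting) we also have $\Gamma(\Ca_{\nu_0}(\A_{\lambda^+}^\theta))=\Ca_{\nu_0}(\A_{\lambda^+})$, and abelian localization holds for $\Ca_{\nu_0}(\A_{\lambda^+}^\theta)$ on the symplectic resolution $X^{\nu_0(\C^\times)}$ (using Proposition \ref{Prop:sympl_res_torus_fixed} and Lemma \ref{Lem:local}; one must check $n\gg 0$ works simultaneously for both $X$ and $X^{\nu_0(\C^\times)}$, which follows since each only excludes finitely many $n$). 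So it suffices to identify the subquotient $\OCat_\nu(\A_{\lambda^+})_Z$ with $\OCat_\nu(\Ca_{\nu_0}(\A_{\lambda^+}))$ restricted to the block corresponding to $Z$, and then transport along localization. Since $\nu=m\nu_0+\kappa$ with $m\gg 0$, Lemma \ref{Lem:prec_spec} gives $\nu_0\prec^\lambda\nu$, so by Lemma \ref{Lem:parab_ind} we have $\Ca_\nu(\Ca_{\nu_0}(\A_{\lambda^+}))=\Ca_\nu(\A_{\lambda^+})$ and $\Delta_\nu=\Delta_{\nu_0}\circ\underline{\Delta}$, where $\underline{\Delta}$ is the Verma functor for $\nu$ on $\Ca_{\nu_0}(\A_{\lambda^+})\operatorname{-mod}$.

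\textbf{Step 2: Identify the Serre subquotient.} The poset ideal $\Tcal_0=\{p\mid \varrho(p)\preceq_{\nu_0} Z\}$ in $X^T$ cuts out $\OCat_\nu(\A_{\lambda^+})_{\preceq Z}$, the modules with $\VA(M)\subset \bigcup_{Z'\preceq Z}Y_{Z'}$, and similarly $\OCat_\nu(\A_{\lambda^+})_{<Z}$. I would show the simple objects surviving in the quotient $\OCat_\nu(\A_{\lambda^+})_Z=\OCat_\nu(\A_{\lambda^+})_{\preceq Z}/\OCat_\nu(\A_{\lambda^+})_{<Z}$ are exactly the $L_\nu(N)$ with $N$ in the summand of $\Ca_\nu(\A_{\lambda^+})$ whose support in $\Spec\Ca_\nu(\A_{\lambda^+})$ lies over $Z$, i.e.\ the simples of $\OCat_\nu(\Ca_{\nu_0}(\A_{\lambda^+}))$ attached to $Z$; here one uses that the $T$-fixed points in $Z$ index the simples of $\Ca_{\nu_0}(\A_{\lambda^+})$ sitting over $Z$, and the labelling of simples of $\OCat_\nu$ by $X^T$ from Proposition \ref{Prop:hw_alg}. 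The key computation is that $\pi_Z^!=\Delta_{\nu_0}$ (restricted to the $Z$-block): for $M\in \OCat_\nu(\Ca_{\nu_0}(\A_{\lambda^+}))$ attached to $Z$, the module $\Delta_{\nu_0}(M)=\A_{\lambda^+}/\A_{\lambda^+}\A_{\lambda^+}^{>0,\nu_0}\otimes_{\Ca_{\nu_0}(\A_{\lambda^+})}M$ lies in $\OCat_\nu(\A_{\lambda^+})_{\preceq Z}$ (its associated variety is contained in $Y_{\preceq Z}$, by a support computation using Lemma \ref{Lem:repel_quant_affine}/Lemma \ref{Lem:repel_gener} identifying $\gr$ of the bimodule with $\mathcal{O}_{Y}$), and after applying $\pi_Z$ it becomes the generator with the right universal property: $\Hom_{\OCat_\nu(\A_{\lambda^+})_{\preceq Z}}(\Delta_{\nu_0}(M),N)=\Hom_{\Ca_{\nu_0}}(M, N^{\A_{\lambda^+}^{>0,\nu_0}})$ and $N\mapsto N^{\A_{\lambda^+}^{>0,\nu_0}}$ induces the quotient functor $\pi_Z$ on the $Z$-block. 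Dually $\pi_Z^*=\nabla_{\nu_0}$ using the dual Verma description $\nabla_{\nu_0}(N)=\Hom_{\Ca_{\nu_0}(\A_{\lambda^+})}(\A_{\lambda^+}/\A_{\lambda^+}^{<0,\nu_0}\A_{\lambda^+},N)$ from Section \ref{SSS_cat_O_gen} and the restricted-duality equivalence.

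\textbf{Step 3: Transport to sheaves.} Finally I would check that the equivalences $\OCat_\nu(\A_\lambda^\theta)\cong\OCat_\nu(\A_{\lambda^+})$ and $\OCat_\nu(\Ca_{\nu_0}(\A_\lambda^\theta)|_Z)\cong\OCat_\nu(\Ca_{\nu_0}(\A_{\lambda^+})|_Z)$ (both via $\Gamma/\Loc$, the former preserving categories $\OCat_\nu$ and associated varieties by \cite[Lemma 3.17, Corollary 3.19]{BLPW}) intertwine the Verma functors $\Delta_{\nu_0}$ on the two sides — this is where one needs $\Loc_{\lambda^+}$ applied to $\A_{\lambda^+}/\A_{\lambda^+}\A_{\lambda^+}^{>0,\nu_0}$ to give the sheaf-theoretic bimodule $\A_\lambda^\theta/\A_\lambda^\theta\A_\lambda^{\theta,>0,\nu_0}$ of Section \ref{SS_Ca_param}, which follows from abelian localization for both $X$ and $X^{\nu_0(\C^\times)}$ together with flatness over the parameter line.

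\textbf{Main obstacle.} The hardest point is Step 2: precisely matching the Serre subquotient structure (the poset $\preceq_{\nu_0}$ on $X^T$ given geometrically by the contraction order of the $Z_i$) with the block decomposition of $\Ca_{\nu_0}(\A_{\lambda^+})$ and the resulting description of $\pi_Z^!$, $\pi_Z^*$. Concretely, one must verify the associated-variety estimate $\VA(\Delta_{\nu_0}(M))\subset Y_{\preceq Z}$ and the reverse containment controlling which simples appear — i.e.\ that a simple $L_\nu(N)$ with $N$ over $Z'$ satisfies $Z'\preceq_{\nu_0}Z$ iff $\VA(L_\nu(N))\subset Y_{\preceq Z}$ — which rests on the geometry of the contracting loci $Y_j$ and the $\gr$-computations of the parabolic-induction bimodule in Section \ref{SS_Ca_param}.
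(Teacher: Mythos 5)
Your Step 1 and the adjunction formula $\Hom(\Delta_{\nu_0}(M),N)=\Hom_{\Ca_{\nu_0}(\A_{\lambda^+})}(M,N^{\A_{\lambda^+}^{>0,\nu_0}})$ are consistent with what the paper does, but the heart of the proof is missing, and the candidate you offer for the quotient functor does not work as stated. You propose that $N\mapsto N^{\A_{\lambda^+}^{>0,\nu_0}}$ (restricted to the $Z$-block) ``induces the quotient functor $\pi_Z$''. A Serre quotient functor is exact, whereas the invariants functor is only left exact, and on $\OCat_\nu(\A_{\lambda^+})_{\preceq_{\nu_0} Z}$ the element $h_0$ has eigenvalues bounded above, so $\A_{\lambda^+}^{>0,\nu_0}$ acts locally nilpotently and the invariants are nonzero on \emph{every} nonzero object — in particular on objects of $\OCat_\nu(\A_{\lambda^+})_{\prec_{\nu_0} Z}$, which $\pi_Z$ must kill. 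So you would still have to prove that after projecting to the $Z$-block this functor is exact, annihilates exactly the lower stratum, and is essentially surjective onto $\OCat_\nu(\Ca_{\nu_0}(\A_{\lambda^+})|_Z)$; none of this is supplied, and the tools you point to in your ``main obstacle'' paragraph (associated-variety estimates, the $\gr$-computations of Section \ref{SS_Ca_param}) do not give it.

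The paper's proof works differently and the difference is exactly what closes this gap: it defines $\varpi_Z(M)$ as the generalized $h_0$-eigenspace with the precise eigenvalue $c^p_{\lambda^+}(h_0)$, $p\in Z$, which is tautologically exact, and then all the content is eigenvalue bookkeeping. Lemma \ref{Lem:highest_weights} (the highest weights $c^p_{\lambda+z\chi}$ are affine in $z$, constant along components of $X^{\nu_0(\C^\times)}$, with linear part controlled by the $T$-weights $\alpha_p(\chi)$) combined with the Maulik--Okounkov inequality of Lemma \ref{Lem:ineq_char} and the choice $\lambda^+=\lambda+n\chi$ with $\chi$ ample and $n\gg 0$ show that on $\OCat_\nu(\A_{\lambda^+})_{\preceq Z}$ all $h_0$-eigenvalues lie in $c^p_{\lambda^+}(h_0)+\Z_{\leqslant 0}$ (hence $\A_{\lambda^+}^{>0,\nu_0}$ kills the top eigenspace, so $\varpi_Z$ lands in $\OCat_\nu(\Ca_{\nu_0}(\A_{\lambda^+})|_Z)$) and that $c^p_{\lambda^+}(h_0)$ never occurs on $\Delta_\nu(p')$ with $p'\in Z'\prec Z$ (so $\varpi_Z$ kills the lower stratum); together with $\varpi_Z\circ\Delta_{\nu_0}\cong\operatorname{id}$ this yields the equivalence, and the adjunctions identifying $\pi_Z^!$ with $\Delta_{\nu_0}$ and $\pi_Z^*$ with $\nabla_{\nu_0}$ then follow from the same eigenvalue considerations (for $\nabla_{\nu_0}$, via the embedding $\varpi_Z(N)\hookrightarrow N/\A_{\lambda^+}^{<0,\nu_0}N$). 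This numerical mechanism — in particular the role of $n\gg 0$ in separating highest weights and Lemma \ref{Lem:ineq_char} — is absent from your proposal, and without it the identification of the subquotient and of the adjoint functors does not go through. Note also that exactness of $\Delta_{\nu_0}$ is not available at this stage (it is Proposition \ref{Prop:Cat_O_stand_exact}, proved afterwards), so any variant of your argument that implicitly relies on it would be circular.
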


In order to prove this proposition, we need to understand the ``highest weights'' of modules in
$\OCat_\nu(\A_{\lambda+z\chi})$. We can identify
$\Ca_{\nu}(\A_{\lambda^+})$ with $\C[(X^\theta)^T]$. Let $\ell$ denote the line $\{\lambda+z\chi| z\in \C\}$.
Let $\Phi_{\ell}:\mathfrak{t}\rightarrow \A_{\ell,\leqslant d}$ be a quantum
comoment map for the action of $T$ on $\A_{\ell}$. Note that $\Phi_\ell$ is defined up to
adding a linear function $\mathfrak{t}\rightarrow \operatorname{Aff}(\ell,\C)$, where
$\operatorname{Aff}(\ell,\C)$ stands for the space of affine functions $\ell\rightarrow \C$.
Then we have a linear function $c^p_{\lambda+z\chi}:\mathfrak{t}\rightarrow \C$ defined as the composition
$\eta_p\circ\varphi\circ\Phi_{\lambda+z\chi}$, where the quantum comoment map $\Phi_{\lambda+z\chi}$
is viewed as a map $\mathfrak{t}\rightarrow \A_{\lambda+z\chi}^{\nu(\C^\times)}$, by
$\varphi$ we denote the natural homomorphism $\A_{\lambda+z\chi}^{\nu(\C^\times)}\rightarrow
\Gamma(\Ca_\nu(\A^\theta_{\lambda+z\chi}))$, and $\eta_p$ stands for the projection
$\Gamma(\Ca_\nu(\A^\theta_{\lambda+z\chi}))\rightarrow \C$ to the one-dimensional
quotient corresponding to $p\in X^T$. By the construction, $\mathfrak{t}$ acts on
the top degree component of $\Delta_\nu(p)$
by the character $c^p_{\lambda+z\chi}$. We are going to investigate
the properties of $c^p_{\lambda+z\chi}$.

\begin{Lem}\label{Lem:highest_weights}
The following is true.
\begin{enumerate}
\item The function $c^p_{\lambda+z\chi}$ is affine in $z$.
\item Let $\mathfrak{t}_0\subset \mathfrak{t}$ be the Lie subalgebra spanned by the face
containing $\nu_0$ with $\nu\rightsquigarrow \nu_0$.
If $p,p'$ lie in the connected component of $X^{\nu_0(\C^\times)}$,
then $c^p_{\lambda+z\chi}, c^{p'}_{\lambda+z\chi}$ coincide on $\mathfrak{t}_0$.
\item For two points $p,p'\in X^T$, the linear part of the difference $c^p_{\lambda+z\chi}-c^{p'}_{\lambda+z\chi}$
equals $z(\alpha_p(\chi)-\alpha_{p'}(\chi))$, where $\alpha_p(\chi)$ (resp., $\alpha_{p'}(\chi)$)
is the character of the action of $T$ on the fiber $\mathcal{O}(\chi)_p$ (resp., on $\mathcal{O}(\chi)_{p'}$).
Note that $\alpha_p(\chi)$ depends on the choice of the $T$-equivariant structure on $\mathcal{O}(\chi)$
but the difference does not.
\end{enumerate}
\end{Lem}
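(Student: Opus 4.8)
The plan is to reduce all three assertions to statements about the comoment map $\Phi_{\lambda+z\chi}$ and the Cartan subquotient $\Ca_\nu(\A^\theta_{\lambda+z\chi})$, which by Proposition \ref{Prop:A0_descr} (applied along the line $\ell$, so ``Zariski generic'' becomes ``generic $z$'') may be identified with $\C[(X^\theta)^T]=\bigoplus_{p\in X^T}\C$. First I would address (1). The quantum comoment map $\Phi_{\lambda+z\chi}:\mathfrak{t}\to\A_{\lambda+z\chi,\leqslant d}$ can be chosen to depend affinely (indeed algebraically) on $z$: this is because the whole family comes from the comoment map $\Phi_\ell:\mathfrak{t}\to\A_{\ell,\leqslant d}$ for the $\C[\ell]$-algebra $\A_\ell$, and $\C[\ell]=\C[z]$ carries the filtration in which $z$ has degree $d$, so $\A_{\ell,\leqslant d}$ is a finitely generated $\C$-module on which everything is polynomial in $z$; in fact $\Phi_\ell(\xi)$ lies in $\A_{\ell,\leqslant d}$ and its image in $\gr$ is independent of $z$, so $\Phi_{\lambda+z\chi}(\xi)=\Phi_0(\xi)+z\,\psi(\xi)$ for a linear map $\psi:\mathfrak t\to\C$. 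Composing with the (algebra, hence $z$-algebraic) maps $\varphi$ and $\eta_p$ gives that $c^p_{\lambda+z\chi}(\xi)$ is affine in $z$. Here one should be slightly careful that $\varphi$ and $\eta_p$ are defined for generic $z$ and assemble into a morphism over an open subset of $\ell$; affineness on a dense open set of a line forces affineness everywhere after choosing the comoment maps compatibly.

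For (2), the point is that if $\xi\in\mathfrak t_0$, i.e.\ $\xi$ lies on the face containing $\nu_0$, then $\Phi_{\lambda+z\chi}(\xi)$, viewed in $\A_{\lambda+z\chi}^{\nu(\C^\times)}$, actually lies in the subalgebra $\A_{\lambda+z\chi}^{\nu_0(\C^\times)}$ because $T$-weight spaces for $\xi$ refine the $\nu_0$-grading when $\xi$ and $\nu_0$ are proportional on the relevant tangent weights — more precisely, $\mathfrak t_0$ consists of cocharacters for which $(X^\theta)^{\nu_0(\C^\times)}$-components are preserved, so $\Phi(\xi)$ maps to the center of $\Ca_{\nu_0}(\A^\theta_{\lambda+z\chi})|_Z$, and hence acts by a single scalar on $\Ca_\nu$ of that, i.e.\ on all the one-dimensional quotients $\eta_p$ with $p\in Z$. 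Concretely: by Lemma \ref{Lem:parab_ind} (with the pair $\nu_0\prec\nu$, legitimate for $m\gg0$ by Lemma \ref{Lem:prec_spec}) the map $\eta_p$ factors through $\Ca_\nu(\Ca_{\nu_0}(\A^\theta_{\lambda^+})|_Z)$, and the image of $\Phi(\xi)$ in $\Ca_{\nu_0}(\A^\theta_{\lambda^+})|_Z$ is central and $\nu(\C^\times)$-invariant, hence constant on the connected fixed component $Z$; therefore $\eta_p(\varphi(\Phi(\xi)))$ is the same for all $p\in Z$.

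For (3), the linear-in-$z$ part of $c^p_{\lambda+z\chi}$ is $\eta_p\circ\varphi\circ\psi$ where $\psi:\mathfrak t\to\C$ is the coefficient of $z$ from (1). I would identify $\psi(\xi)$ with the $T$-weight by which $\xi$ acts on the fiber $\mathcal O(\chi)_p$: translation by $\chi$ is implemented by tensoring with the HC bimodule $\A^{(\theta)}_{\lambda,\chi}$ quantizing $\mathcal O(\chi)$, and the $z$-derivative of the comoment map records exactly how the $T$-action is twisted under this translation, namely by the $T$-character of the line bundle's fiber. This is the standard computation (cf.\ the analogous statements in \cite{BLPW}) that the ``$z$-slope'' of the highest weight of $\Delta_\nu(p)$ along the ray $\lambda+z\chi$ equals $\alpha_p(\chi)$; subtracting the same for $p'$, the ambiguity in the $T$-equivariant structure on $\mathcal O(\chi)$ (a global character of $\mathfrak t$) cancels, leaving $z(\alpha_p(\chi)-\alpha_{p'}(\chi))$ independent of choices.

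The main obstacle I anticipate is making (1) fully rigorous: one must set up the comoment maps $\Phi_{\lambda+z\chi}$ coherently in a single $\C[\ell]$-linear family and check that the identification $\Ca_\nu(\A_{\lambda+z\chi})\cong\C[(X^\theta)^T]$ of Proposition \ref{Prop:A0_descr}, together with the maps $\varphi,\eta_p$, can be spread out over an open subset of $\ell$ so that ``affine in $z$'' is meaningful and provable by specialization; once the family is in place, (2) and (3) are essentially bookkeeping with Lemma \ref{Lem:parab_ind} and the bimodule $\A^{(\theta)}_{\lambda,\chi}$.
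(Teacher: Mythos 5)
Parts (1) and (2) of your proposal are in substance the paper's own argument: one forms the universal function $c^p:\mathfrak{t}\rightarrow\C[\ell]$ from the comoment map $\Phi:\mathfrak{t}\rightarrow(\A_\ell^{\nu(\C^\times)})_{\leqslant d}$, and affineness in $z$ follows because $z$ has filtration degree $d$, so the image of $c^p$ lies in the affine functions on $\ell$; for (2) one factors $\A^{\nu(\C^\times)}_{\lambda+z\chi}\rightarrow\Ca_\nu(\A_{\lambda+z\chi})$ through $\Ca_{\nu_0}$ (Lemma \ref{Lem:prec_spec}) and uses that the image of $\Phi(\mathfrak{t}_0)$ there is central, hence constant on each summand attached to a component of $X^{\nu_0(\C^\times)}$. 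Two corrections, though. First, the ``main obstacle'' you anticipate in (1) is not there: $\varphi$ and $\eta_p$ are the natural maps into $\Gamma(\Ca_\nu(\A^\theta_{\lambda+z\chi}))\cong\C[X^T]$ and are defined for every $z$ (Proposition \ref{Prop:A0_descr} is only needed to say $\varphi$ is generically an isomorphism), so no spreading-out or density argument is required. Second, the intermediate claim $\Phi_{\lambda+z\chi}(\xi)=\Phi_0(\xi)+z\,\psi(\xi)$ with $\psi(\xi)\in\C$ is not meaningful for the comoment map itself, since there is no canonical splitting of $\A_\ell$ over $\C[z]$; the correct statement is only about the composite $c^p(\xi)\in\C[\ell]$ having filtration degree at most $d$.

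The genuine gap is in (3), which is where the actual content of the lemma lies. You reduce everything to the assertion that the $z$-linear coefficient of $c^p$ is the $T$-character $\alpha_p(\chi)$ of the fiber $\mathcal{O}(\chi)_p$, and you justify this only by saying that tensoring with the translation bimodule $\A^{(\theta)}_{\lambda,\chi}$ ``twists the $T$-action by the character of the fiber'' and that this is ``the standard computation.'' That statement is essentially equivalent to what must be proved: to use it you would have to compute the $T$-character of the relevant highest-weight line of the quantized line bundle at $p$, and no argument is offered for that. The paper supplies precisely this missing input by passing to the classical limit: the linear part of $c^p$ equals $z\,\mu(p_1)$, the value of the classical moment map at the point $p_1\in X_\chi^T$ deforming $p$; then, choosing a $T\times\C^\times$-equivariant structure on $\mathcal{O}(\chi)$, extending it to a line bundle $\mathcal{L}$ on $X_{\C\chi}$, and invoking Kaledin's result that the complement $\mathcal{L}^\times$ of the zero section is symplectic with the moment map for the extra $\C^\times$ equal to the projection to $\C\chi$, one checks that $\mu_T$ restricted to the fixed line through $p$ is the differential of $\alpha_p(\chi)$; taking the difference for $p,p'$ removes the dependence on the chosen equivariant structure. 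Some geometric argument of this kind (an equivariant-localization computation of $c_1(\mathcal{O}(\chi))$ at the fixed points, made rigorous through the deformation $X_{\C\chi}$) is needed; your translation-bimodule route might be completable, but as written it assumes the key identity rather than proving it.
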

\begin{proof}
We can consider the universal function $c^p:\mathfrak{t}\rightarrow \C[\ell]$,  the composition
of the quantum comoment map $\Phi: \mathfrak{t}\rightarrow  (\A_\ell^{\nu(\C^\times)})_{\leqslant d}$,
the natural homomorphism $\A_\ell^{\nu(\C^\times)}\rightarrow \Ca_{\nu}(\A_\ell)\rightarrow
\Gamma(\Ca_{\nu}(\A_\ell^\theta))$, and the projection $\Gamma(\Ca_{\nu}(\A_\ell^\theta))\twoheadrightarrow
\C[\ell]$ corresponding to $p$. Note that $\operatorname{im}c^p\in \operatorname{Aff}(\ell,\C)$ and that $c^p_{\lambda+z\chi}$ is the specialization of $c^p$ to $\lambda+z\chi$. This implies (1).

Let us proceed to (2).  By Lemma \ref{Lem:prec_spec},
$\A_{\lambda+z\chi}^{\nu(\C^\times)}\rightarrow \Ca_\nu(\A_{\lambda+z\chi})$
factors through $\A_{\lambda+z\chi}^{\nu(\C^\times)}\rightarrow \Ca_{\nu_0}(\A_{\lambda+z\chi})^{\nu(\C^\times)}$. The image of $\Phi(\mathfrak{t}_0)$ in $\Ca_{\nu_0}(\A_{\lambda+z\chi})$
is central and so, for any $\xi\in \mathfrak{t}_0$, the projection of $\Phi(\xi)$ to the direct summand
corresponding to any component of $X^{\nu_0(\C^\times)}$ is constant. This implies (2).

Let us prove (3). The linear part of the map $c^p_{\lambda+z\chi}$ coincides with the composition
of $\mu^*:\mathfrak{t}^*\rightarrow \C[X_{\C\chi}]^{\nu(\C^\times)}$ and the projection
$\C[X_{\C\chi}]^{\nu(\C^\times)}\rightarrow \C[z]$ corresponding to $p$. So this linear part
is $z\mu(p_1)$, where $p_1\in X_\chi^{T}$ is the point corresponding to $p$
(see the end of Section \ref{SS_Ham_fixed}). We now need to show that $\mu(p_1)-\mu(p_1')=
\alpha_p(\chi)-\alpha_{p'}(\chi)$.

Let us choose a $T\times \C^\times$-equivariant structure on $\mathcal{O}(\chi)$. Let $\mathcal{L}$
denote a unique $T\times \C^\times$-equivariant extension of $\mathcal{O}(\chi)$ to $X_{\C\chi}$.
According to \cite{Kaledin_proj}, the complement $\mathcal{L}^\times$ of the zero section in
$\mathcal{L}$ is a symplectic variety with a Hamiltonian action of $\C^\times$, whose moment
map $\mu_{\C^\times}$ is the projection $\mathcal{L}^\times\rightarrow \C\chi$. The $T$-equivariant structure on
$\mathcal{L}$ induces a moment map $\mu_T$ for the action of $T$ on $\mathcal{L}^\times$,
the induced moment map on $X=\mu_{\C^\times}^{-1}(0)/\!/\C^\times$ coincides with the original
one. Let us pick $p\in X^T$, consider the corresponding fixed point line $P\subset X_{\C\chi}^T$
and let $\tilde{P}$ be its preimage in $\mathcal{L}^\times$. The latter is easily seen to be
a symplectic subvariety in $\mathcal{L}^\times$. This symplectic variety is nothing else
but $T^*\C^\times$, where the zero section $\C^\times$ is the preimage of $p$ in $\mathcal{L}^\times$.
The action of $T$ on the base $\C^\times$ is induced by the character $\alpha_p(\chi)$. We conclude
that the restriction of $\mu_T$  to $P\cong \C$ is also given by (the differential of) $\alpha_p(\chi)$.
This  shows the claim in the previous
paragraph and completes the proof of the lemma.
\end{proof}

We need one more lemma, see \cite[Section 3.2.4]{MO}.

\begin{Lem}\label{Lem:ineq_char}
Let $p,p'\in X^T$ be such that $p\prec_{\nu_0}p'$ and let $\chi,\chi'$ be the characters
of $T$ in the fibers $\Str(\chi)_p, \Str(\chi)_{p'}$. Then $\langle\chi,\nu_0\rangle<\langle\chi',\nu_0\rangle$.
\end{Lem}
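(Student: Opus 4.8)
The plan is to reduce the statement about $T$-characters on the line bundle $\Str(\chi)$ to a geometric statement about the moment-map interpretation of $\chi$ that was set up in the proof of Lemma \ref{Lem:highest_weights}(3). Recall from there that, after choosing a $T\times\C^\times$-equivariant structure on $\Str(\chi)$, one has the total space $\mathcal{L}^\times$ of the complement of the zero section, which is symplectic with a Hamiltonian $T\times\C^\times$-action, and the $T$-character $\alpha_p(\chi)$ on the fiber over $p\in X^T$ equals (the differential of) the $T$-weight of the base $\C^\times$ inside the fixed-point subvariety $\tilde P\cong T^*\C^\times$ lying over the fixed line $P\subset X_{\C\chi}^T$. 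So it suffices to prove that the function $p\mapsto \langle\alpha_p(\chi),\nu_0\rangle$ is strictly monotone along the relation $\prec_{\nu_0}$; equivalently, that $\langle\alpha_p(\chi),\nu_0\rangle$ is the value at $p$ of a Morse-type function on $X$ for the $\nu_0(\C^\times)$-action whose critical manifolds are the $Z_i$ and which strictly increases along the flow.

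First I would fix an ample $\chi$ (replacing $\chi$ by a positive multiple does not change the sign of the inequality, and by working on $X^{\nu_0(\C^\times)}$ component by component via Proposition \ref{Prop:sympl_res_torus_fixed} one may assume the relevant data is available) and consider the moment map $\mu_{\nu_0}:=\langle\mu_T,\nu_0\rangle:X\to\C$ for the $\nu_0(\C^\times)$-action coming from the chosen comoment map. Since $\nu_0$ rescales nothing but acts Hamiltonianly, $\mu_{\nu_0}$ is constant on each connected component $Z_i$ of $X^{\nu_0(\C^\times)}$, and its value there, by the computation recalled above, is exactly $\langle\alpha_p(\chi),\nu_0\rangle$ for any $p\in Z_i\cap X^T$ (this is also why the quantity in the statement is well-defined independently of the choice of $p\in Z_i$, matching Lemma \ref{Lem:highest_weights}(2)). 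The key point is then the standard fact that along any $\nu_0(\C^\times)$-orbit, $\frac{d}{dt}\mu_{\nu_0}(\nu_0(t)x)=\|\xi_{\nu_0}(x)\|^2\geq 0$ for an appropriate invariant metric, with strict inequality off the fixed locus; hence $\mu_{\nu_0}$ is weakly increasing along the attracting flow and strictly so on any trajectory not contained in $X^{\nu_0(\C^\times)}$.

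Now suppose $p\prec_{\nu_0}p'$. By definition of the order it is enough (taking transitive closure and using additivity of the inequality) to treat the generating relation: $Z_i\cap\overline{Y_j}\neq\varnothing$ where $p\in Z_i$, $p'\in Z_j$ and $Y_j$ is the attracting set of $Z_j$. Pick a point $x\in Z_i\cap\overline{Y_j}$; then $x\notin Z_j$ (since $Z_i\neq Z_j$ as $i\neq j$ — the relation is between distinct components), and there is a trajectory of $\nu_0(\C^\times)$ through a point of $Y_j$ limiting to $x$ as $t\to\infty$ and to a point of $Z_j$ as $t\to 0$; applying the monotonicity of $\mu_{\nu_0}$ along this non-constant trajectory gives $\mu_{\nu_0}|_{Z_j}<\mu_{\nu_0}|_{Z_i}$... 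I should be careful with the sign convention: with the convention that $Y_j$ is the \emph{contracting} (i.e. $t\to 0$ limit) locus of $Z_j$, the flow runs from $Z_j$-adjacent points toward $x\in Z_i$, so in fact $\mu_{\nu_0}(x)=\mu_{\nu_0}|_{Z_i}$ is the \emph{larger} value, whence $\langle\chi,\nu_0\rangle=\mu_{\nu_0}|_{Z_j}<\mu_{\nu_0}|_{Z_i}$ — wait, this gives the inequality in the direction $\langle\chi,\nu_0\rangle<\langle\chi',\nu_0\rangle$ only if $Z_i$ carries $\chi'$; re-reading the statement, $\chi$ is the character at $p\in Z_i$ and $\chi'$ at $p'\in Z_j$, and $p\prec_{\nu_0}p'$. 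So the direction of the claimed inequality must be matched to the sign convention for $Y_j$ used in Section \ref{S_SS_O}; with that convention in force the monotonicity argument above delivers exactly $\langle\chi,\nu_0\rangle<\langle\chi',\nu_0\rangle$. The main obstacle is precisely this bookkeeping of orientation conventions (which limit defines $Y_j$, and hence whether $\mu_{\nu_0}$ increases or decreases along the flow), together with making the "Morse function along orbits" argument rigorous in the algebraic category — for the latter one can either invoke a $\nu_0(\C^\times)$-equivariant compactification or, more cheaply, restrict to the $\nu_0(\C^\times)$-stable curve closure $\overline{\nu_0(\C^\times)\cdot x'}\cong\mathbb{P}^1$ through a suitable $x'\in Y_j$, on which $\mu_{\nu_0}$ restricts to a non-constant map $\mathbb{P}^1\to\mathbb{C}$ with the two fixed points mapping to the two values $\mu_{\nu_0}|_{Z_i}$, $\mu_{\nu_0}|_{Z_j}$, and use that a $\C^\times$-moment map on $T^*\mathbb{P}^1$-type curves is strictly monotone between its two critical values. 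Once the convention is pinned down, strictness is automatic because the connecting trajectory is non-constant, i.e. $Z_i\neq Z_j$.
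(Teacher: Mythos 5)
The paper itself gives no argument for this lemma; it simply cites \cite[Section 3.2.4]{MO}, where the statement is proved by showing that the pairing of $\nu_0$ with the fiber weight of an \emph{ample} line bundle is strictly increasing along the attraction order. Your plan is in that spirit, but the engine you build it on is the wrong object, and with it the argument collapses. You take $\mu_{\nu_0}=\langle\mu_T,\nu_0\rangle$ to be the moment map attached to the paper's (co)moment map, i.e.\ the moment map for the \emph{holomorphic} symplectic form $\omega$. For that function both of your key claims fail: (i) its value on a fixed component is not $\langle\alpha_p(\chi),\nu_0\rangle$ and in fact does not depend on $\chi$ at all --- e.g.\ for $X=T^*(G/B)$ all $T$-fixed points lie on the zero section, where the holomorphic moment map vanishes identically, while the fiber weights of an ample $\Str(\chi)$ at the various fixed points are pairwise distinct; the identity you import from Lemma \ref{Lem:highest_weights}(3) lives on the deformation $X_{\C\chi}$ (the weight appears as the coefficient of $z$, so it vanishes on the central fiber $X$), not on $X$ itself; and (ii) there is no monotonicity: $\frac{d}{dt}\mu_{\nu_0}(\nu_0(t)x)=\omega(\xi_{\nu_0},\xi_{\nu_0})=0$, a Hamiltonian is conserved along its own flow, so $\mu_{\nu_0}$ is constant on each orbit, hence constant on all of $\overline{Y_j}$, and it cannot separate $Z_i$ from $Z_j$ (it is also $\C$-valued, so ``$\geqslant 0$'' does not even typecheck). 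The identity $\frac{d}{ds}h=\|\xi\|^2$ is a fact about the \emph{real} moment map $h$ of an $S^1$-invariant K\"ahler form, and the identification of its critical values with the weights $\langle\alpha_p(\chi),\nu_0\rangle$ requires taking that K\"ahler form to be the curvature of an invariant metric on the linearized ample bundle $\Str(\chi)$; this K\"ahler/GIT structure (or an equivalent degree computation) is precisely what \cite{MO} uses and what your write-up never introduces.

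Even after replacing $\mu_{\nu_0}$ by the correct real moment map of the ample class, two steps remain open in your text. First, a point of $Z_i\cap\overline{Y_j}$ need not be the $t\rightarrow\infty$ limit of a single orbit in $Y_j$: $X$ is not proper, such limits may fail to exist, and closure points of $Y_j$ need not be orbit limits; so one must either run the Morse--Bott closure argument for the gradient flow of $h$ or connect $Z_i$ to $Z_j$ by a chain of $\nu_0(\C^\times)$-stable curves and use the elementary computation that on such a curve $C$ with fixed points $x_0,x_\infty$ the two fiber weights of $\Str(\chi)$ differ by $\deg(\Str(\chi)|_C)$ times the weight of $\nu_0$ on $T_{x_0}C$, with ampleness supplying strict positivity of the degree. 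Second, the sign bookkeeping you explicitly defer is the actual content of the lemma: you must fix the convention (which limit defines $Y_j$, which sign the moment map carries) and then verify in that convention that the weight increases from $Z_i$ to $Z_j$ when $Z_i\cap\overline{Y_j}\neq\varnothing$ --- a rank-one example such as $T^*\mathbb{P}^1$ settles it --- rather than asserting that ``with that convention in force'' the desired inequality comes out. As written, the proposal neither has a valid monotone function nor a verified direction of monotonicity, so it does not yet prove the lemma.
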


Now we are ready to prove Proposition \ref{Prop:assoc_graded_O}.

\begin{proof}[Proof of Proposition \ref{Prop:assoc_graded_O}]
Let us produce an exact functor $$\varpi_Z:\OCat_{\nu}(\A_{\lambda^+})_{\preceq Z}
\twoheadrightarrow\OCat_\nu(\Ca_{\nu_0}(\A_{\lambda^+})|_Z).$$  Let $h_0$
be the image of $1$ under the quantum comoment map for $\nu_0$. For
$M\in \OCat_{\nu}(\A_{\lambda^+})_{\preceq Z}$, let $\varpi_Z(M)$
be the generalized eigen-space for $h_0$ with eigenvalue $c^p_{\lambda^+}(h_0)$,
where $p\in Z$ (this depends only on $Z$ by (2) of Lemma \ref{Lem:highest_weights}).
(3) of Lemma \ref{Lem:highest_weights} and the choice of $\lambda^+$ imply
that, for any eigenvalue $\alpha$ of $h_0$ in $M:=\Delta_{\nu}(p')$, where $p'\in Z'$
with $Z'\preceq Z$, we have  $\alpha - c^p_{\lambda^+}(h_0)\in \Z_{\leqslant 0}$.
It follows that the similar conclusion is true for any
$M \in\OCat_{\nu}(\A_{\lambda^+})_{\preceq Z}$. So $\A_{\lambda^+}^{>0,\nu_0}$ annihilates $\varpi_Z(M)$
and therefore $\varpi_Z$ is an exact functor
$\OCat_{\nu}(\A_{\lambda^+})_{\preceq Z}\rightarrow \OCat_{\nu}(\Ca_{\nu_0}(\A_{\lambda^+})|_Z)$.
From  Lemma \ref{Lem:ineq_char} it follows that $c^p_{\lambda^+}(h_0)$ is not an
eigenvalue of $h_0$ on $\Delta_{\nu}(p')$ with $p'\in Z'$ and $Z'\prec Z$.
We conclude that $\varpi_Z$ annihilates  $\OCat_{\nu}(\A_{\lambda^+})_{\prec Z}$.
Also we note that, for $N\in \OCat_\nu(\Ca_{\nu_0}(\A_{\lambda^+})|_Z)$, we have
$\varpi_Z\circ \Delta_{\nu_0}(N)\cong N$. We conclude that the essential image of
$\varpi_Z$ is $\OCat_\nu(\Ca_{\nu_0}(\A_{\lambda^+})|_Z)$ and that
$\varpi_Z$ descends to an equivalence $\OCat_{\nu}(\A_{\lambda^+})_{\preceq Z}/
\OCat_{\nu}(\A_{\lambda^+})_{\prec Z}\xrightarrow{\sim}\OCat_{\nu}(\Ca_{\nu_0}(\A_{\lambda^+})|_Z)$.

It remains to check that $\Delta_{\nu_0,Z}$ is the left adjoint of $\varpi_Z$, while
$\nabla_{\nu_0,Z}$ is the right adjoint of $\varpi_Z$. To show the former, note that $$\Hom_{\A_{\lambda^+}}(\Delta_{\nu_0}(M),N)=
\Hom_{\Ca_{\nu_0}(\A_{\lambda^+})}(M,N^{\A^{>0,\nu_0}_{\lambda^+}})$$
for $M\in \OCat_\nu(\Ca_{\nu_0}(\A_{\lambda^+})|_Z)$ and $N\in\OCat_{\nu}(\A_{\lambda^+})_{\preceq Z}$.
The right hand side coincides with  $\Hom_{\Ca_{\nu_0}(\A_{\lambda^+})}(M,\varpi_Z(N))$
because the eigenvalue of $h_0$ on $M$ is $c^{p}_{\lambda^+}(h_0)$ and
$\varpi_Z(N)$ is annihilated by $\A_{\lambda^+}^{>0,\nu_0}$. This shows that $\Delta_{\nu_0,Z}$
is the left adjoint of $\varpi_Z$. A similar argument (based on an observation
that $\varpi_Z(N)$ embeds into $N/\A^{<0,\nu_0}_{\lambda^+} N$) shows that
$\nabla_{\nu_0,Z}$ is right adjoint to $\varpi_Z$.
\end{proof}

\subsection{Algebras and bimodules}
Here we will introduce certain algebras and bimodules that will be needed in several proofs below.
Recall that $\chi\in \tilde{\param}_{\Z}$ is ample for $X^\theta$ and that $\ell$
stands for $\{\lambda+z\chi| z\in \C\}$.

Set $\B^+_{\lambda+z\chi}:=\A_{\lambda+z\chi}/\A_{\lambda+z\chi}\A_{\lambda+z\chi}^{>0,\nu_0}$.
This is a $\A_{\lambda+z\chi}$-$\Ca_{\nu_0}(\A_{\lambda+z\chi})$-bimodule. Note that,
for a Zariski generic $z$, $\B^+_{\lambda+z\chi}$ splits into the direct sum
$\bigoplus_Z \B^+_{\lambda+z\chi}|_Z$, where $Z$ runs over the connected components of
$X^{\nu_0(\C^\times)}$ and $\B^+_{\lambda+z\chi}|_Z=\B^+_{\lambda+z\chi}\otimes_{\Ca_{\nu_0}(\A_{\lambda+z\chi})}\Ca_{\nu_0}(\A_{\lambda+z\chi})|_Z$.

Let us write $\B^+_{\ell}$ for $\A_{\ell}/\A_{\ell}\A_{\ell}^{>0,\nu_0}$
so that $\B^+_{\lambda+z\chi}$ is the specialization of $\B^+_\ell$ to $\lambda+z\chi\in \ell$.
We will need a  $\C[\ell]$-algebra $\Ca_{\nu_0}(\A_\ell)|_Z$ and an $\A_\ell$-$\Ca_{\nu_0}(\A_\ell)|_Z$ bimodule $\B^+_{\ell}|_Z$ that specialize to $\Ca_{\nu_0}(\A_{\lambda+z\chi})|_Z,\B^+_{\lambda+z\chi}|_Z$
for a Zariski generic $z$. Recall that we have a natural homomorphism $\Ca_{\nu_0}(\A_\ell)\rightarrow \Gamma(\Ca_{\nu_0}(\A_\ell^\theta))$. The kernel and the cokernel of the homomorphism of the associated graded algebras
are supported at $0\in \C\chi$, this follows from the proof of Proposition \ref{Prop:A0_descr}.

Let $I_Z$ denote the kernel of the composition of the homomorphism $\Ca_{\nu_0}(\A_\ell)\rightarrow \Gamma(\Ca_{\nu_0}(\A_\ell^\theta))$ with the projection $\Gamma(\Ca_{\nu_0}(\A^\theta_\ell))\twoheadrightarrow \Gamma(\Ca_{\nu_0}(\A^\theta_\ell)|_Z)$.
We set $\Ca_{\nu_0}(\A_\ell)|_Z=\Ca_{\nu_0}(\A_\ell)/I_Z$ and $\B^+_{\ell}|_Z=\B^+_{\ell}/\B^+_\ell I_Z$.

\begin{Lem}\label{Lem:Z_spec}
For a Zariski generic $z$, the specializations of $\Ca_{\nu_0}(\A_\ell)|_Z,\B^+_\ell|_Z$ to
$\lambda+z\chi$ coincide with $\Ca_{\nu_0}(\A_{\lambda+z\chi})|_Z, \B^+_{\lambda+z\chi}|_Z$.
\end{Lem}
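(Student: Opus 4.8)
The plan is to exhibit a Zariski-open $\ell^0\subseteq\ell$ such that, for every $z$ with $\lambda+z\chi\in\ell^0$, the formations of $\Ca_{\nu_0}(\A_\ell)|_Z=\Ca_{\nu_0}(\A_\ell)/I_Z$ and of $\B^+_\ell|_Z=\B^+_\ell\otimes_{\Ca_{\nu_0}(\A_\ell)}\Ca_{\nu_0}(\A_\ell)|_Z$ commute with specialization at $\lambda+z\chi$. The lemma then follows by comparing the outcome with the generic decompositions of $\Ca_{\nu_0}(\A_{\lambda+z\chi})$ and $\B^+_{\lambda+z\chi}$ recorded just before the statement.

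First I would locate the bad locus on $\ell$. The kernel and cokernel of $f\colon\Ca_{\nu_0}(\A_\ell)\to\Gamma(\Ca_{\nu_0}(\A_\ell^\theta))$ are Harish-Chandra bimodules over $\Ca_{\nu_0}(\A_\ell)$ --- the kernel as a sub-bimodule, the cokernel as a quotient of the HC bimodule $\Gamma(\Ca_{\nu_0}(\A_\ell^\theta))$ --- and $\Ca_{\nu_0}(\A_\ell)$ meets the hypotheses of Proposition \ref{Prop:supp_gr} since, by Lemma \ref{Lem:A0_classic}, $\operatorname{Spec}(\Ca_{\nu_0}(\C[X_0]))$ has finitely many symplectic leaves. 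For the induced filtrations, $\gr(\ker f)$ and $\gr(\operatorname{coker}f)$ are a sub and a quotient of $\ker(\gr f)$ and $\operatorname{coker}(\gr f)$, which --- as recorded just before the lemma, following the proof of Proposition \ref{Prop:A0_descr} and using that $\chi$ is ample (so $\C\chi\cap\mathcal{H}_\C=\{0\}$) --- are supported at $0\in\C\chi$. Hence $\Supp_\ell(\gr\ker f)$ and $\Supp_\ell(\gr\operatorname{coker}f)$ are supported at $0$, so by Proposition \ref{Prop:supp_gr} the Zariski-closed subsets $\Supp_\ell(\ker f),\Supp_\ell(\operatorname{coker}f)\subseteq\ell$ have associated cone $\{0\}$ and are therefore finite; let $S$ be their union. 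Besides, $\Gamma(\Ca_{\nu_0}(\A_\ell^\theta))$ is flat over $\C[\ell]$ (it is the base change of the flat $\C[\tilde\param]$-module $\Gamma(\Ca_{\nu_0}(\tilde\A^\theta))$), and, since $X^{\nu_0(\C^\times)}$ is the disjoint union of its connected components $Z$ --- each a conical symplectic resolution with vanishing higher cohomology by Proposition \ref{Prop:sympl_res_torus_fixed} --- it splits as $\bigoplus_Z\Gamma(\Ca_{\nu_0}(\A_\ell^\theta)|_Z)$ with each summand flat over $\C[\ell]$ and specializing to $\Gamma(\Ca_{\nu_0}(\A^\theta_{\lambda+z\chi})|_Z)$ for every $z$. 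I would let $\ell^0$ be the complement of $S$ inside the open subset over which the decompositions $\B^+_{\lambda+z\chi}=\bigoplus_Z\B^+_{\lambda+z\chi}|_Z$ and $\Ca_{\nu_0}(\A_{\lambda+z\chi})=\bigoplus_Z\Ca_{\nu_0}(\A_{\lambda+z\chi})|_Z=\bigoplus_Z\Gamma(\Ca_{\nu_0}(\A^\theta_{\lambda+z\chi})|_Z)$ hold; if convenient, one may shrink $\ell^0$ further by Lemma \ref{Lem:gen_freeness} so that all the $\C[\ell]$-modules below become free over $\C[\ell^0]$.

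Next I would run the base change at a fixed $z$ with $\lambda+z\chi\in\ell^0$. Both $\Ca_{\nu_0}(\A_\ell)$ and $\B^+_\ell$ specialize on the nose --- $\otimes_{\C[\ell]}\C_{\lambda+z\chi}$ returns $\Ca_{\nu_0}(\A_{\lambda+z\chi})$, resp.\ $\B^+_{\lambda+z\chi}$ --- because each is the quotient of a $\C[\ell]$-module direct summand of $\A_\ell$ by a left ideal generated by weight-homogeneous elements, so that only right-exactness of $\otimes$ is needed. Write $g_Z$ for the composite $\Ca_{\nu_0}(\A_\ell)\xrightarrow{f}\Gamma(\Ca_{\nu_0}(\A_\ell^\theta))\twoheadrightarrow\Gamma(\Ca_{\nu_0}(\A_\ell^\theta)|_Z)$, so $I_Z=\ker g_Z$ and $\Ca_{\nu_0}(\A_\ell)|_Z=\operatorname{im}g_Z$. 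Then $\operatorname{coker}g_Z$ is a quotient of $\operatorname{coker}f$, hence supported on $S$; since $\lambda+z\chi\notin S$ and $\C[\ell]$ is a principal ideal domain (so $\operatorname{Tor}_{\geqslant2}$ vanishes and Nakayama applies), $\operatorname{coker}g_Z\otimes\C_{\lambda+z\chi}=0=\operatorname{Tor}_1(\operatorname{coker}g_Z,\C_{\lambda+z\chi})$. Tensoring $0\to\operatorname{im}g_Z\to\Gamma(\Ca_{\nu_0}(\A_\ell^\theta)|_Z)\to\operatorname{coker}g_Z\to0$ with $\C_{\lambda+z\chi}$ gives an isomorphism $\operatorname{im}(g_Z)\otimes\C_{\lambda+z\chi}\cong\Gamma(\Ca_{\nu_0}(\A^\theta_{\lambda+z\chi})|_Z)$ and, using flatness of $\Gamma(\Ca_{\nu_0}(\A_\ell^\theta)|_Z)$, vanishing of $\operatorname{Tor}_1(\operatorname{im}g_Z,\C_{\lambda+z\chi})$; tensoring $0\to I_Z\to\Ca_{\nu_0}(\A_\ell)\to\operatorname{im}g_Z\to0$ then yields
\[\Ca_{\nu_0}(\A_\ell)|_Z\otimes\C_{\lambda+z\chi}=\Gamma(\Ca_{\nu_0}(\A^\theta_{\lambda+z\chi})|_Z)=\Ca_{\nu_0}(\A_{\lambda+z\chi})|_Z\]
(the last equality by the choice of $\ell^0$) and identifies $I_Z\otimes\C_{\lambda+z\chi}$ with the kernel of $\Ca_{\nu_0}(\A_{\lambda+z\chi})\to\Gamma(\Ca_{\nu_0}(\A^\theta_{\lambda+z\chi})|_Z)$. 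Feeding the last identification into $\B^+_\ell|_Z=\B^+_\ell/\B^+_\ell I_Z$ --- whose specialization is $\B^+_{\lambda+z\chi}$ modulo the left ideal generated by the image of $I_Z$ --- gives $\B^+_\ell|_Z\otimes\C_{\lambda+z\chi}=\B^+_{\lambda+z\chi}\otimes_{\Ca_{\nu_0}(\A_{\lambda+z\chi})}\Ca_{\nu_0}(\A_{\lambda+z\chi})|_Z=\B^+_{\lambda+z\chi}|_Z$, as required.

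The step I expect to demand the most care is the finiteness of $S$: it rests on combining Proposition \ref{Prop:supp_gr} with the already recorded fact that the associated-graded comparison map is an isomorphism away from $0\in\C\chi$, that is, on the ampleness of $\chi$ making the line $\ell$ escape the arrangement $\mathcal{H}_\C$ that governs the failure of $\Ca_{\nu_0}(\C[X_0])\to\C[X^{\nu_0(\C^\times)}]$ to be an isomorphism. Once $S$ is finite, the remainder is routine homological base change over the one-dimensional ring $\C[\ell]$.
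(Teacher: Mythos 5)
Your argument is correct, and it reaches the same endpoint as the paper, but you take a heavier route to the key finiteness statement. The paper's own proof is two steps: it reduces at once to the claim about $\Ca_{\nu_0}(\A_\ell)|_Z$, applies Lemma \ref{Lem:gen_freeness} to $I_Z$, $\Ca_{\nu_0}(\A_\ell)/I_Z$ and the cokernel of $\Ca_{\nu_0}(\A_\ell)\rightarrow\Gamma(\Ca_{\nu_0}(\A_\ell^\theta))$ to get generic freeness over $\C[\ell]$ (so specialization is exact on the relevant short exact sequences), and then quotes Proposition \ref{Prop:A0_descr} to identify the generic fibers. You instead re-run, over the line $\ell$, the argument from the proof of Proposition \ref{Prop:A0_descr} itself — Harish-Chandra structure of kernel and cokernel, Lemma \ref{Lem:A0_classic}, Proposition \ref{Prop:supp_gr}, ampleness of $\chi$ forcing the asymptotic cone of the bad locus to be $\{0\}$ — to show the bad locus on $\ell$ is finite, and then do the homological base change by hand. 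That buys a self-contained control of the failure locus but is not needed once Lemma \ref{Lem:gen_freeness} is available. One step of yours needs repair as stated: the claim that $\lambda+z\chi\notin S$ together with ``$\C[\ell]$ is a PID and Nakayama applies'' gives $\operatorname{Tor}_1^{\C[\ell]}(\operatorname{coker}g_Z,\C_{\lambda+z\chi})=0$ is not justified, because $\operatorname{coker}g_Z$ is only known to be finitely generated over $\Ca_{\nu_0}(\A_\ell)$, not over $\C[\ell]$, and vanishing of a fiber of a non-finitely-generated $\C[z]$-module does not kill its $(z-z_0)$-torsion. Your parenthetical fallback — shrinking $\ell^0$ via Lemma \ref{Lem:gen_freeness} so that all the $\C[\ell]$-modules involved become free — fixes this completely and is exactly the mechanism the paper uses, so the proof goes through with that option taken as the main line rather than an aside.
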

\begin{proof}
It is enough to check the claim about $\Ca_{\nu_0}(\A_\ell)|_Z$. Lemma \ref{Lem:gen_freeness} applied
to $\mathfrak{A}_\ell=\Ca_{\nu_0}(\A_\ell)$ shows that $I_Z,\Ca_{\nu_0}(\A_\ell)/I_Z$
as well as the cokernel of $\Ca_{\nu_0}(\A_\ell)\rightarrow \Gamma(\Ca_{\nu_0}(\A_\ell^\theta))$
are generically free over $\C[\ell]$. From here we deduce that for a Zariski
generic $z$, the specialization of $I_Z$ to $\lambda+z\chi$ coincides with the kernel of
$\Ca_{\nu_0}(\A_{\lambda+z\chi})\twoheadrightarrow \Gamma(\Ca_{\nu_0}(\A^\theta_{\lambda+z\chi}))$.
Now the claim of the present lemma is a consequence of Proposition \ref{Prop:A0_descr}.
\end{proof}

The algebra $\Ca_{\nu_0}(\A_\ell)|_Z$ is filtered, the filtration is induced from $\Ca_{\nu_0}(\A_\ell)$.
The bimodule $\B^+_{\ell}|_Z$ is also filtered with the filtration induced from $\B^+_\ell$.
We will  need some information about the associated graded algebra $\gr \Ca_{\nu_0}(\A_\ell)|_Z$
and the associated graded bimodule $\gr\B^+_\ell|_Z$. The facts we need about the former
are gathered in the following lemma.

\begin{Lem}\label{Lem:CZ_fibers}
The following is true.
\begin{enumerate}
\item
The fiber
of $\gr \Ca_{\nu_0}(\A_\ell)|_Z$ over $\chi$ is $\C[Z_\chi]$.
\item The scheme $\underline{Z}_{\C\chi}:=\operatorname{Spec}(\gr \Ca_{\nu_0}(\A_\ell)|_Z)$ is equidimensional over $\C\chi$.
\item The fiber $\underline{Z}$ of $\underline{Z}_{\C\chi}$ over $0$ has finitely many symplectic
leaves.
\end{enumerate}
\end{Lem}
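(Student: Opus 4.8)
The plan is to treat the three assertions in turn; (2) and (3) will both be reduced to understanding the fiber $\underline Z$ of $\underline{Z}_{\C\chi}$ over $0\in\C\chi$.

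Part (1). The algebra $\gr\Ca_{\nu_0}(\A_\ell)|_Z$ is a $\Z_{\geqslant 0}$-graded algebra over $\gr\C[\ell]=\C[\C\chi]$, the coordinate $z_1$ on $\C\chi$ being homogeneous of degree $d>0$; since the contracting $\C^\times$ rescales $z_1$, all fibers over nonzero points of $\C\chi$ are isomorphic, so it suffices to compute the fiber over $z\chi$ for a Zariski generic $z$. By Lemma \ref{Lem:gen_freeness} (applicable because $\gr\Ca_{\nu_0}(\A_\ell)|_Z$, being a quotient of $\Ca_{\nu_0}(\C[X_{\C\chi}])=\Ca_{\nu_0}(\gr\A_\ell)$, is commutative and finitely generated), passing to $\gr$ commutes with specialization at a generic point of $\ell$; combined with Lemma \ref{Lem:Z_spec} and Proposition \ref{Prop:A0_descr}, that fiber equals $\gr\Gamma(\Ca_{\nu_0}(\A^\theta_{\lambda+z\chi})|_Z)$. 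Now $Z_{z\chi}$, a connected component of the fixed-point locus of the conical symplectic resolution $X_{z\chi}$ (Proposition \ref{Prop:sympl_res_torus_fixed}), is a symplectic variety, so $H^{>0}(Z_{z\chi},\mathcal{O})=0$ and hence $\gr\Gamma(\Ca_{\nu_0}(\A^\theta_{\lambda+z\chi})|_Z)=\Gamma(\mathcal{O}_{Z_{z\chi}})=\C[Z_{z\chi}]\cong\C[Z_\chi]$, the last isomorphism because the family $\tilde X_{\C\chi}\to\C\chi$ is $\C^\times$-equivariant. This gives (1).

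Parts (2) and (3). By construction $\gr\Ca_{\nu_0}(\A_\ell)|_Z$ is a quotient of $\Ca_{\nu_0}(\gr\A_\ell)=\Ca_{\nu_0}(\C[X_{\C\chi}])$ by a Poisson ideal. Since $X_{\C\chi}\to\C\chi$ is flat with central fiber $X_0$ we have $\C[X_{\C\chi}]/z_1=\C[X_0]$, and since the $\nu_0$-contracting locus is cut out by equations of positive $\nu_0$-weight the Cartan subquotient construction is compatible with this graded quotient, so $\Ca_{\nu_0}(\C[X_{\C\chi}])/z_1=\Ca_{\nu_0}(\C[X_0])$. Hence $\underline Z$ is a closed Poisson subscheme of $\Spec\Ca_{\nu_0}(\C[X_0])$, which by Lemma \ref{Lem:A0_classic} has finitely many symplectic leaves; a closed Poisson subscheme of a variety with finitely many symplectic leaves again has finitely many symplectic leaves (its reduction is a union of finitely many leaf closures), which is (3). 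For the equidimensionality in (2): by part (1) the fibers of $\underline{Z}_{\C\chi}\to\C\chi$ over all nonzero points are isomorphic to $\Spec\C[Z_\chi]$, which is irreducible (as $Z$ is connected and smooth) of dimension $\dim Z$, so the unique irreducible component of $\underline{Z}_{\C\chi}$ dominating $\C\chi$ is the closure $\Spec\C[\tilde Z_{\C\chi}]$, of dimension $\dim Z+1$, whose fiber over $0$ is $\Spec\C[Z]$. It remains to exclude vertical components of dimension exceeding $\dim Z$, i.e.\ to show that $z_1$ is a non-zero-divisor on $\gr\Ca_{\nu_0}(\A_\ell)|_Z$ (equivalently, that this algebra is flat over $\C[\C\chi]$), for then $\dim\underline Z=\dim\gr\Ca_{\nu_0}(\A_\ell)|_Z-1=\dim Z$. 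I would deduce this from the injection $\Ca_{\nu_0}(\A_\ell)|_Z\hookrightarrow\Gamma(\Ca_{\nu_0}(\A^\theta_\ell)|_Z)$ (injective by the very definition of $I_Z$): if it is strict for the filtrations, then $\gr\Ca_{\nu_0}(\A_\ell)|_Z$ embeds into $\C[\tilde Z_{\C\chi}]$, which — being the coordinate ring of the deformation over $\C\chi$ of the conical symplectic resolution $Z$ — is flat, hence torsion-free, over $\C[\C\chi]$, so a subalgebra is torsion-free too.

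The main obstacle is precisely this strictness: that the filtration on $\Ca_{\nu_0}(\A_\ell)|_Z$ induced from $\A_\ell$ coincides with the one induced from global sections of the sheaf $\Ca_{\nu_0}(\A^\theta_\ell)|_Z$. Here one would exploit, exactly as in the proof of Proposition \ref{Prop:A0_descr}, that the kernel and cokernel of $\gr\Ca_{\nu_0}(\A_\ell)\to\gr\Gamma(\Ca_{\nu_0}(\A^\theta_\ell))$ are supported at $0\in\C\chi$: together with the $\Z_{\geqslant 0}$-grading, the potential discrepancy between the two filtrations is a $z_1$-power-torsion phenomenon, which a homogeneity argument rules out. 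Should this prove awkward, an alternative is to bound $\dim\underline Z$ directly, using that $\underline Z$ is connected (it is the central fiber of the graded, hence conical, scheme $\underline{Z}_{\C\chi}$) and that its reduction is a connected union of symplectic-leaf closures of $\Spec\Ca_{\nu_0}(\C[X_0])$ containing $\rho(Z)$, together with the semismallness of the resolution $X^{\nu_0(\C^\times)}\to\Spec\Ca_{\nu_0}(\C[X_0])$.
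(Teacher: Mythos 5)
Your argument for part (1) rests on a claim that is false in this setting: that for generic $z$ the fiber of $\gr \Ca_{\nu_0}(\A_\ell)|_Z$ over $z\chi\in\C\chi$ equals $\gr$ of the specialization at $\lambda+z\chi\in\ell$. The symbol of the affine coordinate $u-z$ on $\ell$ is the linear coordinate $z_1$ on $\C\chi$ no matter what $z$ is, so for \emph{every} point of $\ell$ the associated graded of the specialization is a quotient of the fiber of $\gr$ over $0\in\C\chi$; the nonzero fibers of the graded family are simply not seen by specializing the filtered algebra along $\ell$. Already for $\A_\ell$ itself this shows the failure: $\gr \A_{\lambda+z\chi}=\C[X_0]$ for all $z$, while the fiber of $\gr\A_\ell=\C[X_{\C\chi}]$ over $z\chi$ is $\C[X_{0,z\chi}]$, which for $z\neq 0$ is smooth and in general not isomorphic to $\C[X_0]$. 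Relatedly, you identify $\gr\Gamma(\Ca_{\nu_0}(\A^\theta_{\lambda+z\chi})|_Z)$ with $\C[Z_{z\chi}]$; but $\Ca_{\nu_0}(\A^\theta_{\lambda+z\chi})$ is a quantization of the fixed locus in the \emph{same} resolution $X^\theta$ for every period, so this associated graded is (the coordinate ring of the affinization of) $Z$, not of $Z_{z\chi}$ --- you are conflating the quantization parameter with the deformation parameter, which only get identified after taking $\gr$ over the whole line. The correct route (the one the paper takes) is to stay at the graded level: the map $\gr\Ca_{\nu_0}(\A_\ell)|_Z\rightarrow\gr\Gamma(\Ca_{\nu_0}(\A_\ell^\theta)|_Z)=\C[Z_{\C\chi}]$ has kernel and cokernel supported at $0\in\C\chi$ (this is extracted from Lemma \ref{Lem:Z_spec} and Proposition \ref{Prop:A0_descr}), and then the fiber over $\chi$ is read off from $\C[Z_{\C\chi}]$.

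For part (2) you reduce everything to $z_1$ being a non-zero-divisor on $\gr\Ca_{\nu_0}(\A_\ell)|_Z$, i.e.\ to strictness of the embedding into $\Gamma(\Ca_{\nu_0}(\A^\theta_\ell)|_Z)$, and you acknowledge you cannot prove this; the homogeneity remark does not rule out $z_1$-torsion (graded torsion supported at $0\in\C\chi$ is perfectly possible), and the fallback via semismallness does not bound $\dim\underline{Z}$ either, since $\Spec(\Ca_{\nu_0}(\C[X_0]))$ has components coming from the other fixed components and mere connectedness of $\underline Z$ gives no dimension control. So as written (2) has a genuine hole; note also that the paper never proves flatness --- it instead uses that $\C[Z_{\C\chi}]$ is \emph{finite} over the image of $\gr\Ca_{\nu_0}(\A_\ell)|_Z$, so that replacing the filtration by the one restricted from $\Gamma(\Ca_{\nu_0}(\A^\theta_\ell)|_Z)$ does not change the reduced associated graded scheme, and the dimension of the zero fiber is then $\dim Z$; this sidesteps strictness entirely. (Also, your parenthetical claim that the zero fiber of the dominating component is $\Spec\C[Z]$ is unjustified, though harmless for dimension counts.) Your part (3), by contrast, is fine and genuinely different from the paper's argument: exhibiting $\underline Z$ as a closed Poisson subscheme of $\Spec(\Ca_{\nu_0}(\C[X_0]))$ and invoking Lemma \ref{Lem:A0_classic} is a legitimate alternative to the paper's use of the resolution $Z\rightarrow\underline Z$ together with Remark \ref{Rem:sympl_leaves_finite}, and it does not depend on the flawed steps above.
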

\begin{proof}
We have a natural homomorphism
\begin{equation}\label{eq:filt_homom_Ca_Z}\Ca_{\nu_0}(\A_\ell)|_Z\rightarrow \Gamma(\Ca_{\nu_0}(\A_\ell^\theta)|_Z)\end{equation}
that is an isomorphism generically, this follows  Lemma \ref{Lem:Z_spec}. The kernel of its associated graded is
supported at $0\in \C\chi$. Moreover, the induced homomorphism of fibers
$\Ca_{\nu_0}(\A_{\lambda})|_Z\rightarrow \Gamma(\Ca_{\nu_0}(\A_\lambda^\theta)|_Z)$ is a filtration
preserving isomorphism for a Zariski generic $\lambda\in \ell$. It follows that the cokernel of
the associated graded of  (\ref{eq:filt_homom_Ca_Z}) is also supported at $0$. (1) follows.

Let us prove (2). We have $\gr \Gamma(\Ca_{\nu_0}(\A_\ell^\theta)|_Z)=\C[Z_{\C\chi}]$.
This algebra is finite over $\gr\Ca_{\nu_0}(\A_\ell)|_Z$. It follows that if we equip
$\Ca_{\nu_0}(\A_\ell)|_Z$ with the filtration restricted from $\Gamma(\Ca_{\nu_0}(\A^\theta_\ell)|_Z)$,
then the reduced scheme of the associated graded does not change. Therefore  the dimension
of the zero fiber of $\operatorname{Spec}(\gr \Ca_{\nu_0}(\A_\ell)|_Z)\rightarrow \C\chi$
equals $\dim Z$. (2) follows.

Let us prove (3). We have a fiberwise resolution of singularities morphism $Z_{\C\chi}\rightarrow \underline{Z}_{\C\chi}$
that is an isomorphism over $\C^\times\chi$. Since the morphism $Z\rightarrow \underline{Z}$ is a
symplectic resolution of singularities, by Remark \ref{Rem:sympl_leaves_finite},
we conclude that  $\underline{Z}$ has finitely many symplectic leaves.
\end{proof}


Let us proceed to studying the $\C[X_{\C\chi}]\otimes\gr \Ca_{\nu_0}(\A_\ell)|_Z$-bimodule
$\gr\B^+_{\ell}|_Z$.

\begin{Lem}\label{Lem:BZ_assoc_graded}
The following is true:
\begin{enumerate}
\item The support of $\gr\B^+_{\ell}|_Z$ in $X_{0,\C\chi}\times \underline{Z}_{\C\chi}$ lies in
$$\hat{Y}_{\C\chi}:=
\{(x,z)\in X_{0,\C\chi}\times \underline{Z}_{\C\chi}| \lim_{t\rightarrow 0}\nu_0(t)x=z\}.$$
\item The fiber of $\hat{Y}_{\C\chi}$  over $\chi$ coincides with the $Y_{\chi,Z}:=\{(x,z)\in X_\chi\times Z_\chi|
\lim_{t\rightarrow 0}\nu_0(t)x=z\}$.
\item The fiber of $\hat{Y}_{\C\chi}$ over $0$ is an isotropic subvariety in $X_0\times \underline{Z}$.
\item The fiber of $\gr\B^+_{\ell}|_Z$ over $\chi$ is $\C[Y_{\chi,Z}]$.
\end{enumerate}
\end{Lem}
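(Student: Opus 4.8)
The plan is to prove Lemma \ref{Lem:BZ_assoc_graded} by reducing everything to its statement over the generic point of $\C\chi$, where the objects involved are honest (graded) coherent sheaves on a symplectic resolution and its contracting locus, and then propagating the information to the special fiber over $0$ by a flatness/asymptotic-cone argument. The starting observation is that $\B^+_\ell|_Z$ is, by construction, a Harish-Chandra bimodule over the pair $(\A_\ell, \Ca_{\nu_0}(\A_\ell)|_Z)$ — indeed it is a quotient of $\B^+_\ell = \A_\ell/\A_\ell\A_\ell^{>0,\nu_0}$, and Lemma \ref{Lem:repel_quant_affine}/Lemma \ref{Lem:repel_gener} show $\gr\B^+_\ell$ is finitely generated with commuting left and right actions, so the good filtration restricted from $\B^+_\ell$ makes $\gr\B^+_\ell|_Z$ a finitely generated module over $\C[X_{0,\C\chi}]\otimes\gr\Ca_{\nu_0}(\A_\ell)|_Z$. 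Its support is then a closed $\C^\times$-stable (for the contracting action) subvariety of $X_{0,\C\chi}\times\underline Z_{\C\chi}$, and the whole lemma is a collection of four statements describing this support and the fiber of the bimodule.

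For part (1): the left and right actions of the $\nu_0$-grading operator $h_0$ (the image of $1$ under the comoment map for $\nu_0$) differ on $\B^+_\ell|_Z$ by an integer-bounded shift — on $\B^+_\ell$ the right action of $h_0$ kills a quotient of $\A_\ell/\A_\ell\A_\ell^{>0,\nu_0}$, i.e. $h_0$ acts on the right with eigenvalue $0$, while on the left it acts with non-positive eigenvalues. Passing to the associated graded, the classical counterpart is that $\C[\B^+_\ell|_Z]$, as a $\C[X_{0,\C\chi}]$-module, is supported where the function $h_0$ (the moment map for $\nu_0$ on $X_{0,\C\chi}$) is constant along the orbit closure, i.e. precisely on the locus of pairs $(x,z)$ with $\lim_{t\to 0}\nu_0(t)x=z$. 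More carefully: $\gr\B^+_\ell|_Z$ is a quotient of $\gr\B^+_\ell$, which by Lemma \ref{Lem:repel_gener} is the structure sheaf of the contracting locus $Y$ (over $\C\chi$), so its support lies inside $\hat Y_{\C\chi}$; passing to the quotient by $\B^+_\ell I_Z$ and using $\gr(\B^+_\ell I_Z)\supseteq (\gr\B^+_\ell)(\gr I_Z)$ cuts it down to the component indexed by $Z$. This gives (1). Part (2) is then the statement that this support construction is compatible with specialization to the generic point of $\C\chi$, which is where $\tilde X\to\tilde X_0$ is an isomorphism: over $\chi$ the map $\rho$ is an iso, $\underline Z_\chi=Z_\chi$, and $\hat Y_{\C\chi}$ restricts to $Y_{\chi,Z}$ by definition; this needs only Lemma \ref{Lem:Z_spec} together with the definition of $\hat Y_{\C\chi}$. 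Part (4) follows from (2) once we know $\gr\B^+_\ell|_Z$ is generically (over $\C\chi$) the structure sheaf of its support: over $\chi$ the algebra $\Ca_{\nu_0}(\A_{\lambda^+})|_{Z_\chi}=\C[Z_\chi]$ by Lemma \ref{Lem:CZ_fibers}(1) and abelian localization, the bimodule $\B^+_{\lambda^+}|_Z$ localizes to $\mathcal A^\theta_{\lambda^+}/\mathcal A^\theta_{\lambda^+}\mathcal A^{\theta,>0,\nu_0}_{\lambda^+}|_Z$, whose associated graded is $\mathcal O_{Y_{\chi,Z}}$ again by Lemma \ref{Lem:repel_gener}, and one checks rank $1$ and reducedness on each contracting component.

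For part (3), the fiber of $\hat Y_{\C\chi}$ over $0$: the key point is equidimensionality. The scheme $\hat Y_{\C\chi}$ is $\C^\times$-stable for the contracting action $t.(x,z)$, the generic fiber $Y_{\chi,Z}$ is a Lagrangian (in fact it is the graph of the contraction $X_\chi\to Z_\chi$ restricted to the attracting set, so $\dim Y_{\chi,Z}=\frac12\dim X_\chi=\frac12\dim X$, using that $\dim Z+\dim(\text{attracting directions})$ adds up correctly as in Proposition \ref{Prop:sympl_res_torus_fixed}), and one must show the zero fiber has the same dimension and is isotropic in $X_0\times\underline Z$. Dimension: since $\gr\B^+_\ell|_Z$ is finitely generated over the $\C[\chi]$-algebra $\C[\hat Y_{\C\chi}]$ and generically free over $\C[\C\chi]$ (Lemma \ref{Lem:gen_freeness}), no component of the zero fiber can have dimension exceeding that of the generic fiber — this is the semicontinuity of fiber dimension for the $\C^\times$-equivariant family, exactly as in the passage from $Z_1$ to $Z_0$ in the proof of Proposition \ref{Prop:supp_gr}. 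Isotropy: on $X_0\times\underline Z$ (with the form $(\omega,-\omega^0)$) the Poisson bracket structure is controlled by $\gr$ of the bimodule commutator relations; the bimodule $\gr\B^+_\ell|_Z/(\chi)$ is a Poisson module, so its support is coisotropic, but having dimension $\le\frac12\dim(X_0\times\underline Z)$ forces it to be isotropic (by Gabber's theorem / Bernstein's inequality, exactly the input used in Section \ref{SS_holon} via \cite{B_ineq}).

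I expect part (3), and specifically the dimension count for the zero fiber, to be the main obstacle: one has to be careful that the ``asymptotic cone'' of $Y_{\chi,Z}$ — i.e. the limit of the family $\hat Y_{\C\chi}$ over $0$ — really is contained in the set $\{(x,z): \lim\nu_0(t)x=z\}\subset X_0\times\underline Z$ and does not acquire extra components of larger dimension supported over the singular locus of $\rho$. The clean way around this is to define $\hat Y_{\C\chi}$ as the support of $\gr\B^+_\ell|_Z$ in the first place (so that (1) becomes a containment by construction and the content is the reverse containment over $\chi$), then deduce equidimensionality purely from generic freeness of the bimodule over $\C[\C\chi]$ plus $\C^\times$-equivariance, and finally invoke the Poisson-module/Bernstein-inequality argument for isotropy — all of which parallel machinery already set up in Proposition \ref{Prop:supp_gr} and Section \ref{SS_holon}. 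The remaining steps (2) and (4) are then essentially bookkeeping with specialization, using Lemmas \ref{Lem:Z_spec}, \ref{Lem:CZ_fibers}, \ref{Lem:repel_gener} and abelian localization at $\lambda^+$.
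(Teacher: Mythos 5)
Your sketches of (1) and (2) are essentially fine (these are the parts the paper calls straightforward), but there is a genuine gap in part (3), which is the key assertion. You propose to first bound the dimension of the zero fiber by semicontinuity in the family and then deduce isotropy from ``coisotropic (Gabber) plus dimension at most half''. That last implication is false in the sense of isotropy used here (intersection with every symplectic leaf of $X_0\times\underline{Z}$ is isotropic in that leaf): a subvariety of dimension at most $\tfrac12\dim(X_0\times\underline{Z})$ need not meet the smaller leaves isotropically --- the closure of a low-dimensional symplectic leaf, for instance, can satisfy the dimension bound without being isotropic, and Gabber's theorem only gives coisotropicity, which together with the bound forces Lagrangian behaviour on the open leaf but says nothing about the others. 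Your fallback of redefining $\hat{Y}_{\C\chi}$ as the support of $\gr\B^+_\ell|_Z$ changes the statement and does not rescue the application: in Steps 3--4 of the proof of Proposition \ref{Prop:projectivity} one needs precisely that the explicit zero fiber $\{(x,z)\in X_0\times\underline{Z}\mid \lim_{t\to 0}\nu_0(t)x=z\}$ is isotropic, so that specializations of bimodules supported on $\hat{Y}_{\C\chi}$ are holonomic and the dimension bound $\tfrac12(\dim X+\dim Z)$ follows. The paper's proof of (3) is purely geometric and needs no dimension count: by \cite[Section 5.1]{B_ineq} isotropy may be checked after pulling back under the resolutions to $X\times Z$; the preimage of the zero fiber consists of pairs $(\tilde{x},\tilde{x}')$ such that $\lim_{t\to 0}\nu_0(t)\tilde{x}$ exists and $\rho(\lim_{t\to 0}\nu_0(t)\tilde{x})=\rho(\tilde{x}')$, and its isotropy reduces to the facts that the Steinberg-type variety $(\rho\times\rho)^{-1}(X_{diag})$ is isotropic in $X\times X$ while $X^{\nu_0(\C^\times)}\times Z$ is symplectic. (Also, $\dim Y_{\chi,Z}$ is $\tfrac12(\dim X+\dim Z)$, not $\tfrac12\dim X$.)

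Your argument for (4) is also too thin and partly misdirected: the fiber over $\chi$ is taken in the classical deformation direction $\C\chi$ of $\gr\A_\ell=\C[X_{0,\C\chi}]$, so abelian localization at $\lambda^+$ (a statement about the quantum parameter) and Lemma \ref{Lem:repel_gener} (a statement on $X$ itself) do not directly compute it. The actual proof first shows, via the $\hbar$-adic complete-intersection argument as in Proposition \ref{Prop:A0}, that the kernel of the natural epimorphism from $\C[X_{\C\chi}]/\C[X_{\C\chi}]\C[X_{\C\chi}]^{>0,\nu_0}$ onto $\gr\B^+_\ell$ is supported at $0\in\C\chi$; it then identifies $(\gr I_Z)_\chi$ using part (1) of Lemma \ref{Lem:CZ_fibers} to get an epimorphism $\C[Y_{\chi,Z}]\twoheadrightarrow(\gr\B^+_\ell|_Z)_\chi$, and concludes by Gabber involutivity (the support is coisotropic) plus the fact that $Y_{\chi,Z}$ is lagrangian, ruling out the zero module by computing the $\nu_0(\C^\times)$-invariants to be $\C[Z_\chi]$. ``Checking rank one and reducedness'' is exactly what has to be proved there, not an available input.
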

\begin{proof}
(1) and (2) are straightforward.

Let us prove (3). Recall, \cite[Section 5.1]{B_ineq}, that a subvariety of
$X_0\times \underline{Z}$ is isotropic if and only if its pre-image in $X\times Z$
is.  The preimage of the zero fiber of $\hat{Y}_{\C\chi}$
in $X\times Z$ consists of all points $(\tilde{x},\tilde{x}')$
such that $\lim_{t\rightarrow 0}\nu_0(t)\tilde{x}$ exists and $\rho(\lim_{t\rightarrow 0}\nu_0(t)\tilde{x})=
\rho(\tilde{x}')$. The claim that this locus is isotropic easily reduces to checking that  $
(\rho\times \rho)^{-1}(X_{diag})\cap X^{\nu_0(\C^\times)}\times Z$ is isotropic. But  $
(\rho\times \rho)^{-1}(X_{diag})\subset X\times X$ is isotropic while $X^{\nu_0(\C^\times)}\times Z$
 symplectic. These two observations  imply our claim.
We conclude that the zero fiber of $\hat{Y}_{\C\chi}$ is isotropic.

Let us prove (4). Let us check that the kernel of the natural epimorphism
$$\C[X_{\C\chi}]\C[X_{\C\chi}]\C[X_{\C\chi}]^{>0,\nu_0}\twoheadrightarrow
\gr\B^+_\ell$$  is supported at $0\in \C\chi$. Consider the $\hbar$-adic completion
$\A_{\ell,\hbar}$ of $R_\hbar(\A)$. Let $\A_{\ell,\hbar}^{reg}$ denote the (completed) localization
of $\A_{\ell,\hbar}$ to $\C^\times \chi$. Then $\A_{\ell,\hbar}^{reg}/
\A_{\ell,\hbar}^{reg}(\A_{\ell,\hbar}^{>0,\nu'})^{reg}$ coincides with the  (micro)localization of $\A_{\ell,\hbar}/\A_{\ell,\hbar}(\A_{\ell,\hbar})^{>0,\nu'}$.
On the other hand, over $\C^\times \chi$, the ideal $\C[X_{\C\chi}]\C[X_{\C\chi}]^{>0,\nu'}$
is a locally complete intersection (given by elements of positive $\nu'$-weight), compare to the proof
of (2) in Proposition \ref{Prop:A0}. As in the proof of (1) of Proposition \ref{Prop:A0}, this implies that
$\A_{\ell,\hbar}^{reg}/\A_{\ell,\hbar}^{reg}(\A_{\ell,\hbar}^{>0,\nu'})^{reg}$ is flat over $\C[[\hbar]]$.
So the $\hbar$-torsion in  $\A_{\ell,\hbar}/\A_{\ell,\hbar}(\A_{\ell,\hbar})_{>0,\nu'}$ is supported
at $0$. This implies the claim in the beginning of the paragraph.

Part (1) of Lemma \ref{Lem:CZ_fibers}
implies that the  fiber over $\chi$ of $\gr I_Z$ is the kernel of $\C[X_\chi^{\nu_0(\C^\times)}]
\rightarrow \C[Z_\chi]$. So $(\gr\B^+_\ell)_\chi/ (\gr\B^+_\ell)_\chi (\gr I_Z)_\chi=\C[Y_{\chi,Z}]$
and we get an epimorphism $\C[Y_{\chi,Z}]\rightarrow (\gr \B^+_{\ell}|_Z)_\chi$
of $\C[X_\chi]\otimes \C[Z_\chi]$-modules. The Gabber involutivity theorem implies that
the support of $(\gr \B^+_{\ell}|_Z)_\chi$ in $X_\chi\times Z_\chi$ is coisotropic
(if this bimodule is nonzero). Since $Y_{\chi,Z}$ is lagrangian, it follows that either
$(\gr \B^+_{\ell}|_Z)_\chi=\C[Y_{\chi,Z}]$ or $(\gr \B^+_{\ell}|_Z)_\chi=0$.
The latter is not the case because
$$[(\gr \B^+_{\ell}|_Z)_\chi]^{\nu_0(\C^\times)}=\gr[\B^+_{\ell}|_Z^{\nu_0(\C^\times)}]_\chi=
[\gr \Ca_{\nu_0}(\A_\ell)|_Z]_\chi=\C[Z_\chi].$$
\end{proof}

To finish this section, let us point out that the conclusions of this section also hold
for the following bimodules
$$ \B^-_{\ell}:=\A_{\ell}/\A_{\ell}\A_{\ell}^{<0,\nu_0}, \quad \B^{+,o}_\ell:=\A_{\ell}/\A_{\ell}^{>0,\nu_0}\A_\ell,
\quad \B^{-,o}_{\ell}:=\A_{\ell}/\A_{\ell}^{<0,\nu_0}\A_\ell.
$$
\subsection{Standardization functor}\label{SS_stand_exact}
According to Proposition \ref{Prop:assoc_graded_O}, under the identification of
$\OCat_\nu(\A_\lambda^\theta)$ with $\OCat_{\nu}(\A_{\lambda^+})$ and
$\gr_{\nu_0} \OCat_\nu(\A_\lambda^\theta)$ with $\OCat_\nu(\Ca_{\nu_0}(\A_{\lambda^+}))$, the standardization
functor $\gr_{\nu_0} \OCat_\nu(\A_\lambda^\theta)\rightarrow
\OCat_\nu(\A_\lambda^\theta)$ becomes $\Delta_{\nu_0}$. Recall that we write
$\lambda^+$ for $\lambda+n\chi$ with $n\gg 0$ and $\chi\in \tilde{\param}_{\Z}$ ample.
The goal of this section is to prove the  following claim.

\begin{Prop}\label{Prop:Cat_O_stand_exact}
The functor $\Delta_{\nu_0}: \OCat_{\nu}(\Ca_{\nu_0}(\A_{\lambda^+}))
\rightarrow \OCat_{\nu}(\A_{\lambda^+})$ is exact.
\end{Prop}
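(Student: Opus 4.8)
The plan is to reduce the exactness of $\Delta_{\nu_0}$ to the projectivity of the right $\Ca_{\nu_0}(\A_{\lambda^+})$-module $\B^+_{\lambda^+}=\A_{\lambda^+}/\A_{\lambda^+}\A_{\lambda^+}^{>0,\nu_0}$, and then establish that projectivity by a degeneration argument using the filtered bimodule $\B^+_\ell|_Z$ introduced in the previous subsection. Indeed, the Verma module functor is $\Delta_{\nu_0}(N)=\B^+_{\lambda^+}\otimes_{\Ca_{\nu_0}(\A_{\lambda^+})}N$, so if $\B^+_{\lambda^+}$ is projective as a right $\Ca_{\nu_0}(\A_{\lambda^+})$-module then $\Delta_{\nu_0}$ is exact (the target subcategory $\OCat_\nu(\A_{\lambda^+})\subset\A_{\lambda^+}\operatorname{-mod}$ is a Serre subcategory, so exactness in $\A_{\lambda^+}\operatorname{-mod}$ suffices). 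Since $\B^+_{\lambda^+}=\bigoplus_Z\B^+_{\lambda^+}|_Z$ for generic parameter (and $\lambda^+$ may be chosen generic, as abelian localization still holds for $\lambda+n\chi$ with $n\gg0$), it is enough to treat each summand $\B^+_{\lambda^+}|_Z$ separately.

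**First I would** work at the graded (Rees) level, passing to the $\hbar$-adically completed homogenization $\A_{\ell,\hbar}$ as in the proof of Proposition~\ref{Prop:A0}. The key local input is that over $\C^\times\chi$ the contracting locus is a complete intersection cut out by $\nu_0$-semiinvariant functions of positive weight (part (1) of Proposition~\ref{Prop:A0}); this was already used in the proof of part (4) of Lemma~\ref{Lem:BZ_assoc_graded} to show that the $\hbar$-torsion in $\A_{\ell,\hbar}/\A_{\ell,\hbar}\A_{\ell,\hbar}^{>0,\nu_0}$ is supported at $0\in\C\chi$. From the fiber computation $(\gr\B^+_\ell|_Z)_\chi=\C[Y_{\chi,Z}]$ (Lemma~\ref{Lem:BZ_assoc_graded}(4)) together with the fact that $Y_{\chi,Z}\to Z_\chi$ is an affine-space bundle (its fibers are the contracting cells of the symplectic resolution $X_\chi\to\underline Z$), one sees that $\C[Y_{\chi,Z}]$ is a flat — in fact, on each component, a polynomial — $\C[Z_\chi]$-module. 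Hence $\gr\B^+_\ell|_Z$ is, over $\C^\times\chi$, a flat module over $\gr\Ca_{\nu_0}(\A_\ell)|_Z$ whose fibers have the expected dimension, by Lemma~\ref{Lem:CZ_fibers}.

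**Then I would** lift flatness from the associated graded to the filtered object and specialize. Concretely: a standard filtered-algebra argument (flatness of $\gr M$ over $\gr A$ implies flatness of $M$ over $A$, when filtrations are complete and separated and $\gr A$ is Noetherian) gives that $\B^+_{\lambda^+}|_Z$ is flat as a right $\Ca_{\nu_0}(\A_{\lambda^+})|_Z$-module for generic $\lambda^+$. Since $\Ca_{\nu_0}(\A_{\lambda^+})|_Z=\C[Z^\theta]$ has finite global dimension (it is a quantization of the affine-type-or-smooth variety $Z$, and more to the point $X^{\nu_0(\C^\times)}$ is a conical symplectic resolution by Proposition~\ref{Prop:sympl_res_torus_fixed}, so $\Ca_{\nu_0}(\A_{\lambda^+})$ has finite homological dimension for generic $\lambda^+$ by Lemma~\ref{Lem:gen_simplicity}(3) applied to $X^{\nu_0(\C^\times)}$), and since $\B^+_{\lambda^+}|_Z$ is finitely generated, flat of finite projective dimension over a Noetherian algebra forces it to be projective. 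Summing over $Z$ yields projectivity of $\B^+_{\lambda^+}$, hence exactness of $\Delta_{\nu_0}$.

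**The hard part will be** controlling the genericity conditions uniformly: one needs a single generic parameter $\lambda^+=\lambda+n\chi$ (with $n\gg0$) for which simultaneously (a) abelian localization holds, so $\OCat_\nu(\A_\lambda^\theta)\cong\OCat_\nu(\A_{\lambda^+})$ and Proposition~\ref{Prop:assoc_graded_O} applies; (b) $\B^+_\ell$ splits as $\bigoplus_Z\B^+_\ell|_Z$ upon specialization and Lemma~\ref{Lem:Z_spec} holds; (c) $\Ca_{\nu_0}(\A_{\lambda^+})|_Z=\C[Z^\theta]$ has finite global dimension; and (d) the fiber of $\B^+_\ell|_Z$ at $\lambda^+$ is $\C[Y_{\lambda^+,Z}]$, i.e. the associated-graded degeneration is "tight" at $\lambda^+$. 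Each is a nonempty Zariski-open condition on $z$ (for (c) and the finite-homological-dimension claims, Weil-generic suffices via Lemma~\ref{Lem:gen_simplicity}), so their conjunction is nonempty; but one must be careful that these finitely many open conditions are all stable under $z\mapsto z+1$ or at least that the desired conclusion for $\OCat_\nu(\A_\lambda^\theta)$ (which is independent of $n$) can be transported back. Since the statement of the proposition is about $\lambda^+$ itself, it suffices to prove it for $\lambda^+$ in a single nonempty Zariski-open set, so this bookkeeping, while the main technical nuisance, does not require any new idea beyond Lemma~\ref{Lem:gen_freeness} and Proposition~\ref{Prop:supp_gr}.
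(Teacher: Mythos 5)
Your reduction (exactness of $\Delta_{\nu_0}$ follows from projectivity of the right $\Ca_{\nu_0}(\A_{\lambda^+})$-module $\B^+_{\lambda^+}$, split over the components $Z$, and deform along $\ell=\{\lambda+z\chi\}$ using the fiber computation $(\gr\B^+_\ell|_Z)_\chi=\C[Y_{\chi,Z}]$) is exactly the paper's starting point. The gap is in the middle step, where you claim that flatness of the $\chi$-fiber of $\gr\B^+_\ell|_Z$ over $\C[Z_\chi]$ can be ``lifted'' to flatness of $\B^+_{\lambda^+}|_Z$ over $\Ca_{\nu_0}(\A_{\lambda^+})|_Z$ by the standard fact that flatness of $\gr M$ over $\gr A$ implies flatness of $M$ over $A$. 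That fact requires flatness of the \emph{actual} associated graded of the specialized object, and here it fails: for any fixed $z_0$, the parameter $z-z_0$ degenerates to $z$ in the associated graded (since $z$ has positive degree), so $\gr(\B^+_{\lambda+z_0\chi}|_Z)$ lives over the fiber at $0\in\C\chi$, i.e.\ it is supported on the contracting locus inside the \emph{singular} cone $X_0$ mapping to $\underline{Z}$ (Lemma \ref{Lem:BZ_assoc_graded}(1),(3)), where the fibers jump and flatness is simply false in general. What you computed --- flatness of the fiber of the graded \emph{family} at the generic point $\chi$ of $\C\chi$ --- is information about the family over $\ell$, not about the associated graded of any individual specialization, and there is no direct ``gr flat $\Rightarrow$ flat'' passage between the two.

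Converting the generic-fiber information into a statement about a Zariski generic specialization $\lambda+z\chi$ is precisely the hard content of the paper's proof of Proposition \ref{Prop:projectivity}, and it is absent from your proposal. The paper proves projectivity via the vanishing of $\Ext^i_{\Ca_{\nu_0}(\A_{\lambda+z\chi})|_Z^{opp}}(\B^+_{\lambda+z\chi}|_Z,\Ca_{\nu_0}(\A_{\lambda+z\chi})|_Z)$ for $i>0$: one forms the Ext groups over the whole line $\ell$, observes (your computation, Step 5 there) that their associated graded is supported at $0\in\C\chi$ because $\C[Y_{\chi,Z}]$ is projective over $\C[Z_\chi]$, and then shows these Ext modules are \emph{torsion} over $\C[\ell]$ --- this last implication is not formal and is where the holonomicity machinery enters: one needs that they are bimodules supported on $\hat Y_{\C\chi}$, that Weil-generic specializations of such bimodules have GK dimension exactly $\tfrac12(\dim X+\dim Z)$ (Corollary \ref{Cor:full_GK_O}), and a GK-multiplicity contradiction (Steps 3--4), together with Lemma \ref{Lem:gen_freeness} and the finite homological dimension bound of Lemma \ref{Lem:gen_simplicity}(3) to handle all $i$ simultaneously before specializing (Step 2). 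Without some substitute for this torsion argument, your flatness claim at $\lambda^+$ is unsupported, so the proposal as written does not prove the proposition. (A minor point: a finitely generated flat module over a Noetherian algebra is automatically projective, so the ``finite projective dimension'' clause in your last step is unnecessary --- but that step is moot until flatness is actually established.)
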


Since $\Delta_{\nu_0}(\bullet)=\B^+_{\lambda^+}
\otimes_{\Ca_{\nu_0}(\A_{\lambda^+})}\bullet$, this proposition is a direct consequence of the following claim.

\begin{Prop}\label{Prop:projectivity}
For a Zariski generic $z$, the right $\Ca_{\nu_0}(\A_{\lambda+z\chi})$-module $\B_{\lambda+z\chi}$
is projective.
\end{Prop}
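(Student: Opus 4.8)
Write $\B:=\B^+$ throughout. Since the standardization functor is $\Delta_{\nu_0}(-)=\B_{\lambda+z\chi}\otimes_{\Ca_{\nu_0}(\A_{\lambda+z\chi})}(-)$, everything comes down to the right module $\B_{\lambda+z\chi}$, and I will verify it is projective for Zariski generic $z$. The first step is to split off the connected components of the fixed locus: for Zariski generic $z$, combining Proposition~\ref{Prop:A0_descr} with $X^{\nu_0(\C^\times)}=\bigsqcup_Z Z$ and the vanishing $H^{>0}(X^{\nu_0(\C^\times)},\Str)=0$ (a consequence of Proposition~\ref{Prop:sympl_res_torus_fixed}), one obtains $\Ca_{\nu_0}(\A_{\lambda+z\chi})=\bigoplus_Z\Ca_{\nu_0}(\A_{\lambda+z\chi})|_Z$ and correspondingly $\B_{\lambda+z\chi}=\bigoplus_Z\B_{\lambda+z\chi}|_Z$. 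It therefore suffices to prove that, for generic $z$, each $\B_{\lambda+z\chi}|_Z$ is projective over $\Ca_{\nu_0}(\A_{\lambda+z\chi})|_Z$.

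Next I would descend to the associated graded objects. Both $\Ca_{\nu_0}(\A_{\lambda+z\chi})|_Z$ and $\B_{\lambda+z\chi}|_Z$ are $\Z_{\geqslant0}$-filtered, complete and separated, and a projective module over the associated graded of such a ring lifts to a projective module over the ring; so it is enough to show $\gr\B_{\lambda+z\chi}|_Z$ is projective over $\gr\Ca_{\nu_0}(\A_{\lambda+z\chi})|_Z$. By generic freeness over $\C[\ell]$ (Lemma~\ref{Lem:gen_freeness}) together with Lemma~\ref{Lem:Z_spec}, for generic $z$ these associated graded objects are the fibres over $z\chi$ of the $\C[\ell]$-families $\gr\Ca_{\nu_0}(\A_\ell)|_Z$ and $\gr\B^+_\ell|_Z$; since those families over $\C\chi$ are $\C^\times$-equivariant, such a fibre (for $z\neq0$) is isomorphic to the fibre over $\chi$, which by Lemma~\ref{Lem:CZ_fibers}(1) and Lemma~\ref{Lem:BZ_assoc_graded}(4) is $\C[Z_\chi]$, resp.\ $\C[Y_{\chi,Z}]$.

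Thus the heart of the matter is that $\C[Y_{\chi,Z}]$ is a projective $\C[Z_\chi]$-module. As $\chi$ is an integral ample class it lies in the interior of a chamber, hence outside $\mathcal{H}_\C$, so $X_\chi=X^\theta_\chi$ is a smooth affine variety; consequently $Z_\chi$ is smooth affine and, by Bialynicki--Birula, the contracting morphism $Y_{\chi,Z}\to Z_\chi$ realizes $Y_{\chi,Z}$ as the total space of the vector bundle $\mathcal{N}$ spanned by the positive $\nu_0$-weight spaces in $T_{X_\chi}|_{Z_\chi}$. Hence $\C[Y_{\chi,Z}]=\bigoplus_{m\geqslant0}\Gamma(Z_\chi,\operatorname{Sym}^m\mathcal{N}^\vee)$ is a direct sum of finitely generated locally free $\C[Z_\chi]$-modules, and so is projective. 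The same argument applies verbatim to $\B^-_\ell,\B^{+,o}_\ell,\B^{-,o}_\ell$.

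I expect the main obstacle to be the lifting invoked in the second paragraph: since $\gr\B_{\lambda+z\chi}|_Z$ is not finitely generated one cannot simply lift an idempotent matrix, so the lifting has to be organized along the grading by $\nu_0(\C^\times)$-weight --- one writes $\gr\B|_Z=\bigoplus_m(\gr\B|_Z)_m$ with each graded piece finitely generated projective over $\C[Z_\chi]$ and assembles a compatible projective presentation of $\B_{\lambda+z\chi}|_Z$ by successive extensions, using completeness of the filtration. A secondary, purely bookkeeping point is to check that finitely many values of $z$ can be excluded so that Lemmas~\ref{Lem:Z_spec},~\ref{Lem:CZ_fibers},~\ref{Lem:BZ_assoc_graded} and the component decomposition all apply simultaneously.
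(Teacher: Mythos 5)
Your strategy has a genuine gap at its central step: the identification of $\gr\bigl(\B^+_{\lambda+z\chi}|_Z\bigr)$ and $\gr\bigl(\Ca_{\nu_0}(\A_{\lambda+z\chi})|_Z\bigr)$ with the fibres of $\gr\B^+_\ell|_Z$ and $\gr\Ca_{\nu_0}(\A_\ell)|_Z$ over $z\chi\cong\chi$ is false. Passing to the associated graded does not commute with specialization in that way: the coordinate $t$ on $\ell$ has filtration degree $d>0$, so the symbol of $t-z$ is $\bar t$ for \emph{every} $z$, and hence $\gr$ of the specialization at $\lambda+z\chi$ is a quotient of the fibre of the graded family at $0\in\C\chi$, not at $z\chi$. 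In other words, $\gr\bigl(\Ca_{\nu_0}(\A_{\lambda+z\chi})|_Z\bigr)$ lives over the singular scheme $\underline{Z}$ of Lemma \ref{Lem:CZ_fibers}(3), and $\gr\bigl(\B^+_{\lambda+z\chi}|_Z\bigr)$ is a module supported on the isotropic zero fibre of $\hat{Y}_{\C\chi}$ (Lemma \ref{Lem:BZ_assoc_graded}(3)); these are essentially independent of $z$ and are in general \emph{not} projective. The pleasant geometry you invoke ($Y_{\chi,Z}\to Z_\chi$ an affine bundle, so $\C[Y_{\chi,Z}]$ projective over $\C[Z_\chi]$) is true and is indeed used in the paper, but it only controls the fibre of the graded families at the \emph{generic} point of $\C\chi$, i.e.\ the leading behaviour as $z\to\infty$; it says nothing directly about the filtration on the module at a finite value of $z$. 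So the reduction ``it suffices to show the associated graded at generic $z$ is projective'' asks you to prove a statement that generally fails, and the lifting issue you flag at the end, while real, is not the main obstruction.

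What is actually needed, and what the paper does, is a mechanism for converting the $\chi$-fibre computation into a statement at Zariski-generic finite $z$: one reformulates projectivity as the vanishing of $\Ext^i_{\Ca_{\nu_0}(\A_{\lambda+z\chi})|_Z^{opp}}(\B^+_{\lambda+z\chi}|_Z,\Ca_{\nu_0}(\A_{\lambda+z\chi})|_Z)$ for $i>0$, observes via generic freeness (Lemma \ref{Lem:gen_freeness}) and finiteness of homological dimension (Lemma \ref{Lem:gen_simplicity}(3)) that this is the specialization of an $\Ext$-module over $\C[\ell]$, and then shows that module is torsion over $\C[\ell]$. The $\chi$-fibre computation (projectivity of $\C[Y_{\chi,Z}]$ over $\C[Z_\chi]$) only shows the associated graded of that $\Ext$-module is supported at $0\in\C\chi$; upgrading this to torsion over $\C[\ell]$ requires the holonomicity and Gelfand--Kirillov dimension argument with the bimodule category supported on $\hat{Y}_{\C\chi}$ (Steps 3--5 of the paper's proof, using Lemma \ref{Lem:BZ_assoc_graded}(3) and Corollary \ref{Cor:full_GK_O}). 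Your proposal contains no substitute for this step, and without it the argument does not go through.
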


\begin{proof}
The proof is in several steps.

{\it Step 1}. The claim will follow if we show
that, for a Zariski generic $z$, $\B^+_{\lambda+z\chi}|_Z$ is a projective
$\Ca_{\nu_0}(\A_{\lambda+z\chi})|_Z$-module. This will follows if we check
that
\begin{equation}\label{eq:Ext_vanish}
\operatorname{Ext}^i_{\Ca_{\nu_0}(\A_{\lambda+z\chi})|_Z^{opp}}
(\B^+_{\lambda+z\chi}|_Z, \Ca_{\nu_0}(\A_{\lambda+z\chi})|_Z)=0, \text{ for }i>0.
\end{equation}
Indeed, (\ref{eq:Ext_vanish}) implies that $\Ext^i_{\Ca_{\nu_0}(\A_{\lambda+z\chi})|_Z^{opp}}
(\B^+_{\lambda+z\chi}|_Z, M)=0$ when $i>0$, for any right  $\Ca_{\nu_0}(\A_{\lambda+z\chi})|_Z$-module $M$.

{\it Step 2}.
By Lemma \ref{Lem:gen_freeness}, $\B^+_{\ell}|_Z$ is generically free over $\C[\ell]$. We deduce that, for a Zariski
generic $z$, the left hand side of (\ref{eq:Ext_vanish}) is the specialization to $\lambda+z\chi$ of
\begin{equation}\label{eq:Ext_over_ell}
\operatorname{Ext}^i_{\Ca_{\nu_0}(\A_{\ell})|_Z^{opp}}
(\B^+_{\ell}|_Z, \Ca_{\nu_0}(\A_{\ell})|_Z).
\end{equation}

So (\ref{eq:Ext_vanish}) for a fixed $i$ will follow if we check that the module in (\ref{eq:Ext_over_ell})
is torsion over $\C[\ell]$. The claim for all $i$ is then proved by using the fact that, for a Zariski
generic $z$, the algebra $\Ca_{\nu_0}(\A_{\lambda+z\chi})|_Z=\Gamma(\Ca_{\nu_0}(\A^\theta_{\lambda+z\chi})|_Z)$
has  homological dimension not exceeding $\dim Z$, see (3) of Lemma \ref{Lem:gen_simplicity}.

{\it Step 3}. So we are proving that (\ref{eq:Ext_over_ell}) is torsion over $\C[\ell]$. To this end  we
will need the category $\mathcal{C}$  of all finitely  generated $\Ca_{\nu_0}(\A_{\ell})|_Z$-$\A_\ell$-bimodules
whose associated variety lies in the  subvariety $\hat{Y}_{\C\chi}$ from Lemma \ref{Lem:BZ_assoc_graded}.

Thanks to (3) of Lemma \ref{Lem:BZ_assoc_graded},  for $M\in \Cat$, the specialization $M_{\lambda+z\chi}$ is a holonomic
$\A_{\lambda+z\chi}^{opp}\otimes \Ca_{\nu_0}(\A_{\lambda+z\chi})|_Z$-module,
see Section \ref{SS_holon}. We conclude that, for a Weil generic $z$, the specialization $M_{\lambda+z\chi}$
has GK dimension equal to $\frac{1}{2}(\dim X+\dim Z)$ (provided $M_{\lambda+z\chi}\neq 0$), this follows from
 Corollary \ref{Cor:full_GK_O} (applied to the algebra $\A_{\lambda+z\chi}^{opp}\otimes \Ca_{\nu_0}(\A_{\lambda+z\chi})|_Z$ that is the algebra of global sections of a conical
symplectic resolution).

{\it Step 4}. Now let $M$ be an object in $\Cat$. Suppose that $M$ has a good filtration such that
$\gr M$ is torsion over $\C[\C\chi]$. We claim  that $M$ is torsion over $\C[\ell]$.
Lemma \ref{Lem:gen_freeness} implies that $M$ is generically free over $\C[\ell]$.
The associated graded bimodule $\gr M$ is supported on the zero fiber of
$\widehat{Y}_{\C\chi}\rightarrow \C\chi$. The dimension
of the latter does not exceed $\frac{1}{2}(\dim X+\dim Z)$, see (3) of Lemma \ref{Lem:BZ_assoc_graded}.
We conclude that the GK dimension of
$M$ does not exceed  $\frac{1}{2}(\dim X+\dim Z)$. On the other hand, since $M$
is generically free over $\C[\ell]$, we have $M_{\lambda+z\chi}\neq 0$ for a Weil generic $z$.
So the GK dimension of a Weil generic fiber of $M$ coincides with that of $M$. It follows that we have quotients
of $M$ with arbitrary large GK multiplicity. But the GK multiplicity
of $M$ is not less than that of any quotient of $M$. We arrive at a contradiction.

{\it Step 5}. It remains to show that (\ref{eq:Ext_over_ell}) lies in $\Cat$ and
has a good filtration like in Step 4. Recall that $\gr \B^+_{\ell}|_{Z}$ is a
finitely generated $\C[X_{\C\chi}]\otimes \gr \Ca_{\nu_0}(\A_{\ell})|_Z$-module supported on $\widehat{Y}_{\C\chi}$
by (1) of Lemma \ref{Lem:BZ_assoc_graded}. Also note that (\ref{eq:Ext_over_ell}) has a filtration with
\begin{equation}\label{eq:Ext_incl}\gr\operatorname{Ext}^i_{\Ca_{\nu_0}(\A_{\ell})|_Z^{opp}}
(\B^+_{\ell}|_Z, \Ca_{\nu_0}(\A_{\ell})|_Z)\subset \operatorname{Ext}^i_{\gr\Ca_{\nu_0}(\A_{\ell})|_Z}
(\gr\B^+_{\ell}|_Z, \gr\Ca_{\nu_0}(\A_{\ell})|_Z).\end{equation} This implies that
(\ref{eq:Ext_over_ell}) is in $\mathcal{C}$. Let us show that the right hand side of
(\ref{eq:Ext_incl}) is supported at $0\in \C\chi$ (when $i>0$). Indeed, the specialization of  $\gr\Ca_{\nu_0}(\A_{\ell})|_Z$ to $\chi$ is $\C[Z_\chi]$ (see (1) of Lemma \ref{Lem:CZ_fibers}),
while the specialization of $\gr\B^+_{\ell}|_Z$ is $\C[Y_{\chi,Z}]$ (by (4) of Lemma \ref{Lem:BZ_assoc_graded}). It
follows that the specialization of the right hand side of (\ref{eq:Ext_incl})
to $\chi$ coincides with $$\Ext^i_{\C[Z_{\chi}]}(\C[Y_{\chi,Z}],\C[Z_\chi]).$$
But $\C[Y_{\chi,Z}]$ is a projective $\C[Z_\chi]$-module and so the fiber at $\chi$
vanishes.

This completes the proof.
\end{proof}

\subsection{Filtration on projectives}
We finish the proof of Theorem \ref{Thm:cat_O_ss} using Lemma \ref{Lem:ss_order}.
In order to do this, note that the functor
$$\nabla_{\nu_0}(\bullet)=\Hom_{\Ca_{\nu_0}(\A_{\lambda+z\chi})}(\B^{-,o}_{\lambda+z\chi}, \bullet):
\OCat_{\nu}(\Ca_{\nu_0}(\A_{\lambda+z\chi}))\rightarrow \OCat_\nu(\A_{\lambda+z\chi})$$
is exact for a Zariski generic $z\in \C$, where $\B^{-,o}_{\lambda+z\chi}=\A_{\lambda+z\chi}/
\A^{<0,\nu_0}_{\lambda+z\chi}\A_{\lambda+z\chi}$. Indeed, completely analogously to
Proposition \ref{Prop:projectivity}, $\B^{-,o}_{\lambda+z\chi}$ is a projective
$\Ca_{\nu_0}(\A_{\lambda+z\chi})$-module for a Zariski generic $z$.

The category $\OCat_{\nu}(\A_{\lambda^+})$ is highest weight with respect to the order $\leqslant_{\nu}$.
It follows that it satisfies the conditions of Lemma \ref{Lem:w_st_stratif}. Together with the exactness
of $\nabla_{\nu_0}$, this implies that $\OCat_\nu(\A_{\lambda^+})$ is standardly stratified with
respect to the pre-order $\preceq_{\nu_0}$. This finishes the proof of Theorem \ref{Thm:cat_O_ss}.

Let us record a result that follows from the claim that the right $\Ca_{\nu_0}(\A_{\lambda^+})$-module
$\B^+_{\lambda^+}$ and the left $\Ca_{\nu_0}(\A_{\lambda^+})$-module $\B^{-,o}_{\lambda^+}$
are projective.

\begin{Lem}\label{Lem:der_st_cost}
For $M\in D^b(\OCat_\nu(\Ca_{\nu_0}(\A_{\lambda^+})))$, we have
$$\Delta_{\nu_0}(M)=\B^+_{\lambda^+}\otimes^L_{\Ca_{\nu_0}(\A_{\lambda^+})}M,\quad \nabla_{\nu_0}(M)=
R\operatorname{Hom}_{\Ca_{\nu_0}(\A_{\lambda^+})}(\B^{-,o}_{\lambda^+},M).$$
\end{Lem}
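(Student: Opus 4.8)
The plan is to observe that both identities reduce to the exactness results already established, which force the relevant derived functors to coincide with the underived ones computed termwise.

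First recall from Section \ref{SS_stand_exact} that, under the identifications of Proposition \ref{Prop:assoc_graded_O}, the standardization functor $\Delta_{\nu_0}\colon \OCat_\nu(\Ca_{\nu_0}(\A_{\lambda^+}))\to \OCat_\nu(\A_{\lambda^+})$ is $N\mapsto \B^+_{\lambda^+}\otimes_{\Ca_{\nu_0}(\A_{\lambda^+})}N$ and is exact by Propositions \ref{Prop:Cat_O_stand_exact} and \ref{Prop:projectivity}. Being exact, $\Delta_{\nu_0}$ extends to a triangulated functor on bounded derived categories by applying it termwise, and this is the functor denoted $\Delta_{\nu_0}$ in the statement. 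On the other hand, by Proposition \ref{Prop:projectivity} the bimodule $\B^+_{\lambda^+}$ is projective, in particular flat, as a right $\Ca_{\nu_0}(\A_{\lambda^+})$-module, so $\B^+_{\lambda^+}\otimes_{\Ca_{\nu_0}(\A_{\lambda^+})}(-)$ preserves quasi-isomorphisms of complexes of $\Ca_{\nu_0}(\A_{\lambda^+})$-modules. Hence no resolution is needed to compute the derived tensor product: $\B^+_{\lambda^+}\otimes^L_{\Ca_{\nu_0}(\A_{\lambda^+})}M$ is computed termwise and equals $\B^+_{\lambda^+}\otimes_{\Ca_{\nu_0}(\A_{\lambda^+})}M$. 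Comparing the two termwise descriptions yields $\Delta_{\nu_0}(M)=\B^+_{\lambda^+}\otimes^L_{\Ca_{\nu_0}(\A_{\lambda^+})}M$.

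The argument for $\nabla_{\nu_0}$ is completely parallel. By the discussion preceding this lemma, $\nabla_{\nu_0}(\bullet)=\Hom_{\Ca_{\nu_0}(\A_{\lambda^+})}(\B^{-,o}_{\lambda^+},\bullet)$ is an exact functor $\OCat_\nu(\Ca_{\nu_0}(\A_{\lambda^+}))\to\OCat_\nu(\A_{\lambda^+})$, so it extends termwise to derived categories, and this extension is the functor $\nabla_{\nu_0}$ of the statement. Since $\B^{-,o}_{\lambda^+}$ is a projective left $\Ca_{\nu_0}(\A_{\lambda^+})$-module, $\Hom_{\Ca_{\nu_0}(\A_{\lambda^+})}(\B^{-,o}_{\lambda^+},-)$ is exact and preserves quasi-isomorphisms, so $R\operatorname{Hom}_{\Ca_{\nu_0}(\A_{\lambda^+})}(\B^{-,o}_{\lambda^+},M)$ is again computed termwise and agrees with $\Hom_{\Ca_{\nu_0}(\A_{\lambda^+})}(\B^{-,o}_{\lambda^+},M)$. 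Comparing gives the second identity. There is no genuine obstacle here: the only point requiring care is bookkeeping of which side of the bimodules $\B^+_{\lambda^+}$ and $\B^{-,o}_{\lambda^+}$ acts on $\Ca_{\nu_0}(\A_{\lambda^+})$ versus $\A_{\lambda^+}$, so that the resulting functors land in $\A_{\lambda^+}\operatorname{-mod}$ and, by the cited results, restrict to functors into $\OCat_\nu(\A_{\lambda^+})$; all the substantive input (exactness of $\Delta_{\nu_0}$ and $\nabla_{\nu_0}$, and the one-sided projectivity of $\B^+_{\lambda^+}$ and $\B^{-,o}_{\lambda^+}$ over $\Ca_{\nu_0}(\A_{\lambda^+})$) is already in place.
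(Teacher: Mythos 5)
Your argument is correct and is exactly the one the paper intends: the lemma is recorded in the text as an immediate consequence of the projectivity of $\B^+_{\lambda^+}$ as a right $\Ca_{\nu_0}(\A_{\lambda^+})$-module and of $\B^{-,o}_{\lambda^+}$ as a left $\Ca_{\nu_0}(\A_{\lambda^+})$-module, which is precisely what you use to identify the derived functors with the termwise (underived) ones. No further comment is needed.
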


\section{Cross-walling functors}\label{S_CW}
\subsection{Definition and basic properties}
Recall that we have a full embedding $\iota_\nu: D^b(\OCat_\nu(\A_\lambda^\theta))\hookrightarrow
D^b(\operatorname{Coh}(\A_\lambda^\theta))$ (Lemma \ref{Lem:local} and (3) of Proposition
\ref{Prop:hw_alg}). The image clearly lies in the full subcategory
$D^b_{hol}(\operatorname{Coh}(\A_\lambda^\theta))$ of all complexes with holonomic homology
(recall  that a coherent $\A_\lambda^\theta$-module is called holonomic
if its support is lagrangian, note that by the Gabber involutivity theorem, the support is
always coisotropic).
It was checked in \cite[Section 8.2]{BLPW} that the functor $\iota_\nu: D^b(\OCat_\nu(\A_\lambda^\theta))\hookrightarrow
D^b_{hol}(\operatorname{Coh}(\A_\lambda^\theta))$ admits both left adjoint $\iota^!_\nu$
and right adjoint $\iota^*_\nu$.

For two generic one-parameter subgroups $\nu,\nu':\C^\times\rightarrow T$  we define the {\it cross-walling}
functor (shuffling functor from \cite[Section 8.2]{BLPW}) by setting
$$\CW_{\nu\rightarrow \nu'}:= \iota^!_{\nu'}\circ \iota_\nu:D^b(\OCat_\nu(\A_\lambda^\theta))\rightarrow D^b(\OCat_{\nu'}(\A_\lambda^\theta)).$$
So the functor $\CW_{\nu\rightarrow \nu'}$ is uniquely characterized by a bi-functorial isomorphism
\begin{equation}\label{eq:CW_char}
\operatorname{Hom}_{D^b(\OCat_{\nu'}(\A_\lambda^\theta))}(\CW_{\nu\rightarrow \nu'}M,N)=\operatorname{Hom}_{D^b(\Coh(\A_\lambda^\theta))}(M,N).
\end{equation}

Note that the functor $\CW_{\nu\rightarrow \nu'}$ admits a right adjoint functor
$\CW^*_{\nu\rightarrow \nu'}=\iota_\nu\circ \iota_{\nu'}^*$. Also note that there is
a natural functor morphism
\begin{equation}\label{eq:CW_fun_morph}
\CW_{\nu\rightarrow \nu''}\rightarrow \CW_{\nu'\rightarrow \nu''}\circ \CW_{\nu\rightarrow \nu'}
\end{equation}
induced by the adjunction morphism $\operatorname{id}\rightarrow \iota_{\nu'}\circ \iota_{\nu'}^!$.

When $\Loc_\lambda,\Gamma_\lambda$ are abelian equivalences, we can carry the functor
$\CW_{\nu\rightarrow \nu'}$ to $D^b(\OCat_\nu(\A_\lambda))\rightarrow D^b(\OCat_{\nu'}(\A_\lambda))$.
Obviously, this functor can be realized as $\iota_{\nu'}^!\circ \iota_\nu$, where
$\iota_\nu$ now stands for the inclusion $D^b(\OCat_\nu(\A_\lambda))\hookrightarrow D^b_{hol}(\A_\lambda\operatorname{-mod})$.

Let us finish this section by discussing the case of products, where we have $X=X^1\times X^2$.
Recall that we have an identification $\OCat_\nu(\A_\lambda^\theta)\cong \OCat_\nu(\A^{1\theta^1}_{\lambda^1})\boxtimes
\OCat_\nu(\A^{2\theta^2}_{\lambda^2})$, Lemma \ref{Lem:product}. The following claim follows from {\it loc.cit.}
and (\ref{eq:CW_char}).

\begin{Lem}\label{Lem:CW_prod}
Under the identification of the previous paragraph, we have $\CW_{\nu\rightarrow \nu'}=\CW^1_{\nu\rightarrow \nu'}\boxtimes \CW^2_{\nu\rightarrow \nu'}$.
\end{Lem}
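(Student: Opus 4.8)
The plan is to verify that, functorially in $M^1$ and $M^2$, the object $\CW^1_{\nu\to\nu'}(M^1)\boxtimes\CW^2_{\nu\to\nu'}(M^2)$ satisfies the universal property (\ref{eq:CW_char}) characterizing $\CW_{\nu\to\nu'}(M^1\boxtimes M^2)$; the lemma then follows by Yoneda. First I would reduce to external products $M=M^1\boxtimes M^2$ and $N=N^1\boxtimes N^2$: by Lemma \ref{Lem:product} and (\ref{eq:Ext_prod}) the category $\OCat_\nu(\A_\lambda^\theta)$ has enough projectives, all of the form $P^1\boxtimes P^2$, so every object of $D^b(\OCat_\nu(\A_\lambda^\theta))$ is isomorphic to a bounded complex of external products, and the classes of $M$ (resp.\ of $N$) for which the desired natural isomorphism holds are triangulated.

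Next I would build the comparison morphism and compute. Taking the external product of the adjunction units $\iota^i_\nu M^i\to\iota^i_{\nu'}\CW^i_{\nu\to\nu'}(M^i)$ and using the identities $\iota_\nu(M^1\boxtimes M^2)=\iota^1_\nu M^1\boxtimes\iota^2_\nu M^2$ and $\iota_{\nu'}(\CW^1 M^1\boxtimes\CW^2 M^2)=\iota^1_{\nu'}\CW^1 M^1\boxtimes\iota^2_{\nu'}\CW^2 M^2$ in $D^b(\Coh(\A_\lambda^\theta))$, the adjunction (\ref{eq:CW_char}) produces a canonical morphism $\CW_{\nu\to\nu'}(M^1\boxtimes M^2)\to\CW^1_{\nu\to\nu'}(M^1)\boxtimes\CW^2_{\nu\to\nu'}(M^2)$. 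To see it is an isomorphism it suffices to show it induces an isomorphism on $R\Hom_{\OCat_{\nu'}(\A_\lambda^\theta)}(-,N^1\boxtimes N^2)$. On the source, (\ref{eq:CW_char}) rewrites this as $R\Hom_{\Coh(\A_\lambda^\theta)}(M^1\boxtimes M^2,N^1\boxtimes N^2)$. On the target, a Künneth isomorphism gives $R\Hom_{\OCat_{\nu'}(\A_\lambda^\theta)}(\CW^1 M^1\boxtimes\CW^2 M^2,N^1\boxtimes N^2)\cong\bigotimes_{i=1,2} R\Hom_{\OCat_{\nu'}(\A^{i\theta^i}_{\lambda^i})}(\CW^i M^i,N^i)$ (tensor product over $\C$); applying (\ref{eq:CW_char}) in each of $X^1,X^2$ and then the Künneth isomorphism again for $\Coh(\A_\lambda^\theta)$---using that $\A_\lambda^\theta$ is the external tensor product of $\A^{1\theta^1}_{\lambda^1}$ and $\A^{2\theta^2}_{\lambda^2}$ on $X^1\times X^2$---turns the target into $R\Hom_{\Coh(\A_\lambda^\theta)}(M^1\boxtimes M^2,N^1\boxtimes N^2)$ as well, compatibly with the comparison morphism.

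The step I expect to be the main obstacle is the Künneth isomorphism on the sheaf level, $\A_\lambda^\theta$ being a sheaf of algebras rather than a single ring. I would handle it by abelian localization: for $\chi\in\tilde{\param}_{\Z}$ ample and $n\gg 0$, Lemma \ref{Lem:local} and part (3) of Proposition \ref{Prop:hw_alg} identify the relevant $R\Hom$'s in $D^b(\Coh(\A_\lambda^\theta))$ with $R\Hom$'s in $D^b(\A_{\lambda+n\chi}\operatorname{-mod})$, where $\A_{\lambda+n\chi}=\A^1_{\lambda^1+n\chi^1}\otimes\A^2_{\lambda^2+n\chi^2}$ and the equivalence of Lemma \ref{Lem:product} is compatible with localization; this reduces matters to the classical Künneth formula for bounded derived categories of finitely generated modules over $\C$-algebras with Noetherian external product, obtained by tensoring projective resolutions. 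What remains is the routine but slightly delicate bookkeeping to check that the two invocations of Künneth, the per-factor identities (\ref{eq:CW_char}), and the adjunction units fit together coherently, so that the comparison morphism above is matched by the resulting chain of identifications; the case of more than two factors then follows by induction.
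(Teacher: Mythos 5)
Your proposal is correct and follows essentially the same route as the paper, which proves the lemma simply by combining the product decomposition of Lemma \ref{Lem:product} with the characterizing adjunction (\ref{eq:CW_char}); your write-up just fills in the Künneth and dévissage details that the paper leaves implicit.
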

\subsection{Main results}\label{SS_main_CW}
Let us start with some notation.  Pick two generic one-parameter subgroups
$\nu,\nu': \C^\times \rightarrow T$. Define $\delta(\nu,\nu')\in \Z_{\geqslant 0}$ as the number of $T$-weights  in $\bigoplus_{p\in X^T}T_pX$ that are positive on $\nu$ and negative on $\nu'$.  Note that
$\delta(\nu,\nu')+\delta(\nu',\nu'')\geqslant \delta(\nu,\nu'')$ and
$\delta(\nu,\nu')=\delta(\nu',\nu)$. We will say that a sequence $\nu_1,\ldots,\nu_k$
of generic one-parameter subgroups of $T$ is {\it reduced} is $\delta(\nu_1,\nu_k)=
\sum_{i=1}^{k-1}\delta(\nu_i,\nu_{i+1})$. We say that
a reduced sequence $\nu_1,\ldots,\nu_k$ is {\it maximal reduced} if $\nu_i,\nu_{i+1}$ lie in chambers
separated by a single wall.

\begin{Lem}\label{Lem:max_reduced}
Every reduced sequence can be completed to a maximal reduced one.
\end{Lem}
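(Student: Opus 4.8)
The plan is to view the statement through the geometry of the real hyperplane arrangement $\mathcal{A}$ in $V := \operatorname{Hom}(\C^\times,T)\otimes_\Z\R$ given by the kernels $\ker\chi_i$, $i=1,\dots,N$. The function $\delta(\nu,\nu')$ counts the hyperplanes $H_i = \ker\chi_i$ separating the chamber of $\nu$ from that of $\nu'$ (each such $H_i$ contributes the weights among $\chi_1,\dots,\chi_N$ proportional to $\chi_i$, positive on one side and negative on the other). Thus $\delta(\nu,\nu')$ is precisely the \emph{gallery distance} between the two chambers, and a sequence $\nu_1,\dots,\nu_k$ is reduced exactly when the chambers $C_1,\dots,C_k$ containing them form a minimal gallery from $C_1$ to $C_k$ (with possibly repeated chambers allowed, but the additivity of $\delta$ forces the genuine separating-set additivity $S(C_1,C_k)=\bigsqcup_{i} S(C_i,C_{i+1})$, where $S$ denotes the set of separating hyperplanes). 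A maximal reduced sequence is then a minimal gallery in which consecutive chambers are adjacent across a single wall.

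First I would reduce to the combinatorial statement about galleries: given chambers $C_1,\dots,C_k$ with $S(C_1,C_k)=\bigsqcup_i S(C_i,C_{i+1})$, insert between each consecutive pair $C_i,C_{i+1}$ a minimal gallery $C_i = D_0^{(i)}, D_1^{(i)},\dots, D_{m_i}^{(i)} = C_{i+1}$ crossing the hyperplanes of $S(C_i,C_{i+1})$ one at a time, where $m_i = |S(C_i,C_{i+1})| = \delta(\nu_i,\nu_{i+1})$. Such a minimal gallery exists by a standard fact about real hyperplane arrangements: any two chambers are joined by a gallery crossing exactly the separating hyperplanes, each once, and one can always find one crossing them in any prescribed ``shelling'' order — more simply, induct on $|S(C_i,C_{i+1})|$, picking a hyperplane in $S(C_i,C_{i+1})$ that bounds a facet of $C_i$ (such a hyperplane exists), crossing it, and recursing. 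The concatenation of these inserted galleries has total length $\sum_i m_i = \sum_i \delta(\nu_i,\nu_{i+1}) = \delta(\nu_1,\nu_k)$, which equals the gallery distance, so the concatenated sequence is itself a minimal gallery, i.e. reduced; and by construction consecutive chambers are separated by a single wall, so choosing a generic one-parameter subgroup in each chamber of the refined sequence produces the desired maximal reduced completion.

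The one genuinely nonobvious point — the step I expect to be the main obstacle — is the existence, for a pair of chambers $C, C'$ separated by a set $S$ of hyperplanes, of a gallery from $C$ to $C'$ crossing the hyperplanes of $S$ \emph{one at a time} (so that each intermediate chamber differs from its neighbor across exactly one wall). The subtlety is that after crossing one hyperplane $H\in S$ one must still be able to continue along a minimal gallery, i.e. the hyperplane crossed must bound a facet of the current chamber. This is where I would invoke the standard combinatorics of hyperplane arrangements: a minimal gallery from $C$ to $C'$ never crosses the same hyperplane twice and crosses only hyperplanes in $S$; and any chamber $C\neq C'$ with $S(C,C')\neq\varnothing$ has a wall $H$ (a facet-supporting hyperplane) lying in $S(C,C')$, since otherwise the segment from an interior point of $C$ to an interior point of $C'$ would have to exit $C$ through a facet on a non-separating hyperplane, forcing it to re-cross that hyperplane, contradicting minimality of the separating count. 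Crossing such an $H$ strictly decreases $|S|$, and one finishes by induction on $|S|$. With this lemma in hand the argument above goes through; I would also remark that the construction shows any reduced sequence is a subsequence of a maximal reduced one obtained by this refinement, which is all that is needed in the sequel.
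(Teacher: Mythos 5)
Your proof is correct and follows essentially the same route as the paper: reduce to inserting, between consecutive terms of the reduced sequence, a minimal gallery crossing one wall at a time, and note that the concatenation still has total $\delta$-length $\delta(\nu_1,\nu_k)$, hence is reduced. The only difference is cosmetic: the paper obtains the intermediate chambers directly as those met by a generic straight segment joining representatives chosen to avoid pairwise intersections of walls (which also immediately gives that each wall is crossed at most once), whereas you build the same gallery by induction on the set of separating hyperplanes, justifying the key existence of a separating facet hyperplane by essentially the same segment argument.
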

\begin{proof}
We need to check that generic $\nu,\nu'$ are included into a maximal reduced sequence. Pick sufficiently
general $\nu,\nu'$ in their chambers so that the interval joining $\nu,\nu'$ does not intersect
pairwise intersections of walls. Let $\nu=\nu_0,\nu_1,\ldots,\nu'=\nu_k$ be one-parameter subgroups
from the chambers intersected by that interval taken in order. Then $\nu_0,\ldots,\nu_k$
is easily seen to be a maximal reduced sequence.
\end{proof}

Now we are ready to state the main results.

\begin{Thm}\label{Thm:braid_equiv}
The functor $\CW_{\nu\rightarrow\nu'}$ is an equivalence for any $\nu,\nu'$. Moreover,
for any reduced sequence $\nu_1,\ldots,\nu_k$, the natural functor morphism
$$\CW_{\nu_1\rightarrow \nu_k}\rightarrow \CW_{\nu_{k-1}\rightarrow \nu_k}\circ \CW_{\nu_{k-2}\rightarrow \nu_{k-1}}\circ\ldots \circ \CW_{\nu_1\rightarrow \nu_2}$$
is an isomorphism.
\end{Thm}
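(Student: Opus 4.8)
The argument naturally splits into two parts: first establish that $\CW_{\nu\rightarrow\nu'}$ is an equivalence, and then deduce the compatibility statement for reduced sequences (or, in a better-organized approach, prove the two statements simultaneously by induction on $\delta(\nu,\nu')$). The plan is to reduce everything to the case where $\nu,\nu'$ are separated by a single wall, since by Lemma \ref{Lem:max_reduced} every reduced sequence can be refined to a maximal reduced one, and the functor morphism \eqref{eq:CW_fun_morph} lets one compose single-wall crossings. So the core of the proof is: (a) show $\CW_{\nu\rightarrow\nu'}$ is an equivalence when $\nu,\nu'$ are adjacent, and (b) show that for a reduced sequence the composition of the intermediate $\CW$'s matches $\CW_{\nu_1\rightarrow\nu_k}$ via \eqref{eq:CW_fun_morph}.

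For step (b), the key observation is that \eqref{eq:CW_char} gives a clean Hom-characterization: the morphism \eqref{eq:CW_fun_morph} is an isomorphism as soon as the relevant adjunction $\mathrm{id}\to\iota_{\nu'}\circ\iota_{\nu'}^!$ behaves well on the image of $\iota_\nu$ inside $D^b_{hol}(\Coh(\A_\lambda^\theta))$. Concretely, $\CW_{\nu_1\rightarrow\nu_k}=\iota_{\nu_k}^!\circ\iota_{\nu_1}$ while the composition is $\iota_{\nu_k}^!\iota_{\nu_{k-1}}\iota_{\nu_{k-1}}^!\cdots\iota_{\nu_2}\iota_{\nu_2}^!\iota_{\nu_1}$, so one must show that inserting the counits $\iota_{\nu_i}\iota_{\nu_i}^!$ on the intermediate objects does not lose information. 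I expect this to follow from a support/$t$-structure estimate: the reducedness hypothesis $\delta(\nu_1,\nu_k)=\sum\delta(\nu_i,\nu_{i+1})$ should force the complex $\iota_{\nu_1}(M)$, after applying the intermediate adjunctions, to stay ``concentrated'' in a way controlled by the standardly stratified structures of Section \ref{S_SS_O}. This is where Theorem \ref{Thm:cat_O_ss} enters: the pre-order $\preceq_{\nu_0}$ attached to a one-parameter subgroup $\nu_0$ on a wall in the closure of both adjacent chambers gives compatible filtrations on $\OCat_\nu$ and $\OCat_{\nu'}$, and $\CW_{\nu\rightarrow\nu'}$ should be ``triangular'' with respect to these — acting as an equivalence on each associated-graded piece $\gr_{\nu_0}$, which by Proposition \ref{Prop:assoc_graded_O} is a category $\OCat_\nu(\Ca_{\nu_0}(\A_\lambda^\theta)|_Z)$ on a strictly smaller symplectic resolution $Z$. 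This opens the door to an induction on $\dim X$.

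For step (a), the single-wall case, the strategy is: pick $\nu_0$ on the separating wall, so that $\nu\rightsquigarrow\nu_0$ and $\nu'\rightsquigarrow\nu_0$, and both $\OCat_\nu(\A_\lambda^\theta)$ and $\OCat_{\nu'}(\A_\lambda^\theta)$ are standardly stratified with respect to $\preceq_{\nu_0}$ with the \emph{same} poset $\{Z_1,\dots,Z_k\}$ of strata (the strata being indexed by components of $X^{\nu_0(\C^\times)}$, which depend only on $\nu_0$). Using Lemma \ref{Lem:der_st_cost}, the standardization and costandardization functors are realized by derived tensor/Hom with explicit bimodules $\B^\pm_{\lambda^+}$, and the crucial point is that $\CW_{\nu\rightarrow\nu'}$ should intertwine $\Delta_{\nu_0}$ (for $\nu$) with $\nabla_{\nu_0}$ (for $\nu'$) — i.e.\ it sends the standardly filtered subcategory for one stratification to the costandardly filtered subcategory for the other. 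This is the ``Ringel duality on each graded piece'' phenomenon; combined with the inductive hypothesis that cross-walling is an equivalence on each $Z_i$ (lower-dimensional), a standard dévissage along the stratification then upgrades this to an equivalence on all of $D^b(\OCat_\nu(\A_\lambda^\theta))$. The base of the induction is $\dim X = 0$, which is trivial.

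\textbf{Main obstacle.} The hard part will be step (a): proving that $\CW_{\nu\rightarrow\nu'}$ actually respects the two standardly stratified structures and induces the expected (Ringel-type) equivalence on each associated-graded subquotient. The adjoint functors $\iota_\nu^!,\iota_{\nu'}^!$ are only characterized abstractly by \eqref{eq:CW_char}, so one needs a concrete handle on them — presumably via the bimodules of Section \ref{S_SS_O} and a compatibility between the embeddings $\iota_?$ and the Cartan subquotient construction $\Ca_{\nu_0}$, i.e.\ a statement that taking $\Ca_{\nu_0}$ of the quantization commutes appropriately with the functors $\iota^!,\iota^*$. Verifying that this compatibility holds — and that the resulting functor on $\OCat_\nu(\Ca_{\nu_0}(\A_\lambda^\theta)|_Z)$ is precisely the lower-dimensional cross-walling functor, to which induction applies — is the technical heart of the argument. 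A secondary difficulty is bookkeeping with the parameter shifts $\lambda^+=\lambda+n\chi$ and ensuring the genericity hypotheses needed for abelian localization and for Propositions \ref{Prop:A0_descr}, \ref{Prop:projectivity} can all be met simultaneously; this should be routine but requires care.
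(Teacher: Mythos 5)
Your top-level architecture matches the paper's (reduce to single-wall crossings via maximal reduced sequences, use the compatibility of $\CW_{\nu\rightarrow\nu'}$ with the standardly stratified structures attached to a wall cocharacter $\nu_0$), but the core of your step (a) has a genuine gap: the induction on $\dim X$ does not close. If $\nu,\nu'$ are adjacent across a wall $H$ and $\nu_0$ is generic in $H$, then every tangent $T$-weight at a fixed point of a component $Z$ of $X^{\nu_0(\C^\times)}$ has kernel exactly $H$, so $Z$ carries effectively a single wall and the induced functor on the associated graded piece is the cross-walling between the two \emph{opposite} chambers of $Z$, i.e.\ the long cross-walling $\underline{\CW}_{\nu\rightarrow-\nu}$ for $Z$. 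Applying your own scheme to $Z$ gives no further dimension drop (a generic element of its unique wall acts trivially on $Z$), so the recursion bottoms out precisely at the long cross-walling, for which you offer no argument. The paper supplies this missing base case by a direct, non-inductive proof that $\CW^*_{\nu\rightarrow-\nu}[\tfrac{1}{2}\dim X]$ is a Ringel duality (Proposition \ref{Prop_CW_prop}(2)): it uses the homological duality $D=\operatorname{RHom}_{\A_{\lambda^+}}(\bullet,\A_{\lambda^+})$, the computation $D(\Delta_\nu(p))\cong\Delta_\nu^{opp}(p)[-\tfrac{1}{2}\dim X]$, the restricted-duality pairing of Lemma \ref{Lem:tens_prod_vs_hom}, and a Serre-functor-type argument showing that $\CW_{\nu\rightarrow-\nu}\circ(D^{(*)})^{-1}$ and its adjoint are fully faithful. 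Nothing in your proposal plays this role. Relatedly, your intertwining claim is stated incorrectly: the paper proves $\CW_{\nu\rightarrow\nu'}\circ\Delta_{\nu_0}\cong\Delta_{\nu_0}\circ\underline{\CW}_{\nu\rightarrow\nu'}$ and $\CW^*_{\nu\rightarrow\nu'}\circ\nabla_{\nu_0}\cong\nabla_{\nu_0}\circ\underline{\CW}^*_{\nu\rightarrow\nu'}$ (standardization goes to standardization, with the Ringel-type behavior hidden inside $\underline{\CW}$), not that $\CW_{\nu\rightarrow\nu'}$ exchanges $\Delta_{\nu_0}$ with $\nabla_{\nu_0}$; and even this correct intertwining is not free -- it rests on the Tor-vanishing $\B^{-,o}_{\lambda^+}\otimes^L_{\A_{\lambda^+}}\B^+_{\lambda^+}=\Ca_{\nu_0}(\A_{\lambda^+})$ (Lemma \ref{Lem:der_tens_prod}) and the projectivity results of Section \ref{S_SS_O}.

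Your step (b) is also much thinner than what is actually needed. The isomorphism $\CW_{\nu_1\rightarrow\nu_k}\xrightarrow{\sim}\CW_{\nu_{k-1}\rightarrow\nu_k}\circ\cdots\circ\CW_{\nu_1\rightarrow\nu_2}$ is not obtained in the paper from a soft support or $t$-structure estimate; it reduces (Theorem \ref{Thm:short_CW_comp}) to Proposition \ref{Lem:Hom_iso}, whose proof requires analyzing the bimodule $\mathcal{Q}=\B^2\otimes^L_{\A}\B^1$ and showing that its homology lies in the product category $\OCat_{\nu'}(\Ca_{\nu_2}(\A))\boxtimes\OCat_\nu(\Ca_{\nu_1}(\A)^{opp})$ (Lemma \ref{Lem:Q_struct}); it is exactly there, in the positioning of the auxiliary cocharacters $\bar\nu,\nu_1,\nu_2$ relative to the two walls, that reducedness enters, and the conclusion is then extracted via the product decomposition of Lemma \ref{Lem:CW_prod}. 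Without an argument of this kind (or the missing Ringel-duality input above), your outline establishes neither the equivalence nor the composition isomorphism.
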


This theorem proves several conjectures of Braden, Licata, Proudfoot and Webster
(stated in a previous version of \cite{BLPW}).
The claim that the functors $\CW_{\nu\rightarrow \nu'}$ yield an action of the
Deligne groupoid of the hyperplane arrangement given by the non-generic one-parameter subgroups
is established as in \cite[Section 6.4]{BPW}.

Our next main result (that plays a crucial role in the proof of Theorem \ref{Thm:braid_equiv})
concerns an interplay between the equivalences $\CW_{\nu\rightarrow \nu'}$
and the standardly stratified structures on the categories involved. In order to
state the result we need some more notation.

Pick $\nu_0:\C^\times\rightarrow T$.
Recall that for a generic $\nu$ and some $\nu_0$ we will write $\nu\rightsquigarrow \nu_0$ if $\nu_0$ lies in
the closure of the chamber containing $\nu$.
We write $\underline{\CW}_{\nu\rightarrow \nu'}$ for the cross-walling functor between
categories $\mathcal{O}$ for $\Ca_{\nu_0}(\A_\lambda^\theta)$.

\begin{Prop}\label{Prop_CW_prop}
Let $\nu$ be generic with $\nu\rightsquigarrow \nu_0$.  The following is true.
\begin{enumerate}
\item If $\nu'$ is generic with $\nu'\rightsquigarrow \nu_0$, then  $\nabla_{\nu_0}\circ \underline{\CW}^*_{\nu\rightarrow \nu'}\cong \CW^*_{\nu\rightarrow \nu'}\circ \nabla_{\nu_0}$
    and $\Delta_{\nu_0}\circ \underline{\CW}_{\nu\rightarrow \nu'}\cong
    \CW_{\nu\rightarrow \nu'}\circ \Delta_{\nu_0}$.
\item The functor $\CW^*_{\nu\rightarrow -\nu}[\frac{1}{2}\dim X]$ is a Ringel duality.
\end{enumerate}
\end{Prop}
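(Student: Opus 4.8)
The plan is to prove the two assertions separately, in both cases by realising the functors in play as (derived) tensor products and $\Hom$'s with the bimodules of Section~\ref{S_SS_O} and then exploiting the universal property~(\ref{eq:CW_char}). Write $\B^{+,\theta}_\lambda=\A_\lambda^\theta/\A_\lambda^\theta(\A_\lambda^\theta)^{>0,\nu_0}$ and $\B^{-,o,\theta}_\lambda=\A_\lambda^\theta/(\A_\lambda^\theta)^{<0,\nu_0}\A_\lambda^\theta$ for the $\A_\lambda^\theta$-$\Ca_{\nu_0}(\A_\lambda^\theta)$-bimodules quantising the $\nu_0$-contracting correspondence $\widehat{Y}=\{(x,z)\in X\times X^{\nu_0(\C^\times)}\mid \lim_{t\to 0}\nu_0(t)x=z\}$ and its $o$-variant; by Lemma~\ref{Lem:der_st_cost} we have $\Delta_{\nu_0}=\B^{+,\theta}_\lambda\otimes^{L}_{\Ca_{\nu_0}(\A_\lambda^\theta)}(-)$ and $\nabla_{\nu_0}=R\Hom_{\Ca_{\nu_0}(\A_\lambda^\theta)}(\B^{-,o,\theta}_\lambda,-)$. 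The key observation is that $\widehat Y$ knows nothing about the chambers of $\nu$ and $\nu'$.

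For part (1), I would first upgrade $\Delta_{\nu_0}$ and $\nabla_{\nu_0}$ to functors $D^b_{hol}(\Coh(\Ca_{\nu_0}(\A_\lambda^\theta)))\to D^b_{hol}(\Coh(\A_\lambda^\theta))$ commuting with the inclusions $\iota_?$ for every generic $?$ with $?\rightsquigarrow\nu_0$. Holonomicity is preserved because $\widehat Y$ is isotropic over the product (the zero-fibre case is Lemma~\ref{Lem:BZ_assoc_graded}(3), and the Gelfand--Kirillov bookkeeping uses Proposition~\ref{Prop:B_eq} and Corollary~\ref{Cor:full_GK_O}); and the support conditions defining $\OCat_?$ are preserved because, when $?\rightsquigarrow\nu_0$, the $?$-contracting cells in a component of $X^{\nu_0(\C^\times)}$ have $?$-contracting closures in $X$ --- this is precisely the computation in the proof of Lemma~\ref{Lem:order_compat}. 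Secondly, for $N\in\OCat_{\nu'}(\A_\lambda^\theta)$ I would identify the right adjoint $R\Hom_{\A_\lambda^\theta}(\B^{+,\theta}_\lambda,N)$ of the extended $\Delta_{\nu_0}$ with the stratification quotient functor $\varpi_Z$ of Proposition~\ref{Prop:assoc_graded_O}, and similarly for $\B^{-,o}$: since $\Delta_{\nu_0}$ and $\varpi_Z$ are exact (Propositions~\ref{Prop:Cat_O_stand_exact} and~\ref{Prop:assoc_graded_O}), the pair $(\Delta_{\nu_0},\varpi_Z)$ is adjoint already on the bounded derived categories of the categories $\mathcal O$, once one checks that $R\Hom_{\A_\lambda^\theta}(\B^{+,\theta}_\lambda,N)$ is supported on the $\nu'$-contracting locus of $X^{\nu_0(\C^\times)}$ (again using $\nu'\rightsquigarrow\nu_0$).

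Granting these two steps, for $\underline{M}\in D^b(\OCat_\nu(\Ca_{\nu_0}(\A_\lambda^\theta)))$ and $N\in D^b(\OCat_{\nu'}(\A_\lambda^\theta))$ one chains
\begin{align*}
\Hom(\CW_{\nu\to\nu'}\Delta_{\nu_0}\underline{M},N)
&=\Hom_{D^b_{hol}(\Coh(\A_\lambda^\theta))}(\B^{+,\theta}_\lambda\otimes^{L}\underline{M},N)\\
&=\Hom_{D^b_{hol}(\Coh(\Ca_{\nu_0}(\A_\lambda^\theta)))}(\underline{M},\varpi_Z N)\\
&=\Hom(\underline{\CW}_{\nu\to\nu'}\underline{M},\varpi_Z N)=\Hom(\Delta_{\nu_0}\underline{\CW}_{\nu\to\nu'}\underline{M},N),
\end{align*}
using~(\ref{eq:CW_char}), tensor--$\Hom$ adjunction, the characterisation of $\underline{\CW}_{\nu\to\nu'}$ and the $(\Delta_{\nu_0},\varpi_Z)$-adjunction in turn; Yoneda then gives $\CW_{\nu\to\nu'}\circ\Delta_{\nu_0}\cong\Delta_{\nu_0}\circ\underline{\CW}_{\nu\to\nu'}$, and the statement with $\CW^*$ and $\nabla_{\nu_0}$ follows symmetrically from $\B^{-,o,\theta}_\lambda$ (or by passing to right adjoints). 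The main obstacle is the first step: one must control the supports of these derived tensor products and $\Hom$'s so that they genuinely stay in the holonomic subcategories and respect the $\mathcal O$-conditions, which is exactly where the isotropy of $\widehat Y$ and the genericity of $\lambda$ (through Corollary~\ref{Cor:full_GK_O}) enter.

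For part (2), unwind the definitions: $\CW^*_{\nu\to-\nu}$ sends $M\in D^b(\OCat_{-\nu}(\A_\lambda^\theta))$ to the image $\iota^*_\nu\iota_{-\nu}(M)$ in $D^b(\OCat_\nu(\A_\lambda^\theta))$, and $\iota^*_\nu(\iota_{-\nu}M)$ is characterised inside $D^b_{hol}(\Coh(\A_\lambda^\theta))$ by $\Hom_{D^b(\OCat_\nu)}(-,\iota^*_\nu\iota_{-\nu}M)=\Hom_{D^b_{hol}(\Coh(\A_\lambda^\theta))}(-,\iota_{-\nu}M)$. The plan is to show that $\CW^*_{\nu\to-\nu}[\tfrac12\dim X]$ carries $\Delta_{-\nu}(p)$ to $\nabla_\nu(p)$ and $\nabla_{-\nu}(p)$ to $\Delta_\nu(p)$ for each $p\in X^T$. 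Since the standard objects generate $D^b(\OCat_\nu(\A_\lambda^\theta))$, the first of these amounts to the orthogonality
\[
\Ext^{k}_{\A_\lambda^\theta}\bigl(\Delta_\nu(p'),\Delta_{-\nu}(p)\bigr)=\delta_{k,\tfrac12\dim X}\,\delta_{p,p'}\,\C ,
\]
reflecting that the closures of the $\nu$- and $(-\nu)$-contracting cells meet transversally, and only diagonally, in $X^T$; I would prove it by passing through abelian localisation to the sheaf level and then, via the Luna slice theorem at a $T$-fixed point, to the local model $T^*\C^n$ with a linear torus action, where it becomes an explicit computation in the Weyl algebra --- this is the quantum shadow of the orthogonality of opposite stable envelopes of Maulik--Okounkov (cf.\ the appendix and \cite{MO}). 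Granting this, $\CW^*_{\nu\to-\nu}[\tfrac12\dim X]$ restricts to exact equivalences $\OCat_{-\nu}(\A_\lambda^\theta)^{\Delta}\xrightarrow{\sim}\OCat_\nu(\A_\lambda^\theta)^{\nabla}$ and $\OCat_{-\nu}(\A_\lambda^\theta)^{\nabla}\xrightarrow{\sim}\OCat_\nu(\A_\lambda^\theta)^{\Delta}$, hence it takes a tilting generator to a tilting generator and is an equivalence of triangulated categories; by the characterisation of Ringel duality in \cite{LW} it is a Ringel duality, which in particular settles \cite[Conjecture~8.24]{BLPW}. The delicate point for (2) is the displayed $\Ext$-orthogonality, which drives everything else.
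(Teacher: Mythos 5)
Your overall strategy (realize $\Delta_{\nu_0},\nabla_{\nu_0}$ via the bimodules $\B^{+},\B^{-,o}$ and play them against the adjoint characterization (\ref{eq:CW_char})) is the same as the paper's, but the load-bearing ingredient is missing. In part (1) your chain of adjunctions hinges on identifying the derived right adjoint $R\Hom_{\A}(\B^{+},N)$ with the exact quotient functor $\varpi_Z$ for \emph{all} $N\in D^b(\OCat_{\nu'})$. That identification is not available: Proposition \ref{Prop:assoc_graded_O} only makes $(\Delta_{\nu_0},\varpi_Z)$ adjoint inside the abelian filtration piece $\OCat_{?}(\A)_{\preceq Z}$, and the real issue is not the support of $R\Hom_{\A}(\B^{+},N)$ (which is where you put the difficulty) but its concentration in a single cohomological degree and its compatibility with the $\nu_0$-eigenspace description. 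The paper's proof supplies exactly this through Lemma \ref{Lem:der_tens_prod}, the vanishing of the higher Tor's $\operatorname{Tor}^{\A_{\lambda^+}}_i(\B^{-,o}_{\lambda^+},\B^{+}_{\lambda^+})$, proved by an associated-graded support argument at $\chi$ plus the Harish--Chandra machinery (Proposition \ref{Prop:supp_gr}); with it one gets $\B^{-,o}\otimes^L_{\A}\Delta_{\nu_0}(M')\cong M'$, hence that the left adjoint of $\nabla_{\nu_0}$ preserves the relevant categories $\OCat$, and then the isomorphism of functors follows by dévissage on (co)standards. Your proposal contains no substitute for this Tor-vanishing, so the central step of (1) is unproved.

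In part (2) there are two gaps. First, even granting the orthogonality $\Ext^k(\Delta_\nu(p'),\Delta_{-\nu}(p))=\delta_{k,\frac12\dim X}\delta_{p,p'}\C$ (which in the paper is again a consequence of the homological duality computation $D(\Delta_\nu(p))=\Delta_\nu^{opp}(p)[-\tfrac12\dim X]$, Lemma \ref{Lem:dual_hom_vanish}, plus the Tor statement), knowing that $\CW^*_{\nu\to-\nu}[\tfrac12\dim X]$ sends each $\Delta_{-\nu}(p)$ to $\nabla_\nu(p)$ does \emph{not} yield that it restricts to an equivalence $\OCat^\Delta_{-\nu}\xrightarrow{\sim}\OCat^\nabla_\nu$, nor that it is a triangulated equivalence: full faithfulness (isomorphisms on all $\Hom$'s and $\Ext$'s between standardly filtered objects) is precisely what must be established, and the paper does this by a separate Serre-functor-type argument, namely Lemma \ref{Lem:tens_prod_vs_hom} and the isomorphism (\ref{eq:Serre}) for $\mathcal F=\CW_{\nu\to-\nu}\circ(D^{(*)})^{-1}$, from which the equivalence is deduced before invoking Corollary \ref{Cor:CW_costand}. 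Second, your proposed proof of the orthogonality by passing to a Luna-slice local model $T^*\C^n$ is not sound as stated: the objects $\Delta_{\pm\nu}(p)$ are global modules supported on closures of attracting cells, and Ext groups over $\A_{\lambda^+}$ do not reduce to a formal neighborhood of a fixed point; the paper instead runs a deformation argument over the line $\ell=\{\lambda+z\chi\}$ (generic freeness, complete-intersection degeneration at $\chi$) to prove Lemma \ref{Lem:dual_hom_vanish}. So both halves of your argument need the missing vanishing/duality computations before the adjunction bookkeeping can be carried out.
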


We will recall a definition of the Ringel duality in the next section.

In particular, (2) implies the shuffling part of \cite[Conjecture 8.24]{BLPW} (the twisting part
will be proved in the next section).

\subsection{Ringel duality, homological duality and long wall-crossing}
Let $\Cat_1,\Cat_2$ be two highest weight categories. Consider the full subcategories
$\Cat_1^\Delta\subset \Cat_1,\Cat_2^\nabla\subset \Cat_2$ of standardly filtered
and of costandardly filtered objects. We say that $\Cat_2$ is Ringel dual to
$\Cat_1$ if there is an equivalence $\Cat_1^\Delta\xrightarrow{\sim}\Cat_2^\nabla$
of exact categories (and we write $\Cat_1^\vee$ for $\Cat_2$).
This equivalence is known to extend to a derived
equivalence $\mathcal{R}:D^b(\Cat_1)\xrightarrow{\sim} D^b(\Cat_2)$
to be called the Ringel duality functor. The functor $\mathcal{R}$ maps standards
to costandards, projectives to tiltings, tiltings to injectives.
So we can identify highest weight posets of $\Cat_1,\Cat_2$
as sets (they will have the opposite orders).
In this section we will describe the category $\OCat_\nu(\A_\lambda^\theta)^\vee$.



Consider the functor $D:=\operatorname{RHom}_{\A_{\lambda^+}}(\bullet, \A_{\lambda^+})$
defines an equivalence $D^b(\A_{\lambda^+}\operatorname{-mod})\rightarrow D^b(\A_{\lambda^+}^{opp}\operatorname{-mod})^{opp}$.
Note that if $\Supp H^i(D M)\subset \Supp M$ for all $i$. Also note that $H^i(DM)$ is weakly $T$-equivariant
provided $M$ is. Using Lemma \ref{Lem:O_equiv},
we see that $D$ restricts to an equivalence $D^b_{\OCat_\nu}(\A_{\lambda^+}\operatorname{-mod})\xrightarrow{\sim}
D^b_{\OCat_\nu}(\A_{\lambda^+}^{opp}\operatorname{-mod})^{opp}$. So we get
a derived equivalence $D:D^b(\OCat_\nu(\A_{\lambda^+}))\xrightarrow{\sim}
D^b(\OCat_\nu(\A_{\lambda^+}^{opp}))^{opp}$.

\begin{Prop}\label{Prop:D_Ringel}
There is a labeling preserving highest weight equivalence $\OCat_\nu(\A_{\lambda^+})^\vee\xrightarrow{\sim}
\OCat_\nu(\A_{\lambda^+}^{opp})^{opp}$ that intertwines $\mathcal{R}$ and $D$.
\end{Prop}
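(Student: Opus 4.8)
The plan is to exhibit the derived functor $D=R\Hom_{\A_{\lambda^+}}(\bullet,\A_{\lambda^+})$ as the Ringel duality functor by identifying what it does to standard objects and to projectives, and then invoking the standard characterization of Ringel duality. First I would recall that the Ringel duality functor is characterized (among triangulated functors) by the fact that it sends $\Delta(\tau)\mapsto \nabla^\vee(\tau)$ and $P(\tau)\mapsto T^\vee(\tau)$ (tilting), or equivalently it restricts to an exact equivalence $\OCat_\nu(\A_{\lambda^+})^\Delta\xrightarrow{\sim}(\OCat_\nu(\A_{\lambda^+}^{opp})^{opp})^\nabla$; since $(\OCat_\nu(\A_{\lambda^+}^{opp})^{opp})^\nabla=(\OCat_\nu(\A_{\lambda^+}^{opp})^\Delta)^{opp}$, and the restricted-dual equivalence of Section \ref{SSS_cat_O_gen} already identifies $\OCat_\nu(\A_{\lambda^+}^{opp})^{opp}$ with $\OCat_{-\nu}(\A_{\lambda^+})$, I would state the claim purely inside $\OCat_\nu(\A_{\lambda^+})$. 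So the bulk of the proof is: (a) $D$ sends the standard object $\Delta_\nu(N_i)$ to the costandard object $\nabla_\nu(N_i)$ (placed in homological degree $0$), with the correct labeling; (b) $D$ sends projectives to tilting modules. Point (b) in fact follows formally from (a): $D$ is a contravariant derived equivalence, it sends $\Cat^\Delta$ to $(\Cat^{opp})^{\Delta}=(\Cat^\nabla)^{opp}$ exactly; an object is tilting iff it is both standardly and costandardly filtered, and $D$ interchanges these two conditions, so it sends tiltings to tiltings — but to get projectives $\mapsto$ tiltings one uses that $\Cat^{opp}$ of the \emph{Ringel dual} statement is what is being matched. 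The cleanest route is to check (a) and conclude via the uniqueness of Ringel duality.

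For step (a), the key computational input is a local (microlocal) calculation of $D$ on standards. I would argue as follows. Abelian localization holds for $\lambda^+$, so $\Delta_\nu(N_i)$ corresponds under $\Gamma_{\lambda^+}$ to the localized Verma $\Loc_{\lambda^+}\Delta_\nu(N_i)\in\OCat_\nu(\A_{\lambda^+}^\theta)$, whose associated variety is the single contracting component $\overline{Y_{p_i}}$, an affine space of dimension $\tfrac12\dim X$. The duality $D$ on $\A_{\lambda^+}$-modules is computed, after applying $R\Gamma$, by the sheaf-theoretic duality $R\mathcal{H}om_{\A_{\lambda^+}^\theta}(\bullet,\A_{\lambda^+}^\theta)$ up to a shift/twist by the dualizing object; on a module supported on a smooth lagrangian $L\subset X$ this duality is concentrated in a single degree and produces the module attached to $L$ "with the opposite coorientation", i.e., it interchanges the contracting and repelling structures. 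Concretely, $D(\Loc_{\lambda^+}\Delta_\nu(N_i))$ is again supported on $\overline{Y_{p_i}}$, lies in homological degree $0$ (because $\overline{Y_{p_i}}$ is smooth of the maximal possible dimension, the Ext-sheaves in other degrees vanish), is weakly $T$-equivariant, and has the same "highest weight" $c^{p_i}_{\lambda^+}$ up to the finite correction recorded in Lemma \ref{Lem:highest_weights}; by the classification of objects of $\OCat_\nu$ with irreducible lagrangian support and given highest weight (Lemma \ref{Lem:fin_length}, plus the highest weight structure of Proposition \ref{Prop:hw_alg}) this forces $D(\Delta_\nu(N_i))\cong\nabla_\nu(N_i)$, with labeling matching as claimed. (Concretely the shift $[\tfrac12\dim X]$ and a line-bundle twist by $K_Y^{1/2}$ are what make the degree come out to $0$; this is the same bookkeeping that produces the shift $[\tfrac12\dim X]$ in Proposition \ref{Prop_CW_prop}(2), so I would cross-reference that computation.)

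Given (a), the rest is formal. Since $D$ is an equivalence $D^b(\OCat_\nu(\A_{\lambda^+}))\xrightarrow{\sim}D^b(\OCat_\nu(\A_{\lambda^+}^{opp}))^{opp}$ sending each standard to the corresponding costandard in degree $0$, it restricts to an exact equivalence of the full additive subcategories $\OCat_\nu(\A_{\lambda^+})^\Delta\xrightarrow{\sim}\bigl(\OCat_\nu(\A_{\lambda^+}^{opp})^{opp}\bigr)^\nabla$ of standardly, resp. costandardly, filtered objects (exactness because $\Ext^{>0}$ between standards and costandards vanishes, Lemma \ref{Lem:exts}). By definition of Ringel duality this says precisely that $\OCat_\nu(\A_{\lambda^+}^{opp})^{opp}$ is the Ringel dual $\OCat_\nu(\A_{\lambda^+})^\vee$, and that the derived extension of this exact equivalence — which is the Ringel duality functor $\mathcal{R}$ by uniqueness — agrees with $D$. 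The labeling statement follows because $D(\Delta_\nu(N_i))=\nabla_\nu(N_i)$ and $\mathcal{R}$ sends $\Delta(\tau)$ to $\nabla^\vee(\tau)$; matching these pins down the bijection of highest weight posets, with the orders getting reversed as is automatic for Ringel duality.

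The main obstacle I expect is step (a): making the microlocal duality computation precise, in particular checking that $D$ applied to a localized Verma is concentrated in a single cohomological degree and correctly identifying the resulting object as the \emph{dual} Verma rather than, say, the Verma for $-\nu$ or some twist thereof. This requires being careful about (i) the dualizing complex / shift on $\A_{\lambda^+}^\theta$ and the square-root-of-canonical-bundle twist entering via \cite{BGKP}-type results as in Section \ref{SS_Ca_param}, (ii) the interaction of $D$ with weak $T$-equivariance and with the $h$-grading so that one lands in $\OCat_\nu$ and not $\OCat_{-\nu}$, and (iii) the fact that abelian localization is available so that the computation may be done on the sheaf side where supports are visibly lagrangian; the highest-weight bookkeeping of Lemma \ref{Lem:highest_weights} then closes the identification. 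Everything after that is the formal uniqueness argument for Ringel duality.
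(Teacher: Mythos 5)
Your overall architecture (compute $D$ on standard objects, show the answer sits in a single homological degree, identify it with the appropriate (co)standard, then conclude by the formal uniqueness/exactness characterization of Ringel duality) is the same as the paper's, but the two computational claims that carry the whole weight of step (a) are not established, and one of them cannot be established by the route you propose. First, your justification for single-degree concentration — ``because $\overline{Y_{p_i}}$ is smooth of the maximal possible dimension'' — is false: $Y_{p_i}$ is an affine cell but its closure is in general singular, and the characteristic cycle of $\Delta_\nu(p)$ typically involves several components. Concentration in degree $\tfrac{1}{2}\dim X$ is true, but it needs either a holonomic-duality (Auslander regularity plus Gabber involutivity) argument on the sheaf side, which you do not supply and the paper does not set up, or the paper's actual mechanism (Lemma \ref{Lem:dual_hom_vanish}): deform along $\ell$ and $\C\chi$, use generic freeness, and exploit that over the generic, affine fiber $X_\chi$ the contracting locus is cut out by a regular sequence, so the $\Ext$'s against the structure sheaf vanish outside the middle degree; the same degeneration also computes the characteristic cycle of $D(\Delta_{\lambda^+}(p))$ as $\overline{\C^\times Y_{\chi,p}}\cap X$, which is what the identification step really uses.

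Second, your identification of $D(\Delta_\nu(N_i))$ with the costandard object ``by the classification of objects of $\OCat_\nu$ with irreducible lagrangian support and given highest weight'' is a genuine gap: no such classification exists, and these invariants cannot work even in principle, since $\Delta_\nu(p)$ and $\nabla_\nu(p)$ have the same characteristic cycle and the same highest weight. The paper instead identifies the dual collection by combining (i) the $\Ext$-orthogonality and $\operatorname{End}=\C$ properties of $\{D(\Delta(p))\}$, which are transported for free through the equivalence $D$ from the corresponding properties of standards with respect to the opposite order, (ii) the characteristic-cycle computation of Lemma \ref{Lem:dual_hom_vanish}, and (iii) the observation that $\leqslant^{\theta}$ and $\leqslant^{-\theta}$ refine to opposite orders via real moment map values. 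You would need some substitute for exactly this package. There are also bookkeeping slips worth fixing: under the restricted-duality identification the image of $\Delta_\nu(p)$ is (a shift of) $\nabla_{-\nu}(p)$ in $\OCat_{-\nu}(\A_{\lambda^+})$, not $\nabla_\nu(p)$ in $\OCat_\nu(\A_{\lambda^+})$; and $D=R\Hom_{\A_{\lambda^+}}(\bullet,\A_{\lambda^+})$ has no built-in shift or $K_Y^{1/2}$-twist, so the output sits in degree $\tfrac{1}{2}\dim X$ (as in Lemma \ref{Lem:dual_stand}), and the half-canonical twist of Section \ref{SS_Ca_param} plays no role here. The formal second half of your argument (exactness on standardly filtered objects via Lemma \ref{Lem:exts} and uniqueness of Ringel duality) is fine once step (a) is repaired.
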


We start with a technical lemma. Pick $p\in X^T$. It uniquely deforms to a $T$-fixed point in any $X_\chi$
that we again denote by $p$. Let $Y_{\chi,p}$ denote the $\nu$-contracting locus for $p$ in $X_\chi$.

\begin{Lem}\label{Lem:dual_hom_vanish}
The object $D(\Delta_{\lambda^+}(p))$ is concentrated in homological degree $\frac{1}{2}\dim X$
and, moreover, its characteristic cycle (an element of the vector space with basis formed by the irreducible
components of the contracting variety $Y$) coincides with the class of $\overline{\C^\times Y_{\chi,p}}\cap X$.
\end{Lem}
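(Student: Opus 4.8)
The plan is to reduce everything to the sheaf level, where the localized standard object is locally a Koszul complex, and then use abelian localization to control global sections.

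First I would record the geometric set-up. By the choice of $n\gg 0$ abelian localization holds for $(\lambda^+,\theta)$, so $\Gamma_{\lambda^+}$ is exact and $\mathcal{N}:=\Loc_{\lambda^+}(\Delta_{\lambda^+}(p))\in\Coh(\A_{\lambda^+}^\theta)$ satisfies $R\Gamma(\mathcal{N})=\Delta_{\lambda^+}(p)$. Cover a neighbourhood of the contracting locus $Y$ in $X^\theta$ by $(\C^\times)^2$-stable affine charts $X'$ on which $Y\cap X'$ is a complete intersection cut out by a regular sequence $f_1,\dots,f_n$ of functions of positive $\nu$-weight (these are the charts produced by the Luna slice argument in Proposition \ref{Prop:A0}). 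On such $X'$ the sheaf $\mathcal{N}$ is resolved by the Koszul complex of lifts $\tilde f_1,\dots,\tilde f_n\in\A_{\lambda^+}^\theta(X')$ (exactly as in the proof of Proposition \ref{Prop:A0}(1) and Lemma \ref{Lem:repel_quant_affine}), so $\mathcal{E}xt^i_{\A_{\lambda^+}^\theta}(\mathcal{N},\A_{\lambda^+}^\theta)=0$ for $i\neq n$ and $\mathcal{N}$ has projective dimension $n$ in $\Coh(\A_{\lambda^+}^\theta)$; in particular $\gr\mathcal{N}$ (for the localization filtration) is Cohen--Macaulay of pure codimension $n$, locally a complete intersection, supported on $\overline{Y_p}$.

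Next comes the homological concentration. Since $\operatorname{pd}\mathcal{N}=n$, choose a length-$n$ resolution $0\to\mathcal{Q}_n\to\cdots\to\mathcal{Q}_0\to\mathcal{N}\to 0$ by locally free coherent $\A_{\lambda^+}^\theta$-modules. Abelian localization makes $\Gamma$ exact on $\Coh(\A_{\lambda^+}^\theta)$, hence every coherent $\A_{\lambda^+}^\theta$-module is $\Gamma$-acyclic, and so is each right module $\mathcal{H}om_{\A_{\lambda^+}^\theta}(\mathcal{Q}_i,\A_{\lambda^+}^\theta)$ (a summand of a free one). Using the localization equivalence together with $\Delta_{\lambda^+}(p)=R\Gamma(\mathcal{N})$ and $\A_{\lambda^+}=R\Gamma(\A_{\lambda^+}^\theta)$, we get
$$D(\Delta_{\lambda^+}(p))=\operatorname{RHom}_{\A_{\lambda^+}^\theta}(\mathcal{N},\A_{\lambda^+}^\theta)=\bigl[\,\Gamma\mathcal{H}om(\mathcal{Q}_0,\A_{\lambda^+}^\theta)\to\cdots\to\Gamma\mathcal{H}om(\mathcal{Q}_n,\A_{\lambda^+}^\theta)\,\bigr],$$
a complex living in cohomological degrees $0,\dots,n$. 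On the other hand the local-to-global spectral sequence $H^a(X^\theta,\mathcal{E}xt^b)\Rightarrow\operatorname{Ext}^{a+b}$ degenerates (only $b=n$ survives) and gives $\operatorname{Ext}^i_{\A_{\lambda^+}^\theta}(\mathcal{N},\A_{\lambda^+}^\theta)=H^{i-n}(X^\theta,\mathcal{E})$ with $\mathcal{E}:=\mathcal{E}xt^n_{\A_{\lambda^+}^\theta}(\mathcal{N},\A_{\lambda^+}^\theta)$; this vanishes for $i<n$ trivially and for $i>n$ by the degree bound above. Hence $D(\Delta_{\lambda^+}(p))$ is concentrated in degree $n=\tfrac12\dim X$ and equals $\Gamma(\mathcal{E})[-n]$, with $H^{>0}(X^\theta,\mathcal{E})=0$ obtained for free.

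Finally I would compute the characteristic cycle. By localization $\operatorname{CC}(\Gamma\mathcal{E})=\operatorname{CC}_{X^\theta}(\mathcal{E})$, and $\gr\mathcal{E}$ is a subquotient of $\mathcal{E}xt^n_{\mathcal{O}_{X^\theta}}(\gr\mathcal{N},\mathcal{O}_{X^\theta})$; since $\gr\mathcal{N}$ is Cohen--Macaulay of pure codimension $n$ and $\omega_{X^\theta}=\mathcal{O}_{X^\theta}$, this $\mathcal{E}xt$ sheaf is again Cohen--Macaulay of pure codimension $n$ with the same support and the same multiplicities, so $\operatorname{CC}(\mathcal{E})=\operatorname{CC}(\gr\mathcal{N})=\operatorname{CC}_{X^\theta}(\Delta_{\lambda^+}(p))$ (i.e.\ duality into $\mathcal{O}_{X^\theta}$ preserves the characteristic cycle). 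It then remains to identify $\operatorname{CC}_{X^\theta}(\Delta_{\lambda^+}(p))$ with $[\overline{\C^\times Y_{\chi,p}}\cap X]$: the universal attracting cell $\widehat Y_p\subset\tilde X^\theta$, cut out by the positive-$\nu$-weight functions, is stable under the $\C^\times$-action on $\tilde X^\theta$ covering the weight-$d$ dilation of $\tilde\param$; over a generic (affine) fibre $X_\mu$ it is the smooth Bia\l ynicki-Birula cell $Y_{\mu,p}\cong\C^{\,\dim X/2}$ and equals $\gr$ of the corresponding Verma sheaf, so by $\C^\times$-equivariance and flatness its fibre over $0$ is the flat limit $\overline{\C^\times Y_{\chi,p}}\cap X$, which is $\gr\mathcal{N}_0$; and $\operatorname{CC}_{X^\theta}(\Delta_\mu(p))$ is independent of $\mu$ over the localization locus (the standards arise by Hartogs extension of the universal Verma, whose restriction to a big open set of $X^\theta$ is $\mu$-independent), whence the value at $\lambda^+$ equals the value at $\mu=0$. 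I expect the delicate point to be this last step — keeping straight the difference between $\A^\theta_\mu$ as a quantization of the fixed resolution $X^\theta$ and of the affine fibre $\tilde X^\theta_\mu$, and checking that the flat $\C^\times$-limit of the affine attracting cell is exactly the cycle appearing in the standard object — together with the bookkeeping in the homological concentration step; the Koszul/Cohen--Macaulay input and the $\Gamma$-acyclicity are routine once set up.
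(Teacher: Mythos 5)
There is a genuine gap, and it sits at the foundation of your sheaf-level argument. The local Koszul/complete-intersection description you invoke from Proposition \ref{Prop:A0} and Lemma \ref{Lem:repel_quant_affine} applies to the \emph{full} quotient $\A^\theta_{\lambda^+}/\A^\theta_{\lambda^+}\A^{\theta,>0}_{\lambda^+}$, whose associated graded is $\mathcal{O}_Y$ for the whole attracting scheme $Y$; it does not apply to the localization $\mathcal{N}$ of a single standard $\Delta_{\lambda^+}(p)$, which is only a direct summand of that quotient (cut out by the idempotent of $p$ in $\Ca_\nu(\A_{\lambda^+})\cong\C[X^T]$). So your assertion that $\gr\mathcal{N}$ is Cohen--Macaulay, locally a complete intersection, of pure codimension $n$ and supported on $\overline{Y_p}$ is unjustified and in general false: if it were true, the characteristic cycle of $\Delta_{\lambda^+}(p)$, and hence (by your ``duality preserves CC'' step) of $D(\Delta_{\lambda^+}(p))$, would be a multiple of $[\overline{Y_p}]$, whereas the cycle the lemma asserts, $[\overline{\C^\times Y_{\chi,p}}\cap X]$, typically has several components $Y_{p'}$, $p'\preceq p$, with multiplicities. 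The degree-concentration half of your argument can be salvaged (sheaf $\mathcal{E}xt^i$ of the full quotient vanishes for $i\neq n$ by the Koszul argument, and passing to the direct summand is harmless; the missing global length-$n$ resolution can be replaced by $\Gamma$-acyclicity plus finite homological dimension), but the characteristic-cycle half collapses: it now rests entirely on your final identification of $\operatorname{CC}(\Delta_{\lambda^+}(p))$ with the flat limit $\overline{\C^\times Y_{\chi,p}}\cap X$, which you only sketch (``Hartogs extension'', ``$\mu$-independence over a big open set'') and which is essentially as hard as the statement being proved.

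For comparison, the paper never needs $\operatorname{CC}(\Delta_{\lambda^+}(p))$ itself. It works over the line $\ell=\{\lambda+z\chi\}$ with the deformed Verma $\Delta_\ell=\A_\ell/\A_\ell\A_\ell^{>0}$: since over the generic point $\chi$ the ideal $\C[X_{\C\chi}]\C[X_{\C\chi}]^{>0}$ is a locally complete intersection in the smooth affine fibre $X_\chi$, the modules $\operatorname{Ext}^i(\gr\Delta_\ell,\C[X_{\C\chi}])$ for $i\neq \tfrac12\dim X$ are supported at $0\in\C\chi$; combined with generic freeness (Lemma \ref{Lem:gen_freeness}) this kills $\operatorname{Ext}^i(\Delta_{\lambda'},\A_{\lambda'})$ for Zariski generic $\lambda'\in\ell$ and $i\neq\tfrac12\dim X$, and the same degeneration identifies $\gr D(\Delta_{\lambda^+}(p))$ with the specialization of $\operatorname{Ext}^{\frac12\dim X}(\Delta_{\C\chi}(p)|_\chi,\C[X_\chi])$, whose class is exactly $[\overline{\C^\times Y_{\chi,p}}\cap X]$. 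If you want to keep your route, you would need an independent proof that the associated graded of the localized standard degenerates to the flat limit of the affine attracting cell --- at which point you are reproving the deformation argument the paper uses.
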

\begin{proof}
Let us prove the first claim. Set $\ell:=\{\lambda+z\chi| z\in \C\}$.
What we need to prove is that $\operatorname{Ext}^i(\Delta_{\lambda'},\A_{\lambda'})=0$
provided $i\neq \frac{1}{2}\dim X$ for a Zariski generic $\lambda'\in \ell$.
Our claim will follow follow if we show that the support of $\operatorname{Ext}^i(\Delta_\ell, \A_\ell)$
in $\C \chi$ is finite and that the $\C[\ell]$-module $\operatorname{Ext}^i(\Delta_\ell, \A_\ell)$ is generically free.
Here we write $\Delta_\ell=\A_\ell/\A_\ell\A_\ell^{>0}$.

The right $\A_\ell$-modules
$\operatorname{Ext}^i(\Delta_\ell, \A_\ell)$  are filtered with
  $\gr\operatorname{Ext}^i(\Delta_\ell, \A_\ell)\subset
  \operatorname{Ext}^i(\gr\Delta_\ell, \C[X_{\C\chi}])$.
The claim about generic freeness follows from Lemma \ref{Lem:gen_freeness}. Also to prove
that claim in the previous paragraph that the support is  finite
it is enough to prove that  $\operatorname{Ext}^i(\gr\Delta_\ell, \C[X_{\C\chi}])$
is supported at 0 as in Step 4 of the proof of Proposition \ref{Prop:projectivity}.

Set $\Delta_{\C\chi}:=\C[X_{\C\chi}]/\C[X_{\C\chi}]\C[X]_{\C\chi}^{>0}$.
We have $\Delta_{\C\chi}\twoheadrightarrow \gr\Delta_\ell$. Moreover, the kernel is
supported at 0, see the proof of (4) of Lemma \ref{Lem:BZ_assoc_graded}. So it is enough to
show that the support of $\operatorname{Ext}^i(\Delta_{\C\chi}, \C[X_{\C\chi}])$
is  at 0 when $i\neq \frac{1}{2}\dim X$. This follows  from the observation  that, generically over $\C\chi$, the ideal  $\C[X_{\C\chi}]\C[X_{\C\chi}]^{>0}$ is a locally complete intersection in a smooth variety.

The argument above also implies that the associated graded of $D(\Delta_{\lambda^+}(p))$ coincides with
that of $\operatorname{Ext}^{\frac{1}{2}\dim X}(\Delta_{\C\chi}(p)|_\chi,\C[X_{\chi}])$.
The latter is just the class of the contracting component
$Y_{\chi,p}$ (defined as the sum of components of $X\cap \overline{\C^\times Y_{\chi,p}}$
with obvious multiplicities).
%
\end{proof}

\begin{proof}[Proof of Proposition \ref{Prop:D_Ringel}]
We write $\Delta_{\lambda}(p)^\vee$ for $D(\Delta_{\lambda}(p))$, thanks to Lemma \ref{Lem:dual_hom_vanish}, this is an object in $\OCat(\A_{-\lambda})$ (and not just a complex in its derived category).  We have $\operatorname{End}(\Delta_{\lambda}(p)^\vee)=\C$
and $\Ext^i(\Delta_{\lambda}(p)^\vee, \Delta_{\hat{{\lambda}}}(p')^\vee)=0$ if $i>0$ or $p\leqslant^\theta p'$.
We remark that the orders $\leqslant^\theta$ and $\leqslant^{-\theta}$ can be refined
to opposite partial orders (by the values of the real moment
maps for the actions of $\mathbb{S}^1\subset \nu(\C^\times)$),
compare with \cite[5.4]{Gordon}. So it only
remains to prove that the characteristic cycle of $\Delta_{\lambda}(p)^\vee$ consists
of the contracting components $Y_{p'}$ with $p'\leqslant^{-\theta}p$.
The characteristic cycle of $\Delta_{\lambda}(p)^\vee$ coincides with $\overline{\C^\times Y_{\theta,p}}\cap X$,
by Lemma \ref{Lem:dual_hom_vanish}. But the characteristic cycle of $\Delta_{-\lambda}(p)$ is the same. Our claim follows.
\end{proof}

Now we can establish a connection between the {\it long wall-crossing} functor
from \cite[Section 4.1]{BL} and the Ringel duality.  Pick $N\gg 0$ and set $\lambda^-:=\lambda-N\theta$
so that the analogs of (1) and (2) hold for $(\lambda^-,\theta)$. We can consider
the functor $\WC_{\lambda^-\leftarrow \lambda}: D^b(\A_\lambda\operatorname{-mod})
\rightarrow D^b(\A_{\lambda^-}\operatorname{-mod})$ given by $\A_{\lambda,-N\chi}^{(-\theta)}\otimes^L_{\A_\lambda}\bullet$. This is the long
twisting functor from \cite[Section 8.1]{BLPW}, it is an equivalence, see \cite[6.4]{BPW}.
Since $\A_{\lambda,-N\chi}^{(-\theta)}$ is a HC bimodule, the functor
$\WC_{\lambda\rightarrow \lambda^-}$ restricts to an equivalence
$D^b(\OCat_\nu(\A_\lambda))\rightarrow D^b(\OCat_\nu(\A_{\lambda^-}))$.

The following proposition implies the twisting part of
\cite[Conjecture 8.24]{BLPW}.

\begin{Prop}\label{Prop:WC_Ringel}
There is a labelling preserving highest weight equivalence
$\OCat_\nu(\A_{\lambda^-})\xrightarrow{\sim} \OCat_{\nu}(\A_\lambda)^\vee$
that intertwines $\WC_{\lambda\rightarrow \lambda^-}$ and $\mathcal{R}$.
\end{Prop}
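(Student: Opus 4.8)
The plan is to combine Proposition \ref{Prop:D_Ringel} with an identification of the homological duality functor $D = \operatorname{RHom}_{\A_{\lambda^+}}(\bullet,\A_{\lambda^+})$ with a composition of the long wall-crossing functor and the isomorphism $\A_{-\lambda}\cong \A_\lambda^{opp}$ coming from the parity anti-automorphism. Recall from Section \ref{S_sympl_quant} that $\A_{-\mu}\cong \A_\mu^{opp}$ canonically (via $\varsigma$), and this identification is compatible with the categories $\OCat$: more precisely, the restricted-dual functor of Section \ref{SSS_cat_O_gen} gives a contravariant equivalence $\OCat_\nu(\A_\mu)\cong \OCat_{-\nu}(\A_\mu^{opp})^{opp}\cong \OCat_{-\nu}(\A_{-\mu})^{opp}$. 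The first step is therefore to unwind what Proposition \ref{Prop:D_Ringel} actually says after transporting $\OCat_\nu(\A_{\lambda^+}^{opp})$ back to $\OCat_{-\nu}(\A_{-\lambda^+})$ via $\varsigma$: it exhibits a highest weight, labelling-preserving equivalence $\OCat_\nu(\A_{\lambda^+})^\vee \xrightarrow{\sim}\OCat_{-\nu}(\A_{-\lambda^+})$ intertwining $\mathcal R$ with a functor built from $D$ and the parity anti-automorphism. Since $\lambda^+ = \lambda+n\chi$ for $n\gg 0$ and abelian localization holds at $\lambda^+$, we freely replace $\lambda$ by $\lambda^+$ and $\lambda^-$ by the corresponding shift throughout; this is harmless because translation equivalences preserve categories $\OCat$ and intertwine all functors in sight.

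The second step, which I expect to be the technical heart, is to prove that the composite $\OCat_\nu(\A_{\lambda^-})\xrightarrow{\WC_{\lambda^-\leftarrow\lambda}^{-1}}\OCat_\nu(\A_\lambda)$ followed by the duality $\OCat_\nu(\A_\lambda)\xrightarrow{\sim}\OCat_{-\nu}(\A_{-\lambda})^{opp}$ agrees, up to the identifications above, with the functor $D$ appearing in Proposition \ref{Prop:D_Ringel}. Concretely: one shows that $D(\Delta_{\lambda}(p))$, computed in Lemma \ref{Lem:dual_hom_vanish} to be concentrated in degree $\tfrac12\dim X$ with characteristic cycle $\overline{\C^\times Y_{\chi,p}}\cap X$, is carried by the long wall-crossing functor $\WC_{\lambda\to\lambda^-}$ precisely to the costandard object $\nabla_{\lambda^-}(p)$ of $\OCat_\nu(\A_{\lambda^-})$ (using that $\WC_{\lambda\to\lambda^-}$ is given by tensoring with the HC bimodule $\A^{(-\theta)}_{\lambda,-N\chi}$, which reverses the roles of $\theta$ and $-\theta$ and hence sends contracting-component cycles for one sign to those for the other). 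Equivalently, using the Ringel-duality characterisation of Proposition \ref{Prop:D_Ringel}, $\mathcal R$ sends $\Delta_{\lambda^+}(p)$ to $\nabla(p)$ in $\OCat_\nu(\A_{\lambda^+}^{opp})^{opp}$, and one checks $\WC_{\lambda\to\lambda^-}$ does the same after the $\varsigma$-identification. Matching on standard (resp. costandard) objects, together with the fact that a highest weight equivalence is determined by its effect on standards plus the Ext-vanishing already recorded in the proof of Proposition \ref{Prop:D_Ringel}, then forces the two functors to coincide.

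Finally, assembling: by Step 2 we have a commuting (up to natural isomorphism) square of equivalences relating $\WC_{\lambda\to\lambda^-}$, $D$, $\varsigma_*$, and restricted duality; by Step 1 plus Proposition \ref{Prop:D_Ringel} the functor $D$ is intertwined with $\mathcal R$ via a labelling-preserving highest weight equivalence $\OCat_\nu(\A_\lambda)^\vee\xrightarrow{\sim}\OCat_{-\nu}(\A_{-\lambda})^{opp}$; composing the $\varsigma$-identification $\OCat_{-\nu}(\A_{-\lambda})^{opp}\cong \OCat_\nu(\A_\lambda^{opp})^{opp}$ — wait, more simply, since the same argument with $-\theta$ in place of $\theta$ identifies $\OCat_\nu(\A_{\lambda^-})$ with $\OCat_\nu(\A_\lambda)^\vee$ as highest weight categories in a labelling-preserving way — one obtains the desired labelling-preserving highest weight equivalence $\OCat_\nu(\A_{\lambda^-})\xrightarrow{\sim}\OCat_\nu(\A_\lambda)^\vee$ intertwining $\WC_{\lambda\to\lambda^-}$ with $\mathcal R$. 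The main obstacle is Step 2: verifying the precise match of characteristic cycles and homological degrees so that $\WC_{\lambda\to\lambda^-}$ is seen to carry $D(\Delta(p))$ to the costandard $\nabla_{\lambda^-}(p)$ rather than merely to \emph{some} object with the right class in $K_0$; here one leans on Lemma \ref{Lem:dual_hom_vanish}, on the HC-bimodule description of $\WC$, and on the fact (from the proof of Proposition \ref{Prop:D_Ringel}) that the relevant Ext-groups between such objects vanish in positive degree, which rigidifies the identification.
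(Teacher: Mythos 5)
Your Step 1 (transporting Proposition \ref{Prop:D_Ringel} through the parity anti-automorphism and restricted duality) is consistent with how the paper uses that proposition, but Step 2 contains the real gap. The paper's proof is essentially a citation: it invokes \cite[Section 4.3]{BL}, where a \emph{$t$-exact} equivalence $D^b_{hol}(\A_{\lambda^-}\operatorname{-mod})\xrightarrow{\sim}D^b_{hol}(\A_\lambda^{opp}\operatorname{-mod})^{opp}$ intertwining $\WC_{\lambda^-\leftarrow\lambda}$ and $D$ is constructed (for quiver varieties, with the proof working in general), and then Proposition \ref{Prop:D_Ringel} finishes the argument. That $t$-exactness statement is the entire technical content of the proposition, and your proposal does not supply a substitute for it. Your justification --- that $\WC_{\lambda\to\lambda^-}$ is tensoring with $\A^{(-\theta)}_{\lambda,-N\chi}$, which ``reverses the roles of $\theta$ and $-\theta$ and hence sends contracting-component cycles for one sign to those for the other'' --- is not an argument: computing $\A^{(-\theta)}_{\lambda,-N\chi}\otimes^L_{\A_\lambda}\Delta_\lambda(p)$ and showing it is a costandard object concentrated in degree $0$ is precisely the hard step, and nothing in this paper (Lemma \ref{Lem:dual_hom_vanish} concerns $D$, not $\WC$) gives it to you. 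There is also a typing slip: $D(\Delta_\lambda(p))$ lives over $\A_\lambda^{opp}$ (equivalently $\A_{-\lambda}$), so $\WC_{\lambda\to\lambda^-}$ cannot be applied to it as written; what must be compared are the composites after the $\varsigma$/restricted-duality identifications, and pinning that diagram down is part of what \cite[Section 4.3]{BL} does.

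Even granting the object-level computation, your final inference --- ``matching on standard (resp.\ costandard) objects \dots forces the two functors to coincide'' --- is not valid. Two derived equivalences that agree on the standard objects (even with the Ext-vanishing from the proof of Proposition \ref{Prop:D_Ringel}) need not be isomorphic as functors; the proposition asserts an equivalence that \emph{intertwines} $\WC_{\lambda\to\lambda^-}$ and $\mathcal{R}$, i.e.\ a natural isomorphism of functors, and that requires either a $t$-exactness statement for the composite (so that it restricts to an abelian, hence highest weight, equivalence) or a bimodule-level identification --- exactly the input the paper imports from \cite{BL}. So the proposal, as it stands, reduces the proposition to an unproved claim that is at least as hard as the proposition itself.
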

\begin{proof}
In \cite[Section 4.3]{BL} we have established a $t$-exact equivalence
$D^b_{hol}(\A_{\lambda^-}\operatorname{-mod})\xrightarrow{\sim}
D^b_{hol}(\A_\lambda^{opp}\operatorname{-mod})^{opp}$ that intertwines
$\WC_{\lambda^-\leftarrow \lambda}$ and $D$ (there we were dealing with the case
when $X$ is Nakajima quiver variety but the proof works in the general case as
well). Now we are done by Proposition \ref{Prop:D_Ringel}.
\end{proof}

\subsection{Proof of (1) of Proposition \ref{Prop_CW_prop}}
In this section we prove (1) of Proposition \ref{Prop_CW_prop}. Our proof is based on the
computation of the derived tensor product of certain bimodules.

The bimodules we are interested in are as follows:
$$\mathcal{B}^{-,o}_{\lambda^+}:=\A_{\lambda^+}/\A_{\lambda^+}^{<0,\nu_0}\A_{\lambda^+},
\mathcal{B}^+_{\lambda^+}=\A_{\lambda^+}/\A_{\lambda^+}\A_{\lambda^+}^{>0,\nu_0}.$$
The former is a $\Ca_{\nu_0}(\A_{\lambda^+})$-$\A_{\lambda^+}$-bimodule, and the latter is
a $\A_{\lambda^+}$-$\Ca_{\nu_0}(\A_{\lambda^+})$-bimodule.

\begin{Lem}\label{Lem:der_tens_prod}
We have $\mathcal{B}^{-,o}_{\lambda^+}\otimes^L_{\A_{\lambda^+}}\mathcal{B}^+_{\lambda^+}=\Ca_{\nu_0}(\A_{\lambda^+})$.
\end{Lem}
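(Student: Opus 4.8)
\emph{The plan.} Write $\A=\A_{\lambda^+}$ and grade it by the $h_0$-eigenvalue, where $h_0$ is the $\nu_0$-comoment element; since $\nu_0(\C^\times)$ acts algebraically this is an honest $\Z$-grading $\A=\bigoplus_{i\in\Z}\A^i$. The first step is the \emph{underived} tensor product: by the definitions $\mathcal{B}^{-,o}_{\lambda^+}\otimes_{\A}\mathcal{B}^+_{\lambda^+}=\A/(\A^{<0,\nu_0}\A+\A\A^{>0,\nu_0})$, and, as graded subspaces,
$$\A^{<0,\nu_0}\A+\A\A^{>0,\nu_0}=\Big(\bigoplus_{k<0}\A^k\Big)\oplus\Big(\sum_{j>0}\A^{-j}\A^j\Big)\oplus\Big(\bigoplus_{k>0}\A^k\Big):$$
in degree $k>0$ the second ideal contains $\A^0\A^k=\A^k$, in degree $k<0$ the first contains $\A^k\A^0=\A^k$, and both ideals contribute exactly $\sum_{j>0}\A^{-j}\A^j$ in degree $0$. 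Dividing out and using \eqref{eq:Cartan_iso} gives $\mathcal{B}^{-,o}_{\lambda^+}\otimes_{\A}\mathcal{B}^+_{\lambda^+}=\Ca_{\nu_0}(\A)$. So the real content is the vanishing $\operatorname{Tor}^{\A_{\lambda^+}}_i(\mathcal{B}^{-,o}_{\lambda^+},\mathcal{B}^+_{\lambda^+})=0$ for $i>0$.

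To prove this I would localize to $X^\theta$. Since $n\gg0$, abelian localization holds for $(\lambda^+,\theta)$ (Lemma~\ref{Lem:local}); in particular $\A^\theta_{\lambda^+}$ is flat over $\A_{\lambda^+}$, and localizing $\mathcal{B}^+_{\lambda^+}$ as a left and $\mathcal{B}^{-,o}_{\lambda^+}$ as a right $\A_{\lambda^+}$-module produces coherent sheaves $\mathcal{N}^+,\mathcal{N}^-$ on $X^\theta$ with $\Gamma(\mathcal{N}^\pm)=\mathcal{B}^{\pm}_{\lambda^+}$ and $R^{>0}\Gamma\,\mathcal{N}^\pm=0$; on a $(\C^\times)^2$-stable affine open $X'$ meeting $X^{\nu_0(\C^\times)}$ these are the quotients of $\A^\theta_{\lambda^+}(X')$ by the left, resp. right, ideal generated by its $\nu_0$-positive, resp. $\nu_0$-negative, part, i.e. the sheaf-level analogues of $\mathcal{B}^\pm$ from Section~\ref{SS_Ca_param}. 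Using the standard compatibility of abelian localization with derived tensor products of Harish-Chandra-type bimodules (as in \cite{BPW,BLPW}),
$$\mathcal{B}^{-,o}_{\lambda^+}\otimes^L_{\A_{\lambda^+}}\mathcal{B}^+_{\lambda^+}=R\Gamma\big(\mathcal{N}^-\otimes^L_{\A^\theta_{\lambda^+}}\mathcal{N}^+\big).$$

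The sheaf $\mathcal{N}^-\otimes^L_{\A^\theta_{\lambda^+}}\mathcal{N}^+$ is a local question, and, being supported on the intersection of the supports of $\mathcal{N}^\pm$, it suffices to work in a formal neighbourhood of a point $x\in X^{\nu_0(\C^\times)}$. There the completed quantization decomposes as a homogenized Weyl algebra $\mathbb{A}$ of the non-fixed symplectic directions $\widehat{\otimes}$ the completion of $\Ca_{\nu_0}(\A^\theta_{\lambda^+})$ (the decomposition used in the proof of Proposition~\ref{Prop:supp_gr}); under it $\mathcal{N}^+$ and $\mathcal{N}^-$ become $\mathbb{A}/\mathbb{A}W^+$ and $\mathbb{A}/W^-\mathbb{A}$ tensored with the fixed-point factor, for complementary Lagrangian families $W^\pm\subset\mathbb{A}$. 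Since $W^+$ is a commuting regular sequence in $\mathbb{A}$, the Koszul complex $\mathbb{A}\otimes\Lambda^\bullet W^+$ resolves $\mathbb{A}/\mathbb{A}W^+$; applying $(\mathbb{A}/W^-\mathbb{A})\otimes_{\mathbb{A}}(-)$ turns it into the Koszul complex of the regular sequence $W^+$ acting on $\mathbb{A}/W^-\mathbb{A}\cong\operatorname{Sym}W^+$, which has homology only in degree $0$. Hence $\operatorname{Tor}^{\A^\theta_{\lambda^+}}_{>0}(\mathcal{N}^-,\mathcal{N}^+)=0$, and the degree-$0$ term is $\A^\theta_{\lambda^+}/(\A^{\theta,<0,\nu_0}_{\lambda^+}\A^\theta_{\lambda^+}+\A^\theta_{\lambda^+}\A^{\theta,>0,\nu_0}_{\lambda^+})=\Ca_{\nu_0}(\A^\theta_{\lambda^+})$ by the graded computation of the first paragraph applied on each $X'$. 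Finally $X^{\nu_0(\C^\times)}$ is a conical symplectic resolution (Proposition~\ref{Prop:sympl_res_torus_fixed}), so $H^{>0}(\Ca_{\nu_0}(\A^\theta_{\lambda^+}))=0$, while $\Gamma(\Ca_{\nu_0}(\A^\theta_{\lambda^+}))=\Ca_{\nu_0}(\A_{\lambda^+})$ for $n\gg0$ by Proposition~\ref{Prop:A0_descr}; thus $R\Gamma(\Ca_{\nu_0}(\A^\theta_{\lambda^+}))=\Ca_{\nu_0}(\A_{\lambda^+})$, concentrated in degree $0$, which is the claim.

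The main obstacle is the middle step: identifying $\mathcal{N}^\pm$ with the sheaf-level quotients from Section~\ref{SS_Ca_param} (this also uses that the supports $Y^\pm$ of $\mathcal{N}^\pm$ are isotropic, so that localization of $\mathcal{B}^\pm_{\lambda^+}$ produces no higher terms), and proving that global sections commute with $\otimes^L_{\A^\theta_{\lambda^+}}$ for these bimodule sheaves. The local Koszul computation is routine but needs some care with left-versus-right module structures; everything else — the graded identity of the first paragraph and the cohomology vanishing for $\Ca_{\nu_0}(\A^\theta_{\lambda^+})$ — is formal given the results recalled above.
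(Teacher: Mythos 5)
Your first paragraph (the underived computation via the $h_0$-grading and (\ref{eq:Cartan_iso})) agrees with the paper, and you correctly identify that the content is the vanishing of the higher Tor's. But the route you propose for that vanishing has a genuine gap, concentrated exactly at the step you call "the main obstacle" and which you treat as a standard compatibility. To get $\B^{-,o}_{\lambda^+}\otimes^L_{\A_{\lambda^+}}\B^+_{\lambda^+}\cong R\Gamma\bigl(\mathcal{N}^-\otimes^L_{\A^\theta_{\lambda^+}}\mathcal{N}^+\bigr)$ you need, besides abelian localization for \emph{left} modules at $(\lambda^+,\theta)$ (which gives flatness of $\A^\theta_{\lambda^+}$ as a right $\A_{\lambda^+}$-module and takes care of $\mathcal{N}^+$), that the \emph{right}-module localization $\mathcal{N}^-=\B^{-,o}_{\lambda^+}\otimes_{\A_{\lambda^+}}\A^\theta_{\lambda^+}$ has no higher Tor terms and satisfies $R\Gamma(\mathcal{N}^-)=\B^{-,o}_{\lambda^+}$ in degree $0$. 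Right $\A_{\lambda^+}$-modules are left modules over $\A_{-\lambda^+}\cong\A_{\lambda^+}^{opp}$, and for $n\gg 0$ the parameter $-\lambda^+=-\lambda-n\chi$ is the anti-dominant one for $X^\theta$; Lemma \ref{Lem:local} gives nothing there, and no compatibility of this kind is available in \cite{BPW,BLPW} for these one-sided quotient bimodules (isotropy of the supports, which you invoke, is irrelevant to this). Moreover, the identification of $\mathcal{N}^\pm$ on an arbitrary $(\C^\times)^2$-stable affine open $X'$ with the quotient of $\A^\theta_{\lambda^+}(X')$ by the ideal generated by its local $\nu_0$-positive (resp.\ negative) part is false in general: localization of $\B^+_{\lambda^+}$ is the quotient by the ideal generated by the \emph{global} positive part, and on opens not adapted to $\nu_0$ (e.g.\ opens missing the relevant fixed points, where a positively graded section becomes invertible) the two differ -- this is precisely why Proposition \ref{Prop:A0} and Section \ref{SS_Ca_param} work only with carefully chosen coverings. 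Finally, the supports in question lie in the intersection of the attracting and repelling loci, which is in general strictly larger than $X^{\nu_0(\C^\times)}$ (for $T^*\mathbb{P}^1$ it contains the whole zero section), so a Koszul computation in formal neighbourhoods of fixed points does not account for all points where homology could live.

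For comparison, the paper avoids sheaf-level tensor products altogether. It works over the line $\ell=\{\lambda+z\chi\}$ and shows that each $\operatorname{Tor}_i^{\A_\ell}(\B^{-,o}_\ell,\B^+_\ell)$, $i>0$, is torsion over $\C[\ell]$, hence vanishes at a Zariski generic specialization such as $\lambda^+$. The inputs are: the inclusion of $\gr\operatorname{Tor}_i^{\A_\ell}(\B^{-,o}_\ell,\B^+_\ell)$ into $\operatorname{Tor}_i^{\C[X_{\C\chi}]}(\gr\B^{-,o}_\ell,\gr\B^+_\ell)$; the fact that over the generic (affine) fiber $X_\chi$ the associated graded bimodules become $\C[Y^-_\chi]$ and $\C[Y_\chi]$, whose attracting and repelling loci intersect transversally, so the graded Tor's are supported at $0\in\C\chi$; a check that the Tor's are HC $\Ca_{\nu_0}(\A_\ell)$-bimodules; and Proposition \ref{Prop:supp_gr} together with generic freeness (Lemma \ref{Lem:gen_freeness}). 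If you want to salvage your approach, you would essentially have to prove the right-module localization statements above, which is a separate and harder problem than the lemma itself; I recommend redoing the Tor-vanishing along the deformation argument instead.
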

\begin{proof}
Note that
$$\mathcal{B}^{-,o}_{\lambda^+}\otimes_{\A_{\lambda^+}}\mathcal{B}^+_{\lambda^+}=\A_{\lambda^+}/
(\A_{\lambda^+}^{<0,\nu_0}\A_{\lambda^+}+\A_{\lambda^+}\A_{\lambda^+}^{>0,\nu_0})=
\A_{\lambda^+}/\bigoplus_{i>0}\A^{-i}_{\lambda^+}\A^i_{\lambda^+}=
\Ca_{\nu_0}(\A_{\lambda^+}).$$
So we just need to check that the higher Tor's vanish.

Set $\ell:=\{\lambda+z\chi, z\in \C\}$. We can consider the bimodules $\B^{-,o}_{\ell},\B^+_{\ell}$ and form
the derived tensor product $\B^{-,o}_\ell\otimes^{\A_\ell}\B^+_{\ell}$.  Similarly
to Step 2 of  the proof of Proposition \ref{Prop:projectivity},  we just need to check that each $\operatorname{Tor}_i^{\A_{\ell}}(\B^{-,o}_{\ell}, \B^+_{\ell})$ is torsion over $\C[\ell]$.

The bimodules $\operatorname{Tor}_i^{\gr\A_{\ell}}(\gr\B^{-,o}_{\ell}, \gr\B^+_{\ell})$
are all supported at $0\in \C\chi$. Indeed, the fiber of $\gr\B^{+}_{\ell}$ over $\chi$
is $\C[Y_{\chi}]$, while the fiber of
of $\gr\B^{-,o}_{\ell}$ equals $\C[Y^-_{\chi}]$,
compare to (4) of Lemma \ref{Lem:BZ_assoc_graded}. Here $Y^-_{\chi}:=\{(x,z)\in X_\chi\times X^{\nu_0(\C^\times)}_\chi| \lim_{t\rightarrow\infty}\nu_0(t)x=z\}$ and $Y_\chi$ is defined similarly.
The varieties $Y_{\chi}$ and $Y^-_{\chi}$ intersect transversally,
hence our claim. Note that
\begin{equation}\label{eq:Tor_incl}
\gr \operatorname{Tor}_i^{\A_{\ell}}(\B^{-,o}_{\ell}, \B^+_{\ell})
\subset \operatorname{Tor}_i^{\C[X_{\C\chi}]}(\gr\B^{-,o}_{\ell}, \gr\B^+_{\ell})
\end{equation}
(an inclusion of $\gr \Ca_{\nu_0}(\A_\lambda)$-bimodules).
We conclude that $\gr \operatorname{Tor}_i^{\A_{\ell}}(\B^{-,o}_{\ell}, \B^+_{\ell})$
is supported at $0$. We want to deduce from here that $\operatorname{Tor}_i^{\A_{\ell}}(\B^{-,o}_{\ell},
\B^+_{\ell})$ is torsion over $\C[\ell]$.

 Thanks to Proposition \ref{Prop:supp_gr}, the previous claim will follow if we check that   $\operatorname{Tor}_i^{\A_{\ell}}(\B^{-,o}_{\ell}, \B^+_{\ell})$ is a Harish-Chandra $\Ca_{\nu_0}(\A_\ell)$-bimodule. We have an epimorphism $\C[X_{\C\chi}]^{\nu_0(\C^\times)}
\twoheadrightarrow \gr\Ca_{\nu_0}(\A_{\ell})$. So it is enough to prove that the left and the right
actions of $\C[X_{\C\chi}]^{\C^\times}$ on  $\operatorname{Tor}_i^{\C[X_{\C\chi}]}(\gr\B^{-,o}_{\ell}, \gr\B^+_{\ell})$ coincide. We have $\C[X_{\C\chi}]/\C[X_{\C\chi}]\C[X_{\C\chi}]^{>0,\nu_0}\twoheadrightarrow \gr \B^{-,o}_{\ell}$. So the action of $\C[X_{\C\chi}]^{\C^\times}$ on $\gr \B^{-,o}_{\ell}$ (from the right) is restricted from the $\C[X_{\C\chi}]$-action (from the left). It follows that the left $\C[X_{\C\chi}]^{\C^\times}$-action on $\operatorname{Tor}_i^{\C[X_{\C\chi}]}(\gr\B^{-,o}_{\ell}, \gr\B^+_{\ell})$ is restricted from
the $\C[X_{\C\chi}]$-action (we take the Tor of modules over the commutative ring
$\C[X_{\C\chi}]$). For similar reasons, the same is true for the right action.
So $\gr \operatorname{Tor}_i^{\A_{\ell}}(\B^{-,o}_{\ell}, \B^+_{\ell})$ is a $\C[X_{\C\chi}]^{\nu_0(\C^\times)}$-module.
Hence
$\operatorname{Tor}_i^{\A_{\ell}}(\B^{+,o}_{\ell}, \B^-_{\ell})$ is indeed a HC $\Ca_{\nu_0}(\A_\ell)$-bimodule.
\end{proof}


\begin{proof}[Proof Proposition \ref{Prop_CW_prop}, (1)]
Let us show that $\CW_{\nu\rightarrow \nu'}^*\circ \nabla_{\nu_0}\cong \nabla_{\nu_0}\circ
\underline{\CW}_{\nu\rightarrow \nu'}^*$, the other isomorphism is similar.

We need to establish a bi-functorial isomorphism
\begin{equation}\label{eq:bifun_iso1}\Hom_{D^b(\A_{\lambda^+}\operatorname{-mod})}(M, \CW^*_{\nu\rightarrow \nu'}\nabla_{\nu_0}(N))\xrightarrow{\sim}
\Hom_{D^b(\A_{\lambda^+}\operatorname{-mod})}(M, \nabla_{\nu_0}(\underline{\CW}^*_{\nu\rightarrow \nu'}N)),\end{equation}
for $M\in D^b_{\OCat_\nu}(\A_{\lambda^+}\operatorname{-mod}),
N\in D^b_{\OCat_{\nu'}}(\Ca_{\nu_0}(\A_{\lambda^+})\operatorname{-mod})$.

By the definition of $\CW^*_{\nu\rightarrow \nu'}$, we have
\begin{equation}\label{eq:bifun_iso2}
\Hom_{D^b(\A_{\lambda^+}\operatorname{-mod})}(M, \CW^*_{\nu\rightarrow \nu'}\nabla_{\nu_0}(N))\xrightarrow{\sim}
\Hom_{D^b(\A_{\lambda^+}\operatorname{-mod})}(M,\nabla_{\nu_0}(N)).
\end{equation}
On the other hand,
\begin{equation}\label{eq:bifun_iso3}
\begin{split}
&\Hom_{D^b(\A_{\lambda^+}\operatorname{-mod})}(M,\nabla_{\nu_0}(N))=
\Hom_{D^b(\A_{\lambda^+}\operatorname{-mod})}(M,R\Hom_{\Ca_{\nu_0}(\A_{\lambda^+})}(\B^{-,o}_{\lambda^+},N))\\
&\xrightarrow{\sim}\Hom_{D^b(\Ca_{\nu_0}(\A_{\lambda^+})\operatorname{-mod})}
(\B^{-,o}_{\lambda^+}\otimes^L_{\A_{\lambda^+}}M,N).
\end{split}
\end{equation}
The first equality holds thanks to Lemma \ref{Lem:der_st_cost}.

Note that $\B^{-,o}_{\lambda^+}\otimes^L_{\A_{\lambda^+}}M\in
D^b_{\OCat_{\nu}}(\Ca_{\nu_0}(\A_{\lambda^+})\operatorname{-mod})$. Indeed, this is true
for $M=\Delta_{\nu_0}(M')$, where $M'\in \OCat_\nu(\Ca_{\nu_0}(\A_{\lambda^+})$,
because, by Lemma \ref{Lem:der_tens_prod}, we have
$$\B^{-,o}_{\lambda^+}\otimes^L_{\A_{\lambda^+}}M=
\B^{-,o}_{\lambda^+}\otimes^L_{\A_{\lambda^+}}\B^+_{\lambda^+}\otimes^L_{\Ca_{\nu_0}(\A_{\lambda^+})}M'=M'.$$
Since the objects of the form $\Delta_{\nu_0}(M')$ span $D^b_{\OCat_\nu}(\A_{\lambda^+}\operatorname{-mod})$,
we deduce that $\B^{-,o}_{\lambda^+}\otimes^L_{\A_{\lambda^+}}M\in
D^b_{\OCat_{\nu}}(\Ca_{\nu_0}(\A_{\lambda^+})\operatorname{-mod})$ for all $M\in
D^b_{\OCat_\nu}(\A_{\lambda^+}\operatorname{-mod})$. It follows that
\begin{equation}\label{eq:bifun_iso4}
\begin{split}
&\Hom_{D^b(\Ca_{\nu_0}(\A_{\lambda^+})\operatorname{-mod})}
(\B^{-,o}_{\lambda^+}\otimes^L_{\A_{\lambda^+}}M,N)\\
&\xrightarrow{\sim}
\Hom_{D^b(\Ca_{\nu_0}(\A_{\lambda^+})\operatorname{-mod})}
(\B^{-,o}_{\lambda^+}\otimes^L_{\A_{\lambda^+}}M,\underline{\CW}^*_{\nu\rightarrow \nu'}N)\\
&\xrightarrow{\sim} \Hom_{D^b(\A_{\lambda^+}\operatorname{-mod})}
(M,\nabla_{\nu_0}(\underline{\CW}^*_{\nu\rightarrow \nu'}N)).
\end{split}
\end{equation}
Combining (\ref{eq:bifun_iso2}),(\ref{eq:bifun_iso3}) and (\ref{eq:bifun_iso4}),
we get (\ref{eq:bifun_iso1}).
\end{proof}

%

\subsection{Proof of (2) of Proposition \ref{Prop_CW_prop}}
In this section we prove (2) of Proposition \ref{Prop_CW_prop}.

Pick some parameter $\lambda$.

\begin{Lem}\label{CW_vs_D}
We have a bi-functorial isomorphism $$R\operatorname{Hom}_{D^b(\OCat_{\nu'}(\A_{\lambda^+}))}(\CW_{\nu\rightarrow \nu'}M, N)=D(M)\otimes^L_{\A_{\lambda^+}}N.$$
\end{Lem}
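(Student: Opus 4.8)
The plan is to reduce the assertion to a formal property of perfect complexes, using the adjunction that defines $\CW_{\nu\to\nu'}$. Throughout, recall that $\lambda^+=\lambda+n\chi$ with $n\gg 0$, so abelian localization holds for $(\lambda^+,\theta)$ (Lemma \ref{Lem:local}); by the argument in the proof of (3) of Lemma \ref{Lem:gen_simplicity} the (Noetherian) algebra $\A_{\lambda^+}$ then has finite homological dimension, and we freely identify $D^b(\OCat_?(\A_{\lambda^+}))$ with its image in $D^b(\A_{\lambda^+}\operatorname{-mod})$ via the full embeddings of (3) of Proposition \ref{Prop:hw_alg}.

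First I would unwind the left-hand side. Since $\CW_{\nu\to\nu'}=\iota^!_{\nu'}\circ\iota_\nu$ and $\iota^!_{\nu'}$ is left adjoint to the fully faithful inclusion $\iota_{\nu'}\colon D^b(\OCat_{\nu'}(\A_{\lambda^+}))\hookrightarrow D^b_{hol}(\A_{\lambda^+}\operatorname{-mod})$, for $M\in D^b(\OCat_\nu(\A_{\lambda^+}))$ and $N\in D^b(\OCat_{\nu'}(\A_{\lambda^+}))$ there is a bifunctorial isomorphism
\[
R\Hom_{D^b(\OCat_{\nu'}(\A_{\lambda^+}))}(\CW_{\nu\to\nu'}M,N)\;\cong\;R\Hom_{D^b_{hol}(\A_{\lambda^+}\operatorname{-mod})}(\iota_\nu M,\iota_{\nu'}N),
\]
where I use that an adjunction of triangulated functors induces an isomorphism on $\operatorname{Hom}$ after every shift, hence on $R\Hom$. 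As $D^b_{hol}(\A_{\lambda^+}\operatorname{-mod})$ is a full triangulated subcategory of $D^b(\A_{\lambda^+}\operatorname{-mod})$, the right-hand side equals $R\Hom_{D^b(\A_{\lambda^+}\operatorname{-mod})}(M,N)$, with $M,N$ regarded as objects of $D^b(\A_{\lambda^+}\operatorname{-mod})$ via $\iota_\nu,\iota_{\nu'}$. Thus the lemma reduces to the identity $R\Hom_{\A_{\lambda^+}}(M,N)\cong D(M)\otimes^L_{\A_{\lambda^+}}N$ for all $M,N\in D^b(\A_{\lambda^+}\operatorname{-mod})$ — note $M$ need not come from $\OCat_\nu$.

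For this last identity I would invoke perfectness: since $\A_{\lambda^+}$ is Noetherian of finite global dimension, every object of $D^b(\A_{\lambda^+}\operatorname{-mod})$ is quasi-isomorphic to a bounded complex $P^\bullet$ of finitely generated projective $\A_{\lambda^+}$-modules. Then $D(M)=R\Hom_{\A_{\lambda^+}}(M,\A_{\lambda^+})$ is computed by the bounded complex $P^{\bullet\vee}:=\Hom_{\A_{\lambda^+}}(P^\bullet,\A_{\lambda^+})$ of finitely generated projective right modules, so $D(M)\otimes^L_{\A_{\lambda^+}}N=P^{\bullet\vee}\otimes_{\A_{\lambda^+}}N$, while $R\Hom_{\A_{\lambda^+}}(M,N)=\Hom^\bullet_{\A_{\lambda^+}}(P^\bullet,N)$. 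The evaluation maps $\Hom_{\A_{\lambda^+}}(P^i,\A_{\lambda^+})\otimes_{\A_{\lambda^+}}N\to\Hom_{\A_{\lambda^+}}(P^i,N)$, $\varphi\otimes n\mapsto(p\mapsto\varphi(p)n)$, are isomorphisms for each $i$ (true for $P^i$ free, hence for a direct summand of a free module) and assemble to a quasi-isomorphism $P^{\bullet\vee}\otimes_{\A_{\lambda^+}}N\xrightarrow{\sim}\Hom^\bullet_{\A_{\lambda^+}}(P^\bullet,N)$. Composing all these bifunctorial identifications gives the claim.

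The argument is essentially formal. The only point needing care is bookkeeping: checking that the embedding $\iota_{\nu'}$, the identification of categories $\OCat$ on the sheaf and algebra levels under abelian localization, and the full embeddings $D^b(\OCat_?(\A_{\lambda^+}))\hookrightarrow D^b(\A_{\lambda^+}\operatorname{-mod})$ are mutually compatible, so that the defining adjunction of $\CW_{\nu\to\nu'}$ genuinely transports to a statement about $R\Hom$ of $\A_{\lambda^+}$-modules. I do not expect a real obstacle beyond this; the actual content — that objects of $D^b(\A_{\lambda^+}\operatorname{-mod})$ are perfect — is immediate from finiteness of the global dimension of $\A_{\lambda^+}$.
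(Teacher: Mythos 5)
Your proposal is correct and follows essentially the same route as the paper: the paper also invokes the defining adjunction (\ref{eq:CW_char}) to identify the left-hand side with $R\operatorname{Hom}_{D^b(\A_{\lambda^+})}(M,N)$ and then declares the identity $R\operatorname{Hom}_{\A_{\lambda^+}}(M,N)\cong D(M)\otimes^L_{\A_{\lambda^+}}N$ ``standard.'' Your perfect-complex/evaluation-map argument, together with the observation that abelian localization for $(\lambda^+,\theta)$ gives $\A_{\lambda^+}$ finite homological dimension, is exactly the justification of that standard step.
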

\begin{proof}
(\ref{eq:CW_char}) implies that  the left hand side is $R\operatorname{Hom}_{D^b(\A_{\lambda^+})}(M,N)$.
The claim is now standard.
\end{proof}

One can compute $D(\Delta_{\nu}(p))$. The following claim was obtained
in the proof of Proposition \ref{Prop:D_Ringel} (and is a formal corollary of that
proposition).

\begin{Lem}\label{Lem:dual_stand}
We have $D(\Delta_\nu(p))=\Delta^{opp}_{\nu}(p)[-\frac{1}{2}\dim X]$.
\end{Lem}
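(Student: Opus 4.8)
The plan is to extract this as a direct corollary of the work done for Proposition \ref{Prop:D_Ringel}, rather than re-proving anything from scratch. Recall that in the proof of that proposition we wrote $\Delta_\lambda(p)^\vee := D(\Delta_\lambda(p))$ and, via Lemma \ref{Lem:dual_hom_vanish}, we showed two things: first, that $D(\Delta_{\lambda^+}(p))$ is concentrated in a single homological degree, namely $\tfrac12\dim X$ in the convention of Lemma \ref{Lem:dual_hom_vanish} where $D=R\Hom_{\A_{\lambda^+}}(\bullet,\A_{\lambda^+})$ lands in $D^b(\A_{\lambda^+}^{opp}\operatorname{-mod})^{opp}$ (so after accounting for the ${}^{opp}$ on the triangulated category this reads as a shift by $[-\tfrac12\dim X]$); and second, that its characteristic cycle equals the class of $\overline{\C^\times Y_{\chi,p}}\cap X$. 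The core observation is then that this characteristic cycle is exactly the same as the characteristic cycle of the costandard object $\Delta^{opp}_\nu(p)=\nabla_\nu(p)$ for $\OCat_\nu(\A_{\lambda^+}^{opp})$ — indeed, as noted in the proof of Proposition \ref{Prop:D_Ringel}, the characteristic cycle of $\Delta_{-\lambda}(p)$ (which, up to the sign-twist on parameters $\A_{-\lambda}\cong\A_\lambda^{opp}$, is the standard object governing the opposite category) is $\overline{\C^\times Y_{\theta,p}}\cap X$, the same variety.

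First I would fix notation carefully: $D(\Delta_\nu(p))$ here means $R\Hom_{\A_{\lambda^+}}(\Delta_{\lambda^+}(p),\A_{\lambda^+})$ viewed inside $D^b_{\OCat_\nu}(\A_{\lambda^+}^{opp}\operatorname{-mod})^{opp}$, which via the identification $\A_{\lambda^+}^{opp}\cong \A_{-\lambda^+}$ and abelian localization is $D^b(\OCat_\nu(\A_{\lambda^+}^{opp}))$, and $\Delta^{opp}_\nu(p)$ is the standard object of that opposite highest weight category. Next I would invoke Lemma \ref{Lem:dual_hom_vanish} to conclude concentration in degree $\tfrac12\dim X$, so $D(\Delta_\nu(p))$ is (up to the shift $[-\tfrac12\dim X]$ coming from the opposite convention) an honest module $Q\in\OCat_\nu(\A_{\lambda^+}^{opp})$. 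Then I would run the argument already present in the proof of Proposition \ref{Prop:D_Ringel}: $\End(Q)=\C$, $\Ext^i(Q,Q')=0$ for $i>0$ or for $p\leqslant^\theta p'$ with $Q'$ the analogous object for $p'$, and the characteristic cycle of $Q$ consists of the contracting components $Y_{p'}$ with $p'\leqslant^{-\theta}p$ with top coefficient $1$ at $Y_p$. These are precisely the axioms characterizing the standard object $\Delta^{opp}_\nu(p)$ in the highest weight category $\OCat_\nu(\A_{\lambda^+}^{opp})$ with the order $\leqslant^{-\theta}$ (which refines to the opposite of $\leqslant^\theta$), by the BGG-type uniqueness of standard objects. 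Hence $Q\cong \Delta^{opp}_\nu(p)$, giving $D(\Delta_\nu(p))\cong \Delta^{opp}_\nu(p)[-\tfrac12\dim X]$.

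The only real content is bookkeeping, so there is no serious obstacle; the one point that needs care is the sign/shift convention — making sure the homological degree $\tfrac12\dim X$ in Lemma \ref{Lem:dual_hom_vanish}, once transported through the ${}^{opp}$ on the target triangulated category, produces the shift $[-\tfrac12\dim X]$ claimed here — and the identification of the relevant characteristic cycle $\overline{\C^\times Y_{\chi,p}}\cap X$ with that of the costandard (equivalently opposite-standard) object, which is exactly the computation already carried out in the last line of the proof of Proposition \ref{Prop:D_Ringel}. For this reason I would simply state the lemma as ``obtained in the proof of Proposition \ref{Prop:D_Ringel}'' and give the one-paragraph argument above as the proof.
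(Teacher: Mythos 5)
Your proposal is correct and follows the paper's own route: the paper states this lemma as having been obtained in (and being a formal corollary of) the proof of Proposition \ref{Prop:D_Ringel}, whose key inputs are exactly the ones you use — Lemma \ref{Lem:dual_hom_vanish} for concentration in degree $\tfrac{1}{2}\dim X$ and the characteristic cycle, plus the $\End$/$\Ext$-vanishing and order comparison from that proof. Your bookkeeping of the shift and the identification of $D(\Delta_\nu(p))$ with the standard object of the opposite category matches the paper's argument.
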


\begin{Cor}\label{Cor:CW_costand}
We have $\CW^*_{\nu\rightarrow -\nu}\Delta_{-\nu}(p)=\nabla_{\nu}(p)[\frac{1}{2}\dim X]$.
\end{Cor}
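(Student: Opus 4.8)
The plan is to pin down $\CW^*_{\nu\rightarrow-\nu}\Delta_{-\nu}(p)$ through the functor it represents. Since $\CW^*_{\nu\rightarrow-\nu}$ is right adjoint to $\CW_{\nu\rightarrow-\nu}$, and since Lemma \ref{CW_vs_D} gives $R\Hom_{D^b(\OCat_{-\nu}(\A_{\lambda^+}))}(\CW_{\nu\rightarrow-\nu}M,N)\cong D(M)\otimes^L_{\A_{\lambda^+}}N\cong R\Hom_{\A_{\lambda^+}}(M,N)$, we get for every $M\in D^b(\OCat_\nu(\A_{\lambda^+}))$ a bifunctorial isomorphism
\[
R\Hom_{D^b(\OCat_\nu(\A_{\lambda^+}))}\!\big(M,\CW^*_{\nu\rightarrow-\nu}\Delta_{-\nu}(p)\big)\;\cong\;R\Hom_{\A_{\lambda^+}}\!\big(M,\Delta_{-\nu}(p)\big),
\]
where I use that $D^b(\OCat_\nu(\A_{\lambda^+}))\hookrightarrow D^b(\A_{\lambda^+}\text{-mod})$ is fully faithful, Proposition \ref{Prop:hw_alg}(3). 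So it suffices to show that $\nabla_\nu(p)[\tfrac12\dim X]$ represents the same functor on $D^b(\OCat_\nu(\A_{\lambda^+}))$; then the two objects are isomorphic by Yoneda. Since both $M\mapsto R\Hom_{\A_{\lambda^+}}(M,\Delta_{-\nu}(p))$ and $M\mapsto R\Hom_{\A_{\lambda^+}}(M,\nabla_\nu(p)[\tfrac12\dim X])$ are triangulated functors of $M$ and $D^b(\OCat_\nu(\A_{\lambda^+}))$ is generated as a triangulated category by the standard objects $\Delta_\nu(p')$, $p'\in X^T$ (the projectives are $\Delta$-filtered), it is enough to compare them on $M=\Delta_\nu(p')$.

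For the target $\nabla_\nu(p)$: $\OCat_\nu(\A_{\lambda^+})$ is a genuine highest weight category (Proposition \ref{Prop:hw_alg}, using $\Ca_\nu(\A_{\lambda^+})\cong\C[X^T]$ semisimple, so $\nabla_\nu=\bnabla_\nu$), hence by Lemma \ref{Lem:exts}(1) one has $R\Hom_{\A_{\lambda^+}}(\Delta_\nu(p'),\nabla_\nu(p))\cong\delta_{p,p'}\C$ in degree $0$. For the other functor: $\A_{\lambda^+}$ has finite homological dimension (Lemma \ref{Lem:gen_simplicity}(3)), so $R\Hom_{\A_{\lambda^+}}(\Delta_\nu(p'),\Delta_{-\nu}(p))\cong D(\Delta_\nu(p'))\otimes^L_{\A_{\lambda^+}}\Delta_{-\nu}(p)$, and Lemma \ref{Lem:dual_stand} rewrites the right–hand side as $\big(\Delta^{opp}_\nu(p')\otimes^L_{\A_{\lambda^+}}\Delta_{-\nu}(p)\big)[-\tfrac12\dim X]$. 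Thus the whole statement is reduced to the bimodule identity
\[
\Delta^{opp}_\nu(p')\otimes^L_{\A_{\lambda^+}}\Delta_{-\nu}(p)\;\cong\;\delta_{p,p'}\,\C
\]
(concentrated in one homological degree); comparing with the display above and keeping track of the shift recorded in Lemma \ref{Lem:dual_stand} produces the claimed $\nabla_\nu(p)[\tfrac12\dim X]$.

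This last identity is the main obstacle, and I would establish it by exactly the method of Lemma \ref{Lem:der_tens_prod} together with Steps 2--5 of the proof of Proposition \ref{Prop:projectivity}: spread $\Delta_\nu(p')$ and $\Delta_{-\nu}(p)$ over the line $\ell=\{\lambda+z\chi\}$, form the derived tensor product of the resulting $\C[\ell]$-families, and show each $\operatorname{Tor}^{\A_\ell}_i$ is $\C[\ell]$-torsion. For that one shows each $\operatorname{Tor}^{\A_\ell}_i$ is a Harish-Chandra bimodule over $\Ca_\nu(\A_\ell)=\C[X^T_{\C\chi}]$ (so that Proposition \ref{Prop:supp_gr} applies, just as in the proof of Lemma \ref{Lem:der_tens_prod}) and that the associated-graded $\operatorname{Tor}$ over $\C[X_{\C\chi}]$ is supported over $0\in\C\chi$. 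The input for the last point is that, over the generic point of $\C\chi$ and at the associated-graded level, $\Delta_\nu(p')$ and $\Delta_{-\nu}(p)$ become (twists of) the structure sheaves of the attracting cell $\overline{Y_{\chi,p'}}$ and of the repelling cell $\overline{Y^-_{\chi,p}}$ in the smooth affine variety $X_\chi$; since $\nu$ is generic these cells are smooth near a fixed point with complementary tangent directions, so their derived intersection is the skyscraper at $p$ when $p=p'$ and empty otherwise, giving $\operatorname{Tor}=\delta_{p,p'}\C$ in a single degree. The genuinely delicate parts are controlling the boundary components of these cells (where $X_\chi$ may be singular and transversality can fail) and the degeneration at $0\in\C\chi$, which is why the argument has to be routed through the $\hbar$-adic/associated-graded picture and the support and Gelfand--Kirillov estimates of Proposition \ref{Prop:supp_gr} and Corollary \ref{Cor:full_GK_O} rather than through a naive $\operatorname{Tor}$ computation.
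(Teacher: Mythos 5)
Your argument has the same skeleton as the paper's: adjunction plus Lemma \ref{CW_vs_D} and Lemma \ref{Lem:dual_stand} reduce the statement to showing that $\Delta^{opp}_{\nu}(p')\otimes^L_{\A_{\lambda^+}}\Delta_{-\nu}(p)$ is $\delta_{p,p'}\,\C$ concentrated in a single degree, and then the Ext-characterization of costandards identifies $\CW^*_{\nu\rightarrow-\nu}\Delta_{-\nu}(p)$ with the shifted $\nabla_\nu(p)$. Where you diverge is the last step: the paper does not re-prove this Tor identity but deduces it directly from Lemma \ref{Lem:der_tens_prod}, applied with $\nu_0$ taken to be the generic one-parameter subgroup $\pm\nu$. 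Indeed, in that case $\Ca_{\nu_0}(\A_{\lambda^+})\cong\C[X^T]$ is a finite product of copies of $\C$, the Verma modules are the specializations $\C_p\otimes_{\Ca_{\nu_0}(\A_{\lambda^+})}\B^{-,o}_{\lambda^+}$ and $\B^{+}_{\lambda^+}\otimes_{\Ca_{\nu_0}(\A_{\lambda^+})}\C_{p'}$, and since specialization over a semisimple algebra is exact, associativity of $\otimes^L$ gives $\Delta^{opp}_{\nu}(p')\otimes^L_{\A_{\lambda^+}}\Delta_{-\nu}(p)\cong \C_{p'}\otimes_{\Ca_{\nu_0}(\A_{\lambda^+})}\Ca_{\nu_0}(\A_{\lambda^+})\otimes_{\Ca_{\nu_0}(\A_{\lambda^+})}\C_p=\delta_{p,p'}\C$ with no further geometry. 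Your plan to rerun the $\ell$-spreading, Harish-Chandra and Gelfand--Kirillov machinery for the individual attracting and repelling cells would also work, but two points deserve care: first, the off-diagonal vanishing for $p\neq p'$ does not follow from ``complementary tangent directions at a fixed point'' alone --- you must rule out intersection points away from the fixed locus, which holds because a point of the affine variety $X_\chi$ attracted to $p$ and repelled to $p'$ would yield a complete orbit closure in $X_\chi$, forcing $p=p'$ (and note $X_\chi$ is smooth for $\chi$ ample, so your worry about singular fibers is vacuous); second, your ``Yoneda on generators'' step should be phrased as the standard devissage showing that an object of $D^b(\OCat_\nu(\A_{\lambda^+}))$ whose $R\Hom$ from every $\Delta_\nu(p')$ is $\delta_{p,p'}\C$ in one fixed degree is the corresponding shift of $\nabla_\nu(p)$, since comparing Hom-groups objectwise on a generating set does not by itself produce an isomorphism in a triangulated category. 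With these repairs your route is correct, but it re-proves (cell by cell) exactly what Lemma \ref{Lem:der_tens_prod} already packages, which is why the paper's proof is three lines.
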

\begin{proof}
We have \begin{align*}
&R\Hom_{\OCat_{-\nu}(\A_{\lambda^+})}(\Delta_{\nu}(p), \CW^*_{\nu\rightarrow -\nu}\Delta_{-\nu}(p')[k])
=D(\Delta_{\nu}(p))\otimes^L_{\A_{\lambda^+}}\Delta_{-\nu}(p')[k]\\
&=\Delta_{\nu}^{opp}(p)\otimes^L_{\A_{\lambda^+}}\Delta_{-\nu}(p')[k-\frac{1}{2}\dim X].
\end{align*}
It follows from Lemma \ref{Lem:der_tens_prod} that the last expression is zero unless
$p=p'$ and $k=\frac{1}{2}\dim X$, in which case it is one dimensional. Therefore  $\CW^*_{\nu\rightarrow -\nu}\Delta_{-\nu}(p')=\nabla_\nu(p')[\frac{1}{2}\dim X]$.
\end{proof}

Let us proceed to the proof of (2).

Recall, see Section \ref{SSS_cat_O_gen}, that $\OCat_{-\nu}(\A_{\lambda^+})\cong \OCat_\nu(\A_{\lambda^+}^{opp})^{opp}$
via $N\mapsto N^{(*)}$ (the restricted dual).

\begin{Lem}\label{Lem:tens_prod_vs_hom}
For $M\in \OCat_{\nu}(\A_{\lambda^+}^{opp}), N\in \OCat_{-\nu}(\A_{\lambda^+})$ we have a
bifunctorial isomorphism
\begin{equation}\label{eq:hom_dual}\Hom_{\A_{\lambda^+}^{opp}}(M,N^{(*)})=(M\otimes_{\A_{\lambda^+}}N)^*.\end{equation}
\end{Lem}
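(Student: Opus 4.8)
The plan is to obtain the isomorphism by combining the standard tensor--hom adjunction over $\A_{\lambda^+}$ with a locality argument that pins down the image of $N^{(*)}$ inside the full linear dual $N^{*}$. First I would use that for an arbitrary right $\A_{\lambda^+}$-module $M$ and left $\A_{\lambda^+}$-module $N$ there is a canonical, bifunctorial isomorphism of vector spaces
\[
(M\otimes_{\A_{\lambda^+}}N)^{*}\;\xrightarrow{\sim}\;\Hom_{\A_{\lambda^+}^{opp}}(M,N^{*}),\qquad \phi\mapsto\bigl(m\mapsto(n\mapsto\phi(m\otimes n))\bigr),
\]
where $N^{*}:=\Hom_{\C}(N,\C)$ carries its natural right $\A_{\lambda^+}$-action $(f\cdot a)(n)=f(an)$. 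This is just the universal property of $\otimes_{\A_{\lambda^+}}$ (a linear functional on $M\otimes_{\A_{\lambda^+}}N$ is the same as a balanced bilinear form on $M\times N$), and it requires no finiteness hypotheses whatsoever.

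It then remains to show that the inclusion $N^{(*)}\hookrightarrow N^{*}$ induces an isomorphism $\Hom_{\A_{\lambda^+}^{opp}}(M,N^{(*)})\xrightarrow{\sim}\Hom_{\A_{\lambda^+}^{opp}}(M,N^{*})$, i.e. that every $\A_{\lambda^+}^{opp}$-module homomorphism $\phi\colon M\to N^{*}$ already takes values in $N^{(*)}$. Let $h$ be the relevant grading element (the image of the cocharacter under the quantum comoment map) governing the category $\OCat$. Since $N\in\OCat_{-\nu}(\A_{\lambda^+})$, Lemma \ref{Lem:fin_length} gives $N=\bigoplus_{\alpha}N_{\alpha}$ with $\dim N_{\alpha}<\infty$, whence $N^{*}=\prod_{\alpha}N_{\alpha}^{*}$; as $h-\alpha$ acts invertibly on $N_{\beta}^{*}$ for $\beta\neq\alpha$, the generalized $h$-eigenspace of $N^{*}$ of eigenvalue $\alpha$ is exactly $N_{\alpha}^{*}$, so $N^{(*)}=\bigoplus_{\alpha}N_{\alpha}^{*}$ is precisely the maximal $h$-locally finite submodule of $N^{*}$. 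On the other hand $M\in\OCat_{\nu}(\A_{\lambda^+}^{opp})$, so again by Lemma \ref{Lem:fin_length} the operator $h$ acts locally finitely on $M$ and $M=\bigoplus_{\alpha}M_{\alpha}$ as an honest direct sum. Any such $\phi$ commutes with the action of $h$, hence carries $M_{\alpha}$ into the generalized $h$-eigenspace of $N^{*}$ of eigenvalue $\alpha$, i.e. into $N_{\alpha}^{*}$; since each element of $M$ is a finite sum of generalized eigenvectors, $\phi(M)\subseteq\bigoplus_{\alpha}N_{\alpha}^{*}=N^{(*)}$.

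Composing the two steps yields the asserted bifunctorial isomorphism $\Hom_{\A_{\lambda^+}^{opp}}(M,N^{(*)})\cong(M\otimes_{\A_{\lambda^+}}N)^{*}$; here the left-hand side is meaningful because $N^{(*)}\in\OCat_{\nu}(\A_{\lambda^+}^{opp})$ by the discussion of restricted duals in Section \ref{SSS_cat_O_gen}. No step is genuinely difficult. The only place needing care is the bookkeeping with the $h$-action on $N^{*}$ — in particular the verification that its generalized $\alpha$-eigenspace is not strictly larger than $N_{\alpha}^{*}$ — together with making sure the sign normalizations of the comoment element and of the functor $N\mapsto N^{(*)}$ are fixed consistently, so that $M$ and $N^{(*)}$ genuinely live in the same category $\OCat_{\nu}(\A_{\lambda^+}^{opp})$ and the $h$-eigenspace decompositions on the two sides match up.
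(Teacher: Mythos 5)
Your proof is correct, and it takes a genuinely different route from the paper's. The paper writes down the same natural map $\varphi\mapsto\langle\varphi(\cdot),\cdot\rangle$, observes it is injective, and then proves surjectivity by a 5-lemma reduction to projective (hence standardly filtered) $M,N$, where both sides are shown to have dimension $\sum_{p\in X^T}[M:\Delta^{opp}_\nu(p)][N:\Delta_{-\nu}(p)]$ using the analog of Lemma \ref{Lem:der_tens_prod} (Tor-acyclicity on standardly filtered objects and $\dim\Delta^{opp}_\nu(p)\otimes_{\A_{\lambda^+}}\Delta_{-\nu}(p')=\delta_{pp'}$) together with the costandard filtration on $N^{(*)}$. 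You instead factor the map through the tensor--hom adjunction $(M\otimes_{\A_{\lambda^+}}N)^*\cong\Hom_{\A_{\lambda^+}^{opp}}(M,N^*)$ and then show that every homomorphism into the full dual lands in $N^{(*)}$, identified as the $h$-locally finite part of $N^*$; the only inputs are that $h$ acts locally finitely on $M$ and that the generalized $h$-eigenspaces of $N$ are finite dimensional, both supplied by Lemma \ref{Lem:fin_length}, and your care with the eigenvalue bookkeeping is in fact not even needed beyond local finiteness (any $\varphi$ commuting with $h$ sends a vector with finite-dimensional $\C[h]$-span to one with finite-dimensional $\C[h]$-span). What each approach buys: yours is more elementary and more robust — it uses no highest weight structure, no reduction to projectives, and no Tor computation, and it gives the isomorphism directly on arbitrary objects; the paper's argument leans on Lemma \ref{Lem:der_tens_prod}, which is needed in the surrounding proofs anyway, and its dimension count records the multiplicity formula that is reused immediately afterwards. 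Both constructions produce the same bifunctorial pairing map, so your lemma can be substituted without affecting the later derived-category extension.
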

\begin{proof}
The space on the right is that of $\K$-bilinear maps $M\times N\rightarrow \K$ satisfying
$\langle ma,n\rangle=\langle m,an\rangle$. So we have a homomorphism from the left hand side
to the right hand side that to an $\A_{\lambda^+}^{opp}$-linear map $\varphi: M\rightarrow N^{(*)}$ assigns
the linear map $\langle m,n\rangle_{\varphi}:=\langle\varphi(m),n\rangle$. This homomorphism
is clearly bifunctorial and injective. So it remains to prove that it is surjective.
By the 5 lemma it is enough to prove the surjectivity when both $M,N$ are projective. We will do this
in the case when $M,N$ are standardly filtered. When $N$ is standardly filtered, the
object $N^{(*)}$ is costandardly filtered. So the left hand side has dimension
$\sum_{p\in X^T}[M:\Delta^{opp}_\nu(p)][N:\Delta_{-\nu}(p)]$. By a direct analog of Lemma
\ref{Lem:der_tens_prod}, the functor
$$\bullet\otimes^L_{\A_{\lambda^+}}?: D^b(\OCat_{\nu}(\A^{opp}_{\lambda^+}))\times
D^b(\OCat_{-\nu}(\A_{\lambda^+}))\rightarrow D^b(\operatorname{Vect})$$
is acyclic on standardly filtered objects and also
$\dim\Delta^{opp}_{\nu}(p)\otimes_{\A_{\lambda^+}}\Delta_{-\nu}(p')=\delta_{pp'}$.
It follows that the dimension of the right hand side of (\ref{eq:hom_dual}) is also
$\sum_{p\in X^T}[M:\Delta^{opp}_\nu(p)][N:\Delta_{-\nu}(p)]$.
\end{proof}

Lemma \ref{Lem:tens_prod_vs_hom} implies that
$$ \Hom_{\A_{\lambda^+}^{opp}}(M,N^{(*)})=(M\otimes_{\A_{\lambda^+}}N)^*,\quad
M\in D^b(\OCat_\nu(\A^{opp}_{\lambda^+})), N\in D^b(\OCat_{-\nu}(\A_{\lambda^+})).$$
So we get the following
bi-functorial isomorphism:
\begin{align*}
&\Hom_{D^b(\OCat_{-\nu}(\A_{\lambda^+}))}(\CW_{\nu\rightarrow -\nu}(M),N)= D(M)\otimes_{\A_{\lambda^+}}N=\\
&=\Hom_{D^b(\OCat_{\nu}(\A^{opp}_{\lambda^+}))}(D(M), N^{(*)})^*=\Hom_{D^b(\OCat_{-\nu}(\A_{\lambda^+}))}(N, D^{(*)}(M))^*.
\end{align*}
Here we write $D^{(*)}(M)$ for $(D(M))^{(*)}$.  Note that $D^{(*)}$ is an equivalence. So the functor $\mathcal{F}:=\CW_{\nu\rightarrow -\nu}\circ (D^{(*)})^{-1}$
satisfies
\begin{equation}\label{eq:Serre}\Hom_{D^b(\OCat_{-\nu}(\A_{\lambda^+}))}(\mathcal{F}N', N)\cong \Hom_{D^b(\OCat_{-\nu}(\A_{\lambda^+}))}(N,N')^*.\end{equation}
From here it is easy to see that $\mathcal{F}$ is fully faithful. Also $\mathcal{F}$
has right adjoint: $\mathcal{F}^*=D^{(*)}\circ \CW_{\nu\rightarrow -\nu}^*$.
Since $\mathcal{F}$ is fully faithful, the adjunction morphism $\operatorname{id}\rightarrow\mathcal{F}^*\circ \mathcal{F}$ is an isomorphism. It follows from (\ref{eq:Serre})
 that the adjoint $\mathcal{F}^*$ is also fully faithful. So we conclude that
$\mathcal{F},\mathcal{F}^*$ are mutually inverse equivalences. Therefore $\CW_{\nu\rightarrow -\nu}$
is an equivalence of triangulated categories.

Note that $\CW^*_{\nu\rightarrow -\nu}[\frac{1}{2}\dim X]$ sends $\Delta_{-\nu}(p)$
to $\nabla_\nu(p)$. Since $\CW^*_{\nu\rightarrow -\nu}[\frac{1}{2}\dim X]$
is an equivalence, it follows from Corollary \ref{Cor:CW_costand} that
$\CW^*_{\nu\rightarrow -\nu}[\frac{1}{2}\dim X]$ is a Ringel duality functor.
This finishes the proof of (2).

\subsection{Short cross-walling functors are equivalences}\label{SS_short_CW_equi}
Here we consider the case when $\nu,\nu'$ lie in chambers sharing a wall (we say that $\nu,\nu'$
are neighbors). We will show that  $\CW_{\nu\rightarrow \nu'}$ (a short wall-crossing functor) is an equivalence.

We will prove a stronger result. Let $\nu,\nu'\rightsquigarrow \nu_0$. Let $\underline{\CW}_{\nu\rightarrow\nu'}$
denote the cross-walling functor $D^b(\OCat_\nu(\Ca_{\nu_0}(\A_\lambda^\theta)))\rightarrow
D^b(\OCat_{\nu'}(\Ca_{\nu_0}(\A_\lambda^\theta)))$.
\begin{Lem}\label{Lem:equiv_inherit}
If $\underline{\CW}_{\nu\rightarrow \nu'}$ is an equivalence, then so is $\CW_{\nu\rightarrow \nu'}$.
\end{Lem}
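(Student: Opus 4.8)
The plan is to use the standardly stratified structures provided by Theorem \ref{Thm:cat_O_ss}, together with the compatibility of cross-walling with standardization and costandardization from Proposition \ref{Prop_CW_prop}(1), and to argue by d\'evissage along the stratification filtration. The pre-order $\preceq_{\nu_0}$, and hence the stratification poset $\Xi=\{Z_1,\dots,Z_k\}$ of connected components of $X^{\nu_0(\C^\times)}$, depends only on $\nu_0$, so $\OCat_\nu(\A_\lambda^\theta)$ and $\OCat_{\nu'}(\A_\lambda^\theta)$ are standardly stratified over the \emph{same} poset, with stratum at $Z$ equal to $\OCat_\nu(\Ca_{\nu_0}(\A_\lambda^\theta)|_Z)$ resp.\ $\OCat_{\nu'}(\Ca_{\nu_0}(\A_\lambda^\theta)|_Z)$ and standardization/costandardization functors $\Delta_{\nu_0},\nabla_{\nu_0}$ (Proposition \ref{Prop:assoc_graded_O}). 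Since $X^{\nu_0(\C^\times)}=\bigsqcup_i Z_i$, the sheaf $\Ca_{\nu_0}(\A_\lambda^\theta)$ splits accordingly, $\OCat_\nu(\Ca_{\nu_0}(\A_\lambda^\theta))$ is the product of the $\OCat_\nu(\Ca_{\nu_0}(\A_\lambda^\theta)|_{Z_i})$, and (componentwise, as in Lemma \ref{Lem:CW_prod}) $\underline{\CW}_{\nu\to\nu'}$ is the product of the functors $\underline{\CW}_{\nu\to\nu'}|_{Z_i}$; so the hypothesis is equivalent to each $\underline{\CW}_{\nu\to\nu'}|_{Z_i}$ being an equivalence.

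Next I would check that $\CW_{\nu\to\nu'}$ and its right adjoint $\CW^*_{\nu\to\nu'}$ respect the stratification filtration. For a poset ideal $\Xi_0\subseteq\Xi$, the fully faithful embedding $D^b(\OCat_\nu(\A_\lambda^\theta)_{\Xi_0})\hookrightarrow D^b(\OCat_\nu(\A_\lambda^\theta))$ of Lemma \ref{Lem:derived_inclusion} has essential image the triangulated subcategory generated by the objects $\Delta_{\nu_0}(N)$ with $N$ an object of a stratum $Z\in\Xi_0$ --- indeed $\OCat_\nu(\A_\lambda^\theta)_{\Xi_0}$ is standardly stratified, its projectives are standardly filtered by (SS2), and each standard object has this form. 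By Proposition \ref{Prop_CW_prop}(1), $\CW_{\nu\to\nu'}\Delta_{\nu_0}(N)\cong\Delta_{\nu_0}\underline{\CW}_{\nu\to\nu'}(N)$, and $\underline{\CW}_{\nu\to\nu'}(N)$ stays in the stratum $Z$; hence $\CW_{\nu\to\nu'}$ maps $D^b(\OCat_\nu(\A_\lambda^\theta)_{\Xi_0})$ into $D^b(\OCat_{\nu'}(\A_\lambda^\theta)_{\Xi_0})$. Dually, using that injectives in $\OCat_{\nu'}(\A_\lambda^\theta)_{\Xi_0}$ are costandardly filtered, that costandards are $\nabla_{\nu_0}$ of stratum objects, and the isomorphism $\CW^*_{\nu\to\nu'}\circ\nabla_{\nu_0}\cong\nabla_{\nu_0}\circ\underline{\CW}^*_{\nu\to\nu'}$ of Proposition \ref{Prop_CW_prop}(1), the functor $\CW^*_{\nu\to\nu'}$ carries $D^b(\OCat_{\nu'}(\A_\lambda^\theta)_{\Xi_0})$ into $D^b(\OCat_\nu(\A_\lambda^\theta)_{\Xi_0})$.

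Finally I would induct on $|\Xi|$. Pick a $\preceq_{\nu_0}$-minimal $Z\in\Xi$. Then $\OCat_\nu(\A_\lambda^\theta)_{\{Z\}}$ is identified, via $\Delta_{\nu_0}$ (which on this ideal is an equivalence onto its image), with the stratum $\OCat_\nu(\Ca_{\nu_0}(\A_\lambda^\theta)|_Z)$, and the restriction of $\CW_{\nu\to\nu'}$ to $D^b$ of this subcategory is $\underline{\CW}_{\nu\to\nu'}|_Z$, an equivalence by hypothesis. Passing to the Verdier quotients $D^b(\OCat_\nu(\A_\lambda^\theta))/D^b(\OCat_\nu(\A_\lambda^\theta)_{\{Z\}})\simeq D^b(\OCat_\nu(\A_\lambda^\theta)^{\{Z\}})$, and likewise for $\nu'$, which by the discussion in Section \ref{S_SSC} are again standardly stratified over $\Xi\setminus\{Z\}$ with unchanged strata and standardization $\pi_{\{Z\}}\circ\Delta_{\nu_0}$, the functor $\CW_{\nu\to\nu'}$ descends to a functor intertwining the standardizations in the same way (apply $\pi_{\{Z\}}$ to Proposition \ref{Prop_CW_prop}(1)); by the inductive hypothesis it is an equivalence. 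Since $\CW_{\nu\to\nu'}$ and $\CW^*_{\nu\to\nu'}$ both preserve $D^b(\OCat_?(\A_\lambda^\theta)_{\{Z\}})$ and induce equivalences both on it and on the quotient, a standard five-lemma argument with the localization triangle shows $\CW_{\nu\to\nu'}$ is an equivalence.

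The hard part will be this last step: one must verify that the stratification gives a genuine recollement at each stage --- so that $D^b(\OCat_?(\A_\lambda^\theta)_{\{Z\}})$ is an admissible subcategory on which $\CW^*_{\nu\to\nu'}$ restricts correctly --- and that the functor induced by $\CW_{\nu\to\nu'}$ on the quotient categories again satisfies precisely the hypotheses needed to run the induction, in particular that the intertwining isomorphisms of Proposition \ref{Prop_CW_prop}(1) descend, so that the five-lemma/d\'evissage argument genuinely applies. All of this ultimately reduces to the formal properties of standardly stratified categories recorded in Section \ref{S_SSC}.
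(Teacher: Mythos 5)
Your reduction is the same as the paper's: you use Proposition \ref{Prop_CW_prop}(1) together with Proposition \ref{Prop:assoc_graded_O} and Lemma \ref{Lem:derived_inclusion} to show that $\CW_{\nu\rightarrow\nu'}$ and $\CW^*_{\nu\rightarrow\nu'}$ preserve the filtration of $D^b(\OCat_?(\A_{\lambda^+}))$ by the subcategories $D^b(\OCat_?(\A_{\lambda^+})_{\preceq_{\nu_0}Z})$ (generated by the standards $\Delta_{\nu_0}(N)$, resp.\ the costandards $\nabla_{\nu_0}(N)$, from strata $\preceq_{\nu_0}Z$), and that on the successive subquotients they induce $\underline{\CW}_{\nu\rightarrow\nu'}$ and $\underline{\CW}^*_{\nu\rightarrow\nu'}$, which are mutually quasi-inverse once $\underline{\CW}_{\nu\rightarrow\nu'}$ is an equivalence. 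Up to that point your argument and the paper's coincide.

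The gap is in your concluding step, which is the actual content of the lemma. You induct on the poset via Verdier quotients and appeal to ``a standard five-lemma argument with the localization triangle,'' explicitly deferring the existence of a recollement and the descent of the intertwining isomorphisms. But for a bare triangulated functor preserving a thick subcategory, inducing equivalences on the subcategory and on the Verdier quotient does \emph{not} formally imply being an equivalence: without the adjoints to the inclusion there are no functorial localization triangles on which to run a five lemma. Section \ref{S_SSC} provides neither a $D^b$-level recollement nor the identification of $D^b(\Cat)/D^b(\Cat_{\Xi_0})$ with $D^b(\Cat^{\Xi_0})$ (Lemma \ref{Lem:derived_inclusion} only gives full faithfulness of the inclusion), and the quotient categories are no longer categories $\OCat$ of quantized resolutions, so even the inductive statement would have to be reformulated abstractly. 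The paper closes the argument with none of this machinery, using only what you have already established plus the adjunction $\CW_{\nu\rightarrow\nu'}\dashv\CW^*_{\nu\rightarrow\nu'}$: the filtration and the subquotient equivalences make $\CW_{\nu\rightarrow\nu'}$ conservative (if $\CW_{\nu\rightarrow\nu'}K=0$, d\'evissage along the finite filtration forces $K=0$), and the cones of the unit $M\rightarrow\CW^*_{\nu\rightarrow\nu'}\CW_{\nu\rightarrow\nu'}M$ and of the counit vanish because on each subquotient these morphisms become the unit and counit of the quasi-inverse pair $(\underline{\CW}_{\nu\rightarrow\nu'},\underline{\CW}^*_{\nu\rightarrow\nu'})$; hence the two functors are mutually inverse equivalences. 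Replacing your recollement/five-lemma step by this adjunction-based d\'evissage would complete your proof.
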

The claim that $\CW_{\nu\rightarrow \nu'}$ is an equivalence will follow: thanks
to (2) of Proposition \ref{Prop_CW_prop}, when $\nu,\nu'$ are neighbors, the functor
$\underline{\CW}_{\nu\rightarrow\nu'}$ is the sum of Ringel dualities with various shifts.

\begin{proof}[Proof of Lemma \ref{Lem:equiv_inherit}]
Recall that, according to (1) of Proposition \ref{Prop_CW_prop},
\begin{equation}\label{eq:func_equal}
\begin{split}
&\CW_{\nu\rightarrow \nu'}\circ \Delta_{\nu_0}\cong \Delta_{\nu_0}\circ \underline{\CW}_{\nu\rightarrow \nu'},\\
&\CW^*_{\nu\rightarrow \nu'}\circ \nabla_{\nu_0}\cong \nabla_{\nu_0}\circ \underline{\CW}^*_{\nu\rightarrow \nu'}.
\end{split}
\end{equation}
It follows that both $\CW_{\nu\rightarrow \nu'},\CW^*_{\nu\rightarrow \nu'}$ respect the
filtrations $$D^b_{\preceq_{\nu_0}Z}(\OCat_\nu(\A_{\lambda^+}))\subset D^b(\OCat_\nu(\A_{\lambda^+})),
D^b_{\preceq_{\nu_0}Z}(\OCat_{\nu'}(\A_{\lambda^+}))\subset D^b(\OCat_{\nu'}(\A_{\lambda^+})),$$
where $Z$ runs over the poset of the connected components of $X^{\nu_0(\C^\times)}$.
Recall, Lemma \ref{Lem:derived_inclusion}, that  $D^b(\OCat_\nu(\A_{\lambda^+})_{\preceq_{\nu_0}Z})$
is a full subcategory in $D^b(\OCat_\nu(\A_{\lambda^+})$.
Moreover,  the associated graded categories are just $D^b(\OCat_\nu(\Ca_{\nu_0}(\A_{\lambda^+}))$, $
D^b(\OCat_{\nu'}(\Ca_{\nu_0}(\A_{\lambda^+}))$ by Proposition \ref{Prop:assoc_graded_O}. It follows
from (\ref{eq:func_equal}) that  the  functors $D^b(\OCat_\nu(\Ca_{\nu_0}(\A_{\lambda^+}))\rightleftarrows D^b(\OCat_{\nu'}(\Ca_{\nu_0}(\A_{\lambda^+}))$ induced by  $\CW_{\nu\rightarrow \nu'},
\CW^*_{\nu\rightarrow \nu'}$ are $\underline{\CW}_{\nu\rightarrow \nu'}, \underline{\CW}^*_{\nu\rightarrow \nu'}$.
The latter are mutually quasi-inverse equivalences. So if $\CW_{\nu\rightarrow \nu'}(K)=0$
for some $K\in D^b(\OCat_\nu(\A_{\lambda^+}))$, then $K=0$. Applying this
for $K$ being the cone of the adjunction morphism $M\rightarrow \CW^*_{\nu\rightarrow \nu'}\circ \CW_{\nu\rightarrow \nu'}M$, we see that $M\xrightarrow{\sim} \CW^*_{\nu\rightarrow \nu'}\circ \CW_{\nu\rightarrow \nu'}M$.
Similarly, we get that $\CW_{\nu\rightarrow \nu'}\circ \CW^*_{\nu\rightarrow \nu'}N\xrightarrow{\sim}N$.
We conclude that $\CW_{\nu\rightarrow \nu'}, \CW^*_{\nu\rightarrow \nu'}$ are mutually inverse equivalences.
\end{proof}

\subsection{Proof of Theorem \ref{Thm:braid_equiv}}
The following theorem is a crucial step in the proof of (and also a special case of) Theorem \ref{Thm:braid_equiv}.

\begin{Thm}\label{Thm:short_CW_comp}
Let $\nu,\nu',\nu''$ be three generic one-parameter subgroups forming a reduced sequence (in this order)
such that $\nu',\nu''$ are neighbors. Then $\CW_{\nu\rightarrow \nu''}\xrightarrow{\sim}\CW_{\nu'\rightarrow \nu''}\circ \CW_{\nu\rightarrow \nu'}$.
\end{Thm}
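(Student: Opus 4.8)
The plan is to use the functor morphism \eqref{eq:CW_fun_morph}, specialized to the triple $\nu,\nu',\nu''$, and check that it is an isomorphism by evaluating both sides on a generating set of $D^b(\OCat_\nu(\A_\lambda^\theta))$. The natural candidate for a generating set is the collection of standard objects $\Delta_\nu(p)$, $p\in X^T$; it suffices to show that the morphism
\[
\CW_{\nu\rightarrow \nu''}(\Delta_\nu(p))\longrightarrow \CW_{\nu'\rightarrow \nu''}\bigl(\CW_{\nu\rightarrow \nu'}(\Delta_\nu(p))\bigr)
\]
is an isomorphism for every $p$, and then invoke a standard dévissage (both functors are triangulated, and the class of $M$ for which the morphism is iso is a triangulated subcategory containing all $\Delta_\nu(p)$, hence everything). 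So the heart of the matter is a computation of $\CW_{\nu\rightarrow \nu'}$ on standards.

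First I would reduce to the computation of $\CW_{\nu\rightarrow\nu'}$ on standards when $\nu,\nu'$ are \emph{neighbors}. The idea is to pick $\nu_0$ in the common wall of $\nu,\nu'$ (so $\nu\rightsquigarrow\nu_0$ and $\nu'\rightsquigarrow\nu_0$), use (1) of Proposition \ref{Prop_CW_prop} to get $\CW_{\nu\rightarrow\nu'}\circ\Delta_{\nu_0}\cong \Delta_{\nu_0}\circ\underline{\CW}_{\nu\rightarrow\nu'}$, and combine with (2) of Proposition \ref{Prop_CW_prop}: on $\Ca_{\nu_0}(\A_\lambda^\theta)$ the fixed points of $\nu$ and $-\nu$ for the residual action coincide up to the reflection across the wall, so $\underline{\CW}_{\nu\rightarrow\nu'}$ is (up to a homological shift on each block $Z$) a Ringel duality, hence sends $\underline{\Delta}(p)$ to $\underline{\nabla}(p)[\tfrac12\dim Z]$. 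Pushing forward by $\Delta_{\nu_0}$ and using $\Delta_{\nu_0}\circ\underline{\nabla}=\nabla_{\nu_0}\circ(\text{something})$—more precisely using that the standardly stratified structure of Theorem \ref{Thm:cat_O_ss} identifies $\Delta_{\nu_0}(\underline{\Delta}(p))=\Delta_\nu(p)$ and lets one control $\Delta_{\nu_0}(\underline{\nabla}(p))$ in terms of the $\nu'$-standard filtration—I expect to obtain that $\CW_{\nu\rightarrow\nu'}(\Delta_\nu(p))$ is a module (concentrated in one homological degree after an explicit shift) with a $\Delta_{\nu'}$-filtration, and moreover that its class in $K_0$ is determined by the residual Ringel duality. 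Granting this, for a \emph{maximal reduced} sequence $\nu\to\nu'\to\nu''$ one checks that the composite $\CW_{\nu'\rightarrow\nu''}\circ\CW_{\nu\rightarrow\nu'}$ applied to $\Delta_\nu(p)$ is again standardly filtered with the correct class, and that the morphism \eqref{eq:CW_fun_morph} between two standardly filtered objects with equal $K_0$-class and equal ``top'' is forced to be an isomorphism (using the $\Ext$-vanishing between standards and proper costandards, Lemma \ref{Lem:exts}).

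The reduction of the general reduced sequence $\nu\to\nu'\to\nu''$ (with $\nu',\nu''$ neighbors but $\nu$ arbitrary) to the maximal case uses Lemma \ref{Lem:max_reduced} to refine $\nu\to\nu'$ into a maximal reduced chain $\nu=\mu_0,\mu_1,\dots,\mu_r=\nu'$, so that $\nu=\mu_0,\mu_1,\dots,\mu_r=\nu',\nu''$ is a maximal reduced chain for $\nu\to\nu''$; then the composition statement for maximal reduced sequences (proved above step-by-step, each step joining two neighbors) plus functoriality of \eqref{eq:CW_fun_morph} gives the general claim.

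The main obstacle I anticipate is the explicit description of $\CW_{\nu\rightarrow\nu'}(\Delta_\nu(p))$ for neighbors $\nu,\nu'$: one must show it is acyclic away from a single homological degree and compute its characteristic cycle / standard multiplicities. I would attack this exactly as Lemma \ref{Lem:dual_hom_vanish} and Proposition \ref{Prop:D_Ringel} were proved—deform to a generic line $\ell=\{\lambda+z\chi\}$, pass to associated graded over $\C\chi$, use the complete-intersection description of the contracting and repelling loci (as in Lemma \ref{Lem:BZ_assoc_graded}), and use that $Y_\nu$ and $Y_{\nu'}$ meet transversally along $X^{\nu_0(\C^\times)}$ (the wall case) to kill higher Tor's and higher Ext's, exactly as in Lemma \ref{Lem:der_tens_prod}. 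The bookkeeping of homological shifts $\tfrac12\dim Z$ over the various blocks $Z$ of $X^{\nu_0(\C^\times)}$, and checking these add up correctly along a maximal reduced chain so that the composite morphism is a genuine isomorphism and not merely a quasi-isomorphism up to a global shift, is the delicate point; I would track it via $\delta(\nu,\nu')$ and the identity $\delta(\nu,\nu')=\sum\dim Z_i^{+}$ relating the defect to the positive-weight subspaces of the $T_pX$.
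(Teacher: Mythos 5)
Your overall plan---evaluate the comparison morphism (\ref{eq:CW_fun_morph}) on a generating family and force it to be an isomorphism---is the right shape, but the two load-bearing steps are not established, and one of them cannot work as stated. First, the criterion you invoke to conclude on generators (``a morphism between two standardly filtered objects with equal $K_0$-class and equal top is forced to be an isomorphism'') is not a valid deduction: a map of complexes, even between objects concentrated in a single degree with filtrations of the same type and the same class in $K_0$, can fail to be injective or surjective, and Lemma \ref{Lem:exts} gives no handle on this. Moreover the structural input you feed into it is off: by (1) and (2) of Proposition \ref{Prop_CW_prop} (see also Corollary \ref{Cor:CW_costand}), for neighbors the functor $\underline{\CW}_{\nu\rightarrow\nu'}$ behaves like a (shifted, blockwise) Ringel duality, so it sends standards of $\OCat_\nu(\Ca_{\nu_0}(\A_\lambda^\theta))$ to \emph{costandard}-type objects; applying $\Delta_{\nu_0}$ then produces objects filtered by proper standards of the stratification, not objects with a $\Delta_{\nu'}$-filtration, so the $K_0$/top bookkeeping you propose does not even have the claimed shape. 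Second, your reduction of the general reduced triple $(\nu,\nu',\nu'')$ to neighbor steps is circular: the factorization of $\CW_{\nu\rightarrow\nu'}$ along a maximal reduced chain is exactly what Theorem \ref{Thm:braid_equiv} deduces \emph{from} Theorem \ref{Thm:short_CW_comp} by induction, and your neighbor computation gives no access to $\CW_{\nu\rightarrow\nu'}(\Delta_\nu(p))$ when $\nu,\nu'$ are not neighbors without already assuming that factorization.

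The paper handles the general $\nu$ directly and by a different mechanism. It passes to the right adjoints $\CW^*$ and shows that the morphism $\CW^*_{\nu\rightarrow\nu'}\circ\CW^*_{\nu'\rightarrow\nu''}\rightarrow\CW^*_{\nu\rightarrow\nu''}$ is an isomorphism on the costandard objects $\nabla_{\nu_2}(N)$ attached to the wall $\nu_2$ between $\nu'$ and $\nu''$ (these exhaust the costandards of $\OCat_{\nu''}$ since $\nu',\nu''$ are neighbors), by testing Homs from the standard objects $\Delta_{\nu_1}(M)$ attached to a wall $\nu_1$ between $\nu$ and an auxiliary neighbor $\bar{\nu}$ chosen so that $\nu,\bar{\nu},\nu',\nu''$ is reduced (Proposition \ref{Lem:Hom_iso}); since these $\Delta_{\nu_1}(M)$ exhaust the standards of $\OCat_\nu$, the cone of the comparison map vanishes. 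The Hom computation is converted, via Lemmas \ref{Lem:fun_commut_diagr1} and \ref{Lem:fun_commut_diagr2} and Lemma \ref{Lem:der_st_cost}, into a statement about the single object $\mathcal{Q}=\B^2\otimes^L_{\A}\B^1$ with $\B^1=\A/\A\A^{>0,\nu_1}$, $\B^2=\A/\A^{<0,\nu_2}\A$, and the decisive point is Lemma \ref{Lem:Q_struct}: the homology of $\mathcal{Q}$ lies in $\OCat_{\nu'}(\Ca_{\nu_2}(\A))\boxtimes\OCat_{\nu}(\Ca_{\nu_1}(\A)^{opp})$, which is exactly where the reducedness of the sequence enters (via weight bounds and the position of $\nu,\nu'$ relative to the walls containing $\nu_1,\nu_2$). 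Nothing in your proposal plays the role of this lemma, and without it (or a genuine substitute, such as an honest computation of the cone) the argument does not close.
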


A key step in the proof is the next proposition. Let $\bar{\nu}$ be a generic one-parameter
subgroup that is a neighbor of $\nu$ and such that
the sequence $\nu,\bar{\nu}, \nu',\nu''$ is reduced (note that here we can have $\bar{\nu}=\nu'$).
Let $\nu_1,\nu_2$ be such that $\nu,\bar{\nu}\rightsquigarrow \nu_1, \nu',\nu''\rightsquigarrow \nu_2$
and $\nu_1$ lies on the wall between $\nu,\bar{\nu}$, while $\nu_2$ lies on the wall between
$\nu',\nu''$. Below we will write $\A$ instead of $\A_{\lambda^+}$.

\begin{Prop}\label{Lem:Hom_iso}
For $M\in D^b(\OCat_\nu(\Ca_{\nu_1}(\A))), N\in D^b(\OCat_{\nu''}(\Ca_{\nu_2}(\A)))$,
the natural homomorphism
\begin{equation}\label{eq:nat_hom} \Hom_{\A}(\Delta_{\nu_1}(M),\CW^*_{\nu\rightarrow \nu'}\CW^*_{\nu'\rightarrow \nu''}\nabla_{\nu_2}(N))
\rightarrow \Hom_{\A}(\Delta_{\nu_1}(M),\CW_{\nu\rightarrow \nu''}^*\nabla_{\nu_2}(N))\end{equation}
is an isomorphism.
\end{Prop}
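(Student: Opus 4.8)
The goal is to show that the functor morphism
\[
\CW_{\nu\rightarrow \nu''}\rightarrow \CW_{\nu'\rightarrow \nu''}\circ \CW_{\nu\rightarrow \nu'}
\]
from (\ref{eq:CW_fun_morph}) becomes an isomorphism after applying it to $\Delta_{\nu_1}(M)$ and testing against $\nabla_{\nu_2}(N)$. The strategy is to reduce everything to the ``Cartan-subquotient'' categories $\OCat_?(\Ca_{\nu_i}(\A))$, where the analogous statement is known because, by (2) of Proposition \ref{Prop_CW_prop} and the reduced-sequence hypothesis, short cross-walling functors between those categories are (shifts of) Ringel dualities, hence the composition law for them is already available (Section \ref{SS_short_CW_equi}). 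Concretely, I would first rewrite both sides of (\ref{eq:nat_hom}) using the adjunctions $(\CW,\CW^*)$ and the characterization (\ref{eq:CW_char}): the right-hand side is
\[
\Hom_{D^b(\A\operatorname{-mod})}(\CW_{\nu\rightarrow\nu''}\Delta_{\nu_1}(M),\nabla_{\nu_2}(N))
=\Hom_{D^b(\A\operatorname{-mod})}(\Delta_{\nu_1}(M),\nabla_{\nu_2}(N)),
\]
and the left-hand side unwinds similarly to $\Hom_{D^b(\A\operatorname{-mod})}(\Delta_{\nu_1}(M),\nabla_{\nu_2}(N))$ as well, provided one can push $\CW^*$ past the standardization/costandardization functors. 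That is exactly what part (1) of Proposition \ref{Prop_CW_prop} provides, but only when the cross-walling and the parabolic direction are ``compatible'', i.e.\ when the relevant generic one-parameter subgroups both flow to the same $\nu_0$.

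So the key step is a bookkeeping argument matching up the walls. Since $\nu,\bar\nu\rightsquigarrow\nu_1$ and $\nu',\nu''\rightsquigarrow\nu_2$, I would use part (1) of Proposition \ref{Prop_CW_prop} (with $\nu_0=\nu_1$) to get
\[
\CW^*_{\nu\rightarrow\bar\nu}\circ\nabla_{\nu_1}\cong \nabla_{\nu_1}\circ\underline{\CW}^*_{\nu\rightarrow\bar\nu},\qquad
\CW_{\nu\rightarrow\bar\nu}\circ\Delta_{\nu_1}\cong\Delta_{\nu_1}\circ\underline{\CW}_{\nu\rightarrow\bar\nu},
\]
and analogously with $\nu_0=\nu_2$ for the pair $\nu',\nu''$. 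The subtlety is that $\CW^*_{\nu\rightarrow\nu'}$ and $\CW^*_{\nu\rightarrow\nu''}$ are not individually ``$\nu_i$-compatible''; to handle them one factors them through the intermediate short cross-wallings $\CW_{\nu\rightarrow\bar\nu}$ and $\CW_{\bar\nu\rightarrow\nu'}$ (etc.), each of which \emph{is} compatible with one of the parabolic structures, using that $\bar\nu$ was chosen so that $\nu,\bar\nu,\nu',\nu''$ is reduced. Here I would invoke Lemma \ref{Lem:max_reduced} to refine the reduced sequence $\nu,\nu',\nu''$ into a maximal reduced one, so that consecutive terms are always neighbors, and then Theorem \ref{Thm:short_CW_comp}'s statement (proved for neighbors via (2) of Proposition \ref{Prop_CW_prop}) applies at each step. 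The induced functors on associated graded categories $\gr_{\nu_i}$ are $\underline{\CW}$-type functors by Proposition \ref{Prop:assoc_graded_O}, and on those the composition morphism is an isomorphism because there $\underline{\CW}$ is a composition of Ringel dualities, whose composition law is formal.

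Assembling: apply $\CW^*$-adjunction to both sides of (\ref{eq:nat_hom}), use Lemma \ref{Lem:der_st_cost} to write $\Delta_{\nu_1}(M)=\B^+\otimes^L M$, $\nabla_{\nu_2}(N)=R\Hom(\B^{-,o},N)$, move the $\CW^*$'s across these via the compatibility isomorphisms of Proposition \ref{Prop_CW_prop}(1) applied in stages along the maximal reduced refinement, and observe that in each stage the residual discrepancy lives in a derived Hom computed inside the associated graded category, where the short-cross-walling composition law of Section \ref{SS_short_CW_equi} makes it an isomorphism. The functor morphism (\ref{eq:CW_fun_morph}) is compatible with all these manipulations by naturality of the adjunction unit. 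Then (\ref{eq:nat_hom}) is an isomorphism because both sides are identified with the same $\Hom$ space $\Hom_{D^b(\A\operatorname{-mod})}(\Delta_{\nu_1}(M),\nabla_{\nu_2}(N))$ compatibly with the comparison map.

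\textbf{Main obstacle.} The hard part is not any single computation but the combinatorial/functorial bookkeeping of walls: ensuring that every occurrence of $\CW^*_{?\rightarrow ?}$ can be decomposed into short steps each of which is genuinely compatible (in the sense of Proposition \ref{Prop_CW_prop}(1)) with exactly one of the two parabolic structures $\Ca_{\nu_1}$, $\Ca_{\nu_2}$, and that these compatibilities can be applied \emph{simultaneously} on the two sides of (\ref{eq:nat_hom}) in a way that is natural with respect to the composition morphism (\ref{eq:CW_fun_morph}). This is where the reducedness of $\nu,\bar\nu,\nu',\nu''$ and the choice of $\bar\nu$ as a neighbor of $\nu$ are essential — without them the intermediate short cross-wallings need not be equivalences, and the decomposition collapses.
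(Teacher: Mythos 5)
Your proposal has a genuine gap, and in fact two intertwined ones. First, it is circular: you invoke Theorem \ref{Thm:short_CW_comp} (and the ``composition law'' for short cross-walling functors along a maximal reduced refinement) as an input, but in the paper that theorem is \emph{deduced from} Proposition \ref{Lem:Hom_iso}; likewise, the only thing available from Section \ref{SS_short_CW_equi} at this stage is that short cross-wallings are equivalences, not that the natural morphism (\ref{eq:CW_fun_morph}) between a long cross-walling and a composition of short ones is an isomorphism. Your remark that on the subquotient categories ``the composition morphism is an isomorphism because $\underline{\CW}$ is a composition of Ringel dualities, whose composition law is formal'' is not a valid argument: knowing each functor is an equivalence (even a Ringel duality) says nothing about whether the specific adjunction-unit morphism (\ref{eq:CW_fun_morph}) is invertible -- that is exactly the statement being proved. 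Moreover, the wall bookkeeping cannot close as described: Proposition \ref{Prop_CW_prop}(1) only lets you commute $\CW^*_{\eta\rightarrow\eta'}$ past $\nabla_{\nu_0}$ when \emph{both} $\eta,\eta'\rightsquigarrow\nu_0$, and the intermediate chambers of a maximal reduced refinement of $\nu,\nu',\nu''$ generally do not have $\nu_1$ or $\nu_2$ in their closure, so the stage-by-stage compatibilities you want do not exist.

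Second, the proposal is missing the actual content of the paper's argument. After the (correct) adjunction rewriting -- the right-hand side is $\Hom_{D^b(\A\operatorname{-mod})}(\Delta_{\nu_1}(M),\nabla_{\nu_2}(N))$ and, using Proposition \ref{Prop_CW_prop}(1) for the pair $\nu',\nu''\rightsquigarrow\nu_2$, the left-hand side is $\Hom_{D^b(\A\operatorname{-mod})}(\Delta_{\nu_1}(M),\iota^*_{\nu'}\nabla_{\nu_2}(N))$ -- what one must show is that applying the coreflection $\iota^*_{\nu'}$ does not change Homs out of $\Delta_{\nu_1}(M)$. The paper does this by introducing the bimodule $\mathcal{Q}=\B^2\otimes^L_{\A}\B^1$ (with $\B^1=\A/\A\A^{>0,\nu_1}$, $\B^2=\A/\A^{<0,\nu_2}\A$), rewriting both Hom spaces as Homs out of $\mathcal{Q}$ in $D^b(\Ca_{\nu_2}(\A)\otimes\Ca_{\nu_1}(\A)^{opp}\operatorname{-Mod})$ (Lemma \ref{Lem:fun_commut_diagr2}), and then proving the key Lemma \ref{Lem:Q_struct}: $H_\bullet(\mathcal{Q})$ lies in $\OCat_{\nu'}(\Ca_{\nu_2}(\A))\boxtimes\OCat_{\nu}(\Ca_{\nu_1}(\A)^{opp})$ -- note $\nu'$, not $\nu''$, in the first factor; this is where the reducedness of $\nu,\bar\nu,\nu',\nu''$ and the choice of $\nu_1,\nu_2$ enter, via weight estimates for the $T$-action and the quantum comoment map. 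Once $\mathcal{Q}$ (hence $\B^2\otimes^L_\A\Delta_{\nu_1}(M)$) is known to live in the $\OCat_{\nu'}$-part, the comparison map becomes the adjunction identity $\Hom(Q',\underline{\iota}^*_{\nu'}(-))\cong\Hom(Q',-)$ for $Q'\in D^b_{\OCat_{\nu'}}$, and the statement follows. Your proposal contains no substitute for this weight-theoretic computation, which is the heart of the proof.
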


We will give a proof of Proposition \ref{Lem:Hom_iso} after a series of auxiliary results.

We start by giving an alternative interpretation of the homomorphism (\ref{eq:nat_hom}). Below we identify
all categories $D^b(\OCat_{\tilde{\nu}}(\A))$ with $D^b_{\OCat_{\tilde{\nu}}}(\A\operatorname{-mod})$ so that the functors
$\iota_{\tilde{\nu}}$ become the inclusions. So we will omit the functors $\iota_{\tilde{\nu}}$
and get functor morphisms $\iota^*_{\tilde{\nu}}\rightarrow\operatorname{id}$.

\begin{Lem}\label{Lem:fun_commut_diagr1}
For $\tilde{M}\in D^b_{\OCat_\nu}(\A\operatorname{-mod}), \tilde{N}\in D^b_{\OCat_{\nu''}}
(\A\operatorname{-mod})$, there are bifunctorial isomorphisms
\begin{align*}&\Hom_{D^b(\A\operatorname{-mod})}(\tilde{M},\tilde{N})\xrightarrow{\sim}
\Hom_{D^b(\A\operatorname{-mod})}(\tilde{M},\CW_{\nu\rightarrow \nu''}^*\tilde{N}),\\
&\Hom_{D^b(\A\operatorname{-mod})}(\tilde{M},\iota_{\nu'}^*\tilde{N})
\xrightarrow{\sim}
\Hom_{D^b(\A\operatorname{-mod})}(\tilde{M},\CW^*_{\nu\rightarrow \nu'}\CW_{\nu'\rightarrow \nu''}^*\tilde{N})
\end{align*}
making the following diagram commutative

\begin{picture}(150,30)
\put(2,2){$\Hom_{D^b(\A\operatorname{-mod})}(\tilde{M},\CW^*_{\nu\rightarrow \nu'}\CW_{\nu'\rightarrow \nu''}^*\tilde{N})$}
\put(92,2){$\Hom_{D^b(\A\operatorname{-mod})}(\tilde{M},\CW_{\nu\rightarrow \nu''}^*\tilde{N})$}
\put(14,22){$\Hom_{D^b(\A\operatorname{-mod})}(\tilde{M},\iota_{\nu'}^*\tilde{N})$}
\put(100,22){$\Hom_{D^b(\A\operatorname{-mod})}(\tilde{M},\tilde{N})$}
\put(30,20){\vector(0,-1){13}}
\put(115,20){\vector(0,-1){13}}
\put(75,4){\vector(1,0){15}}
\put(60,24){\vector(1,0){37}}
\end{picture}
\end{Lem}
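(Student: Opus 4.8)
The plan is to build the two horizontal isomorphisms directly from the defining adjunction property \eqref{eq:CW_char} of the cross-walling functors, and then check that the square commutes by unwinding those adjunctions. First I would recall that $\CW^*_{\nu\to\nu''}=\iota_\nu\circ\iota^*_{\nu''}$, and under the identification of $D^b(\OCat_{\tilde\nu}(\A))$ with $D^b_{\OCat_{\tilde\nu}}(\A\operatorname{-mod})$ the functor $\iota_\nu$ is just the inclusion; so $\CW^*_{\nu\to\nu''}$ becomes $\iota^*_{\nu''}$ viewed as an endofunctor of $D^b_{\OCat_\nu}(\A\operatorname{-mod})$ followed by the inclusion, and $\CW^*_{\nu'\to\nu''}=\iota^*_{\nu''}$ landing in $D^b_{\OCat_{\nu'}}$. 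Since $\tilde M\in D^b_{\OCat_\nu}(\A\operatorname{-mod})$ and $\iota_\nu$ is a full embedding, $\Hom(\tilde M,\CW^*_{\nu\to\nu''}\tilde N)=\Hom(\tilde M,\iota^*_{\nu''}\tilde N)=\Hom(\tilde M,\tilde N)$ by the $(\iota_{\nu''},\iota^*_{\nu''})$ adjunction applied to $\tilde N$ — but more simply, $\CW^*_{\nu\to\nu''}$ is defined so that $\Hom_{D^b(\Coh)}(\tilde M,\tilde N)\cong\Hom_{D^b(\OCat_{\nu''})}(\CW_{\nu\to\nu''}\tilde M,\tilde N)$, and since all the relevant objects already lie in the holonomic subcategory one reads off the top horizontal arrow as the composition of this with the adjunction $\CW_{\nu\to\nu''}=\iota^!_{\nu''}\iota_\nu$. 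This gives the upper isomorphism $\Hom(\tilde M,\tilde N)\xrightarrow{\sim}\Hom(\tilde M,\CW^*_{\nu\to\nu''}\tilde N)$.

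Next I would produce the lower horizontal isomorphism by iterating the same observation. We have $\CW^*_{\nu\to\nu'}\CW^*_{\nu'\to\nu''}=\iota_\nu\circ\iota^*_{\nu'}\circ\iota_{\nu'}\circ\iota^*_{\nu''}$. The middle composite $\iota^*_{\nu'}\circ\iota_{\nu'}$ is, by the $(\iota_{\nu'},\iota^*_{\nu'})$ adjunction and fullness of $\iota_{\nu'}$, naturally isomorphic to the identity on $D^b(\OCat_{\nu'}(\A))$; hence $\CW^*_{\nu\to\nu'}\CW^*_{\nu'\to\nu''}\cong\iota_\nu\circ\iota^*_{\nu'}\circ\iota^*_{\nu''}$. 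Here $\iota^*_{\nu''}\tilde N\in D^b(\OCat_{\nu''}(\A))$, but to apply $\iota^*_{\nu'}$ we should instead think of $\iota^*_{\nu'}$ as applied to $\tilde N$ directly after restricting along the inclusions — the cleanest route is to note $\Hom(\tilde M,\CW^*_{\nu\to\nu'}\CW^*_{\nu'\to\nu''}\tilde N)\cong\Hom(\CW_{\nu\to\nu'}\tilde M,\CW^*_{\nu'\to\nu''}\tilde N)$ by \eqref{eq:CW_char}, which in turn equals $\Hom(\CW_{\nu'\to\nu''}\CW_{\nu\to\nu'}\tilde M,\tilde N)$; and since $\tilde M\in D^b_{\OCat_\nu}$ and $\CW_{\nu\to\nu'}=\iota^!_{\nu'}\iota_\nu$, the source $\CW_{\nu\to\nu'}\tilde M$ already accounts for passing to $\OCat_{\nu'}$, so composing with the adjunction unit $\tilde M\to\iota_\nu\iota^!_{\nu}\tilde M$ (an isomorphism on $D^b_{\OCat_\nu}$) and the counit gives the identification with $\Hom(\tilde M,\iota^*_{\nu'}\tilde N)$. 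The key point is that on the subcategory $D^b_{\OCat_{\nu'}}$ the three functors $\iota^*_{\nu'}$, $\iota^*_{\nu''}$ applied successively collapse appropriately.

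Finally, commutativity of the square: the vertical maps are induced by the functor morphism \eqref{eq:CW_fun_morph} (equivalently the adjunction unit $\operatorname{id}\to\iota_{\nu'}\iota^!_{\nu'}$ and its dual), and the horizontal maps are the adjunction isomorphisms just constructed. Since all four isomorphisms are assembled from units and counits of the $(\iota_?,\iota^!_?)$ and $(\iota_?,\iota^*_?)$ adjunctions, commutativity follows from the triangle identities and the naturality of composition of adjunctions: tracing an element $\varphi\in\Hom(\tilde M,\iota^*_{\nu'}\tilde N)$ around both paths yields the same morphism $\tilde M\to\tilde N$ in $D^b(\Coh(\A_{\lambda^+}^\theta))$ after forgetting down. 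I expect the main obstacle to be purely bookkeeping: keeping the various identifications $D^b(\OCat_{\tilde\nu}(\A))\cong D^b_{\OCat_{\tilde\nu}}(\A\operatorname{-mod})$ and the suppressed inclusion functors straight, so that the "natural homomorphisms" in the statement are manifestly the composites of adjunction units — once that is set up, no homological input beyond Lemma \ref{Lem:local} and Proposition \ref{Prop:hw_alg}(3) is needed, and the diagram chases are formal.
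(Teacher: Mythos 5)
Your ultimate route is the intended one: the paper gives no details here (its proof is the single sentence ``The proof is straightforward''), and the two isomorphisms do come from \eqref{eq:CW_char} together with the adjunctions between $\CW$ and $\CW^*$, full faithfulness of the inclusions $\iota_{\tilde\nu}$, and a naturality check for the square. Your ``more simply''/``cleanest route'' chains, e.g. $\Hom_{D^b(\A\operatorname{-mod})}(\tilde M,\CW^*_{\nu\rightarrow\nu''}\tilde N)\cong\Hom_{D^b(\OCat_{\nu''}(\A))}(\CW_{\nu\rightarrow\nu''}\tilde M,\tilde N)\cong\Hom_{D^b(\A\operatorname{-mod})}(\tilde M,\tilde N)$, are exactly what is needed.

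However, two functor identities you write down along the way are wrong and would trivialize the statement if relied upon. The right adjoint of $\CW_{\nu\rightarrow\nu''}=\iota^!_{\nu''}\circ\iota_\nu$ is $\iota^*_{\nu}\circ\iota_{\nu''}$ (right adjoints compose in the reverse order), so on $\tilde N\in D^b_{\OCat_{\nu''}}(\A\operatorname{-mod})$ it gives $\iota^*_{\nu}\tilde N$, \emph{not} $\iota^*_{\nu''}\tilde N$: the latter is canonically isomorphic to $\tilde N$ itself, so your first chain $\Hom(\tilde M,\CW^*_{\nu\rightarrow\nu''}\tilde N)=\Hom(\tilde M,\iota^*_{\nu''}\tilde N)=\Hom(\tilde M,\tilde N)$ would assert $\CW^*_{\nu\rightarrow\nu''}\cong\operatorname{id}$ and proves nothing (the misprint is in the paper's displayed formula for $\CW^*$, but the corollary you drew from it is false). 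Similarly, $\CW^*_{\nu\rightarrow\nu'}\CW^*_{\nu'\rightarrow\nu''}=\iota^*_\nu\,\iota_{\nu'}\,\iota^*_{\nu'}\,\iota_{\nu''}$, and the middle composite is $\iota_{\nu'}\iota^*_{\nu'}$, which is not the identity (only $\iota^*_{\nu'}\iota_{\nu'}\cong\operatorname{id}$ holds); if your claimed collapse were correct, Theorem \ref{Thm:short_CW_comp} would be a tautology, whereas it is the real content of the section. What you should state instead, and what makes both the second isomorphism and the commutativity transparent, is the identification $\CW^*_{\nu'\rightarrow\nu''}\tilde N\cong\iota^*_{\nu'}\tilde N$ for $\tilde N\in D^b_{\OCat_{\nu''}}(\A\operatorname{-mod})$: then the lower isomorphism is the upper one with $(\nu'',\tilde N)$ replaced by $(\nu',\iota^*_{\nu'}\tilde N)$, the isomorphisms of the lemma are the vertical arrows of the diagram (not the horizontal ones, as your last paragraph has it), the top horizontal arrow is induced by the counit $\iota^*_{\nu'}\rightarrow\operatorname{id}$, the bottom one by the mate of \eqref{eq:CW_fun_morph}, and commutativity reduces, as you say, to naturality and the fact that \eqref{eq:CW_fun_morph} is by definition built from the unit $\operatorname{id}\rightarrow\iota_{\nu'}\iota^!_{\nu'}$.
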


The proof is straightforward.

Consider the $\A$-$\Ca_{\nu_1}(\A)$-bimodule $\B^1:=\A/\A\A^{>0,\nu_1}$ and the
$\Ca_{\nu_2}(\A)$-$\A$-bimodule $\B^2:=\A/\A^{<0,\nu_2}\A$
so that $\Delta_{\nu_1}(\bullet)=\B^1\otimes^L_{\Ca_{\nu_1}(\A)}\bullet$
and $\nabla_{\nu_2}(\bullet)=R\Hom_{\Ca_{\nu_2}(\A)}(\B^2,\bullet)$, by Lemma \ref{Lem:der_st_cost}.
Set $\mathcal{Q}:=\B^2\otimes^L_{\A}\B^1$. This is an object
in $D^b(\Ca_{\nu_2}(\A)\otimes \Ca_{\nu_1}(\A)^{opp}\operatorname{-Mod})$
(the notation ``Mod'' stand for the category of all modules).

Let $\underline{\iota}_{\nu'}$ denote the inclusion $D^b(\OCat_{\nu'}(\Ca_{\nu_2}(\A)))=
D^b_{\OCat_{\nu'}}(\Ca_{\nu_2}(\A)\operatorname{-mod})
\hookrightarrow D^b(\Ca_{\nu_2}(\A)\operatorname{-mod})$. So we get the right adjoint functor
$\underline{\iota}^*_{\nu'}: D^b(\Ca_{\nu_2}(\A)\operatorname{-mod})\rightarrow
D^b_{\OCat_{\nu'}}(\Ca_{\nu_2}(\A)\operatorname{-mod})$. Also we consider the homological duality functor
$\underline{D}:D^b(\Ca_{\nu_1}(\A)\operatorname{-mod})
\xrightarrow{\sim}D^b(\Ca_{\nu_1}(\A)^{opp}\operatorname{-mod})$.

\begin{Lem}\label{Lem:fun_commut_diagr2}
For $M\in D^b_{\OCat_\nu}(\Ca_{\nu_1}(\A)\operatorname{-mod}),
N\in D^b_{\OCat_{\nu''}}
(\Ca_{\nu_2}(\A)\operatorname{-mod})$, there are bifunctorial isomorphisms
\begin{align*}&\Hom_{D^b(\A\operatorname{-mod})}(\Delta_{\nu_1}(M),\nabla_{\nu_2}(N))\xrightarrow{\sim}
\Hom_{D^b(\Ca_{\nu_2}(\A)\otimes \Ca_{\nu_1}(\A)^{opp}\operatorname{-Mod})}(\mathcal{Q},N\boxtimes \underline{D}M),\\
&\Hom_{D^b(\A\operatorname{-Mod})}(\Delta_{\nu_1}(M),\iota_{\nu'}^*\nabla_{\nu_2}(N))
\xrightarrow{\sim}
\Hom_{D^b(\Ca_{\nu_2}(\A)\otimes \Ca_{\nu_1}(\A)^{opp}\operatorname{-Mod})}(\mathcal{Q},\underline{\iota}^*_{\nu'}(N)\boxtimes \underline{D}M)
\end{align*}
making the following diagram commutative

\begin{picture}(150,30)
\put(8,2){$\Hom_{\ldots}(\mathcal{Q},\underline{\iota}^*_{\nu'}(N)\boxtimes \underline{D}M)$}
\put(94,2){$\Hom_{\ldots}(\mathcal{Q},N\boxtimes \underline{D}M)$}
\put(2,22){$\Hom_{D^b(\A\operatorname{-mod})}(\Delta_{\nu_1}(M),\iota_{\nu'}^*\nabla_{\nu_2}(N))$}
\put(86,22){$\Hom_{D^b(\A\operatorname{-mod})}(\Delta_{\nu_1}(M),\nabla_{\nu_2}(N))$}
\put(30,20){\vector(0,-1){13}}
\put(107,20){\vector(0,-1){13}}
\put(60,4){\vector(1,0){32}}
\put(66,24){\vector(1,0){18}}
\end{picture}
\end{Lem}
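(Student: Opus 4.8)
The plan is to manipulate the four $\Hom$-spaces using adjunctions until both rows become $\Hom$-spaces over the product algebra $\Ca_{\nu_2}(\A)\otimes \Ca_{\nu_1}(\A)^{opp}$, and then check that the vertical maps (induced by the adjunction morphism $\underline{\iota}^*_{\nu'}\to\operatorname{id}$) and the horizontal maps (induced by $\iota^*_{\nu'}\to\operatorname{id}$) are carried into one another. First I would rewrite the top-right corner: by Lemma \ref{Lem:der_st_cost} we have $\nabla_{\nu_2}(N)=R\Hom_{\Ca_{\nu_2}(\A)}(\B^2,N)$ and $\Delta_{\nu_1}(M)=\B^1\otimes^L_{\Ca_{\nu_1}(\A)}M$, so the tensor--Hom adjunction gives
\[
\Hom_{D^b(\A\operatorname{-mod})}(\Delta_{\nu_1}(M),\nabla_{\nu_2}(N))
\cong \Hom_{D^b(\A\operatorname{-mod})}\bigl(\B^1\otimes^L_{\Ca_{\nu_1}(\A)}M,\, R\Hom_{\Ca_{\nu_2}(\A)}(\B^2,N)\bigr).
\]
Adjunction in the first variable moves $M$ out, and adjunction in the second variable moves $\B^2$ in, turning this into a $\Hom$ over $\Ca_{\nu_2}(\A)\otimes\Ca_{\nu_1}(\A)^{opp}$ between $\mathcal{Q}=\B^2\otimes^L_{\A}\B^1$ and the exterior product $N\boxtimes(\text{something built from }M)$. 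The ``something'' is identified with $\underline{D}M=R\Hom_{\Ca_{\nu_1}(\A)}(M,\Ca_{\nu_1}(\A))$ by the standard identity $R\Hom_{\Ca_{\nu_1}(\A)}(M,\Ca_{\nu_1}(\A)^{opp})$ in the appropriate module category; this uses that $\Ca_{\nu_1}(\A)$ and the relevant modules have finite homological dimension, which holds by (3) of Lemma \ref{Lem:gen_simplicity} applied to $\Ca_{\nu_1}(\A)$ after shifting the parameter, as in the proof of Proposition \ref{Prop:projectivity}. This establishes the top isomorphism.

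Next I would obtain the bottom isomorphism by the same chain of adjunctions, but starting from $\iota^*_{\nu'}\nabla_{\nu_2}(N)$. The key observation is that $\iota^*_{\nu'}$ is the right adjoint of the inclusion $D^b_{\OCat_{\nu'}}(\A\operatorname{-mod})\hookrightarrow D^b(\A\operatorname{-mod})$, and that under the adjunctions above it corresponds on the $\Ca_{\nu_2}(\A)$-side to $\underline{\iota}^*_{\nu'}$. Concretely: $\Hom_{D^b(\A\operatorname{-mod})}(\Delta_{\nu_1}(M),\iota^*_{\nu'}\nabla_{\nu_2}(N))=\Hom_{D^b(\A\operatorname{-mod})}(\Delta_{\nu_1}(M),\nabla_{\nu_2}(N))$ already, since $\Delta_{\nu_1}(M)$ lies in $D^b_{\OCat_{\nu'}}(\A\operatorname{-mod})$ — wait, that is not automatic, so here I must be careful: $\Delta_{\nu_1}(M)$ with $M\in D^b_{\OCat_\nu}(\Ca_{\nu_1}(\A)\operatorname{-mod})$ need not have homology in $\OCat_{\nu'}$. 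Instead, the correct route is to push $\iota^*_{\nu'}$ through the adjunctions: since $\iota^*_{\nu'}$ only involves the $\A$-module structure and commutes with $R\Hom_{\Ca_{\nu_2}(\A)}(\B^2,-)$ up to the identification that $\B^2$ induces $\OCat_{\nu'}$ on one side iff it does on the other (a statement proved exactly as in Proposition \ref{Prop_CW_prop}, using that $\B^2$ is a Harish-Chandra-type bimodule), one gets $\iota^*_{\nu'}\nabla_{\nu_2}(N)=R\Hom_{\Ca_{\nu_2}(\A)}(\B^2,\underline{\iota}^*_{\nu'}N)$. Feeding this into the same adjunction chain yields the bottom isomorphism with $\underline{\iota}^*_{\nu'}(N)$ in place of $N$.

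Finally I would check commutativity of the square. Both the horizontal arrows (top and bottom) and the two vertical arrows are induced by the canonical adjunction counits $\iota^*_{\nu'}\to\operatorname{id}$ and $\underline{\iota}^*_{\nu'}\to\operatorname{id}$. The isomorphisms constructed in the two paragraphs above are built entirely from adjunction units/counits for $(\B^1\otimes^L-,\,R\Hom(\B^1,-))$, $(\B^2\otimes^L-,\,R\Hom(\B^2,-))$, and the homological duality $\underline{D}$, all of which are natural transformations; so the square commutes by naturality of all maps involved, i.e. it is a diagram of natural transformations and reduces to the compatibility of the counit $\iota^*_{\nu'}\to\operatorname{id}$ with the isomorphism $\iota^*_{\nu'}\nabla_{\nu_2}(N)\cong R\Hom_{\Ca_{\nu_2}(\A)}(\B^2,\underline{\iota}^*_{\nu'}N)$ — and this compatibility is exactly the statement that the identification $\nabla_{\nu_2}=R\Hom_{\Ca_{\nu_2}(\A)}(\B^2,-)$ intertwines the two localizations, which was the content of the step above. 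The main obstacle is precisely this intertwining claim — that $\iota^*_{\nu'}$ on $D^b(\A\operatorname{-mod})$ corresponds to $\underline{\iota}^*_{\nu'}$ on $D^b(\Ca_{\nu_2}(\A)\operatorname{-mod})$ under $R\Hom_{\Ca_{\nu_2}(\A)}(\B^2,-)$; once that is in hand, everything else is formal manipulation of adjoint functors.
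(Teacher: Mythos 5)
Your proposal is correct and follows essentially the same route as the paper: the same chain of tensor--Hom adjunctions (via Lemma \ref{Lem:der_st_cost}) identifying both rows with $\Hom$'s out of $\mathcal{Q}$ into $N\boxtimes\underline{D}M$ (resp.\ $\underline{\iota}^*_{\nu'}N\boxtimes\underline{D}M$), combined with the key intertwining $\iota^*_{\nu'}\circ\nabla_{\nu_2}\cong\nabla_{\nu_2}\circ\underline{\iota}^*_{\nu'}$ compatible with the counits, which the paper likewise extracts from the proof of (1) of Proposition \ref{Prop_CW_prop}, with commutativity then following by naturality. Your self-correction about $\Delta_{\nu_1}(M)$ not lying in $D^b_{\OCat_{\nu'}}$ lands you exactly on the paper's argument.
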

\begin{proof}
First of all, there is a functor isomorphism
$\nabla_{\nu_2}\circ \underline{\iota}^*_{\nu'}\xrightarrow{\sim} \iota^*_{\nu'}\circ \nabla_{\nu_2}$
intertwining the morphisms $\nabla_{\nu_2}\circ \underline{\iota}^*_{\nu'},
\iota^*_{\nu'}\circ \nabla_{\nu_2}\rightarrow \nabla_{\nu_2}$. This was
established in the proof of (1) of Proposition
\ref{Prop_CW_prop} (recall that on $D^b_{\OCat_\nu}(\A\operatorname{-mod})$ the functor
$\iota^*_{\nu'}$ coincides with $\CW^*_{\nu\rightarrow \nu'}$ and a similar claim holds for
$\underline{\iota}^*_{\nu'}$).

The claim of the lemma is now a consequence of the following
bifunctorial isomorphisms (and similar isomorphisms with $N$ replaced with $\underline{\iota}^*_{\nu'}N$)
\begin{align*}
&\Hom_{D^b(\A\operatorname{-mod})}(\Delta_{\nu_1}(M),\nabla_{\nu_2}(N))=\\
&\Hom_{D^b(\A\operatorname{-mod})}(\B^1\otimes_{\Ca_{\nu_1}(\A)}^L M ,R\Hom_{\Ca_{\nu_2}(\A)}(\B^2,N))=\\
&\Hom_{D^b(\Ca_{\nu_2}(\A)\operatorname{-Mod})}(\mathcal{Q}\otimes^L_{\Ca_{\nu_1}(\A)}M, N)=\\
&\Hom_{D^b(\Ca_{\nu_2}(\A)\otimes \Ca_{\nu_1}(\A)^{opp}\operatorname{-Mod})}(\mathcal{Q}, N\boxtimes \underline{D}M).
\end{align*}
\end{proof}

Now we will need to get some information about $\mathcal{Q}\in D^b(\Ca_{\nu_2}(\A)\otimes \Ca_{\nu_1}(\A)^{opp}\operatorname{-Mod})$.

\begin{Lem}\label{Lem:Q_struct}
We have $H_\bullet(\mathcal{Q})\in\OCat_{\nu'}(\Ca_{\nu_2}(\A))\boxtimes
\OCat_{\nu}(\Ca_{\nu_1}(\A)^{opp})$.
\end{Lem}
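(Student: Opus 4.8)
The plan is to analyze $\mathcal{Q} = \B^2 \otimes^L_\A \B^1$ by computing its homology in stages, using that $\B^1 = \A/\A\A^{>0,\nu_1}$ and $\B^2 = \A/\A^{<0,\nu_2}\A$ are the bimodules defining $\Delta_{\nu_1}$ and $\nabla_{\nu_2}$ respectively (via Lemma \ref{Lem:der_st_cost}, these bimodules are left/right projective over the relevant Cartan subquotients after passing to $\lambda^+$). First I would observe that $\mathcal{Q}$ carries commuting left $\Ca_{\nu_2}(\A)$- and right $\Ca_{\nu_1}(\A)$-module structures, and that its homology is finitely generated over $\Ca_{\nu_2}(\A)\otimes\Ca_{\nu_1}(\A)^{opp}$: this follows because $\B^1,\B^2$ are HC bimodules in the appropriate sense (compare the discussion of $\gr\B^+_\ell$ in Lemma \ref{Lem:BZ_assoc_graded}), so $\gr H_\bullet(\mathcal{Q})$ is a finitely generated module over $\gr\Ca_{\nu_2}(\A)\otimes \gr\Ca_{\nu_1}(\A)$ supported on an appropriate fiber product of contracting and repelling loci.

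The heart of the argument is to control the support. The support of $\gr H_\bullet(\mathcal{Q})$ in $X^{\nu_2(\C^\times)}\times X^{\nu_1(\C^\times)}$ is contained in the image of $Y^{-}_{\nu_2}\cap Y^{+}_{\nu_1}$ under the two projections, where $Y^+_{\nu_1}$ is the $\nu_1$-contracting locus (attracting for $\nu_1$) and $Y^-_{\nu_2}$ is the $\nu_2$-repelling locus, viewed inside $X\times X^{\nu_1(\C^\times)}$ and $X^{\nu_2(\C^\times)}\times X$ respectively; taking the derived tensor product over $\A$ corresponds geometrically to the (derived) fiber product over $X$. The key point is then that this locus, projected to $X^{\nu_2(\C^\times)}\times X^{\nu_1(\C^\times)}$, is contained in the union of $Y'_2\times Y'_1$, where $Y'_2$ is the $\nu'$-contracting locus inside $X^{\nu_2(\C^\times)}$ and $Y'_1$ is the $\nu$-contracting locus inside $X^{\nu_1(\C^\times)}$. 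This is where the reducedness hypothesis on $\nu,\bar\nu,\nu',\nu''$ and the fact that $\nu_1$ (resp.\ $\nu_2$) lies on the wall between $\nu,\bar\nu$ (resp.\ $\nu',\nu''$) enter: it forces the weights along the relevant tangent spaces to have consistent signs so that existence of the double limit $\lim_{t\to 0}\nu_1(t)x \in X^{\nu_1(\C^\times)}$ together with $\lim_{t\to\infty}\nu_2(t)x$ existing pins down $x$ to lie in the combined contracting locus. Once the support containment is established, membership of $H_\bullet(\mathcal{Q})$ in $\OCat_{\nu'}(\Ca_{\nu_2}(\A))\boxtimes \OCat_\nu(\Ca_{\nu_1}(\A)^{opp})$ follows from Lemma \ref{Lem:O_equiv} (characterizing category $\mathcal{O}$ objects by support plus weak equivariance) applied to each factor, since $\mathcal{Q}$ inherits a weak $T$-equivariant structure from $\B^1,\B^2$, and on each factor we read off local nilpotence of $\Ca_{\nu_i}(\A)^{>0,\nu'}$ (resp.\ $^{>0,\nu}$) from boundedness of the weights, exactly as in the proof of Lemma \ref{Lem:O_equiv}.

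The main obstacle I expect is the support computation, specifically verifying that the fiber product $Y^-_{\nu_2}\times_X Y^+_{\nu_1}$ projects into the product of the two ``smaller'' contracting loci. This is essentially a statement about one-parameter subgroups acting on $X$ near fixed-point components, and the clean way to handle it is to localize to a $T$-stable affine neighborhood and apply the Luna slice theorem (as in the proofs of Lemma \ref{Lem:order_compat} and Lemma \ref{Lem:prec_spec}) to reduce to a $T$-module, where the claim becomes an explicit sign-of-weights count controlled by the chamber geometry and the reducedness of $\nu,\bar\nu,\nu',\nu''$. One must also be careful that the derived tensor product does not introduce higher homology with larger support; but since $\B^1$ is left projective over $\Ca_{\nu_1}(\A)$ and right a quotient of $\A$ with the expected associated variety (cf.\ Lemma \ref{Lem:repel_quant_affine} and Lemma \ref{Lem:BZ_assoc_graded}(1)), the $\gr$-estimate $\gr\operatorname{Tor}^\A_i(\B^2,\B^1)\subset \operatorname{Tor}^{\C[X]}_i(\gr\B^2,\gr\B^1)$ bounds all homological degrees at once, and transversality of the two loci (as in the proof of Lemma \ref{Lem:der_tens_prod}) keeps the dimension under control.
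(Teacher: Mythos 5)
Your support-theoretic outline reproduces, in geometric clothing, only the easy half of the argument (the paper's Steps 1--2: weak $T$-equivariance of $H_\bullet(\mathcal{Q})$, boundedness of the $\nu_1$- and $(-\nu_2)$-weights, and the observation that the reducedness of $\nu,\bar{\nu},\nu',\nu''$ forces the relevant weight signs to agree, so that ``$\OCat$ for $\nu_1$'' on $\Ca_{\nu_2}(\A)$ is ``$\OCat$ for $\nu'$'', and similarly on the other factor). The genuine gap is your first claim: that $H_\bullet(\mathcal{Q})$ is finitely generated over $\Ca_{\nu_2}(\A)\otimes\Ca_{\nu_1}(\A)^{opp}$ because its associated graded is supported on the fiber product of attracting/repelling loci. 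Support containment does not give finite generation over the smaller algebra; for that you would need the simultaneous-limit map on $Y^{+}_{\nu_1}\cap Y^{-}_{\nu_2}$ to be finite over $\operatorname{Spec}(\gr\Ca_{\nu_2}(\A))\times\operatorname{Spec}(\gr\Ca_{\nu_1}(\A))$, and this fails in general: any tangent weight $\alpha$ at a fixed point with $\langle\alpha,\nu_1\rangle>0>\langle\alpha,\nu_2\rangle$ (such weights exist precisely because walls are crossed strictly between $\nu_1$ and $\nu_2$ in a reduced sequence) gives a direction lying in $Y^{+}_{\nu_1}\cap Y^{-}_{\nu_2}$ that is killed by both limit projections, so the fibers are positive-dimensional. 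For the same reason your appeal to transversality ``as in Lemma \ref{Lem:der_tens_prod}'' is misplaced: there the attracting and repelling loci are taken for the \emph{same} $\nu_0$ and are transverse, whereas $Y^{+}_{\nu_1}$ and $Y^{-}_{\nu_2}$ with $\nu_1\neq\nu_2$ are not transverse in general. Finally, Lemma \ref{Lem:O_equiv} takes coherence as an input, so you cannot use it to conclude membership in the product category before finiteness is established.

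The paper closes exactly this hole by non-geometric means, and your proposal has no substitute for it. First (Step 3) one shows each $H_i(\mathcal{Q})$ is a \emph{direct sum} of objects of the product category: the commutative-algebra estimate on $\C[X_0]/\C[X_0](\C[X_0]^{>0,\nu_1}+\C[X_0]^{<0,\nu_2})$ gives that the $\tilde{\nu}=\nu_1-\nu_2$ weight spaces of $H_i(\mathcal{Q})$ are finite dimensional and bounded above, so the comoment image $h$ acts locally finitely and one decomposes by generalized eigenvalues modulo $\Z$, invoking Lemma \ref{Lem:fin_length}(4). Then (Step 4) finiteness of the sum is proved by showing $\dim\Hom(\mathcal{Q},I[k])<\infty$ for every indecomposable injective $I$ of the product category: via the adjunction identity of Lemma \ref{Lem:fun_commut_diagr2} this Hom space is $\Hom_{D^b(\A)}(\Delta_{\nu_1}(M),\nabla_{\nu_2}(N))$, which is finite dimensional, and every injective of the product category is of the form $N\boxtimes\underline{D}M$ because $\underline{\iota}^*_{\nu'}$ and $\underline{D}$ are equivalences. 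If you want to keep your geometric framing for Steps 1--2, you must still import something like this Hom-finiteness argument (or another genuine finiteness mechanism) to get finite generation and finite length; as written, the proposal does not prove the lemma.
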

\begin{proof}
The proof is in several steps.

{\it Step 1}. Note that both bimodules $\mathcal{B}^1$ and $\mathcal{B}^2$ are weakly $T$-equivariant.
So $H_\bullet(\mathcal{Q})$ is weakly $T$-equivariant as well. Let us check that the numbers
$\langle \nu_1, \alpha\rangle, -\langle \nu_2,\alpha\rangle$ are bounded from above for all
$T$-weights $\alpha$ appearing in  $H_\bullet(\mathcal{Q})$. Let us prove this claim
for $\langle \nu_1,\alpha\rangle$, the proof for $-\langle \nu_2,\alpha\rangle$ is similar.
Recall that $\mathcal{B}^1=\A/\A\A^{>0,\nu_1}$. Choose a projective weakly $T$-equivariant
$\A$-module resolution for $\B^2$. We see that  $H_\bullet(\mathcal{Q})$ is also the
homology of a complex whose terms are direct sums of several copies of $\mathcal{B}^1$ with
twisted $T$-actions. Our claim follows.

{\it Step 2}. Now let us check that  $H_\bullet(\mathcal{Q})$ is the union of objects in
\begin{equation}\label{eq:prod_cat}\OCat_{\nu'}(\Ca_{\nu_2}(\A))\boxtimes
\OCat_{\nu}(\Ca_{\nu_1}(\A)^{opp}).\end{equation} Consider the action of
$(\C^\times)^2$ on $\Ca_{\nu_2}(\A)\otimes\Ca_{\nu_1}(\A)^{opp}$
by $\nu_1\times \nu_2$ so that the first copy of $\C^\times$ acts trivially on the second factor
and vice versa. Since the numbers $\langle\nu_1,\alpha\rangle, -\langle \nu_2,\alpha\rangle$
are bounded by above, we see that $H_\bullet(\mathcal{Q})$ is the union of objects  in
$\OCat_{\nu_1}(\Ca_{\nu_2}(\A))\boxtimes
\OCat_{-\nu_2}(\Ca_{\nu_1}(\A)^{opp})$.
The latter category coincides with (\ref{eq:prod_cat}). This is because $\nu$ and $\nu_1$ lie in
one half-space with respect to the wall containing $\nu_2$, and $\nu',\nu_2$ also lie in one half-space with respect to
the wall containing $\nu_1$ (it is here that we use our choice of the one-parameter subgroups involved).

{\it Step 3}. Now we claim that each $H_i(\mathcal{Q})$ is actually the direct sum of objects
in (\ref{eq:prod_cat}). Set $\tilde{\nu}:=\nu_1-\nu_2$. Note that all weight spaces
for $\tilde{\nu}$ in $H_i(Q)$ are finite dimensional and the weights are bounded from above. Indeed, it is enough to check
this for   $\operatorname{Tor}_i^{\C[X_0]}(\gr\mathcal{B}^2, \gr\mathcal{B}^1)$ because
$$\gr \operatorname{Tor}_i^{\A}(\mathcal{B}^2,\mathcal{B}^1)\subset
\operatorname{Tor}_i^{\C[X_0]}(\gr\mathcal{B}^2,\gr\mathcal{B}^1).$$
Note that $\operatorname{Tor}_i^{\C[X_0]}(\gr\mathcal{B}^2, \gr\mathcal{B}^1)$
is a finitely generated module over $\C[X_0]/\C[X_0](\C[X_0]^{>0,\nu_1}+\C[X]^{<0,\nu_2})$.
The eigencharacters of $\tilde{\nu}$ on the latter  are all non-negative. The zero eigenspace
is easily seen to be finite dimensional. Since the algebra $\C[X_0]/\C[X_0](\C[X_0]^{>0,\nu_1}+\C[X]^{<0,\nu_2})$
is finitely generated,   every eigenspace is finite dimensional. This shows that the weights of
the $\tilde{\nu}$-action on $H_i(\mathcal{Q})$ are bounded from above and all eigen-spaces are finite
dimensional.

Let $h$ denote the image of $1$ under the quantum comoment map for the actions of $\nu_1-\nu_2$ on
$\Ca_{\nu_2}(\A)\otimes\Ca_{\nu_1}(\A)^{opp}$. The element $h$ preserves the grading hence acts locally finitely on
$H_i(\mathcal{Q})$.
For $z\in \C, j\in \Z$, let $H_i(\mathcal{Q})[z,j]$ denote the generalized $z$-eigenspace for $h$
in the component of degree $j$. So $M_{z}:=\bigoplus_j H_i(\mathcal{Q})[z+j,j]$ is a submodule in
$H_i(\mathcal{Q})$ and $H_i(\mathcal{Q})=\bigoplus_{z\in \C}M_{z}$. By (4) of Lemma \ref{Lem:fin_length}, $M_z$
lies in (\ref{eq:prod_cat}).

{\it Step 4}. To check that $H_i(\mathcal{Q})$ lies in (\ref{eq:prod_cat}) it is enough to show
that $$\dim\operatorname{Hom}_{D^b(\Ca_{\nu_2}(\A))\otimes\Ca_{\nu_1}(\A)^{opp}\operatorname{-Mod})}(\mathcal{Q},
I[k])<\infty$$ for any indecomposable injective object $I$ in (\ref{eq:prod_cat}) and any integer
$k$. Recall, Lemma \ref{Lem:fun_commut_diagr2}, that $$\operatorname{Hom}_{D^b(\Ca_{\nu_2}(\A))\otimes\Ca_{\nu_1}(\A)^{opp}\operatorname{-mod})}
(\mathcal{Q},\iota_{\nu'}^*N\boxtimes \underline{D}M)=\operatorname{Hom}_{D^b(\A)}(\Delta_{\nu_1}(M),\nabla_{\nu_2}(M)).$$
The right hand side is finite dimensional. Now we remark that any indecomposable injective
in (\ref{eq:prod_cat}) is a product of the indecomposable injectives in the factors, this follows
from Lemma \ref{Lem:product}. Both
$\underline{\iota}^*_{\nu'}: D^b(\OCat_{-\nu'}(\Ca_{\nu_2}(\A)))\rightarrow
D^b(\OCat_{\nu'}(\Ca_{\nu_2}(\A)))$ and $\underline{D}:
D^b(\OCat_{\nu}(\Ca_{\nu_1}(\A)))\rightarrow D^b(\OCat_{\nu}(\Ca_{\nu_1}(\A)^{opp}))$
are equivalences. So any injective in (\ref{eq:prod_cat}) has the form $N\boxtimes \underline{D}(M)$.
\end{proof}

\begin{proof}[Proof of Proposition \ref{Lem:Hom_iso}]
We claim that the bottom row of the commutative diagram in Lemma \ref{Lem:fun_commut_diagr2}
is an isomorphism. For this we note that both $\mathcal{Q}$ and $\underline{\iota}^*_{\nu'}N\boxtimes
\underline{D}M$ lie in $D^b(\OCat_{\nu'}(\Ca_{\nu_2}(\A)\boxtimes
\OCat_{\nu}(\Ca_{\nu_1}(\A)^{opp}))$. Furthermore, on $N\boxtimes \underline{D}M
\in \OCat_{\nu''}(\Ca_{\nu_2}(\A))\boxtimes
\OCat_{\nu}(\Ca_{\nu_1}(\A)^{opp}$, the functor $\underline{\iota}_{\nu'}^*\boxtimes
\operatorname{id}$ coincides $\underline{\CW}^*_{\nu'\rightarrow \nu''}\boxtimes \operatorname{id}=\underline{\CW}^*_{(\nu',\nu)\rightarrow (\nu'',\nu)}$, the last equality
holds thanks to Lemma \ref{Lem:CW_prod}. So the claim in the beginning of the proof follows.
\end{proof}

\begin{proof}[Proof of Theorem \ref{Thm:short_CW_comp}]
First of all, we claim that, in the notation of Proposition \ref{Lem:Hom_iso},
the natural morphism $\CW^*_{\nu\rightarrow \nu'}\circ \CW^*_{\nu'\rightarrow \nu''}\nabla_{\nu_2}(N)
\rightarrow \CW^*_{\nu\rightarrow \nu''}\nabla_{\nu_2}(N)$ is an isomorphism. Indeed, let $C$ be the cone.
Then, by Proposition \ref{Lem:Hom_iso}, we have $R\Hom_{\OCat_{\nu}(\A)}(\Delta_{\nu_1}(M), C)=0$.
Since all standard objects in $\OCat_{\nu}(\A)$ are of the form $\Delta_{\nu_1}(M)$
we see that $C=0$.

Now the functor $\CW^*_{\nu'\rightarrow \nu''}\circ \CW^*_{\nu\rightarrow \nu'}\rightarrow
\CW^*_{\nu\rightarrow \nu''}$  is an isomorphism on all costandard objects in
$\OCat_{\nu''}(\A)$. So it is an isomorphism.
\end{proof}

\begin{proof}[Proof of Theorem \ref{Thm:braid_equiv}]
The part about the isomorphism of functors follows from Theorem \ref{Thm:short_CW_comp}
and easy induction. The claim that $\CW^*_{\nu\rightarrow \nu'}$ is an equivalence
now follows from the existence of a maximal reduced decomposition and
Lemma \ref{Lem:equiv_inherit}.
\end{proof}

\section{Appendix: comparison to \cite{MO}}
In \cite{MO}, Maulik and Okounkov gave a geometric construction of $R$-matrices.
In this section we will argue that the cross-walling functors should be viewed
as a categorical version of their construction.

\subsection{Stable envelopes and Verma modules}
Namely, let $X$ be a conical symplectic resolution equipped with a Hamiltonian
action of a torus $T$. We assume that $T$ has finitely many fixed points in
$X$ (although this is not required in \cite{MO}). Given a cocharacter
$\nu$, Maulik and Okounkov, \cite[Section 3.5]{MO}, constructed a map
$\mathsf{Stab}_\nu: H^*_{T\times \C^\times}(X^{\nu(\C^\times)})\rightarrow H^*_{T\times \C^\times}(X)$
that becomes an isomorphism on the localized equivariant cohomology (here
$\C^\times$ stands for the contracting torus). The map enjoys a certain transitivity
property. Namely, pick $\nu_0$ with $\nu\rightsquigarrow \nu_0$.
Then we get maps $\mathsf{Stab}_{\nu_0}:H^*_{T\times \C^\times}(X^{\nu_0(\C^\times)})
\rightarrow H^*_{T\times \C^\times}(X)$
and $\underline{\mathsf{Stab}}_{\nu}: H^*_{T\times \C^\times}(X^{\nu(\C^\times)})
\rightarrow H^*_{T\times \C^\times}(X^{\nu_0(\C^\times)})$. We have the following
equality \cite[Lemma 3.6.1]{MO}: $\mathsf{Stab}_\nu=\mathsf{Stab}_{\nu_0}\circ \underline{\mathsf{Stab}}_{\nu}$.

Following the original intuition of Okounkov, we would like to claim that the functor $\Delta_{\nu}$
should be viewed as a categorical version of $\mathsf{Stab}_\nu$. Namely, pick a generic
co-character $\tilde{\nu}$ such that $\tilde{\nu}\rightsquigarrow\nu$. We replace
$H^*_{T\times \C^\times}(X),H^*_{T\times \C^\times}(X^{\nu(\C^\times)})$
with $\OCat_{\tilde{\nu}}(\A_{\lambda^+}), \OCat_{\tilde{\nu}}(\Ca_\nu(\A_{\lambda^+}))$
so that we have an exact functor $\Delta_\nu:\OCat_{\tilde{\nu}}(\Ca_\nu(\A_{\lambda^+}))
\rightarrow \OCat_{\tilde{\nu}}(\A_{\lambda^+})$. An analog of the claim that $\mathsf{Stab}_\nu$
is an isomorphism after localization is that $\Delta_\nu$ lifts the equivalence
$\OCat_{\tilde{\nu}}(\Ca_\nu(\A_{\lambda^+}))
\xrightarrow{\sim} \gr\OCat_{\tilde{\nu}}(\A_{\lambda^+})$ (where the filtration is taken
with respect to $\nu$), see  Section \ref{SS_stand_exact}. Lemma \ref{Lem:parab_ind}
provides a transitivity result.

\subsection{R-matrices}
Pick  generic co-characters $\nu',\nu$.
Maulik and Okounkov, \cite[Section 4]{MO}, defined an automorphism $R_{\nu\rightarrow \nu'}$ of
$H^*_{T\times \C^\times}(X^{T})_{loc}$
as $\mathsf{Stab}_{\nu'}^{-1}\circ \mathsf{Stab}_{\nu}$ (here and below $\bullet_{loc}:=
\bullet\otimes_{\C[\mathfrak{t}\times \C]}\C(\mathfrak{t}\times \C)$).

We could also consider the automorphism $\tilde{R}_{\nu\rightarrow \nu'}:=\mathsf{Stab}_{\nu'}\circ
\mathsf{Stab}_\nu^{-1}$ of $H^*_{T\times \C^\times}(X)_{loc}$.
The transitivity result quoted in the last section implies that
if $\nu,\nu'\rightsquigarrow \nu_0$, then
\begin{equation}\label{R:transit}
\tilde{R}_{\nu\rightarrow \nu'}=\mathsf{Stab}_{\nu_0}\circ
\underline{\tilde{R}}_{\nu\rightarrow \nu'}\circ \mathsf{Stab}^{-1}_{\nu_0}.
\end{equation}

We would like to argue that the functor $\CW_{\nu\rightarrow \nu'}:D^b(\OCat_{\nu}(\A_{\lambda^+}))
\xrightarrow{\sim} D^b(\OCat_{\nu'}(\A_{\lambda^+}))$ should be viewed as a categorical
version of $\tilde{R}_{\nu\rightarrow \nu'}$. A part of our reasons for this is Proposition
\ref{Prop_CW_prop}. Part (1) of that proposition should be viewed as a categorical
analog of (\ref{R:transit}).
To interpret part (2) of that proposition we note that  using $\mathsf{Stab}_\nu$ we can define a basis
in $H^*_{T\times \C^\times}(X)_{loc}$ called the stable basis, it consists of the
images of the classes of the  fixed points. The map $\tilde{R}_{\nu\rightarrow \nu'}$
intertwines the stable bases for $\nu$ and $\nu'$. In particular, the stable basis
for $-\nu$ should be viewed as the opposite to $\nu$, and the map $\tilde{R}_{\nu\rightarrow -\nu}$
intertwines these two bases. Part (2) of Proposition \ref{Prop_CW_prop} should be
viewed as a categorical analog of this.

By its very definition, the map $\tilde{R}_{\nu\rightarrow \nu'}$ satisfies
the braid relations, compare to \cite[4.1.8]{MO}. A categorical analog of
this is Theorem \ref{Thm:braid_equiv}.

The map $\tilde{R}_{\nu\rightarrow \nu'}$ and the functor $\CW_{\nu\rightarrow \nu'}$
share some further similarities. For example, using \cite[Theorem 4.6.1]{MO}, one
can prove that $\tilde{R}_{\nu\rightarrow \nu'}$ commutes with the action of the
convolution algebra of the Steinberg variety $X\times_{X_0}X$. On the other hand,
it is not difficult to show that the functors $\CW_{\nu\rightarrow \nu'}$ commute
with the functors of taking derived tensor products with Harish-Chandra bimodules, compare to
\cite[Section 8.3]{BLPW}.

We do not know if the map $R_{\nu\rightarrow \nu'}$ itself has a categorical analog.

\end{document}